\numberwithin{equation}{section} 
\titleformat{\subsection}[runin]{\normalsize\bfseries}{\thesubsection}{5pt}{}
\newcommand{\moment}{
\textup{\textbf{J}}
}
\author{Mathieu Molitor
\\
\small{\it{e-mail:}}\,\,\url{pergame.mathieu@gmail.com}}
\title{K\"{a}hler toric manifolds from dually flat spaces\\
}
\date{}
\begin{document}

\theoremstyle{definition}
\newtheorem{lemma}{Lemma}[section]
\newtheorem{definition}[lemma]{Definition}
\newtheorem{proposition}[lemma]{Proposition}
\newtheorem{corollary}[lemma]{Corollary}
\newtheorem{theorem}[lemma]{Theorem}
\newtheorem{remark}[lemma]{Remark}
\newtheorem{example}[lemma]{Example}
\bibliographystyle{alpha}

\maketitle 


\begin{abstract}
	We present a correspondence between real analytic K\"{a}hler toric manifolds and dually flat spaces, similar to Delzant correspondence in symplectic geometry. 
	This correspondence gives rise to a lifting procedure: if $f:M\to M'$ is an affine isometric map between dually flat spaces and if $N$ and $N'$ are K\"{a}hler toric manifolds 
	associated to $M$ and $M'$, respectively, then there is an equivariant K\"{a}hler immersion $N\to N'$. For example, we show that 
	the Veronese and Segre embeddings are lifts of inclusion maps between appropriate statistical manifolds. 
	We also discuss applications to Quantum Mechanics. 
\end{abstract}


\section{Introduction and summary}

\subsection{From Delzant to Dually flat spaces.}

	In 1988, Delzant showed the following result:

	\begin{theorem}[\cite{Delzant}]\label{nkfnkankdksn}
		Given $i=1,2$, let $(M_{i},\omega_{i})$ be a connected compact symplectic manifold of dimension $2n$ equipped with an effective Hamiltonian 
		torus action $\Phi_{i}:\mathbb{T}^{n}\times M_{i}\to M_{i}$, with momentum map $\moment_{i}:M_{i}\to \mathbb{R}^{n}$. If $\moment_{1}(M_{1})=
		\moment_{2}(M_{2})$, then there is an equivariant symplectomorphism $\varphi:M_{1}\to M_{2}$ satisfying $\moment_{1}=\moment_{2}\circ \varphi$. 
	\end{theorem}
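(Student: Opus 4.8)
The plan is to reduce the statement to a classification result: I would construct, for the common momentum image $\Delta := \moment_1(M_1) = \moment_2(M_2)$, a canonical model $M_{\Delta}$ and show that each $M_i$ is equivariantly symplectomorphic to $M_{\Delta}$ in a way intertwining the momentum maps. The desired $\varphi$ is then the composite $M_1 \to M_{\Delta} \to M_2$.

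First I would establish that $\Delta$ is a convex polytope with special combinatorial properties. By the Atiyah--Guillemin--Sternberg convexity theorem, the image of the momentum map of a Hamiltonian torus action on a compact connected symplectic manifold is the convex hull of the images of the fixed points, hence a convex polytope. Because the action is effective and $\dim M_i = 2n = 2\dim \mathbb{T}^n$, a dimension count at the fixed points (where the isotropy representation is faithful and splits into $n$ nontrivial weight spaces) forces $\Delta$ to be \emph{simple} (exactly $n$ edges meet at each vertex), \emph{rational} (edge directions lie in the weight lattice), and \emph{smooth} (the $n$ primitive edge vectors at each vertex form a lattice basis). These are precisely the defining properties of a Delzant polytope.

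Next I would build the canonical model. Writing $\Delta = \{x \in \mathbb{R}^n : \langle x, v_j \rangle \geq \lambda_j,\ j = 1, \dots, d\}$ with $v_j \in \mathbb{Z}^n$ the primitive inward facet normals, the assignment $e_j \mapsto v_j$ defines a map $\pi : \mathbb{R}^d \to \mathbb{R}^n$ that is surjective by smoothness of $\Delta$ and carries $\mathbb{Z}^d$ onto $\mathbb{Z}^n$. Let $K$ be the subtorus of $\mathbb{T}^d$ determined by $\ker \pi$. Performing symplectic reduction of $(\mathbb{C}^d, \omega_{\mathrm{std}})$, with its standard $\mathbb{T}^d$-action, by $K$ at the level prescribed by $(\lambda_j)$ yields a compact symplectic manifold $M_{\Delta}$ carrying a residual effective Hamiltonian $\mathbb{T}^d/K \cong \mathbb{T}^n$-action whose momentum image is exactly $\Delta$; the smoothness condition guarantees that $K$ acts freely at the relevant level, so $M_{\Delta}$ is a genuine manifold.

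The heart of the argument — and the main obstacle — is the uniqueness step: any compact symplectic toric manifold $(M,\omega,\moment)$ with $\moment(M) = \Delta$ is equivariantly symplectomorphic to $M_{\Delta}$ intertwining the momentum maps. Over the interior $\mathring{\Delta}$ the preimage is a principal $\mathbb{T}^n$-bundle, and action--angle coordinates identify it symplectically with $\mathring{\Delta} \times \mathbb{T}^n$, so $M$ and $M_{\Delta}$ agree there. The delicate part is extending this identification across the boundary strata: near an orbit lying over a point of an open face of codimension $k$, the equivariant local normal form of Marle--Guillemin--Sternberg describes a neighborhood entirely in terms of the isotropy representation, which for a toric manifold is pinned down by the facet normals $v_j$ meeting that face. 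Since this combinatorial data is identical for $M$ and $M_{\Delta}$, the local models coincide; I would then glue the interior identification to these boundary normal forms and absorb the discrepancy between the resulting patched map and an honest symplectomorphism by an equivariant Moser/averaging argument. Applying this to $M_1$ and $M_2$ and composing the two isomorphisms with $M_{\Delta}$ produces the required $\varphi$ satisfying $\moment_1 = \moment_2 \circ \varphi$.
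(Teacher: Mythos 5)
This statement is quoted in the paper with a citation to Delzant's original article; the paper itself gives no proof, so there is no in-paper argument to compare against, and your proposal must be judged against the classical proof it invokes. Your outline is the standard and correct strategy: Atiyah--Guillemin--Sternberg convexity, the dimension/weight count at fixed points forcing $\Delta$ to be simple, rational and smooth, the reduction construction $M_{\Delta} = \mathbb{C}^{d} /\!\!/ K$ (with freeness of the $K$-action at the level set coming from the smoothness of $\Delta$), and then uniqueness, after which $\varphi$ is the composite $M_{1}\to M_{\Delta}\to M_{2}$. All of these steps are sound as stated.

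The one place where your sketch papers over the real difficulty is the uniqueness step, and it is worth flagging how your route differs from Delzant's. You propose to fix an action--angle identification over $\mathring{\Delta}$, match local normal forms near the boundary strata, and ``absorb the discrepancy'' by an equivariant Moser argument. The subtlety is that the local identifications (both over $\mathring{\Delta}$ and near the faces) are far from canonical: any two equivariant symplectomorphisms over an open set $U\subseteq \Delta$ commuting with the momentum maps differ by a gauge transformation, essentially the time-one flow of a function pulled back from $U$ together with a locally constant $\mathbb{Z}^{n}$-ambiguity, so the patched map you build on overlaps need not agree. Delzant's actual argument organizes this as a sheaf problem over $\Delta$: the local isomorphisms exist by the normal form, the discrepancies define a \v{C}ech $1$-cocycle with values in the sheaf of local automorphisms, and the obstruction vanishes because $\Delta$ is convex (hence contractible) and the relevant sheaf cohomology groups $H^{1}$ are trivial. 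A direct Moser-type gluing can be made rigorous (this is essentially the route taken, with care, by Karshon and Lerman in their later treatment), but as written your ``absorb by Moser'' sentence is an assertion rather than an argument, and it is exactly the step where the convexity of $\Delta$ must enter; without it the theorem is false, since the local data alone do not determine the manifold. So: correct skeleton, genuinely the classical approach, but the gluing step needs the cohomological (or an equivalent) mechanism spelled out, not just a normal-form comparison.
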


	A quadruplet $(M,\omega,\Phi,\moment)$ as in Theorem \ref{nkfnkankdksn} is called a \textit{symplectic toric manifold}. Two 
	symplectic toric manifolds $(M_{i},\omega_{i},\Phi_{i},\moment_{i})$, $i=1,2,$ are said to be \textit{equivalent} if there is an equivariant symplectomorphism
	$\varphi:M_{1}\to M_{2}$ such that $\moment_{1}=\moment_{2}\circ \varphi$. Call any subset $\Delta$ of $\mathbb{R}^{n}$ a 
	\textit{Delzant polytope} if it is the image under the momentum map $\moment$ of a symplectic toric manifold. 
	Then Theorem \ref{nkfnkankdksn} yields a bijection between the set of Delzant polytopes $\Delta\subset \mathbb{R}^{n}$ and 
	the set of equivalence classes $[(M,\omega,\Phi,\moment)]$ of symplectic toric manifolds:
	\begin{eqnarray}\label{nknwkenfknkn}
	\begin{tabular}{ccc}
		$\left\{
		\begin{tabular}{ll}
		Equivalence classes of \\
		symplectic toric manifolds
		\end{tabular}
		\right\}$
			& $\overset{}{\rightarrow}$  & Delzant polytopes in $\mathbb{R}^{n}$\\[1em]
		$[(M,\omega,\Phi,\moment)]$           & $\mapsto$                                                       & $\moment(M)$ 
	\end{tabular}
	\end{eqnarray}
	This bijection is called \textit{Delzant correspondence}. Delzant actually characterized Delzant polytopes: they are exactly the convex polytopes in $\mathbb{R}^{n}$ 
	that are simple, rational and smooth (see, e.g., \cite{Ana}). 

	From a K\"{a}hler geometrical perspective, 
	the constructions employed by Delzant 
	show that the manifold $M_{\Delta}$ associated to a Delzant polytope $\Delta$ is canonically a K\"{a}hler manifold and that the torus acts by holomorphic and 
	isometric transformations. Moreover, on the set $M_{\Delta}^{\circ}$ of points $p\in M_{\Delta}$ where the torus action is free, the momentum map $\moment:M_{\Delta}^{\circ}\to \Delta^{\circ}$ 
	is a principal $\mathbb{T}^{n}${-}bundle ($\Delta^{\circ}=$ topological interior of $\Delta$) and hence 
	the K\"{a}hler metric on $M_{\Delta}^{\circ}$ descends to a Riemannian metric on $\Delta^{\circ}$. This metric is Hessian;
	a potential $\phi:\Delta^{\circ}\to \mathbb{R}$ was explicitly computed by Guillemin \cite{Victor} (see also \cite{Gauduchon}). 
	Abreu generalized the results of Guillemin by characterizing all potentials $\phi$ on $\Delta^{\circ}$ 
	that are induced from a compatible K\"{a}hler metric $g$ on $M_{\Delta}$ \cite{Abreu}. 

	Delzant correspondence and the results obtained by Guillemin and Abreu 
	are very rigid in the sense that they depend in a crucial way on the geometry of the momentum polytope $\Delta$ (vertices, faces, etc.). 
	The objective of this paper is to ``reshape" Delzant's correspondence in a more flexible way, in the K\"{a}hler setting, 
	by replacing Delzant polytopes by dually flat manifolds (see below). Our motivation comes from the geometrization program of Quantum Mechanics that we pursued in previous 
	works \cite{Molitor2012,Molitor-exponential,Molitor2014,Molitor-2015}\footnote{We learned after publication that many results in \cite{Molitor-2015} were 
	already obtained independently by John David Lafferty in \cite{Lafferty} and Max von Renesse in \cite{Renesse}.}. Let us describe the objects involved. 
	\begin{itemize}
	\item 
	We call \textit{K\"{a}hler toric manifold} a connected K\"{a}hler manifold $N$ 
	of complex dimension $n$ equipped with an effective isometric and holomorphic action $\Phi:\mathbb{T}^{n}\times N\to N$ of the $n${-}dimension real torus 
	$\mathbb{T}^{n}=\mathbb{R}^{n}/\mathbb{Z}^{n}$ that can be globally extended to a holomorphic action $\Phi^{\mathbb{C}}:(\mathbb{C}^{*})^{n}\times N\to N$ 
	of the algebraic torus $(\mathbb{C}^{*})^{n}$. We say that a K\"{a}hler manifold is \textit{regular} if it is complete, connected, simply connected 
	and if the K\"{a}hler metric is real analytic. 
	\item A \textit{dually flat manifold} is a triple $(M,h,\nabla)$, where $(M,h)$ is a Riemannian manifold 
	and $\nabla$ is a flat linear connection whose dual $\nabla^{*}$ with respect to $h$ is also flat. 
	(The dual connection $\nabla^{*}$ is the unique linear connection on $M$ satisfying $Xh(Y,Z)=h(\nabla_{X}Y,Z)+h(Y,\nabla^{*}_{X}Z)$ for all 
	vector fields $X,Y,Z$ on $M$). 
	\end{itemize}
	Two K\"{a}hler toric manifolds $N_{1}$ and $N_{2}$ with torus actions $\Phi_{1}$ and $\Phi_{2}$, respectively, are \textit{equivalent} if there is a K\"{a}hler isomorphism $G:N_{1}\to N_{2}$ 
	and a Lie group isomorphism $\rho:\mathbb{T}^{n}\to \mathbb{T}^{n}$ such that 
	\begin{eqnarray}\label{neknwknkdnwknk}
		G\circ (\Phi_{1})_{a}=(\Phi_{2})_{\rho(a)}\circ G 	
	\end{eqnarray}
	for all $a\in \mathbb{T}^{n}$, where $(\Phi_{i})_{x}:N_{i}\to N_{i},$ $y\mapsto \Phi_{i}(x,y)$. 
	Two dually flat manifolds $(M_{i},h_{i},\nabla_{i})$, $i=1,2$, are \textit{equivalent} if there exists an afine isometric diffeomorphism between them. 

	With this notation, we show that there is a one-to-one correspondence, which we will regard as a duality, of the form (see Theorem \ref{neknkneknkn}) :
	\begin{eqnarray}\label{nknwkenfknknqqdwdw}
	\begin{tabular}{ccc}
		\bigg\{
		\begin{tabular}{ll}
		Equivalence classes of \\
		regular K\"{a}hler toric manifolds\\
		\end{tabular}
		\bigg\}  & $ \overset{\textup{1-to-1}}{\longrightarrow}$  & $\left\{ 
		\begin{tabular}{ll}
		Equivalence classes of \\
		dually flat manifolds\\
			($+$ conditions)
		\end{tabular}
		\right\}
		$	
	\end{tabular}
	\end{eqnarray}

	\noindent The conditions that appear on the right hand side of the duality are discussed below, after we have defined the concept of torification.  

	There are naturally two equivalent ways to define the duality map \eqref{nknwkenfknknqqdwdw}, that we now describe. \\
	
	\noindent\textbf{The symplectic point of view.} Since a regular toric K\"{a}hler manifold $N$ is simply connected (by definition), its torus action is 
	Hamiltonian and hence there is a momentum map $\moment:N\to \mathbb{R}^{n}$. Let $N^{\circ}$ be the set of points $p\in N$ where the torus action is free.  
	Since $\moment:N^{\circ}\to \moment(N^{\circ})$ is a principal $\mathbb{T}^{n}${-}bundle, the K\"{a}hler metric on $N^{\circ}$ descends to a Riemannian metric, say 
	$k$, on $\moment(N^{\circ})$. Let $\nabla^{k}$ be the dual connection of the flat connection on $\moment(N^{\circ})\subset \mathbb{R}^{n}$. Then $[\moment(N^{\circ}),k,\nabla^{k}]$ 
	is the image under the duality map \eqref{nknwkenfknknqqdwdw} of $[N]$ (here $[\,.\,]$ denotes the equivalence class). 

	\noindent\textbf{The complex point of view.} Let $\Phi^{\mathbb{C}}:(\mathbb{C}^{*})^{n}\times N\to N$ be the holomorphic extension of the 
	torus action. Choose any point $p\in N^{\circ}$ and consider the map $\Phi_{p}^{\mathbb{C}}:(\mathbb{C}^{*})^{n}\to N^{\circ}$, $z\mapsto \Phi^{\mathbb{C}}(z,p)$. 
	Then $\Phi_{p}^{\mathbb{C}}$ is a holomorphic diffeomorphism. Let $\sigma:(\mathbb{C}^{*})^{n}\to \mathbb{R}^{n}$, $(z_{1},...,z_{n})\mapsto 
	(\tfrac{\ln(|z_{1}|)}{2\pi},...,\tfrac{\ln(|z_{n}|)}{2\pi})$. The map $\sigma \circ (\Phi_{p}^{\mathbb{C}})^{-1}:N^{\circ}\to \mathbb{R}^{n}$ is a principal $\mathbb{T}^{n}$-bundle.
	Therefore the K\"{a}hler metric on $N^{\circ}$ descends to a Riemannian metric $h$ on $\mathbb{R}^{n}$. Then $[\mathbb{R}^{n},h,\nabla^{\textup{flat}}]$ is the 
	image under the duality map \eqref{nknwkenfknknqqdwdw} of $[N]$, where $\nabla^{\textup{flat}}$ is the flat connection on $\mathbb{R}^{n}$. \\

	The equivalence between the two points of view was essentially proven by Guillemin \cite{Guillemin-Legendre}, who noted that
	the metrics $h$ on $\mathbb{R}^{n}$ and $k$ on $\moment(N^{\circ})$ are related by a Legendre transform, as follows (see Appendix \ref{nfeknknkefnknknk}).
	Both metrics are Hessian, that is, there are smooth functions 
	$\phi:\mathbb{R}^{n}\to \mathbb{R}$ and $\phi^{*}:\moment(N^{\circ})\to \mathbb{R}$ such that $h=\textup{Hess}(\phi)$ and 
	$k=\textup{Hess}(\phi^{*})$. With the right convention, $\phi^{*}$ is the Legendre transform of $\phi$.
	Moreover, (minus) the gradient map $-\nabla\phi:\mathbb{R}^{n}\to 
	\moment(N^{\circ})$ is an isometric diffeomorphism with inverse $-\nabla\phi^{*}$:
	\begin{eqnarray*} 
	\begin{tikzcd}
		&   N^{\circ}  \arrow{rd}{\displaystyle \moment}\arrow[swap]{ld}{\displaystyle \sigma\circ (\Phi_{p}^{\mathbb{C}})^{-1}}  & \\
		\mathbb{R}^{n}   \arrow[swap]{rr}{-\nabla\phi=(-\nabla\phi^{*})^{-1}}  & &   \moment(N^{\circ})
	\end{tikzcd}
	\end{eqnarray*}

	It is not difficult to see that the map $-\nabla\phi:\mathbb{R}^{n}\to \moment(N^{\circ})$ preserves the affine structures of $\mathbb{R}^{n}$ and $\moment(N^{\circ})$ described 
	above and hence the dually flat spaces $(\mathbb{R}^{n},h,\nabla^{\textup{flat}})$ and $(\moment(N^{\circ}),k,\nabla^{k})$ are equivalent (see Proposition \ref{nekfnkwdnkneknknk}). 
	This shows that both points of view can indeed be used to define the duality map \eqref{nknwkenfknknqqdwdw}, and illustrates the flexibility of using dually flat spaces. 

	The symplectic and complex points of view are well-known in the literature \cite{Abreu,Guillemin-Legendre} and they allow to define the duality map \eqref{nknwkenfknknqqdwdw} 
	easily. However, in this paper we focus on the inverse problem: given a dually flat manifold $(M,h,\nabla)$, how can we associate a K\"{a}hler toric manifold? Delzant 
	used symplectic reduction techniques to construct a symplectic toric manifold from a Delzant polytope. 
	Our approach is different (non constructive) and is based on what we call ``torification" of a dually flat space. 

\subsection{Torification.}\label{nekwdnkfenknk}

	Let $(M,h,\nabla)$ be a connected dually flat space. Suppose that there is a global frame $(X_{1},...,X_{n})$ of vector fields on $M$. 
	Then there is a natural action of $\mathbb{R}^{n}$ on the tangent bundle $TM$, defined by $x\cdot u:=u+x_{1}X_{1}+...+x_{n}X_{n}$, where $x\in \mathbb{R}^{n}$ and $u\in TM$. 
	It is a general fact that if a commutative
	group $G$ acts on a set $S$ and if $K$ is any subgroup of $G$, then $G/K$ acts on $S/K$. With $G=\mathbb{R}^{n}$ and $K=\mathbb{Z}^{n}$, we get 
	an effective action of the real torus $\mathbb{T}^{n}=\mathbb{R}^{n}/\mathbb{Z}^{n}$ on the quotient space $TM/\mathbb{Z}^{n}$:
	\begin{eqnarray}\label{nwdnknfeknkndkndk}
		\mathbb{T}^{n}\times TM/\mathbb{Z}^{n}\to TM/\mathbb{Z}^{n}. 
	\end{eqnarray}
	From an analytical point of view, it follows from Dombrowski's construction that the tangent bundle $TM$ is naturally a K\"{a}hler manifold 
	(see Proposition \ref{prop:3.8}). Moreover, if the vector fields $X_{j}$'s are parallel with respect to $\nabla$, then the $\mathbb{R}^{n}$-action on $TM$ is holomorphic and isometric
	(Lemma \ref{nkwnkfenkwn}). Thus the quotient $TM/\mathbb{Z}^{n}$ is naturally a K\"{a}hler manifold and the torus action \eqref{nwdnknfeknkndkndk} is isometric 
	and holomorphic (Lemma \ref{nndknfkdnfkdnk}).

	Let $N$ be a connected K\"{a}hler manifold equipped with an effective 
	holomorphic and isometric torus action $\mathbb{T}^{n}\times N\to N$. We say that 
	$N$ is a \textit{torification} of the dually flat space $(M,h,\nabla)$ if there is a global frame $(X_{1},...,X_{n})$ of $\nabla$-parallel vector fields on $M$ 
	and an equivariant K\"{a}hler isomorphism $F:TM/\mathbb{Z}^{n}\to N^{\circ}$ (see Definition \ref{nksdnkfnksdn}). 

	Comments are in order. To say that $N$ is a torification of $M$ simply means that $N^{\circ}$ can be parametrized by points in $TM/\mathbb{Z}^{n}$ in a 
	consistant way with the geometric structures involved, but the definition does not say anything about the set $N\backslash N^{\circ}$ of points where the torus action is not free. As 
	a result, if a torification exists, then it is not unique in general (if $N$ is a torification, then so does $N^{\circ}$). In that sense, torification is a weak concept. 
	However, the situation changes if the following conditions are imposed on the K\"{a}hler manifold $N$: completeness, simply connectedness and real analyticity of the K\"{a}hler 
	metric. When these conditions are satisfied, we say that $N$ is \textit{regular}. If $M$ has a regular torification, we say that it is \textit{toric}. 

	One of the main results of this paper is the following: all regular torifications of a given dually flat manifold are equivalent 
	(Theorem \ref{aaanfeknkenfknk}). Therefore one can unambiguously associate a K\"{a}hler manifold (equipped with a torus action) to 
	a toric dually flat manifold, up to an equivariant K\"{a}hler isomorphism. This yields a map:
	\begin{eqnarray}\label{nckenkefnknknkef}
	\begin{tabular}{ccc}
		\big\{
		\begin{tabular}{ll}
		Toric dually flat manifolds\\
		\end{tabular}
		\big\}
			& $\overset{\textup{torification}}{\longrightarrow}$  & \bigg\{
		\begin{tabular}{ll}
		Equivalence classes of \\
		regular torifications\\
		\end{tabular}
		\bigg\}  \\[1em]
		$(M,h,\nabla)$           & $\overset{\textup{}}{\longmapsto}$   & $\big[\,\mathbb{T}^{n}\times N\to N\,\big].$
	\end{tabular}
	\end{eqnarray}
	The torification map \eqref{nckenkefnknknkef} is analogous to the map 
	$\Delta \mapsto [M_{\Delta}]$ given by Delzant correspondence, but there are two notable differences: $N$ is not necessarily compact 
	(see examples below) and the equivalence relation between torifications 
	is weaker than the equivalence relation used in Delzant correspondence (the torus action can be reparametrized by a Lie group isomorphism 
	$\rho:\mathbb{T}^{n}\to \mathbb{T}^{n}$, see \eqref{neknwknkdnwknk}). 

	Regarding the duality \eqref{nknwkenfknknqqdwdw}, we show that the torification map \eqref{nckenkefnknknkef} depends only on the 
	equivalence class of $(M,h,\nabla)$ and hence it descends to a quotient map 
	between quotient spaces. When certain conditions are imposed on $(M,h,\nabla),$ this quotient map becomes the inverse of the duality map 
	\eqref{nknwkenfknknqqdwdw}. These conditions are (see Theorem \ref{neknkneknkn}): (1) $(M,h,\nabla)$ is toric, 
	(2) the torus action associated to the regular torification of $(M,h,\nabla)$ is holomorphically extendable, and (3) $(M,h,\nabla)$ 
	admits a global pair of dual coordinate systems (see Definition \ref{nfkdwnkenfkdk}). These three conditions are the conditions 
	that appear on the right hand side of the duality \eqref{nknwkenfknknqqdwdw}.

\subsection{Lifting property.} One of the main results of this paper is that the torification map \eqref{nckenkefnknknkef} gives rise to a lifting procedure. 
	Suppose $N$ and $N'$ are regular torifications of the dually flat manifolds $(M,h,\nabla)$ and $(M',h',\nabla')$, respectively. By definition of a torification,
	there are equivariant K\"{a}hler isomorphisms $N^{\circ}\cong TM/\mathbb{Z}^{n}$ and $(N')^{\circ}\cong TM'/\mathbb{Z}^{d}$. In this situation, we show that if 
	$f:M\to M'$ is an affine isometric map, then its derivative $f_{*}:TM\to TM'$ is automatically equivariant with respect to the actions 
	of $\mathbb{Z}^{n}$ and $\mathbb{Z}^{d}$ and hence it descends to a map $m:N^{\circ}\to (N')^{\circ}$ (Proposition \ref{nfekwnknfkenk}). 
	This map extends uniquely to an equivariant K\"{a}hler immersion $m:N\to N'$. We call it a \textit{lift} of $f$. 
	Lifts are not unique (they depend on the choice of the parametrizations $N^{\circ}\cong TM/\mathbb{Z}^{n}$), but they are conjugate (Proposition \ref{neknmfkrnknk}). 
	Under suitable parametrizations, compositions of lifts are lifts (Proposition \ref{cnekdwnkndknk}). 	



%
%
%
%
%
%

\subsection{Fundamental lattice.} If $N$ is a torification of $(M,h,\nabla)$, then there is a frame $X=(X_{1},...,X_{n})$ of $\nabla$-parallel vector fields $X_{j}$'s on $M$
	that induces a $\mathbb{Z}^{n}$-action on $TM$ and there is an equivariant K\"{a}hler isomorphism 
	$TM/\mathbb{Z}^{n}\to N^{\circ}$. The pair $(X,F)$ is not unique in general. 
	However, we show that if $N$ is regular, then the set 
	\begin{eqnarray*}
		\mathscr{L}=\{k_{1}X_{1}(p)+...+k_{n}X_{n}(p)\,\,\big\vert\,\,p\in M,\,\,k_{1},...,k_{n}\in \mathbb{Z}\}\subset TM
	\end{eqnarray*}
	is independent of the the frame $(X_{1},...,X_{n})$. We call $\mathscr{L}\subset TM$ the \textit{fundamental lattice} of $(M,h,\nabla)$. 

	The fundamental lattice is closely related to the space of K\"{a}hler functions on $TM$. Recall that a K\"{a}hler function on a K\"{a}hler manifold 
	is a smooth function whose Hamiltonian vector field is Killing. We show that when the space of K\"{a}hler functions separates the points of $N$, 
	then $\mathscr{L}$ coincides with the set of diffeomorphisms $\varphi:TM\to TM$ satisfying $f\circ \varphi=f$ for all K\"{a}hler functions 
	$f:TM\to \mathbb{R}$, where $\mathscr{L}$ is identified with a group of translations (see Proposition \ref{nfeknkwneknk}). Using this, we 
	deduce a criteria for a dually flat manifold $(M,h,\nabla)$ not to be toric (Corollary \ref{nekdnwkwndknk}).  

\subsection{Examples from Information Geometry.}\label{nfeeknkwefknkn}
	Information geometry is the branch of mathematics that studies statistical manifolds, that is, manifolds 
	whose points are probability density functions defined over a fixed measure space \cite{Jost2,Amari-Nagaoka,Murray}. For example, the set of all normal distributions 
	$(\sqrt{2\pi}\sigma)^{-1}\textup{exp}(-(x-\mu)^{2}/(2\sigma^{2}))$ is a statistical manifold 
	parametrized by the mean $\mu\in\mathbb{R}$ and $\sigma>0$. Another important example is the set of all 
	probabilities $p:\Omega\to \mathbb{R}$ $p>0$, $\sum_{k=1}^{n}p(x_{k})=1$, defined over a finite set $\Omega=\{x_{1},...,x_{n}\}$. These probabilities are called 
	\textit{categorical distributions}. Together they form an $(n-1)$-dimensional statistical manifold. 

	In general, a statistical manifold $M$ is equipped with a Riemannian metric $h_{F}$, 
	namely the \textit{Fisher metric}, and a linear connection $\nabla^{(e)}$, called \textit{exponential connection}. In many cases, $(M,h_{F},\nabla^{(e)})$ is a dually flat manifold. 
	For example, this is true for the most common families of statistical distributions: Binomial, Poisson, Normal, Categorical, just to name a few.
	Throughout the rest of the introduction, we will use these names 
	to denote the corresponding families of probability distributions. Some of these families are characterized by some parameters, like the number of trials 
	for Binomial. For simplicity, we will omit 
	these parameters from our notation. Thus, for instance, we will denote by $\textsf{Binomial}$ 
	the set of all Binomial distributions $p(k)=\binom{n}{k}q^{k}(1-q)^{n-k}$, $q\in (0,1)$, defined over the set $\{0,1,...,n\}$, instead of using an explicit notation like 
	$\textsf{Binomial}(n)$.

	Let $\mathbb{P}_{n}(c)$ denote the complex projective space of complex dimension $n$ and holomorphic sectional curvature $c>0$. Let $\mathbb{D}(c)$ denote the 
	unit disk in $\mathbb{C}$ endowed with the Hyperbolic metric of constant holomorphic sectional curvature $c<0$. Below we give a list of statistical 
	manifolds that are toric together with their regular torifications (see Proposition \ref{nfeknwknwknk}).
	\begin{center}
		\begin{tabular}{l|l}
			K\"{a}hler toric manifold                  &    Dually flat space \\
			\hline \\[-0.5em]
			$S^{2}=\mathbb{P}_{1}(\tfrac{1}{n})$       &    \textsf{Binomial} \\[0.3em]
			$\mathbb{C}$                               &    \textsf{Poisson}  \\ [0.3em]
			$\mathbb{P}_{n}(1)$                        &   \textsf{Categorical}\\[0.3em]
			$\mathbb{P}_{n}(\tfrac{1}{m})$             &   \textsf{Multinomial}\\[0.3em]
			$\mathbb{D}(-\tfrac{1}{r})$                &   \textsf{Negative Binomial} 
		\end{tabular}
	\end{center}
	The numbers $n,m,r$ are all positive integers. For the corresponding torus actions, see Proposition \ref{nfeknwknwknk}. A simple counterexample is the set of 
	normal distributions with known variance $\sigma=1$, which is not toric (see Example \ref{nceknwdkenknk}). 
	Note that $\mathbb{C}$ and $\mathbb{D}$ are non-compact. 

	Given two dually flat manifolds $M_{1}$ and $M_{2}$ with torifications $N_{1}$ and $N_{2}$, respectively, we show that $N_{1}\times N_{2}$ is a torification of 
	$M_{1}\times M_{2}$ (Proposition \ref{nfeknknefknfknk}). 
	Therefore one can construct new torifications from old ones. For example, $\mathbb{C}^{n}$ is the torification of a finite product of Poisson families. 

	An interesting connection with algebraic geometry and projective varieties arises when the measure space $\Omega$ of a dually flat statistical manifold 
	$M$ is finite. In this case, we prove that the natural inclusion $M\subset \textsf{Categorical}$ is always affine and isometric (Lemma \ref{nefknknknk}). 
	Thus if $M$ has a regular torification $N$, then there is a lift $f:N\to \mathbb{P}_{r}(1)$ which is an equivariant K\"{a}hler immersion. 
	For example (see Propositions \ref{ncednwkneksnk} and \ref{nekwnknkwdkejjkknk}), 
	\begin{itemize}
	\item the Veronese embedding $\mathbb{P}_{1}(\tfrac{1}{n})\to \mathbb{P}_{n}(1)$, $[z_{1},z_{2}]\mapsto\big[
			z_{1}^{n},...,\binom{n}{k}^{1/2}z_{1}^{n-k}z_{2}^{k},...,z_{2}^{n}\big]$ (homogeneous coordinates) and 
	\item the Segre embedding $\mathbb{P}_{n}(1)\times \mathbb{P}_{m}(1)\to \mathbb{P}_{(n+1)(m+1)-1}(1)$, $([z_{i}],[w_{j}])\mapsto [z_{i}w_{j}]$, 
		$i=1,...,n,j=1,...,m$ (lexicographic ordering),
	\end{itemize}
	are the lifts of the inclusion maps $\textsf{Binomial}\subset \textsf{Categorical}$ and $\textsf{Categorical}\times \textsf{Categorical}\subset \textsf{Categorical}$, 
	respectively. Both maps are well-known in algebraic geometry (see, e.g., \cite{Harris}). 

\subsection{Applications to Geometric Quantum Mechanics.}

	Geometric Quantum Mechanics (GQM) is a geometric reformulation of Quantum Mechanics (QM) based on the K\"{a}hler structure of the complex projective space $\mathbb{P}_{n}$
	\cite{Ashtekar}. In GQM, the Schr\"{o}dinger evolution is replaced by the Hamiltonian flows of K\"{a}hler functions 
	$f:\mathbb{P}_{n}\to \mathbb{R}$, and the probabilistic features (quantum uncertainties and state vector reduction in a measurement process) 
	are described by the Riemannian metric on $\mathbb{P}_{n}$, namely the Fubini-Study metric. This is an elegant formalism that 
	enlightens many aspects of QM, such as Berry’s phase, entanglement and the measurement problem \cite{Spera-geometric}. 


	GQM naturally leads to K\"{a}hler manifolds other than just the complex projective space $\mathbb{P}_{n}$. Let us give two examples.
	\begin{enumerate}[(1)]
		\item The \textit{Segre variety} characterizes the set of completely disentangled states (see \cite{Benvegnu}).
		\item The \textit{Veronese variety} characterizes the so-called \textit{spin coherent states} of 
			$\textup{SU}(n)$ (see \cite{Eva,Molitor-exponential}).
	\end{enumerate}
	We stress the fact that these examples, and their connections with QM, are not the result of a quantization scheme (like geometric quantization \cite{Blau,Nair,Woodhouse}); 
	they arise in a purely quantum context, where no classical counterpart is required\footnote{GQM is not concerned with quantization.}.

	It is natural to ask whether or not GQM could be extended to a ``natural" class of K\"{a}hler manifolds 
	that would include the examples above (projective space, Segre and Veronese varieties), and if a simple mechanism could explain their connections with QM. 
	It is our opinion that the torification approach provides an appropriate mathematical framework to tackle these questions. Let us elaborate this further 
	by examining the examples above in more details. 

	The projective space $\mathbb{P}_{n}$, the Veronese and Segre varieties are all torifications of statistical manifolds (see examples above). 
	Because of this, each one of them is canonically associated to a probability space, and thus a probabilistic interpretation is available. This is important 
	because QM is probabilistic in nature. More concretely, if $N$ is the torification of a dually flat manifold $M$, then there is a canonical projection 
	$\kappa:N^{\circ}\to M$ (it is unique up to an equivariant K\"{a}hler isomorphism of $N$, see Lemma \ref{efnkfenknfknk}). If $M$ is a statistical manifold, then $\kappa(p)$ 
	is a probability for every $p\in N^{\circ}$. For example, 
	let $M=\textsf{Categorical}$ be the set of positive probability density functions defined on a finite set 
	$\Omega=\{x_{0},...,x_{n}\}$. 
	Then $N=\mathbb{P}_{n}$ is the regular torification of $M$ and the corresponding projection $\kappa:\mathbb{P}_{n}^{\circ}\to M$ is defined by 
	$\kappa([z])(x_{k})=\tfrac{|z_{k}|^{2}}{|z_{0}|^{2}+...+|z_{n}|^{2}}$, where $[z]=[z_{0},...,z_{n}]\in \mathbb{P}_{n}^{\circ}$ (homogeneous coordinates) and 
	$x_{k}\in \Omega.$ (For more examples, see Proposition \ref{njewndknknknk}.) In \cite{Molitor-exponential}, we explain how to recover the probabilistic interpretation of GQM from 
	the projection $\kappa$ instead of the geodesic distance on $\mathbb{P}_{n}$. We then applied this approach to the torification of \textsf{Binomial}, which is the sphere $S^{2}$. 
	The result is essentially the mathematical description of the spin 
	of a non-relativistic particle. This example shows that GQM can be generalized by considering torifications of appropriate statistical manifolds. 


	Regarding the Veronese variety, there is a simple way to understand its connection to QM, that we now describe. By definition, the Veronese variety 
	is the image of the sphere $S^{2}$ under the Veronese embedding $S^{2}\to \mathbb{P}_{n}$ and hence it is 
	a complex submanifold of $\mathbb{P}_{n}$. Thus, one may consider the following problem: given a K\"{a}hler function $f$ on $S^{2}$, is there 
	a K\"{a}hler function $\hat{f}$ on $\mathbb{P}_{n}$ that extends $f$? In \cite{Molitor-exponential}, we proved that every 
	K\"{a}hler function $f$ on $S^{2}$ extends uniquely to a K\"{a}hler function $\hat{f}$ on $\mathbb{P}_{n}$ and that the map 
	$\mathscr{K}(S^{2})\to \mathscr{K}(\mathbb{P}_{n})$, $f\mapsto \hat{f}$ is 
	a Lie algebra homomorphism (here $\mathscr{K}(N)$ denotes the space of K\"{a}hler functions on $N$). Since $\mathscr{K}(S^{2})\cong \mathfrak{u}(2)$ and $\mathscr{K}(\mathbb{P}_{n})
	\cong \mathfrak{u}(n+1)$, one obtains a unitary representation of $\mathfrak{u}(2)$. From this one can extract all irreducible unitary representations of $\mathfrak{su}(2)$, 
	which is what physicists use to describe the spin of a particle. The Veronese embedding itselft is interesting from 
	the physical point of view, because it characterizes the so-called \textit{spin coherent states} \cite{Brody,Molitor-exponential}. 
	In summary, the spin representations and spin coherent states are manifestations of 
	the exterior geometry of $S^{2}$, that is, the way $S^{2}$ is embedded into $\mathbb{P}_{n}$. From a torification point of view, this 
	embedding is just the lift of the inclusion map $\textsf{Binomial}\subset \textsf{Categorical}$. 
	
	Now we turn our attention to the Segre embedding. In QM, the Hilbert space for a combined system of two particles is the tensor product $H_{1}\otimes H_{2}$, where 
	each Hilbert space $H_{i}$ is the state space of a single particle. If the state $\Psi\in H_{1}\otimes H_{2}$ of the particles can be written as a single 
	term $u\otimes v$ for some $u\in H_{1}$ and $v\in H_{2}$, then we say that the particles are \textit{disentangled}. For simplicity, suppose that 
	$H_{1}=H_{2}=\mathbb{C}^{2}$. In this case, it can be proven that the two particles (``qubits") are disentangled if and only if their state vector 
	$\Psi\in \mathbb{P}_{3}=\mathbb{P}(\mathbb{C}^{2}\otimes \mathbb{C}^{2})$ is a point in the Segre variety $X\subset \mathbb{P}_{3}$. More generally, 
	the generalized Segre Variety $X\subset \mathbb{P}_{2^{n}-1}$ coincides with the set of disentangled states of the $n$-qubit space \cite{Benvegnu}.
	Mathematically, the Segre variety $X\subset \mathbb{P}_{3}$ is the image of $\mathbb{P}_{1}\times \mathbb{P}_{1}$ under the Segre embedding $f([z_{1},z_{2}],[w_{1},w_{2}])=
	[z_{1}w_{1},z_{1}w_{2},z_{2}w_{1},z_{2}w_{2}]$. From a torification point of view, the Segre embedding is just the lift of the inclusion map
	$\textsf{Categorical}\times \textsf{Categorical}\subset \textsf{Categorical}$. 

	It is interesting to note that a multiparticle formalism is readily implemented in 
	the torification approach. Suppose $M_{1}$ and $M_{2}$ are dually flat statistical manifolds defined over the finite sets 
	$\Omega_{1}=\{x_{0},...,x_{r}\}$ and $\Omega_{2}=\{y_{0},...,y_{s}\}$, respectively. Then the product $M_{1}\times M_{2}$ is naturally a dually flat 
	statistical manifold defined over $\Omega_{1}\times \Omega_{2}$ (see Definition \ref{nfknknefknknekn}). 
	If $N_{1}$ and $N_{2}$ are regular torifications of $M_{1}$ and $M_{2}$, respectively, then the product $N_{1}\times N_{2}$ is the regular torification of 
	$M_{1}\times M_{2}$ (Proposition \ref{nfeknknefknfknk}). Moreover, since the natural inclusion $M_{1}\times M_{2}\subset \textsf{Categorical}$ is affine and isometric, there is a lift 
	$N_{1}\times N_{2}\to \mathbb{P}_{(r+1)(s+1)-1}=\mathbb{P}(\mathbb{C}^{r+1}\otimes \mathbb{C}^{s+1})$ which generalizes the Segre embedding. \\

	The discussion above highlights some of the advantages of the torification approach, like its simplicity, minimality and naturalness, and paves the way for 
	further developments of GQM. Perhaps more importantly, the torification approach places information geometry at the very heart of QM in finite dimension.
	This naturally leads to the idea that QM might be derived from 
	information-theoretical principles, like, for instance, the constancy of speed of light in special relativity.  
	Many authors already reached similar conclusions using axiomatic approaches 
	\cite{Brukner,Clifton,Grimbaum,Chiribella,Goyal-2008,Goyal,Goyal-2010,Masanes,Rovelli}. These approaches have their own merits and 
	respective successes, but to our knowledge, no consensus has emerged yet. The torification approach, in this regard, 
	might offer some guidance or insight.

\subsection{Organization of the paper.} 
	For the convenience of the reader, the paper starts with a discussion on the relation between K\"{a}hler geometry and statistics (see Section \ref{nfekkdwnknk}). 
	The material presented is mostly taken from \cite{Molitor2014}, except for Proposition \ref{nfeknwknekn} which seems to be new. 
	We shall present the subject in a uniform way by using the concept of dually flat structure.
	In Section \ref{nkdnksnkenkn}, we introduce the concept of \textit{parallel lattice} on a dually flat manifold and show how to construct a torus action from it. 
	The analytical and geometrical properties of this torus action are discussed in Sections \ref{nmknfkenkfnk} and \ref{ndnkneknkdnksk}. The concept of torification is defined in Section 
	\ref{nfeknkwdnkk}. In Section \ref{nekdwknknwkdnknkn}, we re-examine carefully some already known results in toric geometry, adopting 
	systematically the torification point of view. In particular we show that every K\"{a}hler toric manifold is a torification of an appropriate 
	dualistic structure on $\mathbb{R}^{n}$ (complex point of view, Theorem \ref{nckdnwknknednenkenk}) but also the torification 
	of its momentum polytope (symplectic point of view, Corollary \ref{nknkwkkdnkn}). Examples from information geometry are presented in Section \ref{nfknwkdenknwkn}. 
	The lifting property that comes with the concept of torification is discussed in Section \ref{nfwkwnkenkwndk}. The fundamental lattice of a toric dually flat manifold 
	is defined in Section \ref{nfknnkndknknknk}. In Section \ref{nwkwnkenfkwnknknk}, we describe a close connection between algebraic geometry and the torification of statistical 
	manifolds. Finally, in Section \ref{nnknkenkwnkdnk} we discuss the duality between dually flat spaces and K\"{a}hler toric manifolds mentioned at the beginning of this paper.

       \textbf{}\\
       \textbf{Notation.} Let $M$ be a manifold. The map $\pi:TM\to M$ denotes the canonical projection. The set $\textup{Diff}(M)$ is the group of 
       diffeomorphisms of $M$. The space of vector fields on $M$ is denoted by $\mathfrak{X}(M)$. 
       Let $\Phi:G\times M\to M$ be a Lie group action of a Lie group $G$ on $M$. Given $g\in G$ and $p\in M$, 
       we will denote by $\Phi_{p}:G\to M$ and $\Phi_{g}:M\to M$ the maps defined by $\Phi_{p}(g)=\Phi_{g}(p)=\Phi(g,p)$. If $N$ is a K\"{a}hler manifold, we usually denote 
       its metric by $g$, its complex structure by $J$ and the corresponding symplectic form by $\omega$. The set $\textup{GL}(n,\mathbb{K})$ is the group of 
       $n\times n$ invertible matrices with entries in $\mathbb{K}\in \{\mathbb{R},\mathbb{Z}\}$.

\section{Preliminaries: dually flat spaces and K\"ahler geometry}\label{nfekkdwnknk}

\subsection{Connections and connectors.}\label{pouette}

	This section follows closely \cite{Dombrowski}. Let $M$ be a manifold. 
\begin{definition}\label{def:2.1}
	A \textit{linear connection} $\nabla$ on $M$ is a map 
	$\mathfrak{X}(M)\times\mathfrak{X}(M)\to\mathfrak{X}(M)$, $(X,Y)\mapsto\nabla_{X}Y$, 
	satisfying the following properties:
	\begin{enumerate}[(1)]
		\item $\nabla_{fX+gY}Z=f\nabla_{X}Z+g\nabla_{Y}Z$,
		\item $\nabla_{X}(Y+Z)=\nabla_{X}Y+\nabla_{X}Z$,
		\item $\nabla_{X}(fY)=X(f)Y+f\nabla_{X}Y$,
	\end{enumerate}
	for all vector fields $X,Y,Z\in\mathfrak{X}(M)$ and for all functions 
	$f,g\in C^{\infty}(M)$. 
\end{definition}

	In local coordinates $(x_{1},...,x_{n})$ on $U\subseteq M$, if $X=\sum_{k=1}^{n}X^{i}\tfrac{\partial}{\partial x_{k}}$ 
	and $Y=\sum_{k=1}^{n}Y^{k}\tfrac{\partial}{\partial x_{k}}$, then, by standard computations, 
	\begin{eqnarray}\label{nekwmdkvnksnk}
		\nabla_{X}Y=\sum_{k=1}^{n}\bigg(X(Y^{k})+\sum_{i,j=1}^{n}X^{i}Y^{j}\Gamma_{ij}^{k}\bigg)
		\dfrac{\partial}{\partial x_{k}},
	\end{eqnarray}
	where $\Gamma_{ij}^{k}:U\to \mathbb{R}$ are the Christoffel symbols, defined by the formula 
	\begin{eqnarray*}
		\nabla_{\frac{\partial}{\partial x_{i} }}\frac{\partial}{\partial x_{j}}
		=\sum_{k=1}^n\Gamma_{ij}^{k}\frac{\partial}{\partial x_{k}}\,\,\,\,\,\,\qquad \textup{for}\,\,i,j=1,...,n.
	\end{eqnarray*}
		
	Let $\pi:TM\to M$ be the canonical projection and let 
	$(U,\varphi)$ be a chart for $M$ with local coordinates $(x_{1},...,x_{n})$. Define 
	$\widetilde{\varphi}:\pi^{-1}(U)\to \mathbb{R}^{2n}$ by
	\begin{eqnarray*}
		\widetilde{\varphi}\bigg(\sum_{i=1}^{n}u_{i}\dfrac{\partial}{\partial x_{i}}\bigg\vert_{p}\bigg)
		=(x_{1}(p),...,x_{n}(p),u_{1},...,u_{n}). 
	\end{eqnarray*}
	Then $(\pi^{-1}(U),\widetilde{\varphi})$ is a chart for $TM$. We will usually denote the corresponding local coordinates on 
	$\pi^{-1}(U)\subseteq TM$ by $(q,r)=(q_{1},...,q_{n},r_{1},...,r_{n})$ (in particular, $q_{i}=x_{i}\circ \pi$ for every $i=1,...,n)$. 
	Informally, we sometimes write $(x,\dot{x})=(x_{1},...,x_{n},\dot{x}_{1},...,\dot{x}_{n})$ instead of $(q,r)=(q_{1},...,q_{n},r_{1},...,r_{n})$. 


	Let $u=\sum_{k=1}^{n}u_{k}\tfrac{\partial}{\partial x_{k}}\big\vert_{p}\in \pi^{-1}(U)$ be arbitrary. 
	Define a linear map $K_{u}:T_{u}(TM)\to T_{p}M$ by 
	\begin{eqnarray*}
		&&K_{u}\bigg(\dfrac{\partial}{\partial q_{a}}\bigg\vert_{u}\bigg):=
			\sum_{k,j=1}^{n}\Gamma_{aj}^{k}(p)u_{j}\dfrac{\partial}{\partial x_{k}}\bigg\vert_{p}
			 \,\,\,\,\,\,\,\,\,\,\textup{for}\,\,a=1,...,n,\\[0.9em]
		&&K_{u}\bigg(\dfrac{\partial}{\partial r_{a}}\bigg\vert_{u}\bigg):=
			\dfrac{\partial}{\partial x_{a}}\bigg\vert_{p}\,\,\,\,\,\,\,\,\,\,\textup{for}\,\,a=1,...,n.
	\end{eqnarray*}
\begin{lemma}\label{nfkewefnknknk}
	Let $X$ and $Y$ be vector fields on $M$. Suppose $Y(p)=u$. Then $K_{u}(Y_{*_{p}}X_{p})=(\nabla_{X}Y)(p)$.
\end{lemma}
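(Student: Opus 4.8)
The plan is to verify the identity by a direct computation in the local coordinates $(q,r)$ on $\pi^{-1}(U)\subseteq TM$. Both sides of the asserted equality are pointwise objects in $T_{p}M$, and the statement is local, so it suffices to work in a single chart and match coefficients against the basis $\partial/\partial x_{k}|_{p}$. The only ingredients needed are the coordinate description of the vector field $Y$ regarded as a smooth map $Y:M\to TM$, the chain rule for its differential $Y_{*_{p}}$, the defining formulas for $K_{u}$ on the coordinate basis of $T_{u}(TM)$, and the local expression \eqref{nekwmdkvnksnk} for $\nabla_{X}Y$.

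First I would fix the coordinate data. Writing $X=\sum_{i}X^{i}\,\partial/\partial x_{i}$ and $Y=\sum_{k}Y^{k}\,\partial/\partial x_{k}$ on $U$, the definition of $\widetilde{\varphi}$ shows that, expressed through the coordinates $(q,r)$ on $TM$, the map $Y$ satisfies $q_{a}\circ Y=x_{a}$ and $r_{a}\circ Y=Y^{a}$ for all $a$. Next I would compute the pushforward. By the chain rule, using $\partial(q_{a}\circ Y)/\partial x_{i}=\delta_{ai}$ and $\partial(r_{a}\circ Y)/\partial x_{i}=\partial Y^{a}/\partial x_{i}$, one gets for each $i$
\[
	Y_{*_{p}}\!\left(\frac{\partial}{\partial x_{i}}\Big\vert_{p}\right)
	=\frac{\partial}{\partial q_{i}}\Big\vert_{u}
	+\sum_{a=1}^{n}\frac{\partial Y^{a}}{\partial x_{i}}(p)\,\frac{\partial}{\partial r_{a}}\Big\vert_{u}.
\]
Taking the linear combination of these with coefficients $X^{i}(p)$ yields the coordinate expression for $Y_{*_{p}}X_{p}$.

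Then I would apply $K_{u}$ and use linearity. Since $u=Y(p)$, the components are $u_{j}=Y^{j}(p)$, so the $\partial/\partial q_{i}$ terms contribute $\sum_{i,j,k}X^{i}(p)Y^{j}(p)\Gamma_{ij}^{k}(p)\,\partial/\partial x_{k}|_{p}$ through the first defining formula for $K_{u}$, while the $\partial/\partial r_{a}$ terms contribute $\sum_{i,a}X^{i}(p)\,\partial Y^{a}/\partial x_{i}(p)\,\partial/\partial x_{a}|_{p}$ through the second. Renaming $a\to k$ in the latter and recognizing $\sum_{i}X^{i}\,\partial Y^{k}/\partial x_{i}=X(Y^{k})$, the two contributions combine into
\[
	K_{u}(Y_{*_{p}}X_{p})
	=\sum_{k=1}^{n}\Big(X(Y^{k})(p)+\sum_{i,j=1}^{n}X^{i}(p)Y^{j}(p)\Gamma_{ij}^{k}(p)\Big)\frac{\partial}{\partial x_{k}}\Big\vert_{p},
\]
which is precisely \eqref{nekwmdkvnksnk} evaluated at $p$, i.e. $(\nabla_{X}Y)(p)$.

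There is no genuine obstacle here; the content of the lemma is bookkeeping, and the argument is a verification. The one spot requiring care is the pushforward step: one must notice that the relation $q_{a}\circ Y=x_{a}$ forces the ``horizontal'' coordinates to differentiate to the Kronecker delta, so that the Christoffel symbols enter the final answer \emph{only} through the $K_{u}(\partial/\partial q_{a})$ term, while the derivative term $X(Y^{k})$ enters \emph{only} through the $K_{u}(\partial/\partial r_{a})$ term. Matching indices after this separation reproduces the connection formula verbatim, and the lemma follows.
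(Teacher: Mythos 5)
Your proof is correct and follows essentially the same route as the paper's: compute $Y_{*_{p}}X_{p}$ in the coordinates $(q,r)$, apply $K_{u}$ using $u_{j}=Y^{j}(p)$, and compare with the local expression \eqref{nekwmdkvnksnk} for $\nabla_{X}Y$. The only difference is that you make explicit the chain-rule derivation of $Y_{*_{p}}X_{p}$, which the paper dismisses as ``standard computations''.
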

\begin{proof}
	By standard computations, 
	\begin{eqnarray*}
		Y_{*_{p}}X_{p}=\sum_{a=1}^{n}\bigg(X^{a}(p)\dfrac{\partial}{\partial q_{a}}\bigg\vert_{u}
		+X_{p}(Y^{a})\dfrac{\partial}{\partial r_{a}}\bigg\vert_{u}\bigg)
	\end{eqnarray*}
	so 
	\begin{eqnarray*}
		K_{u}(Y_{*_{p}}X_{p})=
		\sum_{a=1}^{n}\bigg(X^{a}(p)\sum_{k,j=1}^{n}\Gamma_{aj}^{k}(p)Y^{j}(p)\dfrac{\partial}{\partial x_{k}}\bigg\vert_{p}
			+X_{p}(Y^{a}) \dfrac{\partial}{\partial x_{a}}\bigg\vert_{p}\bigg),
	\end{eqnarray*}
	where we have used $u_{j}=Y^{j}(p)$. Comparing the above formula with the local 
	expression for $\nabla_{X}Y$ in coordinates (see \eqref{nekwmdkvnksnk}), one obtains the desired formula. 
\end{proof}

	Clearly, vectors of the form $Y_{*_{p}}X_{p}$, with $Y_{p}=u$, generate $T_{u}(TM)$, and so the above lemma implies 
	that the definition of $K_{u}$ is independant of the choice of the chart $(U,\varphi)$. 
	
	The map 
	\begin{eqnarray*}
		K:TTM\to TM,
	\end{eqnarray*}
	defined for $A\in T_{u}(TM)$ by $K(A):=K_{u}(A)$, is called \textit{connector}, or \textit{connection map}, 
	associated to $\nabla$. 

	The following result is an immediate consequence of the definition of $K$.
\begin{proposition}\label{prop:2.3}
	Let $K$ be the connector associated to a connection $\nabla$ on $M$. The following holds.
	\begin{enumerate}[(1)]
	\item For every pair $X,Y$ of vector fields on $M$, $\nabla_{X}Y=KY_{*}X$,
		where $Y_{*}X$ denotes the derivative of $Y$ in the direction of $X$.
	
	\item For every $u\in T_{p}M$, the restriction of $K$ to $T_{u}(TM)$ is a linear map $T_{u}(TM)\to T_{p}M$.
	\end{enumerate}
\end{proposition}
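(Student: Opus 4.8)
The plan is to obtain both assertions directly from the definition of the connector $K_{u}$ and from Lemma~\ref{nfkewefnknknk}, so that essentially no new computation is required; everything is a matter of unwinding definitions and applying the lemma pointwise.

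For the first claim I would argue pointwise. Fix $p\in M$ and set $u:=Y(p)\in T_{p}M$. Viewing $Y$ as a smooth map $M\to TM$, its derivative applied to $X$ at $p$ is the vector $Y_{*_{p}}X_{p}\in T_{Y(p)}(TM)=T_{u}(TM)$, so that by definition of $K$ one has $(KY_{*}X)(p)=K(Y_{*_{p}}X_{p})=K_{u}(Y_{*_{p}}X_{p})$. Since $Y(p)=u$, Lemma~\ref{nfkewefnknknk} applies and gives $K_{u}(Y_{*_{p}}X_{p})=(\nabla_{X}Y)(p)$. As $p\in M$ is arbitrary, the two vector fields $KY_{*}X$ and $\nabla_{X}Y$ agree everywhere, which is exactly the identity $\nabla_{X}Y=KY_{*}X$.

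For the second claim, linearity is built into the construction. The map $K_{u}$ was specified on the coordinate basis $\{\partial/\partial q_{a}|_{u},\,\partial/\partial r_{a}|_{u}\}_{a=1}^{n}$ of $T_{u}(TM)$ and extended by linearity, and the restriction $K|_{T_{u}(TM)}$ is, by the very definition of the connector, precisely $K_{u}$; hence $K|_{T_{u}(TM)}\colon T_{u}(TM)\to T_{p}M$ is linear. I do not expect a genuine obstacle here: the only point that is not purely formal is that $K_{u}$ must not depend on the chart used to define it, but this has already been settled in the remark following Lemma~\ref{nfkewefnknknk}, which observes that vectors of the form $Y_{*_{p}}X_{p}$ with $Y(p)=u$ span $T_{u}(TM)$ and that on such vectors $K_{u}$ is expressed as $(\nabla_{X}Y)(p)$, a quantity defined without reference to coordinates. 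The main thing to be careful about is therefore just the bookkeeping of the identification of the fibre $T_{Y(p)}(TM)$ with $T_{u}(TM)$ and the fact that the restriction of the globally defined $K$ to this fibre coincides with $K_{u}$, after which both statements follow at once.
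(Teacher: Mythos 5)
Your proof is correct and takes essentially the same route as the paper, which simply declares the proposition ``an immediate consequence of the definition of $K$'': part (1) is exactly the pointwise unwinding of Lemma~\ref{nfkewefnknknk} via $u=Y(p)$, and part (2) is the built-in linearity of $K_{u}$, with chart-independence already settled by the remark following that lemma, just as you note. There is nothing missing.
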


	If $A\in T_{u}(TM)$ is such that $\pi_{*_{u}}A=0$ and $K(A)=0$, then a simple calculation using local coordinates 
	shows that $A=0$. Therefore, 
\begin{proposition}\label{cor:2.7}
	Let $K$ be the connector associated to a connection $\nabla$ on $M$. 
	Given $u\in T_{p}M$, the map $T_{u}(TM)\to T_{p}M\oplus T_{p}M$,
	defined by
	\begin{eqnarray}
		A\mapsto(\pi_{*_{u}}A,KA),\label{eq:2.5}
	\end{eqnarray}
	is a linear bijection.
\end{proposition}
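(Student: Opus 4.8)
The plan is to reduce bijectivity to injectivity by a dimension count and then verify that the kernel is trivial using exactly the local coordinate computation anticipated in the paragraph preceding the statement. Write $n=\dim M$. Since $TM$ is a $2n$-dimensional manifold, $\dim T_{u}(TM)=2n$, and likewise $\dim(T_{p}M\oplus T_{p}M)=n+n=2n$. The map $A\mapsto(\pi_{*_{u}}A,KA)$ is linear: its first component is the differential $\pi_{*_{u}}$ of the smooth projection $\pi$ at $u$, and its second component is linear by Proposition \ref{prop:2.3}(2). Hence, by rank-nullity, it suffices to show the map is injective, i.e. that $\pi_{*_{u}}A=0$ and $KA=0$ together force $A=0$.

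To establish injectivity I would work in the local coordinates $(q,r)=(q_{1},\dots,q_{n},r_{1},\dots,r_{n})$ on $\pi^{-1}(U)$ introduced above and expand an arbitrary element of $T_{u}(TM)$ as
\begin{eqnarray*}
	A=\sum_{a=1}^{n}\left(\alpha_{a}\,\frac{\partial}{\partial q_{a}}\Big\vert_{u}+\beta_{a}\,\frac{\partial}{\partial r_{a}}\Big\vert_{u}\right),\qquad \alpha_{a},\beta_{a}\in\mathbb{R}.
\end{eqnarray*}
Because $q_{i}=x_{i}\circ\pi$, the projection is simply $(q,r)\mapsto q$ in these coordinates, so $\pi_{*_{u}}(\partial/\partial q_{a}\vert_{u})=\partial/\partial x_{a}\vert_{p}$ and $\pi_{*_{u}}(\partial/\partial r_{a}\vert_{u})=0$; thus $\pi_{*_{u}}A=\sum_{a}\alpha_{a}\,\partial/\partial x_{a}\vert_{p}$, and the hypothesis $\pi_{*_{u}}A=0$ forces every $\alpha_{a}=0$. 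With the $\partial/\partial q_{a}$-terms eliminated, the defining formula $K_{u}(\partial/\partial r_{a}\vert_{u})=\partial/\partial x_{a}\vert_{p}$ gives $KA=\sum_{a}\beta_{a}\,\partial/\partial x_{a}\vert_{p}$, so $KA=0$ forces every $\beta_{a}=0$. Therefore $A=0$, and the map is a linear bijection.

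I do not expect a genuine obstacle here: the statement is a routine fact expressing the decomposition of $T_{u}(TM)$ into ``horizontal'' and ``vertical'' directions. The only point needing care is that neither condition alone suffices — $\pi_{*_{u}}A=0$ kills only the $\partial/\partial q_{a}$-components, and it is the second condition $KA=0$ that removes the remaining $\partial/\partial r_{a}$-components. A pleasant simplification is that once the $\alpha_{a}$ vanish, the Christoffel-symbol term in the definition of $K_{u}(\partial/\partial q_{a}\vert_{u})$ never enters, so the computation collapses to the trivial identity $K_{u}(\partial/\partial r_{a}\vert_{u})=\partial/\partial x_{a}\vert_{p}$.
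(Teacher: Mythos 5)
Your proof is correct and follows the same route as the paper, which also establishes injectivity via the local coordinate computation (noting that $\pi_{*_{u}}A=0$ kills the $\partial/\partial q_{a}$-components and then $K(A)=0$ kills the $\partial/\partial r_{a}$-components) and concludes bijectivity from the equality of dimensions. You have merely written out explicitly the ``simple calculation'' the paper leaves to the reader.
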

	Thus, given a linear connection $\nabla$, we can identify at any point $u\in T_{p}M$ the vector spaces  $T_{u}(TM)$
	and $T_{p}M\oplus T_{p}M$ via the map (\ref{eq:2.5}).

\subsection{Dually flat spaces.}

\begin{definition}
	Let $(M,h)$ be a Riemannian manifold endowed with a linear connection $\nabla$. The \textit{dual connection} of $\nabla$ with 
	respect to $h$ is the unique linear connection, denoted by $\nabla^{*}$, satisfying 
	\begin{eqnarray*}
		Xh(Y,Z)=h(\nabla_{X}Y,Z)+h(Y,\nabla_{X}^{*}Z)
	\end{eqnarray*}
	for all vector fields $X,Y,Z$ in $M$. The triple $(h,\nabla,\nabla^{*})$ is called a \textit{dualistic structure} on $M$. 
\end{definition}
\begin{example}
	If $\nabla$ is the Levi-Civita connection of $h$, then $\nabla^{*}=\nabla$. 
\end{example}

	As the literature is not uniform, let us agree that the torsion $T$ and the curvature tensor $R$ of a
	connection $\nabla$ are defined as
	\begin{alignat*}{1}
		 T(X,Y)&:=\nabla_{X}Y-\nabla_{Y}X-[X,Y],\nonumber\\
		 R(X,Y)Z &:=\nabla_{X}\nabla_{Y}Z-\nabla_{Y}\nabla_{X}Z-\nabla_{[X,Y]}Z,\nonumber
	\end{alignat*}
	where $X,Y,Z$ are vector fields on $M$. By definition, a linear connection is \textit{flat} 
	if the torsion and curvature tensor are identically zero on $M$. A manifold endowed with a 
	flat linear connection is called an \textit{affine manifold}. 

\begin{definition}\label{def:3.7} 
	A dualistic structure $(h,\nabla,\nabla^{*})$ is \textit{dually flat} if both $\nabla$ and $\nabla^{*}$ are flat. A manifold 
	endowed with a dually flat structure is called a \textit{dually flat manifold}, or \textit{dually flat space}. 
\end{definition}

	Since the dual connection $\nabla^{*}$ is completely determined by $h$ and $\nabla$, we will often regard 
	a dually flat manifold as a triple $(M,h,\nabla)$. 

\begin{proposition}\label{neewknkfenknkn}
	Let $(h,\nabla,\nabla^{*})$ be a dualistic structure on a manfold $M$. Let $R$ and $R^{*}$ denote the curvature tensores of $\nabla$ and $\nabla^{*}$, respectively.
	Then 
	\begin{eqnarray*}
		h(R(X,Y)Z,W)=-h(R^{*}(X,Y)W,Z)
	\end{eqnarray*}
	for all vector fields $X,Y,Z,W$ on $M$. In particular, $R\equiv 0$ if and only if $R^{*}\equiv 0$.
	\end{proposition}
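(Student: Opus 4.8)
The plan is to derive the identity purely from the defining relation of the dual connection by differentiating it twice and comparing the result with a single direct application. The only structural input beyond the definitions is the nondegeneracy of $h$, which I reserve for the very last step, and the symmetry of $h$, used once to rewrite $h(Z,R^*(X,Y)W)$ as $h(R^*(X,Y)W,Z)$.

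First I would start from the defining relation
\[
	X\,h(Z,W)=h(\nabla_X Z,W)+h(Z,\nabla^*_X W)
\]
and apply a second vector field $Y$ to both sides, using the same relation again on each of the two terms produced. This yields
\[
	YX\,h(Z,W)=h(\nabla_Y\nabla_X Z,W)+h(\nabla_X Z,\nabla^*_Y W)+h(\nabla_Y Z,\nabla^*_X W)+h(Z,\nabla^*_Y\nabla^*_X W),
\]
and interchanging the roles of $X$ and $Y$ gives the corresponding expression for $XY\,h(Z,W)$. The key observation is that the two mixed terms $h(\nabla_X Z,\nabla^*_Y W)$ and $h(\nabla_Y Z,\nabla^*_X W)$ are symmetric in $X$ and $Y$, so they cancel upon subtraction, leaving
\[
	[X,Y]\,h(Z,W)=h(\nabla_X\nabla_Y Z-\nabla_Y\nabla_X Z,W)+h(Z,\nabla^*_X\nabla^*_Y W-\nabla^*_Y\nabla^*_X W).
\]

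Next I would apply the defining relation once more, directly to the bracket $[X,Y]$, obtaining $[X,Y]\,h(Z,W)=h(\nabla_{[X,Y]}Z,W)+h(Z,\nabla^*_{[X,Y]}W)$. Equating this with the previous display and collecting the $\nabla_{[X,Y]}$ and $\nabla^*_{[X,Y]}$ terms on the appropriate sides, the first argument reassembles into $h(R(X,Y)Z,W)$ and the second into $-h(Z,R^*(X,Y)W)$; by symmetry of $h$ this is $-h(R^*(X,Y)W,Z)$, which is the claimed identity. For the final assertion, if $R\equiv 0$ then $h(R^*(X,Y)W,Z)=0$ for every $Z$, so nondegeneracy of the Riemannian metric forces $R^*(X,Y)W=0$ for all $X,Y,W$, i.e.\ $R^*\equiv 0$; the converse follows by the same argument applied to $\nabla^*$, whose dual is $\nabla$. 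I expect the only delicate point to be the careful bookkeeping that produces the cancellation of the mixed terms and the correct sign; everything else is forced by the definitions and the nondegeneracy of $h$.
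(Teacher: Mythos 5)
Your proof is correct and complete: the double application of the duality relation, the cancellation of the mixed terms (whose sum is symmetric in $X$ and $Y$), and the final appeal to nondegeneracy of $h$ all go through with the paper's sign conventions for $R$ and $R^{*}$. The paper gives no proof of its own, deferring to Amari--Nagaoka (Theorem 3.3), and your argument is exactly the standard computation carried out there, so the approach is essentially the same as the intended one.
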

\begin{proof}
	See \cite{Amari-Nagaoka}, Theorem 3.3.
\end{proof}

	Therefore an affine manifold $(M,\nabla)$ endowed with a Riemannian metric $h$
	is a dually flat manifold if and only if the torsion of the dual connection $\nabla^{*}$ is identically zero. 

	Let $\nabla^{\textup{flat}}$ denote the canonical 
	flat connection on $\mathbb{R}^{n}$ (or any open subset of it). Thus, if $x=(x_{1},...,x_{n})$ are standard coordinates on $\mathbb{R}^{n}$, then 
	\begin{eqnarray}\label{nkwndkddnkwnneknfnk}
		(\nabla^{\textup{flat}})_{X}Y=\sum_{i=1}^{n}X(Y^{i})\dfrac{\partial}{\partial x_{i}},
	\end{eqnarray}
	where $X,Y$ are vector fields on $\mathbb{R}^{n}$, $Y=\sum_{i=1}^{n}Y^{i}\tfrac{\partial}{\partial x_{i}}$.
	
	The next result describes a simple yet important class of dually flat manifolds.   
\begin{lemma}
	Let $\psi:U\to \mathbb{R}$ be a smooth function defined on an open set $U\subseteq \mathbb{R}^{n}$. Suppose that the Hessian matrix of $\psi$, 
	denoted by $\textup{Hess}(\psi)$, is positive definite at each point of $U$ (thus $\textup{Hess}(\psi)$ can be regarded as a Riemannian metric on $\mathbb{R}^{n}$). 
	Then $(U,\textup{Hess}(\psi),\nabla^{\textup{flat}})$ is a dually flat manifold. 
\end{lemma}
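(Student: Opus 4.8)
The plan is to invoke the remark following Proposition \ref{neewknkfenknkn}: since $\nabla^{\textup{flat}}$ is flat by construction (its Christoffel symbols vanish identically, so both its torsion and curvature are zero), it suffices to check that the dual connection $\nabla^{*}:=(\nabla^{\textup{flat}})^{*}$ with respect to $h:=\textup{Hess}(\psi)$ is torsion-free. The vanishing of its curvature is then automatic: Proposition \ref{neewknkfenknkn} guarantees that $R\equiv 0$ forces $R^{*}\equiv 0$, so once $\nabla^{*}$ is shown to be torsion-free it is flat, and the triple $(U,\textup{Hess}(\psi),\nabla^{\textup{flat}})$ is dually flat. First I would note that $h$ is a genuine Riemannian metric, being smooth (as $\psi$ is smooth) and positive definite by hypothesis; in standard coordinates $(x_{1},\dots,x_{n})$ its components are $h_{ij}=\partial_{i}\partial_{j}\psi$, where $\partial_{i}=\tfrac{\partial}{\partial x_{i}}$.

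Next I would compute the Christoffel symbols of $\nabla^{*}$ directly from the defining identity $Xh(Y,Z)=h(\nabla^{\textup{flat}}_{X}Y,Z)+h(Y,\nabla^{*}_{X}Z)$. Setting $X=\partial_{i}$, $Y=\partial_{j}$, $Z=\partial_{k}$ and using $\nabla^{\textup{flat}}_{\partial_{i}}\partial_{j}=0$ (see \eqref{nkwndkddnkwnneknfnk}), the identity collapses to $\partial_{i}h_{jk}=h(\partial_{j},\nabla^{*}_{\partial_{i}}\partial_{k})$. Writing $\nabla^{*}_{\partial_{i}}\partial_{k}=\sum_{l}(\Gamma^{*})^{l}_{ik}\partial_{l}$, this reads $\partial_{i}\partial_{j}\partial_{k}\psi=\sum_{l}(\Gamma^{*})^{l}_{ik}\,h_{jl}$, which expresses the dual Christoffel symbols in terms of the third partial derivatives of $\psi$ and the (invertible) metric $h$.

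The key observation is that the left-hand side $\partial_{i}\partial_{j}\partial_{k}\psi$ is symmetric under exchange of $i$ and $k$, by equality of mixed partials. Since coordinate vector fields commute, the torsion of $\nabla^{*}$ has components $(\Gamma^{*})^{l}_{ik}-(\Gamma^{*})^{l}_{ki}$, so I would swap $i$ and $k$ in the relation above and subtract to get $\sum_{l}\big[(\Gamma^{*})^{l}_{ik}-(\Gamma^{*})^{l}_{ki}\big]h_{jl}=0$ for every $j$. Positive definiteness of $h$ makes $(h_{jl})$ invertible, forcing $(\Gamma^{*})^{l}_{ik}=(\Gamma^{*})^{l}_{ki}$ for all indices; hence $\nabla^{*}$ is torsion-free, and combined with $R^{*}\equiv 0$ this finishes the argument. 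I do not expect a real obstacle here: the only point worth flagging is to extract $R^{*}\equiv 0$ from Proposition \ref{neewknkfenknkn} rather than computing the dual curvature tensor by hand, which would be considerably more laborious.
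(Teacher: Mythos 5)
Your proposal is correct and follows essentially the same route as the paper: both reduce the problem via Proposition \ref{neewknkfenknkn} to showing the dual connection of $\nabla^{\textup{flat}}$ is torsion-free, and both deduce this from the duality identity together with the symmetry of $\partial_{i}\partial_{j}\partial_{k}\psi$ in its indices. The only difference is cosmetic: you phrase the computation in terms of the dual Christoffel symbols $(\Gamma^{*})^{l}_{ik}$ and invert the matrix $(h_{jl})$, whereas the paper computes $h\big(T(\tfrac{\partial}{\partial x_{i}},\tfrac{\partial}{\partial x_{j}}),\tfrac{\partial}{\partial x_{k}}\big)$ directly and concludes from nondegeneracy of $h$.
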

\begin{proof}
	Let $\nabla$ be the dual connection of $\nabla^{\textup{flat}}$ with respect to $h=\textup{Hess}(\psi)$. In view of Proposition \ref{neewknkfenknkn}, 
	it suffices to show that the torsion $T$ of $\nabla$ is identically zero. Let $(x_{1},...,x_{n})$ denote the usual coordinates on $\mathbb{R}^{n}$. Given $i,j,k=1,...,n$, 
	we compute:
	\begin{eqnarray*}
		\lefteqn{h\big(T(\tfrac{\partial}{\partial x_{i}},\tfrac{\partial}{\partial x_{j}}),\tfrac{\partial}{\partial x_{k}}\big)=
		h\Big(\nabla_{\tfrac{\partial}{\partial x_{i}}}\tfrac{\partial}{\partial x_{j}}-\nabla_{\tfrac{\partial}{\partial x_{j}}}\tfrac{\partial}{\partial x_{i}},
		\tfrac{\partial}{\partial x_{k}}\Big)}\\
		&=&\tfrac{\partial}{\partial x_{i}}h(\tfrac{\partial}{\partial x_{j}},\tfrac{\partial}{\partial x_{k}})
		-h\Big(\tfrac{\partial}{\partial x_{j}},\nabla^{\textup{flat}}_{\tfrac{\partial}{\partial x_{i}}}\tfrac{\partial}{\partial x_{k}}\Big)
		-\tfrac{\partial}{\partial x_{j}}h(\tfrac{\partial}{\partial x_{i}},\tfrac{\partial}{\partial x_{k}})
		+h\Big(\tfrac{\partial}{\partial x_{i}},\nabla^{\textup{flat}}_{\tfrac{\partial}{\partial x_{j}}}\tfrac{\partial}{\partial x_{k}}\Big)\\
		&=& \tfrac{\partial}{\partial x_{i}}\tfrac{\partial^{2}\psi}{\partial x_{j}\partial x_{k}} -\tfrac{\partial}{\partial x_{j}}\tfrac{\partial^{2}\psi}{\partial x_{i}\partial x_{k}}=0.
	\end{eqnarray*}
	The result follows.  
\end{proof}


\begin{definition}\label{nfekndwnknknk}
	Suppose $(M,\nabla)$ is an affine manifold. An \textit{affine coordinate system} is 
	a coordinate system $(x_{1},...,x_{n})$ defined on some open set $U\subseteq M$ such that 
	$\nabla_{\tfrac{\partial}{\partial x_{i} }}\tfrac{\partial}{\partial x_{j}}=0$
	for all $i,j=1,...,n$. 
\end{definition}

	It can be shown that for every point $p$ in an affine manifold $M$, there is an affine coordinate system 
	$(x_{1},...,x_{n})$ defined on some neighborhood $U\subseteq M$ of $p$ (see \cite{Shima}, Proposition 1.1). 

\begin{definition}\label{nfkdwnkenfkdk}
	Let $(M,h,\nabla)$ be a dually flat manifold, with dual connection $\nabla^{*}$, and 
	let $x=(x_{1},...,x_{n})$ and $y=(y_{1},...,y_{n})$ be coordinate systems on the same open set 
	$U\subseteq M$. The pair $(x,y)$ is said to be a 
	\textit{pair of dual coordinate systems} if the following conditions are satisfied.
	\begin{enumerate}[(1)]
		\item $x$ is affine with respect to $\nabla$.
		\item $y$ is affine with respect to $\nabla^{*}$.
		\item $h\bigg(\dfrac{\partial}{\partial x_{i}}, \dfrac{\partial}{\partial y_{j}}\bigg)=\delta_{ij}$ $(=\textup{Kronecker delta)}$ for all 
			$i,j=1,...,n$. 
	\end{enumerate}
\end{definition}
	When $U=M$ in the above definition, the pair $(x,y)$ is said to be a \textit{global pair of dual coordinate systems}.  
	It can be shown that there exists a pair of dual coordinate systems around each point $p\in M$ 
	(see \cite{Amari-Nagaoka}, Section 3.3). 

\begin{proposition}\label{neknkneknekenfkenk}
	Let $(M,h,\nabla)$ be a dually flat manifold and let $(x,y)=\big((x_{1},...,x_{n}),(y_{1},...,y_{n})\big)$ be a pair of dual 
	coordinate systems on $U\subset M.$ Given $i,j=1,...,n$, let $h_{ij}:U\to \mathbb{R}$ and $h^{ij}:U\to \mathbb{R}$ be defined by $h_{ij}(x)=
	h_{x}(\tfrac{\partial}{\partial x_{i}}, \tfrac{\partial}{\partial x_{j}})$ and $h^{ij}(x)=h_{x}(\tfrac{\partial}{\partial y_{i}}, \tfrac{\partial}{\partial y_{j}})$. 
	\begin{enumerate}[(1)]
		\item For every $x\in U$, the matrices $[h_{ij}(x)]$ and $[h^{ij}(x)]$ are inverses of each other. 
		\item $\dfrac{\partial x_{i}}{\partial y_{j}}=h^{ij}$ and $\dfrac{\partial y_{i}}{\partial x_{j}}=h_{ij}$. 
		\item $\dfrac{\partial}{\partial x_{i}}=\displaystyle\sum_{j=1}^{n}h_{ij}\dfrac{\partial}{\partial y_{j}}$ and 
			$\dfrac{\partial}{\partial y_{i}}=\displaystyle\sum_{j=1}^{n}h^{ij}\dfrac{\partial}{\partial x_{j}}$. 
		\item If $U$ is connected and simply connected, then there are smooth functions $\psi,\phi:U\to \mathbb{R}$ such that 
			$\dfrac{\partial \psi}{\partial x_{i}}=y_{i}$ and $\dfrac{\partial \phi}{\partial y_{i}}=x_{i}$
			for all $i=1,...,n$. 
		\item Suppose $U$ connected and simply connected. Let $\psi,\phi:U\to \mathbb{R}$ be as in (4). Then, 
			\begin{enumerate}[(a)]
				\item $\dfrac{\partial^{2} \psi}{\partial x_{i}\partial x_{j}}=h_{ij}$ and $\dfrac{\partial^{2} \phi}{\partial y_{i}\partial y_{j}}=h^{ij}$ 
					for all $i,j=1,...,n.$
				\item $\psi+\phi-\displaystyle\sum_{k=1}^{n}x_{k}y_{k}$ is constant on $U$. 
			\end{enumerate}
	\end{enumerate}
\end{proposition}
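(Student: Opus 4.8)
The backbone of the argument is the defining relation $h\bigl(\tfrac{\partial}{\partial x_{i}},\tfrac{\partial}{\partial y_{j}}\bigr)=\delta_{ij}$ from Definition \ref{nfkdwnkenfkdk}(3), combined with the elementary change-of-frame formulas $\tfrac{\partial}{\partial y_{i}}=\sum_{k}\tfrac{\partial x_{k}}{\partial y_{i}}\tfrac{\partial}{\partial x_{k}}$ and $\tfrac{\partial}{\partial x_{i}}=\sum_{k}\tfrac{\partial y_{k}}{\partial x_{i}}\tfrac{\partial}{\partial y_{k}}$. The plan is to first read off the Jacobians $\bigl[\tfrac{\partial x_{k}}{\partial y_{j}}\bigr]$ and $\bigl[\tfrac{\partial y_{k}}{\partial x_{i}}\bigr]$ from this pairing, deduce the purely linear-algebraic statements (1)--(3), and then obtain the potentials in (4)--(5) from the Poincar\'e lemma. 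Notably, none of the steps uses flatness of $\nabla$ or $\nabla^{*}$ directly: that hypothesis only guarantees, via the results cited after Definition \ref{nfkdwnkenfkdk}, that dual coordinate systems exist in the first place.

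For (1) and (2), I would substitute the expansion of $\tfrac{\partial}{\partial y_{j}}$ into the pairing to get $\delta_{ij}=\sum_{k}h_{ik}\tfrac{\partial x_{k}}{\partial y_{j}}$, which says that $\bigl[\tfrac{\partial x_{k}}{\partial y_{j}}\bigr]$ is the matrix inverse of $[h_{ik}]$ (invertible since $h$ is positive definite); symmetrically, expanding $\tfrac{\partial}{\partial x_{i}}$ gives $\delta_{ij}=\sum_{k}\tfrac{\partial y_{k}}{\partial x_{i}}h^{kj}$, so $\bigl[\tfrac{\partial y_{k}}{\partial x_{i}}\bigr]$ is the inverse of $[h^{kj}]$. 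To identify $[h^{ij}]$ with $[h_{ij}]^{-1}$ I would expand both slots of $h^{ij}=h\bigl(\tfrac{\partial}{\partial y_{i}},\tfrac{\partial}{\partial y_{j}}\bigr)$ in the $x$-frame and substitute $\tfrac{\partial x_{k}}{\partial y_{i}}=(h^{-1})_{ki}$, which collapses by $\sum_{l}(h^{-1})_{lj}h_{kl}=\delta_{kj}$ to $h^{ij}=(h^{-1})_{ij}$; using symmetry of $h$ this is exactly (1), and feeding it back gives $\tfrac{\partial x_{i}}{\partial y_{j}}=h^{ij}$ and $\tfrac{\partial y_{i}}{\partial x_{j}}=h_{ij}$, i.e. (2). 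Part (3) is then immediate: substitute the Jacobians from (2) into the two change-of-frame formulas.

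The second half rests on a single observation. By (2), $\tfrac{\partial y_{i}}{\partial x_{j}}=h_{ij}$ is symmetric in $i,j$ because $h$ is a metric, so the $1$-form $\sum_{i}y_{i}\,dx_{i}$ is closed; since $U$ is simply connected, the Poincar\'e lemma produces $\psi$ with $\tfrac{\partial\psi}{\partial x_{i}}=y_{i}$, and the symmetric Jacobian $\tfrac{\partial x_{i}}{\partial y_{j}}=h^{ij}$ yields $\phi$ in the same way, giving (4). Statement (5a) is just one more differentiation, $\tfrac{\partial^{2}\psi}{\partial x_{i}\partial x_{j}}=\tfrac{\partial y_{i}}{\partial x_{j}}=h_{ij}$ and likewise for $\phi$. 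For (5b) I would show that the differential of $g:=\psi+\phi-\sum_{k}x_{k}y_{k}$ vanishes: computing $\tfrac{\partial g}{\partial x_{i}}$ with the chain rule $\tfrac{\partial\phi}{\partial x_{i}}=\sum_{k}x_{k}\tfrac{\partial y_{k}}{\partial x_{i}}$, the terms cancel to leave $0$, and connectedness of $U$ forces $g$ to be constant.

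The computation is essentially bookkeeping, so I do not expect a serious obstacle; the one point requiring care is the index management in (1)--(2), where one must consistently exploit the symmetry of $h$ and of its inverse to avoid spurious transposes. The genuinely conceptual ingredient is recognizing that the duality condition is precisely what forces the Jacobian $\bigl[\tfrac{\partial y_{i}}{\partial x_{j}}\bigr]$ to be symmetric---this is the closedness that unlocks (4)---whereas for an arbitrary pair of coordinate systems no such symmetry would hold.
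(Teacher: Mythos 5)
Your proof is correct. The paper itself offers no argument---its proof is the citation ``See \cite{Amari-Nagaoka}, Section 3.3''---and your derivation, reading the Jacobians off the pairing $h\bigl(\tfrac{\partial}{\partial x_{i}},\tfrac{\partial}{\partial y_{j}}\bigr)=\delta_{ij}$ via change of frame, then using the resulting symmetry of $\bigl[\tfrac{\partial y_{i}}{\partial x_{j}}\bigr]$ to close the form $\sum_{i}y_{i}\,dx_{i}$ and invoke the Poincar\'e lemma, is exactly the standard argument found in that reference; your observation that flatness (indeed, even the affineness of $x$ and $y$) is never used beyond guaranteeing that dual coordinates exist is also accurate.
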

\begin{proof}
	See \cite{Amari-Nagaoka}, Section 3.3.
\end{proof}

\begin{lemma}\label{nefkwnknkenknk}
	Let $(M,h,\nabla)$ be a dually flat manifold and let $x=(x_{1},...,x_{n})$ be a $\nabla$-affine coordinate system on $U\subseteq M$ such that 
	$x(U)\subseteq \mathbb{R}^{n}$ is convex. Suppose there exists 
	a smooth function $\psi:U\to \mathbb{R}$ such that $\tfrac{\partial^{2}\psi}{\partial x_{i}\partial x_{j}}=h_{ij}=
	h(\tfrac{\partial}{\partial x_{i}},\tfrac{\partial}{\partial x_{j}})$ for all $i,j=1,...,n$ on $U$. Let $y=(y_{1},...,y_{n}):U\to \mathbb{R}^{n}$ be defined by 
	$y=\textup{grad}(\psi)=(\tfrac{\partial \psi}{\partial x_{1}}...,\tfrac{\partial \psi}{\partial x_{n}})$. Then $(x,y)$ is a pair of dual 
	coordinate systems on $U$. 
\end{lemma}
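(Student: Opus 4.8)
The plan is to verify the three defining conditions of a pair of dual coordinate systems from Definition \ref{nfkdwnkenfkdk}. Condition (1), that $x$ is $\nabla$-affine, holds by hypothesis, so the work is to show that $y$ is a genuine coordinate system and then to check conditions (2) and (3). Throughout I would regard $\psi$ as a function of the coordinates $x$, so that $y_{i}=\partial \psi/\partial x_{i}$ and the Jacobian of the map $y$ relative to $x$ is
\begin{eqnarray*}
	\frac{\partial y_{i}}{\partial x_{j}}=\frac{\partial^{2}\psi}{\partial x_{i}\partial x_{j}}=h_{ij}.
\end{eqnarray*}

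First I would establish that $y$ is a coordinate system on $U$. Since $[h_{ij}]$ is positive definite (being the matrix of a Riemannian metric), the Jacobian above is invertible at every point, so $y$ is a local diffeomorphism by the inverse function theorem. For global injectivity I would use that $\psi$, having positive definite Hessian, is strictly convex on the convex set $x(U)$: if $\textup{grad}(\psi)$ took the same value at two distinct points $p,q$, then the function $g(t)=\psi\big(p+t(q-p)\big)$, $t\in[0,1]$, would have $g'(0)=g'(1)$ while $g''(t)=\langle \textup{Hess}(\psi)(q-p),q-p\rangle>0$ forces $g'$ to be strictly increasing, a contradiction. Hence $y$ is injective, and therefore a diffeomorphism onto the open set $y(U)$.

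Next I would verify condition (3). Inverting the Jacobian relation gives $\partial x_{k}/\partial y_{j}=h^{kj}$, where $[h^{kj}]=[h_{ij}]^{-1}$, so by the chain rule $\partial/\partial y_{j}=\sum_{k}h^{kj}\,\partial/\partial x_{k}$. Then
\begin{eqnarray*}
	h\Big(\frac{\partial}{\partial x_{i}},\frac{\partial}{\partial y_{j}}\Big)=\sum_{k=1}^{n}h^{kj}\,h\Big(\frac{\partial}{\partial x_{i}},\frac{\partial}{\partial x_{k}}\Big)=\sum_{k=1}^{n}h_{ik}\,h^{kj}=\delta_{ij},
\end{eqnarray*}
which is exactly condition (3).

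The remaining step is condition (2), that $y$ is affine for the dual connection $\nabla^{*}$; I expect this to be the heart of the argument, although it comes out cleanly once (3) is in place. I would first observe that $\nabla_{\partial/\partial y_{a}}\,\partial/\partial x_{c}=0$: writing $\partial/\partial y_{a}=\sum_{k}h^{ka}\,\partial/\partial x_{k}$ and using $C^{\infty}$-linearity of $\nabla$ in its first slot together with $\nabla_{\partial/\partial x_{k}}\,\partial/\partial x_{c}=0$ (as $x$ is $\nabla$-affine) makes every term vanish. Now apply the defining relation of the dual connection,
\begin{eqnarray*}
	\frac{\partial}{\partial y_{a}}\,h\Big(\frac{\partial}{\partial x_{c}},\frac{\partial}{\partial y_{b}}\Big)=h\Big(\nabla_{\partial/\partial y_{a}}\frac{\partial}{\partial x_{c}},\frac{\partial}{\partial y_{b}}\Big)+h\Big(\frac{\partial}{\partial x_{c}},\nabla^{*}_{\partial/\partial y_{a}}\frac{\partial}{\partial y_{b}}\Big).
\end{eqnarray*}
By condition (3) the argument of the left-hand side is the constant $\delta_{cb}$, so the left-hand side vanishes, and the first term on the right vanishes by the observation just made. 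Hence $h\big(\partial/\partial x_{c},\nabla^{*}_{\partial/\partial y_{a}}\,\partial/\partial y_{b}\big)=0$ for every $c$, and since $\{\partial/\partial x_{c}\}$ is a frame and $h$ is nondegenerate, $\nabla^{*}_{\partial/\partial y_{a}}\,\partial/\partial y_{b}=0$ for all $a,b$. This is condition (2), completing the verification. The only genuinely analytic point is the global injectivity of $y$; everything else is linear algebra driven by the duality relation.
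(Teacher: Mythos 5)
Your proof is correct and follows essentially the same route as the paper's: invertibility of the Jacobian $\big[\tfrac{\partial y_{i}}{\partial x_{j}}\big]=[h_{ij}]$ plus strict convexity of $\psi$ on the convex set $x(U)$ to get that $y$ is a global coordinate system, the inverted Jacobian relation to verify the pairing condition $h\big(\tfrac{\partial}{\partial x_{i}},\tfrac{\partial}{\partial y_{j}}\big)=\delta_{ij}$, and the defining identity of $\nabla^{*}$ applied to the constant pairing together with $\nabla_{\partial/\partial y_{a}}\tfrac{\partial}{\partial x_{c}}=0$ to conclude $\nabla^{*}$-affineness of $y$. The only cosmetic difference is in the injectivity step, where you restrict $\psi$ to a segment and use strict monotonicity of the derivative, while the paper adds the two first-order strict convexity inequalities to obtain $\langle y(x_{1})-y(x_{2}),x_{1}-x_{2}\rangle>0$; these are interchangeable forms of the same convexity argument.
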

\begin{proof}
	In order to simplify notation, we identify $U$ and $x(U)$. Thus $\psi$ is a smooth function defined on the convex set $U\subseteq \mathbb{R}^{n}$, whose Hessian 
	is the matrix representation of $h$. In particular, $\psi$ is strictly convex. 
	Because $\tfrac{\partial y_{i}}{\partial x_{j}}=\tfrac{\partial^{2} \psi}{\partial x_{j}\partial x_{i}}=h_{ji}$, $y$ is a local diffeomorphism and $y(U)$ is 
	an open subset of $\mathbb{R}^{n}$. To see that $y$ is injective, let $x_{1},x_{2}$ be a pair of points in $U$ such that $x_{1}\neq x_{2}$. Since 
	$\psi$ is strictly convex, $\psi(x_{2})>\psi(x_{1})+\langle \textup{grad}(\psi)(x_{1}), x_{2}-x_{1}\rangle$, 
	where $\langle\,,\,\rangle$ is the Euclidean pairing on $\mathbb{R}^{n}$. Reversing the roles of $x_{1}$ and $x_{2}$, and adding the two inequalities we get 
	\begin{eqnarray*}
		\langle y(x_{1})-y(x_{2}), x_{1}-x_{2}    \rangle>0.
	\end{eqnarray*}
	This forces $y(x_{1})\neq y(x_{2})$ and shows that $y$ is injective. It follows that $y:U\to y(U)$ 
	is a diffeomorphism. In particular, $(y_{1},...,y_{n})$ is a coordinate system on $U$. Given $a=1,...,n$, we compute:
	\begin{eqnarray*}
		\dfrac{\partial}{\partial x_{a}}=\sum_{k=1}^{n} \dfrac{\partial y_{k}}{\partial x_{a}} \dfrac{\partial}{\partial y_{k}}=
		\sum_{k=1}^{n} \dfrac{\partial}{\partial x_{a}}\bigg(\dfrac{\partial \psi}{\partial x_{k}}\bigg)\dfrac{\partial}{\partial y_{k}}
		=\sum_{k=1}^{n}h_{ak}\dfrac{\partial}{\partial y_{k}}, 
	\end{eqnarray*}
	where we have used $y_{a}=(\textup{grad}(\psi))_{a}=\tfrac{\partial \psi}{\partial x_{a}}$ and the fact that $h$ is the Hessian of $\psi.$ Inverting these equations, 
	we get 
	\begin{eqnarray*}
		\dfrac{\partial}{\partial y_{a}}= \sum_{k=1}^{n}h^{ak}\dfrac{\partial}{\partial x_{k}}, 
	\end{eqnarray*}
	where $h^{ij}$ is the $(i,j)$-entry of the inverse of the matrix $[h_{ij}]$. It follows that 
	\begin{eqnarray*}
		h\bigg(\dfrac{\partial}{\partial x_{a}},\dfrac{\partial}{\partial y_{b}} \bigg)=\sum_{k=1}^{n}h^{bk} 
		h\bigg(\dfrac{\partial}{\partial x_{a}},\dfrac{\partial}{\partial x_{k}} \bigg)= \sum_{k=1}^{n}h^{bk}h_{ak}=\delta_{ab}.
	\end{eqnarray*}
	It remains to show that $y$ is affine with respect to the dual connection $\nabla^{*}$ of $\nabla$. Given $i,j,k=1,...,n$, we have
	\begin{eqnarray*}
		h\bigg(\nabla^{*}_{\tfrac{\partial}{\partial y_{i}}}\dfrac{\partial}{\partial y_{j}}, \dfrac{\partial}{\partial x_{k}}\bigg)=
		\dfrac{\partial}{\partial y_{i}}h\bigg(\dfrac{\partial}{\partial y_{j}},\dfrac{\partial}{\partial x_{k}}\bigg)-h\bigg(\dfrac{\partial}{\partial y_{j}}, 
		\nabla_{\tfrac{\partial}{\partial y_{i}}}\dfrac{\partial}{\partial x_{k}}\bigg)=0,
	\end{eqnarray*}
	where we have used the following facts: (1) $h(\tfrac{\partial}{\partial y_{j}},\tfrac{\partial}{\partial x_{k}})=\delta_{jk}$ is constant and (2) $\tfrac{\partial}{\partial x_{k}}$ is 
	parallel with respect to $\nabla$ (since $x=(x_{1},...,x_{n})$ is $\nabla$-affine), which implies 
	$\nabla_{\tfrac{\partial}{\partial y_{i}}}\tfrac{\partial}{\partial x_{k}}=0.$ 
	Therefore $\nabla^{*}_{\tfrac{\partial}{\partial y_{i}}}\tfrac{\partial}{\partial y_{j}}=0$ 
	for all $i,j$, which shows that $y$ is $\nabla^{*}$-affine. 
\end{proof}

	Let $\varphi:M\to M'$ be a diffeomorphism between manifolds and $\nabla$ a linear connection on $M'$. We will use the following notation:
	\begin{itemize}
		\item Given a vector field $X$ on $M$, $\varphi_{*}X$ is the vector field on $M'$ defined by $(\varphi_{*}X)(p)=\varphi_{*_{\varphi^{-1}(p)}}X(\varphi^{-1}(p))$. 
		\item $\varphi^{*}\nabla$ is the unique connection on $M$ satisfying $(\varphi^{*}\nabla)_{X}Y=(\varphi^{-1})_{*}\big(\nabla_{\varphi_{*}X}\varphi_{*}Y\big)$ 
			for all vector fields $X,Y$ on $M$. 
	\end{itemize}

\begin{definition}
	Let $(M,h,\nabla)$ and $(M',h',\nabla')$ be dually flat spaces. Let $\nabla^{*}$ and $(\nabla')^{*}$ denote the dual connections of $\nabla$ and $\nabla'$ with respect to $h$ and $h'$, 
	respectively. A diffeomorphism $\varphi:M\to M'$ is said to be an \textit{isomorphism of dually flat spaces} 
	if $\varphi^{*}h'=h$, $\varphi^{*}\nabla'=\nabla$ and $\varphi^{*}(\nabla')^{*}=\nabla^{*}$. 
\end{definition}

	In practice, it suffices to prove that $\varphi^{*}h'=h$ and $\varphi^{*}\nabla'=\nabla$, as the following lemma shows.  

\begin{lemma}\label{ncekndwkneknku}
	Let $(M,h)$ and $(M',h')$ be Riemannian manifolds and let $\nabla$ and $\nabla'$ be affine connections on $M$ and $M'$, respectively. 
	We denote the dual connections by $\nabla^{*}$ and $(\nabla')^{*}$, respectively. Let $\varphi:M\to M'$ be a diffeomorphism. If 
	$\varphi^{*}h'=h$ and $\varphi^{*}\nabla'=\nabla$, then $\varphi^{*}(\nabla')^{*}=\nabla^{*}$.
\end{lemma}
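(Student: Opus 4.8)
The plan is to exploit the \emph{uniqueness} of the dual connection. Since $\nabla^{*}$ is characterized as the unique linear connection on $M$ satisfying $Xh(Y,Z)=h(\nabla_{X}Y,Z)+h(Y,\nabla^{*}_{X}Z)$ for all $X,Y,Z\in\mathfrak{X}(M)$, it suffices to verify that the candidate connection $\varphi^{*}(\nabla')^{*}$ satisfies this very identity; the conclusion $\varphi^{*}(\nabla')^{*}=\nabla^{*}$ then follows for free. So the whole proof reduces to transporting the defining relation for $(\nabla')^{*}$ on $M'$ back to $M$ along $\varphi$.

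First I would record three compatibility facts that follow from the hypotheses and from the conventions for $\varphi_{*}$ and $\varphi^{*}$ fixed just before the statement. (i) From $\varphi^{*}h'=h$, the metric relation $h(X,Y)=h'(\varphi_{*}X,\varphi_{*}Y)\circ\varphi$ holds for all $X,Y\in\mathfrak{X}(M)$. (ii) Because $X$ and $\varphi_{*}X$ are $\varphi$-related, they act compatibly on functions: if $g=g'\circ\varphi$ then $Xg=\big((\varphi_{*}X)g'\big)\circ\varphi$ (this is just the chain rule together with $(\varphi_{*}X)(\varphi(p))=\varphi_{*_{p}}X_{p}$). (iii) The hypothesis $\varphi^{*}\nabla'=\nabla$ unwinds to $\varphi_{*}(\nabla_{X}Y)=\nabla'_{\varphi_{*}X}\varphi_{*}Y$, and the definition of the pullback connection gives $\varphi_{*}\big((\varphi^{*}(\nabla')^{*})_{X}Z\big)=(\nabla')^{*}_{\varphi_{*}X}\varphi_{*}Z$.

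Next I would apply the defining identity for $(\nabla')^{*}$ on $M'$ to the pushed-forward fields $\varphi_{*}X,\varphi_{*}Y,\varphi_{*}Z$, namely
\[
(\varphi_{*}X)\,h'(\varphi_{*}Y,\varphi_{*}Z)=h'\big(\nabla'_{\varphi_{*}X}\varphi_{*}Y,\varphi_{*}Z\big)+h'\big(\varphi_{*}Y,(\nabla')^{*}_{\varphi_{*}X}\varphi_{*}Z\big),
\]
and compose both sides with $\varphi$. On the left, (ii) applied to $g'=h'(\varphi_{*}Y,\varphi_{*}Z)$ together with (i) turns the expression into $X\big(h(Y,Z)\big)$. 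For the first term on the right, (iii) rewrites $\nabla'_{\varphi_{*}X}\varphi_{*}Y$ as $\varphi_{*}(\nabla_{X}Y)$ and then (i) yields $h(\nabla_{X}Y,Z)$; for the second term, the second half of (iii) rewrites $(\nabla')^{*}_{\varphi_{*}X}\varphi_{*}Z$ as $\varphi_{*}\big((\varphi^{*}(\nabla')^{*})_{X}Z\big)$ and then (i) yields $h\big(Y,(\varphi^{*}(\nabla')^{*})_{X}Z\big)$. Since $\varphi_{*}$ is a bijection on vector fields, the resulting identity holds for \emph{all} $X,Y,Z\in\mathfrak{X}(M)$, which is exactly the defining relation of the dual of $\nabla$ with respect to $h$; uniqueness then forces $\varphi^{*}(\nabla')^{*}=\nabla^{*}$.

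The argument is entirely formal, so I expect no genuine analytic obstacle. The only delicate point is the bookkeeping of the pullback/pushforward conventions—in particular checking the function-action identity (ii) and the translation (iii) between $\varphi^{*}$ and $\varphi_{*}$ for connections—so that each of the three terms lands back on $M$ with the correct fields inserted.
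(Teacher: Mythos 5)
Your proof is correct and follows essentially the same route as the paper: both arguments transport the defining duality identity for $(\nabla')^{*}$ along $\varphi$ using $\varphi^{*}h'=h$ and $\varphi^{*}\nabla'=\nabla$, with your version packaging the computation as ``verify the defining relation, then invoke uniqueness of the dual connection'' while the paper computes $h_{x}\big((\varphi^{*}(\nabla')^{*})_{X}Y,Z\big)$ directly. (Your uniqueness formulation in fact sidesteps a typo in the paper's last line, which writes $\varphi^{*}(\nabla')^{*}=\nabla$ where $\nabla^{*}$ is meant.)
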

\begin{proof}
	Let $X,Y,Z$ be arbitrary vector fields on $M$. Given $x\in M$, we compute
	\begin{eqnarray*}
		\lefteqn{h_{x}\big((\varphi^{*}(\nabla')^{*})_{X}Y,Z\big)=(\varphi^{*}h')_{x}\big((\varphi^{*}(\nabla')^{*})_{X}Y,Z\big)}\\
		&=& h'_{\varphi(x)}\big(\varphi_{*_{x}}(\varphi^{*}(\nabla')^{*})_{X}Y,\varphi_{*_{x}}Z\big)=h'_{\varphi(x)}\big((\nabla')^{*}_{\varphi_{*}X}\varphi_{*}Y,\varphi_{*}Z\big)\\
		&=&  (\varphi_{*}X)\big( h'(\varphi_{*}Y,\varphi_{*}Z)\big)-h'_{\varphi(x)}\big(\varphi_{*}Y,\nabla'_{\varphi_{*}X}\varphi_{*}Z\big)\\
		&=& X\big(h'(\varphi_{*}Y,\varphi_{*}Z)\circ \varphi\big)-h'_{\varphi(x)}\big(\varphi_{*}Y,\varphi_{*_{x}}(\varphi^{-1})_{*_{\varphi(x)}}
			\nabla'_{\varphi_{*}X}\varphi_{*}Z\big)\\
		&=& X\big(h(Y,Z)\big) - h_{x}\big(Y,(\varphi^{*}\nabla')_{X}Z \big)=X\big(h(Y,Z)\big) - h_{x}\big(Y,\nabla_{X}Z \big)\\
		&=& h_{x}\big(\nabla_{X}Y,Z\big)
	\end{eqnarray*}
	and hence $h_{x}\big((\varphi^{*}(\nabla')^{*})_{X}Y,Z\big)=h_{x}\big(\nabla_{X}Y,Z\big)$. It follows that $\varphi^{*}(\nabla')^{*} =\nabla$. 
\end{proof}

\subsection{Dombrowski's construction.}\label{nknknfeknaekn}

	Let $M$ be a smooth manifold endowed with a connection $\nabla$. We will denote by 
	$\pi:TM\rightarrow M$ the canonical projection. 

	Given $p\in M$ and $u\in T_{p}M$, the connection $\nabla$ induces an identification of 
	vector spaces $T_{u}(TM)\cong T_{p}M\oplus T_{p}M$ given by $A\in T_{u}(TM)\mapsto (\pi_{*_{u}}A,KA)\in T_{p}M\oplus T_{p}M$, where 
	$\pi_{*_{u}}$ is the derivative of $\pi$ at $u$ and $K$ is the connector associated to $\nabla$ (see Section \ref{pouette}). 
	If there is no danger of confusion, we will therefore regard an element of $T_{u}(TM)$ as a pair $(v,w)$, where $v,w\in T_{p}M$.

	Let $h$ be a Riemannian metric on $M$. The pair $(h,\nabla)$ determines an almost Hermitian 
	structure on $TM$ via the following formulas:

	\begin{alignat*}{5}\label{equation definition G, omega, etc.}
		g_{u}\big(\big(v,w\big),
			\big(\overline{v},
			\overline{w}\big)\big)\quad:=&\quad
			h_{p}\big(v,\overline{v}\big)+
			h_{p}\big(w,\overline{w}\big),&\textup{}\,\,\,\,\,\,\,\,\,
			(\textup{metric})&\nonumber\\
		\omega_{u}\big(\big(v,w\big),
			\big(\overline{v},
			\overline{w}\big)\big)\quad:=&\quad h_{p}\big(v,\overline{w}\big)-
			h_{p}\big(w,\overline{v}\big),&\textup{}\,\,\,\,\,\,\,\,\,
			(\textup{2-form})&\nonumber\\
		J_{u}\big(\big(v,w\big)\big)\quad:=&\quad
		(-w,v),&
		(\textup{almost complex structure})
	\end{alignat*}
	where $u,v,w,\overline{v},\overline{w}\;\in\; T_{p}M$.

	The tensors $g,J,\omega$ are smooth (this will follow from their coordinate representation, 
	see Proposition \ref{prop:4.1} below) and clearly, 
	$J^{2}=-Id$, $g(Ju,Jv)=g(u,v)$ and $\omega(u,v)=g(Ju,v)$ for all $u,v\in TM$ such that $\pi(u)=\pi(v)$. 
	Thus, $(TM,g,J)$ is an almost Hermitian manifold with fundamental form $\omega$. 
	This is \textit{Dombrowski's construction} \cite{Dombrowski}. 
	
	Note that $\pi:(TM,g)\to (M,h)$ is a Riemannian submersion. \\

	We now review the analytical properties of Dombrowski's construction. Recall that an almost Hermitian structure $(g,J,\omega)$ is K\"{a}hler 
	when the following two analytical conditions are met: (1) $J$ is integrable; (2) $d\omega=0$.

%
\begin{proposition}\label{prop:3.8} 
	Let $(h,\nabla,\nabla^{*})$ be a dualistic structure on $M$ and let $(g,J,\omega)$ be 
	the almost Hermitian structure on $TM$ associated to $(h,\nabla)$ via Dombrowski's construction. 
	The following are equivalent.
	\begin{enumerate}[(1)]
	\item $(TM,g,J,\omega)$ is a K\"{a}hler manifold.
	\item $(h,\nabla,\nabla^{*})$ is dually flat.
	\end{enumerate}
\end{proposition}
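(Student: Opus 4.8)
The plan is to verify the two analytic conditions that define the K\"ahler property — integrability of $J$ and closedness of $\omega$ — one at a time, translating each into a statement about $\nabla$ and its dual $\nabla^{*}$, and then to glue the two statements together using Proposition \ref{neewknkfenknkn}. Since the almost Hermitian structure $(g,J,\omega)$ is already in place (with $\omega(u,v)=g(Ju,v)$), there is nothing else to check beyond these two conditions.

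First I would compute the Nijenhuis tensor of $J$ in the local frame of horizontal and vertical lifts $\{H_{a},V_{a}\}$ adapted to the splitting $T_{u}(TM)\cong T_{p}M\oplus T_{p}M$, so that $JH_{a}=V_{a}$ and $JV_{a}=-H_{a}$. The only brackets needed are
\begin{eqnarray*}
[V_{a},V_{b}]=0,\qquad [H_{a},V_{b}]=\sum_{k}\Gamma_{ab}^{k}V_{k},\qquad [H_{a},H_{b}]=-\sum_{j,l}R_{abj}^{l}r_{j}V_{l},
\end{eqnarray*}
the last identity recording that the bracket of horizontal lifts is (minus) the vertical lift of the curvature of $\nabla$ evaluated at $u$. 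Substituting into $N_{J}(X,Y)=[JX,JY]-[X,Y]-J[JX,Y]-J[X,JY]$ and separating horizontal from vertical parts, one finds that $N_{J}(H_{a},H_{b})$ has horizontal part $\sum_{k}T_{ab}^{k}H_{k}$ and vertical part $-[H_{a},H_{b}]$, and that the mixed and purely vertical evaluations produce exactly the same two tensors. Hence $N_{J}\equiv 0$ if and only if the torsion $T$ and the curvature $R$ of $\nabla$ both vanish identically, i.e.\ if and only if $\nabla$ is flat.

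Next I would analyze $d\omega=0$, and the decisive simplification is to exploit that integrability has already forced $\nabla$ to be flat. Granting $\nabla$ flat, I would pass to a $\nabla$-affine coordinate system $(x_{1},\dots,x_{n})$, which exists locally by the remark following Definition \ref{nfekndwnknknk}, so that all Christoffel symbols $\Gamma_{ij}^{k}$ vanish. In the induced coordinates $(q,r)$ on $TM$ the coordinate form of $\omega$ from Proposition \ref{prop:4.1} collapses to $\omega=\sum_{a,b}h_{ab}\,dq_{a}\wedge dr_{b}$, since its $dq\wedge dq$ part is assembled entirely from the $\Gamma_{ij}^{k}$. As each $h_{ab}$ depends only on the base coordinates,
\begin{eqnarray*}
d\omega=\sum_{a,b,c}\frac{\partial h_{ab}}{\partial x_{c}}\,dq_{c}\wedge dq_{a}\wedge dr_{b},
\end{eqnarray*}
so $d\omega=0$ if and only if $\partial h_{ab}/\partial x_{c}$ is symmetric in $a$ and $c$. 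In $\nabla$-affine coordinates the defining relation of the dual connection reads $\partial h_{ab}/\partial x_{c}=h\big(\tfrac{\partial}{\partial x_{a}},\nabla^{*}_{\partial/\partial x_{c}}\tfrac{\partial}{\partial x_{b}}\big)$; together with the symmetry of $h$ in $(a,b)$ the two resulting transpositions generate $S_{3}$ on the three indices of the lowered Christoffel symbols of $\nabla^{*}$, so the $a\leftrightarrow c$ symmetry is equivalent to their full symmetry, that is, to the vanishing of the torsion of $\nabla^{*}$.

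Finally I would assemble the equivalence. If $\nabla$ is not flat then $J$ is not integrable, so $TM$ is not K\"ahler, while $(h,\nabla,\nabla^{*})$ is not dually flat either; both sides fail. If $\nabla$ is flat then $J$ is integrable, and by the previous paragraph $d\omega=0$ holds precisely when $\nabla^{*}$ is torsion-free. By Proposition \ref{neewknkfenknkn}, flatness of $\nabla$ gives $R^{*}\equiv 0$, so ``$\nabla^{*}$ torsion-free'' coincides with ``$\nabla^{*}$ flat.'' Therefore $TM$ is K\"ahler if and only if both $\nabla$ and $\nabla^{*}$ are flat, i.e.\ if and only if $(h,\nabla,\nabla^{*})$ is dually flat. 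The main obstacle is purely computational — the three bracket identities and the coordinate expression for $\omega$ — and the trick that keeps it short is to invoke $\nabla$-affine coordinates only after integrability guarantees their existence, which kills the $dq\wedge dq$ part of $\omega$ and isolates the torsion of $\nabla^{*}$.
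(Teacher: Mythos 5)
Your proof is correct, and it is in substance the classical argument: the paper itself offers no proof of Proposition \ref{prop:3.8} beyond the citation to \cite{Dombrowski,Molitor-exponential}, and what you have written fills in exactly the computation those sources contain. Your bracket identities for $\{H_{a},V_{a}\}$ check out (with $H_{a}=\tfrac{\partial}{\partial q_{a}}-\sum_{k,j}\Gamma_{aj}^{k}r_{j}\tfrac{\partial}{\partial r_{k}}$ one gets $[H_{a},V_{b}]=\sum_{k}\Gamma_{ab}^{k}V_{k}$ and $[H_{a},H_{b}]$ equal to minus the vertical lift of $R(\partial_{a},\partial_{b})u$), and substituting them into $N_{J}$ does yield torsion in the horizontal part and curvature in the vertical part in all three evaluations, so $N_{J}\equiv 0$ on all of $TM$ iff $T\equiv 0$ and $R\equiv 0$. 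The $d\omega$ analysis is also right: in $\nabla$-affine coordinates $\partial_{c}h_{ab}=h\big(\partial_{a},\nabla^{*}_{\partial_{c}}\partial_{b}\big)$, the symmetry $h_{ab}=h_{ba}$ gives the transposition exchanging the field and pairing indices for free, and conjugating the $a\leftrightarrow c$ symmetry forced by $d\omega=0$ by this automatic transposition produces precisely the first-two-index symmetry that is $T^{*}=0$; Proposition \ref{neewknkfenknkn} then converts torsion-freeness of $\nabla^{*}$ into flatness, closing the equivalence. One small point of citation hygiene: Proposition \ref{prop:4.1} is stated in the paper under the hypothesis that $(M,h,\nabla)$ is already dually flat, which you do not know at the stage where you invoke it; but this is cosmetic rather than a gap, since the expression $\omega=\sum_{a,b}h_{ab}\,dq_{a}\wedge dr_{b}$ follows directly from Dombrowski's defining formula $\omega_{u}\big((v,w),(\bar v,\bar w)\big)=h(v,\bar w)-h(w,\bar v)$ once the Christoffel symbols vanish, because the splitting $A\mapsto(\pi_{*}A,KA)$ then aligns with the coordinate splitting $(dq,dr)$. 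Your ordering trick — proving integrability first so that affine coordinates are available for the $d\omega$ step — is exactly what keeps the argument short.
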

\begin{proof}
	See \cite{Dombrowski,Molitor-exponential}.
\end{proof}

	We now direct our attention to the coordinate expressions for $g,J$ and $\omega$. 

%

\begin{proposition}\label{prop:4.1}
	Let $(M,h,\nabla)$ be a dually flat manifold and let $(g, J,\omega)$ 
	be the K\"{a}hler structure on $TM$ associated to $(h,\nabla)$ via Dombrowski's construction. 
	Let $x=(x_{1},...,x_{n})$ be an affine coordinate system with respect to $\nabla$ on $U\subseteq M$, 
	and let $(q,r)=(q_{1},...,q_{n},r_{1},...,r_{n})$ denote the corresponding coordinates on $\pi^{-1}(U)$, 
	as described before Lemma \ref{nfkewefnknknk}. Then, in the coordinates $(q,r)$,
	\begin{equation}\label{eq:4.2}
		g=\begin{bmatrix}h_{ij} & 0\\
		0 & h_{ij}
		\end{bmatrix},\quad J=
		\begin{bmatrix}0 & -I_{n}\\
		I_{n} & 0
		\end{bmatrix},
		\quad\omega=
		\begin{bmatrix}0 & h_{ij}\\
		-h_{ij} & 0
		\end{bmatrix},
	\end{equation} 
	where $h_{ij}=h\big (\tfrac{\partial}{\partial x_{i}},\tfrac{\partial}{\partial x_{j}}\big)$,
	$i,j=1,...,n$. 
\end{proposition}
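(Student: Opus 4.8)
The plan is to exploit the fact that the chosen coordinates $x=(x_{1},\dots,x_{n})$ are affine with respect to $\nabla$, so that the Christoffel symbols $\Gamma_{ij}^{k}$ vanish identically on $U$. This collapses the connector $K$ to its simplest form and lets me make the identification $T_{u}(TM)\cong T_{p}M\oplus T_{p}M$ completely explicit on the coordinate frame; once that is done, the three matrices drop straight out of the formulas defining Dombrowski's structure, with no further work.

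First I would fix $u=\sum_{k}u_{k}\tfrac{\partial}{\partial x_{k}}\big\vert_{p}\in\pi^{-1}(U)$ and compute the images of the basis vectors $\tfrac{\partial}{\partial q_{a}}\big\vert_{u}$ and $\tfrac{\partial}{\partial r_{a}}\big\vert_{u}$ under the linear bijection $A\mapsto(\pi_{*_{u}}A,KA)$ of Proposition~\ref{cor:2.7}. Because $q_{a}=x_{a}\circ\pi$, the projection gives $\pi_{*_{u}}\tfrac{\partial}{\partial q_{a}}=\tfrac{\partial}{\partial x_{a}}\big\vert_{p}$ and $\pi_{*_{u}}\tfrac{\partial}{\partial r_{a}}=0$. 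For the connector, the defining formulas recalled before Lemma~\ref{nfkewefnknknk} give $K_{u}\big(\tfrac{\partial}{\partial r_{a}}\big)=\tfrac{\partial}{\partial x_{a}}\big\vert_{p}$ always, while $K_{u}\big(\tfrac{\partial}{\partial q_{a}}\big)=\sum_{k,j}\Gamma_{aj}^{k}(p)u_{j}\tfrac{\partial}{\partial x_{k}}\big\vert_{p}=0$ since every $\Gamma_{aj}^{k}$ vanishes in affine coordinates. Hence under the identification
\begin{eqnarray*}
	\frac{\partial}{\partial q_{a}}\Big\vert_{u}\longleftrightarrow\Big(\frac{\partial}{\partial x_{a}}\Big\vert_{p},\,0\Big),\qquad
	\frac{\partial}{\partial r_{a}}\Big\vert_{u}\longleftrightarrow\Big(0,\,\frac{\partial}{\partial x_{a}}\Big\vert_{p}\Big).
\end{eqnarray*}

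With these two correspondences in hand the rest is substitution into the three displayed formulas defining $g$, $J$, $\omega$. For the metric I would evaluate $g_{u}$ on each pair drawn from $\{\tfrac{\partial}{\partial q_{a}},\tfrac{\partial}{\partial r_{a}}\}$: both the $(q_{a},q_{b})$ and the $(r_{a},r_{b})$ pairings return $h_{p}(\tfrac{\partial}{\partial x_{a}},\tfrac{\partial}{\partial x_{b}})=h_{ab}$, whereas the mixed $(q_{a},r_{b})$ pairing returns $0$, giving the block-diagonal form. The same substitution into $J_{u}(v,w)=(-w,v)$ sends $\tfrac{\partial}{\partial q_{a}}\mapsto\tfrac{\partial}{\partial r_{a}}$ and $\tfrac{\partial}{\partial r_{a}}\mapsto-\tfrac{\partial}{\partial q_{a}}$, which in the ordered $(q,r)$ basis is exactly $\left[\begin{smallmatrix}0&-I_{n}\\I_{n}&0\end{smallmatrix}\right]$; and feeding the same vectors into $\omega_{u}$ (or simply invoking $\omega=g(J\,\cdot\,,\cdot)$ already noted in the text) yields the off-diagonal form with blocks $\pm h_{ij}$.

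The computation is entirely mechanical, so there is no genuine obstacle; the one point demanding care is the bookkeeping of the ordered basis $(\tfrac{\partial}{\partial q_{1}},\dots,\tfrac{\partial}{\partial q_{n}},\tfrac{\partial}{\partial r_{1}},\dots,\tfrac{\partial}{\partial r_{n}})$ against the block decomposition, so that the signs in $J$ and $\omega$ land in the upper-right versus lower-left blocks precisely as stated. I would double-check this by verifying $\omega(v,w)=g(Jv,w)$ directly on the coordinate frame, which simultaneously confirms the relative placement of the signs in $J$ and $\omega$.
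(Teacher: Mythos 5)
Your proof is correct. The paper itself gives no argument for this proposition, deferring instead to \cite{Molitor2014}, and your computation is exactly the standard one that reference carries out: the vanishing of the Christoffel symbols in $\nabla$-affine coordinates reduces the connector so that the bijection $A\mapsto(\pi_{*_{u}}A,KA)$ of Proposition \ref{cor:2.7} sends $\tfrac{\partial}{\partial q_{a}}\big\vert_{u}\mapsto(\tfrac{\partial}{\partial x_{a}}\big\vert_{p},0)$ and $\tfrac{\partial}{\partial r_{a}}\big\vert_{u}\mapsto(0,\tfrac{\partial}{\partial x_{a}}\big\vert_{p})$, after which the three matrices follow by direct substitution, with your final cross-check $\omega(v,w)=g(Jv,w)$ correctly pinning down the sign placement.
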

\begin{proof}
	See \cite{Molitor2014}.
\end{proof}

	In what follows, we will identify $T\mathbb{R}^{n}$ with $\mathbb{C}^{n}$ via the correspondence $T\mathbb{R}^{n}\to \mathbb{C}^{n}$
	defined by 
	\begin{eqnarray*}
		\sum_{k=1}^{n}b_{k}\dfrac{\partial}{\partial x_{k}}\bigg\vert_{a}\mapsto  a+ib, 
	\end{eqnarray*}
	where $a,b\in \mathbb{R}^{n}$, $b=(b_{1},...,b_{n})$ and where $(x_{1},...,x_{n})$ are standard coordinates on $\mathbb{R}^{n}$. 

	An immediate consequence of Proposition \ref{prop:4.1} is the following result.  

\begin{corollary}\label{nfednwkneknkn}
	Let $h$ be a Riemannian metric on $\mathbb{R}^{n}$ and let $\nabla^{\textup{flat}}$ be the canonical flat connection on $\mathbb{R}^{n}$. 
	Let $(g,J)$ be the almost Hermitian structure on $T\mathbb{R}^{n}=\mathbb{C}^{n}$ associated to 
	$(h,\nabla^{\textup{flat}})$ via Dombrowski's construction. Then, 
	\begin{enumerate}[(1)]
		\item $J$ is the canonical complex structure of $\mathbb{C}^{n}$ (= multiplication by $i$).
		\item $g_{x+iy}(a+ib,a'+ib')=h_{x}(a,a')+h_{x}(b,b')$ for all $x,y,a,a',b,b'\in \mathbb{R}^{n}$. 
	\end{enumerate}
\end{corollary}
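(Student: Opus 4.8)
The plan is to derive the Corollary as a direct specialization of Proposition \ref{prop:4.1} to the case $M=\mathbb{R}^{n}$, $\nabla=\nabla^{\textup{flat}}$, so the only real work is bookkeeping between the two identifications in play: the identification $T\mathbb{R}^{n}\cong\mathbb{C}^{n}$ fixed just before the statement, and the canonical identification $T_{x+iy}\mathbb{C}^{n}\cong\mathbb{C}^{n}$ coming from the vector-space structure of $\mathbb{C}^{n}$. First I would observe that the standard coordinates $(x_{1},\ldots,x_{n})$ on $\mathbb{R}^{n}$ are affine with respect to $\nabla^{\textup{flat}}$: by \eqref{nkwndkddnkwnneknfnk} the Christoffel symbols vanish identically, so $\nabla^{\textup{flat}}_{\partial/\partial x_{i}}\partial/\partial x_{j}=0$. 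Hence Proposition \ref{prop:4.1} applies with $U=\mathbb{R}^{n}$, and the block-matrix expressions \eqref{eq:4.2} for $g$ and $J$ hold globally in the induced coordinates $(q,r)=(q_{1},\ldots,q_{n},r_{1},\ldots,r_{n})$ on $T\mathbb{R}^{n}$ described before Lemma \ref{nfkewefnknknk}.

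The heart of the argument is translating \eqref{eq:4.2} through the two identifications. Under $T\mathbb{R}^{n}\cong\mathbb{C}^{n}$, a point $u=\sum_{k}b_{k}\tfrac{\partial}{\partial x_{k}}\big\vert_{a}$ corresponds to $a+ib$, which in the chart $(q,r)$ has coordinates $(q,r)=(a,b)$; thus the real part records the base point and the imaginary part the fibre. Writing a general point as $x+iy$, its base point is $x$, so the entries $h_{ij}=h(\tfrac{\partial}{\partial x_{i}},\tfrac{\partial}{\partial x_{j}})$ appearing in \eqref{eq:4.2} are evaluated at $x$. Next, using $T_{x+iy}\mathbb{C}^{n}\cong\mathbb{C}^{n}$, a tangent vector written $a+ib$ corresponds to $\sum_{k}a_{k}\tfrac{\partial}{\partial q_{k}}\big\vert_{u}+\sum_{k}b_{k}\tfrac{\partial}{\partial r_{k}}\big\vert_{u}$, i.e. to the column vector with blocks $a$ and $b$ in the $(\partial/\partial q,\partial/\partial r)$-frame. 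This matching of frames is the only place requiring care, and it is where I expect the (mild) obstacle to lie; everything else is formal.

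With these identifications settled, both claims reduce to a single matrix computation. Applying $J=\left[\begin{smallmatrix}0 & -I_{n}\\ I_{n} & 0\end{smallmatrix}\right]$ to the column vector with blocks $(a,b)$ produces the column vector with blocks $(-b,a)$, which corresponds to $-b+ia=i(a+ib)$; hence $J$ is multiplication by $i$, giving (1). For (2), contracting $g=\left[\begin{smallmatrix}h_{ij} & 0\\ 0 & h_{ij}\end{smallmatrix}\right]$, with the block $[h_{ij}]$ evaluated at the base point $x$, against the vectors with blocks $(a,b)$ and $(a',b')$ yields the sum of two quadratic terms equal to $h_{x}(a,a')+h_{x}(b,b')$, which is exactly the asserted formula (and manifestly independent of $y$, consistently with the statement). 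No integrability or closedness verification is needed, since Proposition \ref{prop:4.1} already presupposes the dually flat Kähler setting, which $(\mathbb{R}^{n},h,\nabla^{\textup{flat}})$ satisfies automatically as $\nabla^{\textup{flat}}$ is flat.
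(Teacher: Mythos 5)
Your route coincides with the paper's: the paper offers no separate proof, presenting the corollary as an immediate consequence of Proposition \ref{prop:4.1}, and your coordinate bookkeeping (base point $=$ real part, fibre $=$ imaginary part, tangent blocks $(a,b)$ in the $(\partial/\partial q,\partial/\partial r)$-frame, then $J(a,b)=(-b,a)=i(a+ib)$ and the block-diagonal contraction for $g$) is exactly the intended specialization and is carried out correctly.

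There is, however, one genuine flaw in your justification: the closing claim that $(\mathbb{R}^{n},h,\nabla^{\textup{flat}})$ ``satisfies the dually flat setting automatically as $\nabla^{\textup{flat}}$ is flat'' is false for an arbitrary Riemannian metric $h$, and the corollary is stated for arbitrary $h$. Flatness of $\nabla^{\textup{flat}}$ forces $R^{*}\equiv 0$ by Proposition \ref{neewknkfenknkn}, but dual flatness also requires the torsion of $\nabla^{*}$ to vanish, and the computation in the lemma following Proposition \ref{neewknkfenknkn} shows this torsion is governed by $\partial_{i}h_{jk}-\partial_{j}h_{ik}$; it vanishes precisely when $h$ is (locally) a Hessian metric. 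For instance $h=dx_{1}^{2}+e^{x_{1}}dx_{2}^{2}$ on $\mathbb{R}^{2}$ has $\partial_{1}h_{22}=e^{x_{1}}\neq 0=\partial_{2}h_{12}$, so the triple is not dually flat, and Proposition \ref{prop:4.1} does not apply verbatim. The repair is cheap but should be said: the derivation of the block formulas \eqref{eq:4.2} for $g$ and $J$ uses only the connector identification $T_{u}(TM)\cong T_{p}M\oplus T_{p}M$ and the vanishing of the Christoffel symbols in $\nabla$-affine coordinates, together with Dombrowski's defining formulas; dual flatness enters Proposition \ref{prop:4.1} only through the label ``K\"{a}hler'' (via Proposition \ref{prop:3.8}), not through the coordinate expressions themselves. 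This is exactly why the corollary, unlike Proposition \ref{prop:4.1}, asserts only an almost Hermitian structure. With that observation replacing your final sentence, your argument is complete in the stated generality.
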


	Now we turn our attention to affine maps between dually flat manifolds. Let $(M,h,\nabla)$ and $(M',h',\nabla')$ be connected dually flat 
	manifolds of dimension $n$ and $d$, respectively. We endow $TM$ and $TM'$ with their natural K\"{a}hler structures (coming from Dombrowski's construction).

	Recall that a smooth function $f:M\to M'$ is \textit{affine} if for every $p\in M$, there is a $\nabla$-affine coordinate system 
	$x:U\subseteq M\to \mathbb{R}^{n}$ and a $\nabla'$-affine coordinate system $x':U'\subseteq M'\to \mathbb{R}^{d}$ 
	such that $p\in U$, $f(U)\subseteq U'$ and $x'\circ f\circ x^{-1}:x(U)\to \mathbb{R}^{d}$ is the restriction of an affine map 
	(thus $(x'\circ f\circ x^{-1})(y)=Ay+B$ for all $y\in x(U)$, where $A$ is a $d\times n$ real matrix and $B\in \mathbb{R}^{d}$).

	When $f$ is a diffeomorphism, an easy verification shows that $f$ is affine if and only if $f^{*}\nabla'=\nabla.$

\begin{proposition}\label{nfeknwknekn}
	Let $f:M\to M'$ be a smooth map. 
	\begin{enumerate}[(1)]
	\item The derivative $f_{*}:TM\to TM'$ is holomorphic if and only if $f$ is affine. 
	\item Suppose $f$ affine. Then $f_{*}$ is isometric if and only if $f$ is isometric.
	\end{enumerate}
	Consequently, $f_{*}$ is a K\"{a}hler immersion if and only if $f$ is an isometric affine immersion. 
\end{proposition}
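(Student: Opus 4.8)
The plan is to work entirely in affine coordinates, where Dombrowski's construction trivializes. By Proposition \ref{prop:4.1} and Corollary \ref{nfednwkneknkn}, if $x=(x_{1},\dots,x_{n})$ is $\nabla$-affine near $p$ and $x'=(x'_{1},\dots,x'_{d})$ is $\nabla'$-affine near $f(p)$, then in the induced coordinates $(q,r)$ on $\pi^{-1}(U)\subseteq TM$ (resp. $(q',r')$ on $TM'$) the complex structure $J$ is the standard one and the metric $g$ is the block matrix $\mathrm{diag}([h_{ij}],[h_{ij}])$. Thus $TM$ is locally identified with an open subset of $\mathbb{C}^{n}$ carrying complex coordinates $z_{j}=q_{j}+ir_{j}$, and likewise $TM'$ with $\mathbb{C}^{d}$ and $z'_{k}=q'_{k}+ir'_{k}$. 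Since both holomorphicity and the isometry condition $(f_{*})^{*}g'=g$ are local, pointwise conditions, it suffices to verify them at an arbitrary $u\in T_{p}M$ in these coordinates. Writing $F=x'\circ f\circ x^{-1}=(F^{1},\dots,F^{d})$, a direct computation of the derivative $f_{*}$ gives the coordinate expression
\begin{equation*}
  q'_{k}=F^{k}(q),\qquad r'_{k}=\sum_{i=1}^{n}\frac{\partial F^{k}}{\partial x_{i}}(q)\,r_{i}.
\end{equation*}

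For (1), I would apply the Cauchy--Riemann operator to these. With $\frac{\partial}{\partial\bar z_{j}}=\tfrac12(\partial_{q_{j}}+i\,\partial_{r_{j}})$ one finds, after the two first-order terms cancel,
\begin{equation*}
  \frac{\partial z'_{k}}{\partial\bar z_{j}}=\frac{i}{2}\sum_{i=1}^{n}\frac{\partial^{2}F^{k}}{\partial x_{i}\,\partial x_{j}}(q)\,r_{i}.
\end{equation*}
Holomorphicity of $f_{*}$ means this vanishes at every point $u$, i.e. for all $q$ in the chart and all fiber values $r$; letting $r$ range freely forces $\partial^{2}F^{k}/\partial x_{i}\partial x_{j}\equiv 0$, that is, $F$ is affine, which is exactly the statement that $f$ is affine (the chosen charts being affine). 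Conversely, if $f$ is affine then $F$ is affine on the chart, the second derivatives vanish, and the displayed equations hold, so $f_{*}$ is holomorphic.

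For (2), assume $f$ affine, so $\partial F^{k}/\partial x_{i}=A^{k}_{i}$ is a constant matrix $A$ on the chart. Then the Jacobian of $f_{*}$ in the coordinates $(q,r)\mapsto(q',r')$ is the block-diagonal matrix $\mathrm{diag}(A,A)$, and pulling back the metric $g'=\mathrm{diag}([h'_{kl}],[h'_{kl}])$ (with $h'_{kl}$ evaluated at $f(p)$) yields
\begin{equation*}
  (f_{*})^{*}g'=\begin{bmatrix}\transposee{A}\,h'(f(p))\,A & 0\\ 0 & \transposee{A}\,h'(f(p))\,A\end{bmatrix}.
\end{equation*}
This equals $g=\mathrm{diag}([h_{ij}(p)],[h_{ij}(p)])$ at $u$ if and only if $\transposee{A}\,h'(f(p))\,A=h(p)$. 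Since $f_{*_{p}}\tfrac{\partial}{\partial x_{i}}=\sum_{k}A^{k}_{i}\tfrac{\partial}{\partial x'_{k}}$, the latter is precisely the condition $(f^{*}h')_{p}=h_{p}$; as it is independent of the fiber variable, $f_{*}$ is isometric at every point over $p$ iff $f$ is isometric at $p$, proving (2).

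For the final claim, I would note that an isometric map (one with $(f_{*})^{*}g'=g$, resp. $f^{*}h'=h$) is automatically an immersion, because the pulled-back metric is positive definite and hence the differential is injective. Thus a K\"ahler immersion $f_{*}$ is the same as a holomorphic and isometric $f_{*}$; by (1) this forces $f$ affine, and then by (2) $f$ isometric, so $f$ is an isometric affine immersion, and conversely. The only step requiring genuine care is the interplay between base and fiber variables in the Cauchy--Riemann computation: one must use that holomorphicity is demanded at \emph{every} point of $TM$, so that the fiber coordinates $r_{i}$ vary freely and truly force the vanishing of the second derivatives of $F$, rather than merely a pointwise linear relation among them.
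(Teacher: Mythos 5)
Your proof is correct and follows essentially the same route as the paper: you work in $\nabla$-affine coordinates where Proposition \ref{prop:4.1} trivializes Dombrowski's construction, compute $f_{*}(q,r)=\big(F(q),F_{*_{q}}r\big)$, reduce part (1) to the vanishing of $\tfrac{\partial^{2}F^{k}}{\partial x_{i}\partial x_{j}}$ by letting the fiber variable $r$ range freely, and reduce part (2) to the pointwise identity $\transposee{A}\,h'(f(p))\,A=h(p)$, exactly as in the paper. The only cosmetic difference is that you check holomorphicity via the Cauchy--Riemann operator in the complex coordinates $z_{j}=q_{j}+ir_{j}$ instead of verifying $(f_{*})_{*}\circ J=J'\circ (f_{*})_{*}$ in the real coordinates, which is the same computation in different notation.
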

\begin{proof}
	Let $x=(x_{1},...,x_{n})$ be a $\nabla$-affine coordinate system on $U\subseteq M$ and let 
	$x'=(x_{1}',...,x_{d}')$ be a $\nabla'$-affine coordinate system on $U'\subseteq M'$. Suppose $U$ connected and $f(U)\subseteq U'$. 
	We denote by $(q,r)=(q_{1},...,q_{n},r_{1},...,r_{n})$ and $(q',r')=(q_{1}',...,q_{d}',r_{1}',...,r_{d}')$ 
	the corresponding coordinates on $TM$ and $TM'$, respectively (see before Lemma \ref{nfkewefnknknk}). 
	For simplicity, we will use the same symbols $``f"$ and $``f_{*}"$ for 
	the local expressions for $f$ and $f_{*}$, respectively. Thus we write $f(x)=(f^{1}(x),...,f^{d}(x))$ and $f_{*}(q,r)=(f(q),f_{*_{q}}r)$. 
	
	The derivative of $f_{*}$ at $(q,r)$ in the direction $(u,v)\in \mathbb{R}^{n}\times \mathbb{R}^{n}$ is given by 
	\begin{eqnarray*}	
		(f_{*})_{*_{(q,r)}}(u,v)=\big(f_{*_{q}}(u), A(q,u)r + f_{*_{q}}v\big), 
	\end{eqnarray*}
	where $A(q,u)$ is the $d\times n$ matrix whose $(i,j)$-entry is $\big(\tfrac{\partial f^{i}}{\partial x_{j}}\big)_{*_{q}}u
	=\sum_{k=1}^{n}u_{k}\tfrac{\partial^{2} f^{i}(q)}{\partial x_{k}\partial x_{j}}$. Let $J$ and $J'$ be the complex structures of 
	$M$ and $M'$, respectively. Locally, $J_{(q,r)}(u,v)=(-v,u)$ and $J'_{(q',r')}(u',v')=(-v',u')$ (see Proposition \ref{prop:4.1}). 
	Therefore $(f_{*})_{*}\circ J=J'\circ (f_{*})_{*}$ on $\pi^{-1}(U)$ if and only if 
	\begin{eqnarray*}
		&&\big(-f_{*_{q}}(v), A(q,-v)r + f_{*_{q}}u\big)
			=\big(-A(q,u)r - f_{*_{q}}v,f_{*_{q}}(u)\big)\,\,\,\,\,\,\,\,\,\forall\,q\in U,\,\,\forall\,u,v,r\in\mathbb{R}^{n}\\
		&\Leftrightarrow& A(q,u)r=A(q,v)r=0\,\,\,\,\,\,\,\,\,\forall\,q\in U,\,\,\forall\,u,v,r\in\mathbb{R}^{n}\\
		&\Leftrightarrow& \tfrac{\partial^{2} f^{i}}{\partial x_{k}\partial x_{j}}=0\,\,\,\textup{for all}\,\,k,i,j\,\,\textup{on}\,\,U. 
	\end{eqnarray*}
	Therefore $f_{*}$ is holomorphic on $\pi^{-1}(U)$ if and only if all partial derivatives $\tfrac{\partial^{2} f^{i}}{\partial x_{k}\partial x_{j}}$ vanish on $U$. 
	Since $U$ is connected, this is equivalent to the existence of a $d\times n$ real matrix $B$ and $C\in \mathbb{R}^{n}$ such that 
	$f(x)=Bx+C$ for all $x\in U$. It follows that $f_{*}$ is holomorphic on $TM$ if and only if $f$ is affine. This shows (1). 

	Assume now that $f$ is affine. Let $g$ and $g'$ be the Riemannian metrics on $TM$ and $TM'$, respectively. 
	With the same notation as above, the coordinate expression for $g$ and $g'$ are given by 
	$g(q,r)= 
	\left(\begin{smallmatrix}h_{ij}  &   0  
	\\  0   &   h_{ij}\end{smallmatrix}\right) $ 
	and 
	$g'(q',r')= 
	\left(\begin{smallmatrix}h'_{ij}  &   0  
	\\  0   &   h'_{ij}\end{smallmatrix}\right) $,
	where $h_{ij}=h(\tfrac{\partial}{\partial x_{i}},\tfrac{\partial}{\partial x_{j}})$ and $h'_{ij}
	=h'(\tfrac{\partial}{\partial x'_{i}},\tfrac{\partial}{\partial x'_{j}})$ (see Proposition \ref{prop:4.1}). 
	Given $(q,r)\in U\times \mathbb{R}^{n}$ and $(u,v),(u',v')\in \mathbb{R}^{n}\times \mathbb{R}^{n}$, we compute:
	\begin{eqnarray*}
		\lefteqn{\big((f_{*})^{*}g'\big)_{(q,r)}\big((u,v),(u',v')\big)
			=g'_{(f_{*})(q,r)}\big((f_{*})_{*}(u,v), (f_{*})_{*}(u',v')\big)}\\
	&=& g'_{(f_{*})(q,r)}\big[\big(f_{*_{q}}(u), A(q,u)r + f_{*_{q}}v\big),\big(f_{*_{q}}(u'), A(q,u')r + f_{*_{q}}v'\big)\big]\\
	&=&g'_{(f_{*})(q,r)}\big[\big(f_{*_{q}}(u),f_{*_{q}}v\big),\big(f_{*_{q}}(u'),f_{*_{q}}v'\big)\big]\\
	&=& h'_{f(q)}\big( f_{*_{q}}(u), f_{*_{q}}(u')\big)+h'_{f(q)}\big( f_{*_{q}}(v), f_{*_{q}}(v')\big)\\
	&=& (f^{*}h')_{q}(u,u')+(f^{*}h')_{q}(v,v'),
	\end{eqnarray*}
	where we have used the fact that $A(q,u)=0$, since $f$ is affine. It follows that $(f_{*})^{*}g'=g$ if and only 
	if $f^{*}h'=h$. This shows (2) and concludes the proof of the proposition.
\end{proof}

\section{Parallel lattices and torus actions}\label{nkdnksnkenkn}\label{nfekkknwdkdnk}

	Throughout this section $(M,\nabla)$ is a connected affine manifold of dimension $n$.
\begin{definition}\label{neknwdkneknk}
	A subset $L\subset TM$ is said to be a 
	\textit{parallel lattice} with respect to $\nabla$ if there are $n$ parallel vector fields 
	$X_{1},...,X_{n}$ on $M$ such that:
	\begin{enumerate}[(1)]
	\item $\{X_{1}(p),...,X_{n}(p)\}$ is a basis for $T_{p}M$ for every $p\in M$, 
	\item $L=\big\{k_{1}X_{1}(p)+...+k_{n}X_{n}(p)\,\,\big\vert\,\,p\in M,\,\,k_{1},...,k_{n}\in \mathbb{Z}\big\}$. 
	\end{enumerate}
	In that case we shall write $L=L_{X}$, where $X=(X_{1},...,X_{n})\in \mathfrak{X}(M)^{n}$, and call $X$ a \textit{generator} 
	for $L$. 
\end{definition}

	As a matter of notation, we will denote the set of all generators of $L$ by $\textup{gen}(L)$. Given $X\in \textup{gen}(L)$, we say that 
	$L$ is \textit{generated by} $X$. 

\begin{remark}
	Let $L\subset TM$ be a parallel lattice and $X=(X_{1},...,X_{n})\in \textup{gen}(L)$. 
	\begin{enumerate}[(1)]
		\item For every $p\in M$, the intersection $L\cap T_{p}M$ is a full rank lattice.
		\item $X=(X_{1},...,X_{n})$ is a global frame for $M$. Therefore $M$ is parallelizable. 
		\item $[X_{i},X_{j}]=0$ for all $i,j=1,...,n$. This comes from the fact that parallel vector fields 
			on an affine manifold commute. 
	\end{enumerate}
\end{remark}

	In what follows, let $L\subset TM$ be a fixed parallel lattice. If 
	$X=(X_{1},...,X_{n})\in \textup{gen}(L)$, then, for any $p\in M$ and any $n\times n$ real matrix 
	$A=(a_{ij})$, we let 
	\begin{eqnarray*}
		 &&X(p):=(X_{1}(p),...,X_{n}(p))\in (T_{p}M)^{n},\\
		 &&AX	
		:=\bigg(
		\begin{smallmatrix}
			a_{11} & \cdots & a_{1n}  \\
			\vdots &        & \vdots  \\
			a_{n1} & \cdots & a_{nn} 
		\end{smallmatrix}
		\bigg)
		\,
		\bigg(
		\begin{smallmatrix}
			X_{1}\\
			\vdots \\
			X_{n}
		\end{smallmatrix}
		\bigg)
		:=\bigg(\sum_{i=1}^{n}a_{1i}X_{i},....,
		\sum_{i=1}^{n}a_{ni}X_{i}\bigg)\in \mathfrak{X}(M)^{n}.
	\end{eqnarray*}

	Let $\textup{GL}(n,\mathbb{Z})$ denote the group of invertible $n\times n$ matrices with integer entries. 
\begin{lemma}\label{nkqndwknwfkn}
	Let $X,X'\in \mathfrak{X}(M)^{n}$ be frames. 
	\begin{enumerate}[(1)]
		\item If $X\in \textup{gen}(L$), then a vector field $Y$ on $M$ 
			is parallel if and only if there are real numbers 
			$\lambda_{1},...,\lambda_{n}$ such that $Y=\lambda_{1}X_{1}+...+\lambda_{n}X_{n}$. 
		\item If $X$ and $X'$ are both generators for $L$, then there exists $A\in \textup{GL}(n,\mathbb{Z})$ such that 
			$X=AX'$.
	\end{enumerate}
\end{lemma}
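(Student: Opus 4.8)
The plan is to prove (1) first and then bootstrap (2) from it together with the lattice structure recorded in the Remark. For (1), the ``if'' implication is immediate: if $Y=\sum_{i}\lambda_{i}X_{i}$ with constant $\lambda_{i}\in\mathbb{R}$, then for every vector field $Z$ one has $\nabla_{Z}Y=\sum_{i}\big(Z(\lambda_{i})X_{i}+\lambda_{i}\nabla_{Z}X_{i}\big)=0$, since each $\lambda_{i}$ is constant and each $X_{i}$ is parallel. For the converse I would use that $X=(X_{1},\dots,X_{n})\in\textup{gen}(L)$ is in particular a global frame, so any vector field $Y$ decomposes uniquely as $Y=\sum_{i}f_{i}X_{i}$ with $f_{i}\in C^{\infty}(M)$. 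Applying the Leibniz rule and the parallelism of the $X_{i}$ gives $\nabla_{Z}Y=\sum_{i}Z(f_{i})X_{i}$ for every $Z$. If $Y$ is parallel this vanishes, and since $\{X_{i}(p)\}$ is a basis of $T_{p}M$ at each $p$, it forces $Z(f_{i})=0$ for all $Z$ and all $i$; hence each $f_{i}$ is locally constant, and therefore constant on the connected manifold $M$. The constants $\lambda_{i}:=f_{i}$ are the desired ones.

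For (2), I would first apply (1): each $X_{i}$ is parallel and $X'$ is a generator, so there are (constant) real numbers $a_{ij}$ with $X_{i}=\sum_{j}a_{ij}X'_{j}$, that is, $X=AX'$ with $A=(a_{ij})$. Since both $X$ and $X'$ are frames, $A$ sends a basis to a basis at each point and hence $A\in\textup{GL}(n,\mathbb{R})$. The real content is upgrading $A$ from $\textup{GL}(n,\mathbb{R})$ to $\textup{GL}(n,\mathbb{Z})$, and this is exactly where the lattice structure enters. Fix any $p\in M$. By the definition of $L$, the intersection $L\cap T_{p}M$ coincides with the $\mathbb{Z}$-span of $\{X_{1}(p),\dots,X_{n}(p)\}$ and also with the $\mathbb{Z}$-span of $\{X'_{1}(p),\dots,X'_{n}(p)\}$, so these two bases are both $\mathbb{Z}$-bases of one and the same full-rank lattice $\Lambda:=L\cap T_{p}M$. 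Evaluating $X=AX'$ at $p$ shows that $A$ is precisely the change-of-basis matrix between these two $\mathbb{Z}$-bases of $\Lambda$, and such a matrix lies in $\textup{GL}(n,\mathbb{Z})$: both $A$ and $A^{-1}$ have integer entries, so $\det A\in\mathbb{Z}$ and $\det A^{-1}\in\mathbb{Z}$ force $\det A=\pm1$. Hence $A\in\textup{GL}(n,\mathbb{Z})$.

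The only step requiring care is the last one: one must notice that the \emph{same} constant matrix $A$ produced by (1) simultaneously relates the two frames at every point, so that evaluating at a single $p$ and invoking the identification $L\cap T_{p}M=\Lambda$ already determines the global matrix. I do not expect a genuine obstacle here, as the argument is essentially bookkeeping; the conceptual point worth emphasizing is that part (1) is precisely what guarantees $A$ is constant across $M$, which is what legitimizes deducing a global integrality statement from a pointwise lattice comparison.
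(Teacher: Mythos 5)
Your proposal is correct. Part (2) is essentially the paper's argument: the paper likewise invokes part (1) to obtain a constant real matrix $A$ with $X=AX'$, evaluates at a single point, and concludes via the change-of-basis fact for the full-rank lattice $L\cap T_{p}M$ (which the paper dispatches with ``by the general theory of lattices''; you usefully spell out the integrality of $A$ and $A^{-1}$ and the $\det A=\pm1$ argument). Part (1), however, takes a genuinely different and in fact cleaner route. The paper fixes $p_{0}$, joins an arbitrary $p$ to $p_{0}$ by a piecewise smooth curve $c$, writes $Y(c(t))=\sum_{k}f_{k}(t)X_{k}(c(t))$, checks via the dual coframe that the $f_{k}$ are continuous and piecewise differentiable, and then uses the covariant derivative $\tfrac{D}{\partial t}$ along $c$ to show $f_{k}'\equiv0$ away from the break points, concluding constancy along the curve. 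You instead decompose $Y=\sum_{i}f_{i}X_{i}$ globally (smoothness of the $f_{i}=X_{i}^{*}(Y)$ coming from the same dual-coframe device the paper uses along curves), compute $\nabla_{Z}Y=\sum_{i}Z(f_{i})X_{i}$ by the Leibniz rule and parallelism of the $X_{i}$, and deduce $df_{i}=0$, hence each $f_{i}$ is constant on the connected $M$ (connectedness being a standing hypothesis of the section, so this is legitimate). Your version avoids all the curve and piecewise-smoothness bookkeeping at no cost; the paper's curve argument proves the slightly more flexible statement that the coefficients are constant along any path, but for this lemma that extra generality is not needed. One cosmetic note: the paper only proves the ``parallel $\Rightarrow$ constant coefficients'' direction and leaves the trivial converse implicit, whereas you include it; that is harmless.
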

\begin{proof}
	(1) Let $Y$ be a parallel vector field on $M$. Fix $p_{0}\in M$. Since 
	$\{X_{1}(p_{0}),...,X_{n}(p_{0})\}$ is a basis for $T_{p_{0}}M$, there are real numbers 
	$\lambda_{1},...,\lambda_{n}$ such that 
	\begin{eqnarray*}
		 Y(p_{0})=\lambda_{1}X_{1}(p_{0})+...+\lambda_{n}X_{n}(p_{0}). 
	\end{eqnarray*}
	Let $p\in M$ be arbitary. Since $M$ is connected, there exists a piecewise smooth curve 
	$c:[0,1]\to M$ such that $c(0)=p_{0}$ and $c(1)=p$. Because $(X_{1},...,X_{n})$ is a global frame, 
	there are functions $f_{1},...,f_{n}:[0,1]\to \mathbb{R}$ such that 
	\begin{eqnarray*}
		 Y(c(t))=\sum_{k=1}^{n}f_{k}(t)X_{k}(c(t))
	\end{eqnarray*}
	for every $t\in [0,1]$. Note that $f_{k}(0)=\lambda_{k}$ for every $k=1,..,n.$ 
	Let $(X_{1}^{*},...,X_{n}^{*})$ denote the dual coframe of $(X_{1},...,X_{n})$. 
	Thus, by definition, $\langle X_{i}^{*}, X_{j}\rangle=\delta_{ij}$ for every $i,j$, 
	where $\langle\,,\,\rangle$ is the natural pairing between $TM$ and $T^{*}M$. It is immediate that 
	\begin{eqnarray*}
		 f_{i}(t)=\langle X_{i}^{*},Y\rangle(c(t))
	\end{eqnarray*}
	for every $t\in [0,1]$ and every $i=1,...,n$, where $\langle X_{i}^{*},Y\rangle:M\to \mathbb{R},$ 
	$q\mapsto \langle X_{i}^{*}(q),Y(q)\rangle$. Since the latter function is smooth and since $c$ is piecewise smooth, 
	each $f_{i}$ is continuous on $[0,1]$ and smooth wherever $c$ is. 
	
	Let $\tfrac{D}{\partial t}$ denote the covariant derivative operator 
	of vector fields along $c$ induced by $\nabla$. Because $Y$ and $X_{k}$ are parallel, we have for 
	all $t\in [0,1]$ where $c$ is smooth:
	\begin{eqnarray*}
		 0=\dfrac{D}{\partial t} Y(c(t))=\sum_{k=1}^{n}\bigg[f_{k}'(t)X_{k}(c(t))
		+f_{k}(t)\dfrac{D}{\partial t}X_{k}(c(t))\bigg]=\sum_{k=1}^{n}f'_{k}(t)X_{k}(c(t))
	\end{eqnarray*}
	and hence $f'_{k}(t)=0$ for all $k$ and all $t$, except for finitely many points. 
	It follows from this and the continuity of $f_{k}$ that $f_{k}$ is constant. Thus 
	\begin{eqnarray*}
		Y(p)=\sum_{k=1}^{n}f_{k}(1)X_{k}(p)=\sum_{k=1}^{n}f_{k}(0)X_{k}(p)=\sum_{k=1}^{n}\lambda_{k}X_{k}(p). 
	\end{eqnarray*}
	Since $p\in M$ is arbitrary, $Y=\lambda_{1}X_{1}+...+\lambda_{n}X_{n}$. 

	(2) By the first item there exists an invertible $n\times n$ real matrix $A$ such that $X=AX',$ Thus, 
	at a particular point $p\in M$, we have the formula $X(p)=AX'(p)$, which can be interpreted as a change of basis 
	for the full rank lattice $L\cap T_{p}M$. By the general theory of lattices, $A\in \textup{GL}(n,\mathbb{Z})$. 
\end{proof}

	Given $X=(X_{1},...,X_{n})\in \textup{gen}(L)$, we will denote by $\Gamma(L,X)$ the 
	group of transformations of $TM$ of the form 
	\begin{eqnarray*}
		 TM\to TM,\,\,\,u\mapsto u+k_{1}X_{1}+...+k_{n}X_{n}. \,\,\,\,\,\,\,\,(k_{1},...,k_{n}\in \mathbb{Z})
	\end{eqnarray*}
	The group $\Gamma(L,X)$ is obviously isomorphic to $\mathbb{Z}^{n}$.  
\begin{lemma}
	Suppose $X,X'\in \textup{gen}(L)$. Then $\Gamma(L,X)=\Gamma(L,X')$. 
\end{lemma}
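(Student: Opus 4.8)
The plan is to reduce the equality of these two groups of translations to a single arithmetic input, namely Lemma \ref{nkqndwknwfkn}(2), which asserts that the matrix relating two generators of the same parallel lattice lies in $\textup{GL}(n,\mathbb{Z})$. First I would unwind the definition: an element of $\Gamma(L,X)$ is the fiberwise translation $u\mapsto u+k_{1}X_{1}+\cdots+k_{n}X_{n}$ with $k_{1},\dots,k_{n}\in\mathbb{Z}$, meaning that for $u\in T_{p}M$ the image is $u+k_{1}X_{1}(p)+\cdots+k_{n}X_{n}(p)\in T_{p}M$. Since the $X_{i}$ form a frame, such a map is determined by the parallel vector field $\sum_{i}k_{i}X_{i}$ that it adds, so $\Gamma(L,X)$ is precisely the group of translations by elements of the $\mathbb{Z}$-span of $X_{1},\dots,X_{n}$ inside the space of parallel vector fields. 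The claim is therefore equivalent to saying that this $\mathbb{Z}$-span does not depend on the chosen generator.

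Next, since $X,X'\in\textup{gen}(L)$, Lemma \ref{nkqndwknwfkn}(2) furnishes a matrix $A=(a_{ij})\in\textup{GL}(n,\mathbb{Z})$ with $X=AX'$, that is $X_{i}=\sum_{j}a_{ij}X_{j}'$ for every $i$. Taking an arbitrary $\varphi\in\Gamma(L,X)$, say $\varphi(u)=u+\sum_{i}k_{i}X_{i}$ with $k_{i}\in\mathbb{Z}$, I would substitute and regroup:
\begin{eqnarray*}
\sum_{i=1}^{n}k_{i}X_{i}=\sum_{i=1}^{n}k_{i}\sum_{j=1}^{n}a_{ij}X_{j}'=\sum_{j=1}^{n}\Big(\sum_{i=1}^{n}k_{i}a_{ij}\Big)X_{j}'.
\end{eqnarray*}
The coefficients $m_{j}:=\sum_{i}k_{i}a_{ij}$ are integers, so $\varphi$ takes the form $u\mapsto u+\sum_{j}m_{j}X_{j}'$ and thus lies in $\Gamma(L,X')$; this proves $\Gamma(L,X)\subseteq\Gamma(L,X')$. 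For the reverse inclusion I would use that $A\in\textup{GL}(n,\mathbb{Z})$ forces $A^{-1}$ to have integer entries as well, so $X'=A^{-1}X$ expresses each $X_{j}'$ as an integer combination of the $X_{i}$; running the identical computation with the roles of $X$ and $X'$ interchanged yields $\Gamma(L,X')\subseteq\Gamma(L,X)$, and combining the two inclusions gives the desired equality.

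There is essentially no obstacle in this argument: the only nontrivial ingredient is the integrality of the change-of-basis matrix and of its inverse, which is exactly what Lemma \ref{nkqndwknwfkn}(2) provides, while everything else is a one-line linear-algebra regrouping. The two points to be careful about are purely bookkeeping: to interpret the symbol ``$+$'' fiberwise on $TM$, and to observe that an integer linear combination of integer linear combinations remains integral, so that the computation never leaves $\mathbb{Z}$.
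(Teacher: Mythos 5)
Your proof is correct and is exactly the argument the paper intends: the paper's proof is the one-line remark that the lemma ``follows immediately from the preceding lemma,'' i.e.\ from Lemma \ref{nkqndwknwfkn}(2), and your write-up simply makes explicit the regrouping $\sum_{i}k_{i}X_{i}=\sum_{j}\big(\sum_{i}k_{i}a_{ij}\big)X_{j}'$ together with the fact that $A^{-1}\in\textup{GL}(n,\mathbb{Z})$ gives the reverse inclusion. No gaps; this is the same route, spelled out in full.
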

\begin{proof}
	This follows immediately from the preceding lemma. 
\end{proof}

	It follows that $\Gamma(L,X)$ is independent of the choice of $X\in \textup{gen}(L)$. We shall thus write 
	$\Gamma(L)$ instead of $\Gamma(L,X)$. Note that:
	\begin{description}
		\item[$\bullet$] $\Gamma(L)$ is a subgroup of $\textup{Diff}(TM)$, the group of diffeomorphisms of $TM$.
		\item[$\bullet$] For every $\gamma\in \Gamma(L)$, $\pi\circ \gamma=\pi$, where $\pi:TM\to M$ 
			is the canonical projection.
		\item[$\bullet$] $\Gamma(L)$ characterizes $L$, for 
			$L=\{\gamma(0_{p})\,\,\vert\,\,p\in M,\,\,\gamma\in \Gamma(L)\},$ 
			where $0_{p}$ is the zero vector in $T_{p}M$. 
	\end{description}

	Because the action of $\Gamma(L)$ on $TM$ is free and proper, the orbit space 
	\begin{eqnarray*}
		 M_{L}:=TM/\Gamma(L)
	\end{eqnarray*}
	is a smooth manifold and the quotient map 
	\begin{eqnarray*}
		 q_{L}:TM\to M_{L}
	\end{eqnarray*}
	is a covering map whose Deck transformation group is $\Gamma(L)$. Moreover, the fact that 
	$\pi\circ \gamma=\pi$ for every $\gamma\in \Gamma(L)$ implies that there exists a surjective submersion 
	$\pi_{L}:M_{L}\to M$ such that the following diagram commutes: 
	\begin{eqnarray}\label{nwknkrgnekwnkdndkns}
	\begin{tikzcd}
		TM \arrow{r}{\displaystyle q_{L}} \arrow[swap]{d}{\displaystyle\pi} & M_{L} \arrow{d}{\displaystyle\pi_{L}} \\
		M \arrow[swap]{r}{\displaystyle\textup{Id}} & M
	\end{tikzcd}
	\end{eqnarray}
	Let $\mathbb{T}^{n}=\mathbb{R}^{n}/\mathbb{Z}^{n}$ denote the $n$-dimensional torus. 
	Given $t=(t_{1},...,t_{n})\in \mathbb{R}^{n}$, we will denote by $[t]=[t_{1},...,t_{n}]$ the corresponding 
	equivalence class in $\mathbb{R}^{n}/\mathbb{Z}^{n}$. 

	Given $X\in \textup{gen}(L)$, we will denote by
	\begin{eqnarray*}
		\Phi_{X}:\mathbb{T}^{n}\times M_{L}\to M_{L}
	\end{eqnarray*}
	the torus action defined by 
	\begin{eqnarray}\label{nndkfneknkw}
		\Phi_{X}\big([t],q_{L}(u)\big):=q_{L}(u+t_{1}X_{1}+...+t_{n}X_{n}), 
	\end{eqnarray}
	where $t=(t_{1},...,t_{n})\in \mathbb{R}^{n}$ and $u\in TM$. 

\begin{remark}
	A simple verification shows that:
	\begin{enumerate}[(1)]
	\item $\Phi$ is effective, that is, the map $\mathbb{T}^{n}\to \textup{Diff}(M_{L}),$ $a\mapsto (\Phi_{X})_{a}$ is injective, and 
	\item $\pi_{L}\circ (\Phi_{X})_{a}=\pi_{L}$ for every $a\in \mathbb{T}^{n}$.  
	\end{enumerate} 
\end{remark}
\begin{lemma}\label{nfekdnkknkn}
	The map $f:M_{L}\to \mathbb{T}^{n}\times M$ given by 
	\begin{eqnarray*}
		f\big(q_{L}(u_{1}X_{1}(p)+...+u_{n}X_{n}(p))\big):=\big([u_{1},...,u_{n}],p\big)
	\end{eqnarray*}
	is a $\mathbb{T}^{n}$-equivariant diffeomorphism ($\mathbb{T}^{n}$ acts on $\mathbb{T}^{n}\times M$ via 
	translations on the first factor). 
\end{lemma}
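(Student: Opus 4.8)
The plan is to lift the whole picture to $TM$ by means of the global frame $X=(X_1,\dots,X_n)$, thereby reducing the statement to the elementary fact that quotienting a product $\mathbb{R}^n\times M$ by translations in the first factor produces $\mathbb{T}^n\times M$.

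First I would use the frame $X$ to build an auxiliary diffeomorphism $\widetilde{f}\colon TM\to \mathbb{R}^n\times M$. Since $(X_1,\dots,X_n)$ is a global frame for $M$, every $u\in T_pM$ is written uniquely as $u=u_1X_1(p)+\dots+u_nX_n(p)$, and I set $\widetilde{f}(u):=\big((u_1,\dots,u_n),p\big)$ with $p=\pi(u)$. Smoothness of $\widetilde f$ follows because each coordinate $u_i=\langle X_i^{*},u\rangle$ is smooth (here $(X_1^{*},\dots,X_n^{*})$ is the dual coframe of $(X_1,\dots,X_n)$) and $\pi$ is smooth; its inverse $\big((a_1,\dots,a_n),p\big)\mapsto \sum_i a_iX_i(p)$ is smooth because the $X_i$ are smooth vector fields. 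Hence $\widetilde f$ is a diffeomorphism.

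Next I would check that $\widetilde f$ intertwines the deck group $\Gamma(L)$ with translations on the $\mathbb{R}^n$ factor. For $\gamma\in\Gamma(L)$, $\gamma(u)=u+k_1X_1+\dots+k_nX_n$ with $k=(k_1,\dots,k_n)\in\mathbb{Z}^n$, and so $\widetilde f(\gamma(u))=\big((u_1+k_1,\dots,u_n+k_n),p\big)$. Thus, identifying $\Gamma(L)\cong\mathbb{Z}^n$ via $X$, the diffeomorphism $\widetilde f$ carries the $\Gamma(L)$-action on $TM$ to the free and proper $\mathbb{Z}^n$-action on $\mathbb{R}^n\times M$ given by $k\cdot(a,p)=(a+k,p)$, where $a\in\mathbb{R}^n$. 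An equivariant diffeomorphism between two free and proper actions descends to a diffeomorphism of orbit spaces, so $\widetilde f$ induces a diffeomorphism $TM/\Gamma(L)\to(\mathbb{R}^n\times M)/\mathbb{Z}^n$. Because the $\mathbb{Z}^n$-action is trivial on the $M$ factor, the target is canonically $(\mathbb{R}^n/\mathbb{Z}^n)\times M=\mathbb{T}^n\times M$. Tracing through the definitions, the induced map sends $q_L\big(\sum_i u_iX_i(p)\big)$ to $\big([u_1,\dots,u_n],p\big)$, which is exactly the map $f$ of the statement; in particular this simultaneously establishes that $f$ is well defined, smooth, bijective, and has smooth inverse.

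Finally I would verify $\mathbb{T}^n$-equivariance by a direct computation from \eqref{nndkfneknkw}. Writing $u=\sum_i u_iX_i(p)$, one has $f\big(\Phi_X([t],q_L(u))\big)=f\big(q_L(u+\sum_i t_iX_i)\big)=\big([u_1+t_1,\dots,u_n+t_n],p\big)$, which equals $[t]\cdot\big([u_1,\dots,u_n],p\big)$ for the translation action of $\mathbb{T}^n$ on the first factor. This is precisely the required equivariance. The argument contains no serious obstacle; the only points needing care are the smoothness of $\widetilde f$ and of its inverse (handled by the dual coframe of the global frame $X$) and the standard descent of an equivariant diffeomorphism to the orbit spaces, the product structure of the target being immediate because $\mathbb{Z}^n$ acts trivially on $M$.
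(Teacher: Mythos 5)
Your proof is correct and is precisely the ``direct verification'' the paper leaves to the reader: trivializing $TM\cong\mathbb{R}^{n}\times M$ via the global frame $X$ and its dual coframe, matching the $\Gamma(L)$-action with $\mathbb{Z}^{n}$-translations on the first factor, and descending to the quotients. The smoothness and equivariance checks are all handled properly, so nothing is missing.
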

\begin{proof}
	By a direct verification. 
\end{proof}

\begin{lemma}\label{nkendwkddknk}
	Suppose $X,X'\in \textup{gen}(L)$. Let $A\in \textup{GL}(n,\mathbb{Z})$ be the unique 
	matrix satisfying $X=AX'.$ Then for every $[t]\in \mathbb{T}^{n}$ and every 
	$q_{L}(u)\in M_{L}$, 
	\begin{eqnarray*}
		\Phi_{X}\big([t],q_{L}(u)\big)=\Phi_{X'}\big(\rho_{A^{T}}([t]),q_{L}(u)\big), 
	\end{eqnarray*}
	where $\rho_{A^{T}}:\mathbb{T}^{n}\to \mathbb{T}^{n}$, $[t]\mapsto [A^{T}t]$ ($A^{T}$ is the transpose of $A$). 
\end{lemma}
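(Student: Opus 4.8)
The plan is to compute both torus actions explicitly on a representative point and compare. Since $X$ and $X'$ are both generators for the same parallel lattice $L$, Lemma \ref{nkqndwknwfkn}(2) gives a matrix $A\in \textup{GL}(n,\mathbb{Z})$ with $X=AX'$, meaning $X_{i}=\sum_{j=1}^{n}a_{ij}X_{j}'$ for each $i$. I would then take an arbitrary point $q_{L}(u)\in M_{L}$ and $[t]=[t_{1},\dots,t_{n}]\in \mathbb{T}^{n}$, and unwind the definition \eqref{nndkfneknkw} of $\Phi_{X}$.

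The core computation is a bookkeeping step: substituting $X_{i}=\sum_{j}a_{ij}X_{j}'$ into $\sum_{i}t_{i}X_{i}$ and re-indexing the sum. Explicitly,
\begin{eqnarray*}
	\Phi_{X}\big([t],q_{L}(u)\big)
	=q_{L}\Big(u+\sum_{i=1}^{n}t_{i}X_{i}\Big)
	=q_{L}\Big(u+\sum_{i=1}^{n}t_{i}\sum_{j=1}^{n}a_{ij}X_{j}'\Big)
	=q_{L}\Big(u+\sum_{j=1}^{n}\Big(\sum_{i=1}^{n}a_{ij}t_{i}\Big)X_{j}'\Big).
\end{eqnarray*}
The coefficient of $X_{j}'$ is $\sum_{i}a_{ij}t_{i}=(A^{T}t)_{j}$, so the right-hand side equals $q_{L}\big(u+\sum_{j}(A^{T}t)_{j}X_{j}'\big)$, which by definition \eqref{nndkfneknkw} is exactly $\Phi_{X'}\big([A^{T}t],q_{L}(u)\big)=\Phi_{X'}\big(\rho_{A^{T}}([t]),q_{L}(u)\big)$. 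This establishes the claimed identity.

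The one point requiring genuine care — and the step I expect to be the main obstacle — is checking that the map $\rho_{A^{T}}:\mathbb{T}^{n}\to \mathbb{T}^{n}$, $[t]\mapsto[A^{T}t]$ is well defined on the quotient $\mathbb{R}^{n}/\mathbb{Z}^{n}$. This is precisely where the integrality $A\in \textup{GL}(n,\mathbb{Z})$ (rather than merely $\textup{GL}(n,\mathbb{R})$) is used: if $t\equiv t'\pmod{\mathbb{Z}^{n}}$, then $A^{T}(t-t')\in \mathbb{Z}^{n}$ because $A^{T}$ has integer entries, so $A^{T}t\equiv A^{T}t'\pmod{\mathbb{Z}^{n}}$. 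I should note this explicitly rather than let it pass silently, since the whole statement would fail without it. The computation itself is independent of the representatives $u$ and $t$ chosen (the $\Gamma(L)$-invariance of the formulas is guaranteed by the remarks preceding the lemma), so once well-definedness of $\rho_{A^{T}}$ is settled, the displayed chain of equalities completes the proof.
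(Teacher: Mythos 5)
Your proof is correct and is exactly the ``direct verification'' the paper leaves to the reader: substituting $X_{i}=\sum_{j}a_{ij}X_{j}'$ into the definition \eqref{nndkfneknkw}, re-indexing to identify the coefficient of $X_{j}'$ as $(A^{T}t)_{j}$, and noting that integrality of $A^{T}$ makes $\rho_{A^{T}}$ well defined on $\mathbb{R}^{n}/\mathbb{Z}^{n}$. Your explicit attention to the well-definedness point is a welcome addition rather than a deviation from the paper's (unstated) argument.
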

\begin{proof}
	By a direct verification. 
\end{proof}
	To summarize, a parallel lattice $L$ on a connected affine manifold $(M,\nabla)$ induces an effective 
	torus action on the quotient space $M_{L}=TM/\Gamma(L)$, which is unique modulo $\textup{Aut}(\mathbb{T}^{n})\cong
	\textup{GL}(n,\mathbb{Z})$. 

\section{Analytic properties}\label{nmknfkenkfnk}

	Throughout this section $(M,h,\nabla)$ is a dually flat connected manifold and 
	$(g,J,\omega)$ is the K\"{a}hler structure on $TM$ associated to $(h,\nabla)$ via Dombrowski's construction. 

	Let $L\subset TM$ be a fixed parallel lattice with respect to $\nabla$ and let $\Gamma(L)$ be the corresponding 
	subgroup of $\textup{Diff}(TM)$. 
\begin{lemma}\label{nkwnkfenkwn}
	Suppose $X=(X_{1},...,X_{n})\in \textup{gen}(L)$. 
	For any $t=(t_{1},...,t_{n})\in \mathbb{R}^{n}$, the map $T_{t}:TM\to TM$ defined by 
	\begin{eqnarray*}
		 T_{t}(u):=u+t_{1}X_{1}+...+t_{n}X_{n}, 
	\end{eqnarray*}
	is a holomorphic isometry.
\end{lemma}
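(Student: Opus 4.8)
The plan is to show that, under the fibrewise identification $T_u(TM)\cong T_pM\oplus T_pM$ of Proposition \ref{cor:2.7} (with $p=\pi(u)$), the derivative $(T_t)_{*_u}$ becomes the identity map. Since $g$, $J$ and $\omega$ are defined pointwise purely in terms of this splitting together with the metric $h_p$ at the base point, and since $T_t$ preserves fibers (so the base point is unchanged), this will immediately give that $T_t$ is both holomorphic and isometric.

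First I would record two elementary facts. Since $T_t(u)=u+t_1X_1(p)+\dots+t_nX_n(p)$ lies in the same fiber $T_pM$ as $u$, we have $\pi\circ T_t=\pi$; differentiating gives $\pi_{*}\circ (T_t)_{*}=\pi_{*}$, so the horizontal component $\pi_{*_u}A$ of any $A\in T_u(TM)$ is preserved. It then remains to control the vertical component, that is, to prove the connector identity $K_{T_t(u)}\big((T_t)_{*_u}A\big)=K_uA$ for all $A\in T_u(TM)$, where $K$ is the connector of $\nabla$.

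This connector identity is the heart of the matter, and it is exactly where the parallelism of the $X_i$ enters. Since vectors of the form $Y_{*_p}W_p$, with $Y$ a vector field satisfying $Y(p)=u$ and $W$ an arbitrary vector field, span $T_u(TM)$ (as noted after Lemma \ref{nfkewefnknknk}), it suffices to verify the identity on such vectors. Set $\tilde{Y}:=Y+t_1X_1+\dots+t_nX_n$, so that $T_t\circ Y=\tilde{Y}$ as maps $M\to TM$ and $\tilde{Y}(p)=T_t(u)$. Then $(T_t)_{*_u}(Y_{*_p}W_p)=(T_t\circ Y)_{*_p}W_p=\tilde{Y}_{*_p}W_p$, and Lemma \ref{nfkewefnknknk} yields
$$K_{T_t(u)}\big(\tilde{Y}_{*_p}W_p\big)=(\nabla_W\tilde{Y})(p)=(\nabla_WY)(p)+\sum_{i=1}^{n}t_i(\nabla_WX_i)(p)=(\nabla_WY)(p),$$
where the last equality uses $\nabla_WX_i=0$, the $X_i$ being $\nabla$-parallel. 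As $K_u(Y_{*_p}W_p)=(\nabla_WY)(p)$ by the same lemma, the connector identity follows.

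Combining the two facts, $(T_t)_{*_u}$ sends the pair $(\pi_{*_u}A,K_uA)=(v,w)$ to the pair $(v,w)$ over the unchanged base point $p$; that is, it is the identity under the identifications of Proposition \ref{cor:2.7}. Holomorphicity then follows from $J_{T_t(u)}(v,w)=(-w,v)=(T_t)_{*_u}\big(J_u(v,w)\big)$, and the isometry property from $g_{T_t(u)}\big((v,w),(\overline{v},\overline{w})\big)=h_p(v,\overline{v})+h_p(w,\overline{w})=g_u\big((v,w),(\overline{v},\overline{w})\big)$. The only genuine obstacle is the connector computation, whose success rests entirely on the hypothesis that the generating vector fields $X_i$ are parallel; everything else is formal once the identification of Proposition \ref{cor:2.7} is in place.
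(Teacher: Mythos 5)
Your proof is correct, but it takes a genuinely different route from the paper's. The paper works in a $\nabla$-affine chart: since parallel vector fields are constant in affine coordinates, $T_{t}$ becomes the constant vertical translation $(q,r)\mapsto(q,r+b)$ with $b=A^{T}t$, and invariance of $g$ and $J$ is then read off from their coordinate matrices in Proposition \ref{prop:4.1}. You instead argue invariantly: from $\pi\circ T_{t}=\pi$ together with the connector identity $K_{T_{t}(u)}\circ (T_{t})_{*_{u}}=K_{u}$ --- which you verify on the spanning set of vectors $Y_{*_{p}}W_{p}$ via Lemma \ref{nfkewefnknknk}, with the parallelism $\nabla_{W}X_{i}=0$ doing all the work --- you conclude that $(T_{t})_{*_{u}}$ is the identity under the splitting of Proposition \ref{cor:2.7}, after which preservation of $g$, $J$ and $\omega$ is immediate from their fibrewise definitions. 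The reduction to a spanning set is legitimate since both sides of the connector identity are linear in $A$ by Proposition \ref{prop:2.3}(2); you leave this implicit, but it is harmless. Comparing the two: your argument never invokes flatness of $\nabla$ (the existence of affine charts is precisely what the paper's computation rests on), so it actually shows that $T_{t}$ preserves the Dombrowski almost Hermitian structure for \emph{any} linear connection admitting a parallel frame, with dual flatness entering only to make that structure K\"{a}hler; it also isolates the cleaner statement that $(T_{t})_{*}$ is the identity in the canonical splitting. The paper's route buys brevity --- given that Proposition \ref{prop:4.1} is already established, the verification is a one-line Jacobian computation --- and makes the local normal form of $T_{t}$ explicit, which is reused elsewhere (e.g.\ in the momentum map computation of Proposition \ref{nkdnenfkkn}).
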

\begin{proof}
	Let $(U,\varphi)$ be an affine chart for $M$ with respect to $\nabla$, 
	with local coordinates $(x_{1},...,x_{n})$. We denote by $(q_{1},...,q_{n},r_{1},...,r_{n})$ the corresponding 
	coordinates on $\pi^{-1}(U)\subseteq TM$ as described 
	before Lemma \ref{nfkewefnknknk}, where $\pi:TM\to M$ is the canonical projection. 

	Since each $X_{i}$ is parallel and since parallel vector fields are constant in affine coordinates, 
	there exists an invertible real matrix $A=(a_{ij})\in \textup{GL}(n,\mathbb{R})$ such that 
	for every $p\in U$ and every $i=1,...,n$, 
	\begin{eqnarray*}
		 X_{i}(p)=\sum_{j=1}^{n}a_{ij}\dfrac{\partial}{\partial x_{j}}\bigg\vert_{p},
	\end{eqnarray*}
	so if $u\in \pi^{-1}(U)$, then 
	\begin{eqnarray*}
		 T_{t}(u)=u+\sum_{j=1}^{n}b_{j}\dfrac{\partial}{\partial x_{j}}\bigg\vert_{p}, 
	\end{eqnarray*}
	where $b_{j}=\sum_{i=1}^{n}t_{i}a_{ij}$. It follows that the local expression for $T_{t}$ in the coordinates 
	$(q_{1},...,q_{n},r_{1},...,r_{n})$ is given by 
	\begin{eqnarray*}
		T_{t}(q,r)=(q,r+b), 
	\end{eqnarray*}
	where $b=(b_{1},...,b_{n})\in \mathbb{R}^{n}$. Now, a simple calculation using the local expressions for $g$ and 
	$J$ in the coordinates $(q_{1},...,q_{n},r_{1},...,r_{n})$ shows that $T_{t}^{*}g=g$ and 
	$(T_{t})_{*}\circ J=J\circ (T_{t})_{*}$ on $\pi^{-1}(U)$. It follows that $T_{t}$ is isometric and holomorphic. 
\end{proof}

\begin{corollary}\label{nfkndkefnknk}
	Each $\gamma\in \Gamma(L)$ is a holomorphic and isometric map. 
\end{corollary}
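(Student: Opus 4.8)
The plan is to observe that this corollary is an immediate specialization of Lemma \ref{nkwnkfenkwn}. First I would fix a generator $X=(X_{1},...,X_{n})\in \textup{gen}(L)$, which exists by the definition of a parallel lattice. By construction, $\Gamma(L)=\Gamma(L,X)$ consists precisely of the maps $TM\to TM$ of the form $u\mapsto u+k_{1}X_{1}+...+k_{n}X_{n}$ with $(k_{1},...,k_{n})\in \mathbb{Z}^{n}$.

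Next I would note that each such $\gamma$ is exactly the map $T_{t}$ of Lemma \ref{nkwnkfenkwn} evaluated at the integer vector $t=(k_{1},...,k_{n})\in \mathbb{Z}^{n}\subset \mathbb{R}^{n}$. Since that lemma asserts that $T_{t}$ is a holomorphic isometry for every real $t$, in particular for every integer $t$, it follows at once that $\gamma=T_{t}$ is holomorphic and isometric. This completes the argument.

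There is essentially no obstacle here: the entire content has already been absorbed into the proof of Lemma \ref{nkwnkfenkwn}, where the local coordinate expression $T_{t}(q,r)=(q,r+b)$ together with the block-diagonal, $r$-independent form of $g$ and $J$ (Proposition \ref{prop:4.1}) does the real work. The corollary merely records that the integer translations forming $\Gamma(L)$ constitute a subfamily of the family $\{T_{t}\}_{t\in \mathbb{R}^{n}}$ already treated. The one point worth stating explicitly is that $\Gamma(L)$ is well-defined independently of the chosen generator $X$, which was established just before the lemma; hence the conclusion does not depend on $X$, and the corollary holds for every $\gamma\in \Gamma(L)$.
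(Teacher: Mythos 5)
Your proposal is correct and is exactly the paper's (implicit) argument: the corollary follows by specializing Lemma \ref{nkwnkfenkwn} to integer vectors $t=(k_{1},\dots,k_{n})\in\mathbb{Z}^{n}$, since every $\gamma\in\Gamma(L)$ is of the form $T_{t}$ for such $t$. Your added remark that $\Gamma(L)$ is independent of the chosen generator $X$ is a sensible, if unnecessary, precaution, as the conclusion holds for each fixed representation anyway.
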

	It follows from the corollary above that $M_{L}=TM/\Gamma(L)$ is a K\"{a}hler manifold for 
	which the quotient map 
	\begin{eqnarray*}
		q_{L}: TM\to M_{L}
	\end{eqnarray*}
	is a holomorphic and locally isometric covering map with Deck transformation group $\Gamma(L)$. 
	Moreover, it follows from the formula $\pi=\pi_{L}\circ q_{L}$ (see \eqref{nwknkrgnekwnkdndkns}) and the fact that 
	the canonical projection $\pi:TM\to M$ is a Riemannian submersion that $\pi_{L}:M_{L}\to M$ is a Riemannian submersion.

	Let $X\in \textup{gen}(L)$ be arbitrary and let $\Phi_{X}:\mathbb{T}^{n}\times 
	M_{L}\to M_{L}$ be the corresponding torus action as defined in \eqref{nndkfneknkw}. 

\begin{lemma}\label{nndknfkdnfkdnk}
	For every $[t]\in \mathbb{T}^{n}$, the map $M_{L}\to M_{L}$, $p\mapsto \Phi_{X}([t],p),$ 
	is holomorphic and isometric. 
\end{lemma}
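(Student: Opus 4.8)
The plan is to descend the holomorphicity and isometry of the translation $T_t$ (Lemma \ref{nkwnkfenkwn}) through the covering map $q_L$. The starting observation is that, by the very definition \eqref{nndkfneknkw} of $\Phi_X$, one has
\[
(\Phi_X)_{[t]} \circ q_L = q_L \circ T_t,
\]
where $T_t:TM\to TM$, $u\mapsto u+t_1X_1+\dots+t_nX_n$, and $(\Phi_X)_{[t]}:M_L\to M_L$ denotes the map $p\mapsto \Phi_X([t],p)$. Since $q_L$ is a holomorphic and locally isometric covering map (Corollary \ref{nfkndkefnknk} and the remarks following it), it is in particular a local biholomorphism and a local isometry, and this is exactly what will let me transfer the relevant properties from $T_t$ to $(\Phi_X)_{[t]}$.

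First I would differentiate the commutative relation to obtain $((\Phi_X)_{[t]})_*\circ (q_L)_* = (q_L)_*\circ (T_t)_*$. Fix a point $m\in M_L$ and choose $u\in TM$ with $q_L(u)=m$. Because $q_L$ is a surjective local diffeomorphism, every tangent vector at $m$ has the form $(q_L)_*v$ for some $v\in T_u(TM)$. Using successively the holomorphicity of $q_L$, the differentiated relation, and the holomorphicity of $T_t$, I would compute
\[
((\Phi_X)_{[t]})_*\big(J\,(q_L)_*v\big)
= (q_L)_*(T_t)_*\,Jv
= (q_L)_*\,J\,(T_t)_*v
= J\,((\Phi_X)_{[t]})_*(q_L)_*v ,
\]
which shows that $((\Phi_X)_{[t]})_*$ commutes with $J$ at $m$; as $m$ is arbitrary, $(\Phi_X)_{[t]}$ is holomorphic.

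The isometry statement follows by the same descent. With $v,w\in T_u(TM)$, and using the local isometry property of $q_L$ together with $T_t^*g=g$, I would compute
\[
g\big(((\Phi_X)_{[t]})_*(q_L)_*v,\,((\Phi_X)_{[t]})_*(q_L)_*w\big)
= g\big((T_t)_*v,(T_t)_*w\big)
= g(v,w)
= g\big((q_L)_*v,(q_L)_*w\big),
\]
so that $(\Phi_X)_{[t]}^*g=g$ at $m$. Since $(\Phi_X)_{[t]}$ is a diffeomorphism (it is the action of a torus element, with inverse $(\Phi_X)_{[-t]}$), this makes it an isometry.

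There is essentially no hard step here: once the conjugation identity $(\Phi_X)_{[t]}\circ q_L = q_L\circ T_t$ is recorded, everything reduces to the elementary fact that a local biholomorphism and local isometry intertwines the Kähler structures upstairs and downstairs. The only point requiring a little care is the implicit well-definedness of $(\Phi_X)_{[t]}$, namely that $q_L\circ T_t$ does not depend on the chosen representative $t$ of $[t]$; but this is already guaranteed by the definition of the torus action, since changing $t$ by an integer vector alters $T_t$ by an element of the deck group $\Gamma(L)$, over which $q_L$ is invariant.
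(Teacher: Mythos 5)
Your proposal is correct and follows exactly the paper's argument: both rest on the conjugation identity $(\Phi_X)_{[t]}\circ q_L = q_L\circ T_t$ and descend the holomorphicity and isometry of $T_t$ (Lemma \ref{nkwnkfenkwn}) through the holomorphic, locally isometric covering map $q_L$. Your version merely spells out the pointwise computations and the well-definedness remark that the paper leaves implicit.
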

\begin{proof}
	Let $\Phi: M_{L}\to M_{L},$ $p\mapsto \Phi_{X}([t],p).$ By definition of $\Phi_{X}$ 
	(see \eqref{nndkfneknkw}), we have 
	\begin{eqnarray*}
		\Phi\circ q_{L}=q_{L}\circ T_{t}, 
	\end{eqnarray*}
	and since $q_{L}$ is a holomorphic and locally isometric covering map, we see that $\Phi$ is holomorphic and isometric 
	if and only if $T_{t}$ is, which is the case by Lemma \ref{nkwnkfenkwn}. 
\end{proof}

	Therefore, a parallel lattice $L\subset TM$ on a dually flat manifold $M$ induces a holomorphic 
	and isometric torus action $\mathbb{T}^{n}\times M_{L}\to M_{L}$.  

\section{Momentum map}\label{ndnkneknkdnksk}

	We start with some definitions. Let $G$ be a Lie group with Lie algebra $\textup{Lie}(G)=\mathfrak{g}$. Given $g\in G$, 
	we denote by $\textup{Ad}_{g}:\mathfrak{g}\to \mathfrak{g}$ and $\textup{Ad}_{g}^{*}:\mathfrak{g}^{*}\to 
	\mathfrak{g}^{*}$ the adjoint and coadjoint representations of $G$, respectively; they are related as follows: 
	\begin{eqnarray*}
		 \langle \textup{Ad}_{g}^{*}\alpha, \xi\rangle=\langle \alpha,\textup{Ad}_{g^{-1}}\xi\rangle, 
	\end{eqnarray*}
	where $\xi\in \mathfrak{g}$, $\alpha\in \mathfrak{g}^{*}$ (the dual of $\mathfrak{g}$) and $\langle\,,\,\rangle$ 
	is the natural pairing between $\mathfrak{g}$ and $\mathfrak{g}^{*}$. 

	Let $\Phi:G\times M\to M$ be a Lie group action of $G$ on a manifold $M$. The \textit{fundamental vector field} 
	associated to $\xi\in \mathfrak{g}$ is the vector field on $M$, denoted by $\xi_{M}$, defined by 
	\begin{eqnarray*}
		 (\xi_{M})(p):=\dfrac{d}{dt}\bigg\vert_{0}\Phi(c(t),p), 
	\end{eqnarray*}
	where $p\in M$ and $c(t)$ is a smooth curve in $G$ satisfying $c(0)=e$ (neutral element) and $\dot{c}(0)=\xi$. 
	
	Given a map $\moment:M\to \mathfrak{g}^{*}$ and a vector $\xi\in \mathfrak{g}$, we will denote by $\moment^{\xi}:
	M\to \mathbb{R}$ the function given by 
	\begin{eqnarray*}
		 \moment^{\xi}(p):=\langle \moment(p),\xi\rangle. 
	\end{eqnarray*}
	We shall say that $\moment$ is \textit{$G$-equivariant} if for every $g\in G$, 
	\begin{eqnarray*}
		 \moment \circ \Phi_{g}=\textup{Ad}_{g}^{*}\circ \moment, 
	\end{eqnarray*}
	where $\Phi_{g}:M\to M$, $p\mapsto \Phi(g,p)$. 

	Finally, given a symplectic form $\omega$ on $M$, the say that $\Phi$ is \textit{symplectic} 
	if $(\Phi_{g})^{*}\omega=\omega$ for all $g\in G$. 

\begin{definition}
	Let $(M,\omega)$ be a symplectic manifold. A symplectic action $\Phi:G\times M\to M$ is said to be 
	\textit{Hamiltonian} if there exists a $G$-equivariant map $\moment:M\to \mathfrak{g}^{*}$, called 
	\textit{momentum map}, such that 
	\begin{eqnarray*}
		 \omega(\xi_{M},\,.\,)=d\moment^{\xi}(.)
	\end{eqnarray*}
	(equality of 1-forms) for all $\xi\in \mathfrak{g}$. 
\end{definition}

	When $G=\mathbb{T}^{n}=\mathbb{R}^{n}/\mathbb{Z}^{n}$ is a torus, it is convenient to identify
	\begin{description}
	\item[$\bullet$] the Lie algebra of the torus $\mathbb{T}^{n}$ with $\mathbb{R}^{n}$ via the derivative at 
		$0\in \mathbb{R}^{n}$ of the quotient map $\mathbb{R}^{n}\to \mathbb{R}^{n}/\mathbb{Z}^{n}$, 
	\item[$\bullet$] $\mathbb{R}^{n}$ and its dual $(\mathbb{R}^{n})^{*}$ via the Euclidean metric.
	\end{description}
	Upon these identifications, a momentum map for a Hamiltonian torus action 
	$\mathbb{T}^{n}\times M\to M$ can be regarded as a map $\moment:M\to \mathbb{R}^{n}$. 
	Moreover, since the coadjoint action of a commutative group is trivial, 
	the equivariance condition reduces to $\moment \circ \Phi_{g}=\moment$ for all $g\in \mathbb{T}^{n}$. 

	If the symplectic manifold $M$ is connected, then it is easy to see that two momentum maps 
	$\moment_{1}$ and $\moment_{2}$ for the same Hamiltonian torus action $\Phi:\mathbb{T}^{n}\times M\to M$ 
	differ by a constant, that is $\moment_{1}=\moment_{2}+c$, $c\in \mathbb{R}^{n}$. Reciprocally, if 
	$\moment:M\to \mathbb{R}^{n}$ is a moment map for a Hamiltonian torus action, then so is $\moment+c$, where 
	$c\in\mathbb{R}^{n}$ is any constant.\\

\begin{proposition}\label{nkdnenfkkn}
	Let $(M,h,\nabla)$ be a connected dually flat manifold endowed with a parallel lattice $L=L_{X}\subset TM$ 
	with respect to $\nabla,$ where $X=(X_{1},...,X_{n})\in \textup{gen}(L)$, and let 
	$\Phi_{X}:\mathbb{T}^{n}\times M_{L}\to M_{L}$ be the corresponding torus action. 
	If $(x,y)$ is a global pair of dual coordinate systems on $M$, then $\Phi_{X}$ is Hamiltonian with 
	momentum map $\moment_{X}:M_{L}\to \mathbb{R}^{n}$ given by 
	\begin{eqnarray*}
		 \moment_{X}=-A\circ y\circ \pi_{L}, 
	\end{eqnarray*}
	where $A=(a_{ij})\in \textup{GL}(n,\mathbb{R})$ is the matrix defined via the formula
	\begin{eqnarray*}
		 X_{i}=\sum_{j=1}^{n}a_{ij}\dfrac{\partial}{\partial x_{j}}, \,\,\,\,\,\,\,i=1,...,n. 
	\end{eqnarray*}
\end{proposition}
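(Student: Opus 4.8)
The plan is to verify the defining conditions of a momentum map directly, after transporting the problem from $M_L$ up to $TM$ via the covering map $q_L$. The key simplification is that, because $(x,y)$ is a \emph{global} pair of dual coordinate systems, $x$ is a global $\nabla$-affine chart, so the coordinate formulas of Proposition \ref{prop:4.1} hold globally on $TM$ and every computation can be carried out in the single chart $(q,r)$ with $q=x\circ\pi$.

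First I would reduce to $TM$. The map $q_{L}:TM\to M_{L}$ is a surjective holomorphic local isometry (Corollary \ref{nfkndkefnknk} and the discussion following it), hence $q_{L}^{*}\omega_{M_{L}}=\omega_{TM}$, and by construction of $\Phi_{X}$ (see \eqref{nndkfneknkw}) it intertwines the $\mathbb{R}^{n}$-action $T_{t}(u)=u+\sum_{i}t_{i}X_{i}$ with $\Phi_{X}$. Consequently the fundamental vector field $\xi_{M_{L}}$ of $\xi\in\mathbb{R}^{n}$ is $q_{L}$-related to the fundamental vector field $\xi_{TM}$ of the $\mathbb{R}^{n}$-action. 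Since $q_{L}$ is a surjective local diffeomorphism, the momentum identity $\iota_{\xi_{M_{L}}}\omega_{M_{L}}=d\moment_{X}^{\xi}$ on $M_{L}$ holds if and only if its pullback $\iota_{\xi_{TM}}\omega_{TM}=d(\moment_{X}^{\xi}\circ q_{L})$ holds on $TM$. Thus it suffices to establish the latter for $\xi=e_{i}$, $i=1,\dots,n$.

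Next I would compute both sides in the chart $(q,r)$. Because $X_{i}=\sum_{j}a_{ij}\,\partial/\partial x_{j}$ is $\nabla$-parallel, it is constant in the affine coordinates, so the fundamental vector field $(e_{i})_{TM}$ is the vertical lift $\sum_{j}a_{ij}\,\partial/\partial r_{j}$, which in the Dombrowski splitting $T_{u}(TM)\cong T_{p}M\oplus T_{p}M$ is the pair $(0,X_{i}(p))$. Feeding this into the defining formula for $\omega$ gives $\iota_{(e_{i})_{TM}}\omega=-h(X_{i},\pi_{*}\,\cdot\,)=-\sum_{a}\big(\sum_{j}a_{ij}h_{ja}\big)\,dq_{a}$, where $h_{ja}=h(\partial/\partial x_{j},\partial/\partial x_{a})$. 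On the other side, using $\pi=\pi_{L}\circ q_{L}$ we have $\moment_{X}^{e_{i}}\circ q_{L}=-\sum_{j}a_{ij}\,(y_{j}\circ\pi)$, and since $y_{j}\circ\pi$ depends only on the base point, $d(\moment_{X}^{e_{i}}\circ q_{L})=-\sum_{j}a_{ij}\sum_{a}\tfrac{\partial y_{j}}{\partial x_{a}}\,dq_{a}$. Invoking $\partial y_{j}/\partial x_{a}=h_{ja}$ from Proposition \ref{neknkneknekenfkenk}(2) shows the two expressions coincide, which is exactly the momentum identity on $TM$.

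Finally I would dispatch the remaining requirements. The action is symplectic because each $(\Phi_{X})_{a}$ is holomorphic and isometric (Lemma \ref{nndknfkdnfkdnk}), hence preserves $\omega=g(J\,\cdot\,,\,\cdot\,)$. Equivariance is automatic for a torus, since the coadjoint action is trivial and $\moment_{X}\circ(\Phi_{X})_{a}=-A\circ y\circ\pi_{L}\circ(\Phi_{X})_{a}=-A\circ y\circ\pi_{L}=\moment_{X}$, using $\pi_{L}\circ(\Phi_{X})_{a}=\pi_{L}$. The main obstacle is bookkeeping rather than conceptual: one must correctly identify the vertical lift of $X_{i}$ with the pair $(0,X_{i})$ in the Dombrowski splitting and track the matrix $A$ through the computation, and one must justify the descent of the momentum identity from $TM$ to $M_{L}$, which rests on $q_{L}$ being a surjective local diffeomorphism preserving all the relevant structure. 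Once the global chart is fixed, the two one-form computations are short and the equality is forced by the relation $\partial y_{j}/\partial x_{a}=h_{ja}$.
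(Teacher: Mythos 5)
Your proposal is correct and follows essentially the same route as the paper's own proof: both verify the momentum identity in the global affine chart $(q,r)$ using the coordinate form of $\omega$ from Proposition \ref{prop:4.1}, the identity $\partial y_{j}/\partial x_{a}=h_{ja}$, and the fact that the fundamental vector field is the vertical lift $(0,A^{T}\xi)$, then transfer between $TM$ and $M_{L}$ via the equivariant K\"{a}hler covering $q_{L}$. The only difference is cosmetic — the paper first proves the $\mathbb{R}^{n}$-action on $TM$ is Hamiltonian with momentum map $-A\circ y\circ\pi$ and pushes the identity down, whereas you pull it back from $M_{L}$ and verify upstairs — which is the same computation run in the opposite direction.
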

\begin{proof}
	Let $\omega^{TM}$ and $\omega^{M_{L}}$ be the symplectic forms on $TM$ and $M_{L}$, respectively. 
	Let $T:\mathbb{R}^{n}\times TM\to TM$ be the Lie group action of $\mathbb{R}^{n}$ on $TM$ given by 
	\begin{eqnarray*}
		 T(t,u):=u+t_{1}X_{1}+...+t_{n}X_{n},
	\end{eqnarray*}
	where $t=(t_{1},...,t_{n})\in \mathbb{R}^{n}$ and $u\in TM$. We claim that $T$ is Hamiltonian 
	with momentum map $\moment:TM\to \mathbb{R}^{n}\cong(\mathbb{R}^{n})^{*}$ given by 
	\begin{eqnarray*}
		 \moment=-A\circ y\circ \pi,
	\end{eqnarray*}
	where $\pi:TM\to M$ is the canonical projection. To see this, let $(x_{1},...,x_{n})$ be an affine coordinate 
	system on $M$ with respect to $\nabla$, and let $(q,r)=(q_{1},...,q_{n},r_{1},...,r_{n})$ be the corresponding 
	coordinates on $\pi^{-1}(U)\subseteq TM$ as described before Lemma \ref{nfkewefnknknk}. Since 
	$X_{i}=\sum_{j=1}^{n}a_{ij}\tfrac{\partial}{\partial x_{j}}$ by hypothesis, the local expression for 
	$T$ is the coordinates $(q,r)$ is given by 
	\begin{eqnarray*}
		 T(t,(q,r))=(q,r+A^{T}t), 
	\end{eqnarray*}
	and so, the fundamental vector field of $\xi\in \mathbb{R}^{n}\cong T_{0}\mathbb{R}^{n}$ is given by 
	\begin{eqnarray*}
		(\xi_{TM})(q,r)=\dfrac{d}{dt}\bigg\vert_{0}T(t\xi, (q,r))=\dfrac{d}{dt}\bigg\vert_{0}(q,r+tA^{T}\xi)=(0,A^{T}\xi). 
	\end{eqnarray*}
	It follows from this together with the coordinate expression for $\omega^{TM}$ (see Proposition \ref{prop:4.1}) that 
	\begin{eqnarray}\label{fnkndksnkk}
		 \omega^{TM}(\xi_{TM},\,.\,)=(0,A^{T}\xi)
		\begin{bmatrix}
			0 & h\\
			-h & 0
		\end{bmatrix}
		=(-\xi^{T}Ah,0),
	\end{eqnarray}
	where $h=(h_{ij})$ is the coordinate expression for the metric $h$. On the other hand, the derivative of the 
	map $\moment^{\xi}:TM\to \mathbb{R}$, $u\mapsto -\langle\xi, (A\circ y\circ \pi)(u) \rangle $ 
	is given in matrix notation by 
	\begin{eqnarray}\label{fdnkefkwnk}
		 d\moment^{\xi}=\bigg(\dfrac{\partial \moment^{\xi}}{\partial q_{i}}, 
		\dfrac{\partial \moment^{\xi}}{\partial r_{i}}\bigg)=
		\bigg(-\xi^{T}A\bigg(\dfrac{\partial y_{i}}{\partial x_{j}}\bigg)_{ij},0\bigg)=(-\xi^{T}Ah,0), 
	\end{eqnarray}
	where we have used $\pi(q,r)=q$ and $\tfrac{\partial y_{i}}{\partial x_{j}}=h_{ij}$ (see Proposition \ref{neknkneknekenfkenk}). 
	Comparing \eqref{fnkndksnkk} and \eqref{fdnkefkwnk} we find that $\omega^{TM}(\xi_{TM},\,.\,)=d\moment^{\xi}$ 
	for all $\xi\in \mathbb{R}^{n}$. Obviously $\moment$ is equivariant. Therefore $\moment$ is a momentum map for 
	the action $T$. This concludes the proof of the claim. 

	Next we show that $\moment_{X}:M_{L}\to \mathbb{R}^{n}$ is a momentum map for the action $\Phi_{X}$. Let 
	$\xi\in \mathbb{R}^{n}\cong \textup{Lie}(\mathbb{R}^{n})\cong \textup{Lie}(\mathbb{T}^{n})$ be arbitrary. By inspection 
	of the definition of $\Phi_{X}$, we see that $q_{L}:TM\to M_{L}$ is equivariant, that is (with obvious notation):
	\begin{eqnarray*}
		 (\Phi_{X})_{[t]}\circ q_{L}=q_{L}\circ T_{t}
	\end{eqnarray*}
	for every $t\in \mathbb{R}^{n}$. Taking the derivative along the curve $t\xi$ at $t=0$ we find that 
	\begin{eqnarray*}
		 \xi_{M_{L}}\circ q_{L}=(q_{L})_{*}\circ \xi_{TM}.
	\end{eqnarray*}
	Because $q_{L}:TM\to M_{L}$ is holomorphic and locally isometric, we have that 
	$q_{L}^{*}\omega^{M_{L}}=\omega^{TM}$. Thus for any 
	$A\in TTM$, 
	\begin{eqnarray*}
		\omega^{M_{L}}(\xi_{M_{L}}\circ q_{L},(q_{L})_{*}A)&=& \omega^{M_{L}}((q_{L})_{*}\xi_{TM},(q_{L})_{*}A)\\
		&=& (q_{L}^{*}\omega^{M_{L}})(\xi_{TM},A)\\
		&=& \omega^{TM}(\xi_{TM},A)\\
		&=& d\moment^{\xi}(A)\\
		&=& d\moment_{X}^{\xi}\big( (q_{L})_{*}A\big), 
	\end{eqnarray*}
	where in the last equality we have used $\moment=\moment_{X}\circ q_{L}$. It follows that 
	$\omega^{M_{L}}(\xi_{M_{L}},\,.\,)=d\moment_{X}^{\xi}$ for every $\xi \in \textup{Lie}(\mathbb{T}^{n})$. 
	Obviously $\moment_{X}$ is equivariant. Therefore $\moment_{X}$ is a momentum map for 
	the action $\Phi_{X}$. This concludes the proof of the proposition.
\end{proof}

%
%

\begin{lemma}
	Let the hypotheses be as in Proposition \ref{nkdnenfkkn}. Suppose $X,X'\in \textup{gen}(L)$. 
	Let $B\in \textup{GL}(n,\mathbb{Z})$ be the unique matrix such that $X=BX'$, as in Lemma \ref{nkqndwknwfkn}. Then 
	\begin{eqnarray*}
		 \moment_{X}=B\circ \moment_{X'}. 
	\end{eqnarray*}
\end{lemma}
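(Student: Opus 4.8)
The plan is to reduce everything to the explicit formula for the momentum maps supplied by Proposition \ref{nkdnenfkkn} and then to a single matrix identity. Applying that proposition to each generator separately, I would write $\moment_{X}=-P\circ y\circ \pi_{L}$ and $\moment_{X'}=-P'\circ y\circ \pi_{L}$, where $P=(p_{ij})$ and $P'=(p'_{ij})$ are the matrices in $\textup{GL}(n,\mathbb{R})$ defined by $X_{i}=\sum_{j}p_{ij}\tfrac{\partial}{\partial x_{j}}$ and $X'_{i}=\sum_{j}p'_{ij}\tfrac{\partial}{\partial x_{j}}$ in a fixed $\nabla$-affine coordinate system $x$. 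Since both momentum maps share the common factor $y\circ\pi_{L}$, the statement $\moment_{X}=B\circ\moment_{X'}$ will follow at once from the matrix relation $P=BP'$.

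To establish $P=BP'$, I would simply expand the defining relation $X=BX'$ from Lemma \ref{nkqndwknwfkn} in the affine frame. Using the paper's convention $(BX')_{i}=\sum_{k}b_{ik}X'_{k}$, one has
\begin{eqnarray*}
	X_{i}=\sum_{k}b_{ik}X'_{k}=\sum_{k}b_{ik}\sum_{j}p'_{kj}\dfrac{\partial}{\partial x_{j}}
	=\sum_{j}\bigg(\sum_{k}b_{ik}p'_{kj}\bigg)\dfrac{\partial}{\partial x_{j}},
\end{eqnarray*}
and comparing with $X_{i}=\sum_{j}p_{ij}\tfrac{\partial}{\partial x_{j}}$ yields $p_{ij}=\sum_{k}b_{ik}p'_{kj}$, i.e. $P=BP'$ as a product of matrices. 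Plugging this back in gives $\moment_{X}=-P\circ y\circ\pi_{L}=-(BP')\circ y\circ\pi_{L}=B\circ(-P'\circ y\circ\pi_{L})=B\circ\moment_{X'}$, which is the claim; here $B$ acts as a fixed linear endomorphism of $\mathbb{R}^{n}$, so composing it after $\moment_{X'}$ is exactly left multiplication by $B$.

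I expect no serious obstacle: the only delicate point is bookkeeping of the index and transpose conventions, since the reparametrization of the torus in Lemma \ref{nkendwkddknk} involves the transpose $B^{T}$, whereas the momentum maps themselves transform by $B$ (not $B^{T}$). As a consistency check one can re-derive the relation intrinsically: by Lemma \ref{nkendwkddknk} the action $\Phi_{X}$ is the reparametrization of $\Phi_{X'}$ through $[t]\mapsto[B^{T}t]$, so the fundamental vector field of $\xi$ for $\Phi_{X}$ equals that of $B^{T}\xi$ for $\Phi_{X'}$; feeding this into the defining equation $\omega(\xi_{M_{L}},\,\cdot\,)=d\moment^{\xi}$ and using $\langle\moment_{X'},B^{T}\xi\rangle=\langle B\,\moment_{X'},\xi\rangle$ shows $\moment_{X}=B\circ\moment_{X'}$ up to an additive constant. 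That constant is then forced to vanish by the explicit normalization $-P'\circ y\circ\pi_{L}$, recovering the same conclusion.
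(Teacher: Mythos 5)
Your proof is correct and is precisely the ``direct verification'' the paper leaves implicit: applying the explicit formula $\moment_{X}=-A\circ y\circ\pi_{L}$ of Proposition \ref{nkdnenfkkn} to both generators and observing that $X=BX'$ translates, in the affine frame, into the matrix identity $P=BP'$, from which $\moment_{X}=B\circ\moment_{X'}$ follows by linearity. Your closing remark correctly isolates the one point of potential confusion — the torus reparametrization in Lemma \ref{nkendwkddknk} uses $B^{T}$ while the momentum maps transform by $B$ — and your intrinsic cross-check via $\langle\moment_{X'},B^{T}\xi\rangle=\langle B\,\moment_{X'},\xi\rangle$ is sound.
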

\begin{proof}
	By a direct verification.
\end{proof}

\section{Torification}\label{nfeknkwdnkk}

	Throughout this section,
	\begin{description}
	\item[$\bullet$] $(M,h,\nabla,\nabla^{*})$ is a connected dually flat manifold of dimension $n$ and
	\item[$\bullet$] $(N,g,J,\omega)$ is a connected K\"{a}hler manifold of real dimension $2n$, 
		equipped with an effective holomorphic and isometric torus action $\Phi:\mathbb{T}^{n}\times N\to N$.
	\end{description}


	We will denote by $N^{\circ}$ the set of points $p\in N$ where the action $\Phi$ is free, that is, 
	\begin{eqnarray*}
		N^{\circ} = \{p\in N\,\,\vert\,\, \Phi(a,p) = p\,\,\,\Rightarrow\,\,\,a = e\}. 
	\end{eqnarray*}
	Then $N^{\circ}$ is a $\mathbb{T}^{n}$-invariant connected open dense subset of $N$\footnote{This follows from the following result 
	(see \cite{Guillemin}, Corollary B.48.). Let $\Phi:G\times M\to M$ be a proper effective Lie group action of a commutative 
	Lie group $G$ on a connected manifold $M$. Then the set $M^{\circ}$ 
	of points where the action is free is open and dense in $M$. If in addition $M$ 
	is orientable and $G$ is connected, then $M^{\circ}$ is connected. }.\\

	Recall the notation $\Phi_{X}$ defined in \eqref{nndkfneknkw} for a generator $X$ of a parallel lattice $L\subset TM$.

\begin{lemma}\label{nfeknkwnkwnwknk}
	Let $L\subset TM$ be a parallel lattice with respect to $\nabla$, $U\subseteq N$ a $\mathbb{T}^{n}$-invariant set and $F:M_{L}\to U$ a map.
	The following are equivalent:
	\begin{enumerate}[(a)]
		\item There exists $X\in \textup{gen}(L)$ such that $F\circ (\Phi_{X})_{[t]}=\Phi_{[t]}\circ F$ for all $[t]\in \mathbb{T}^{n}$. 
		\item For every $X\in \textup{gen}(L)$, there exists $A\in \textup{GL}(n,\mathbb{Z})$ such that 
			$F\circ (\Phi_{X})_{[t]}=\Phi_{[At]}\circ F$ for all $t\in \mathbb{R}^{n}$.  
	\end{enumerate}
\end{lemma}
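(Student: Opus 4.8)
The plan is to reduce both implications to the change-of-generator formula of Lemma~\ref{nkendwkddknk}: if $X,X'\in\textup{gen}(L)$ are related by $X=AX'$ with $A\in\textup{GL}(n,\mathbb{Z})$, then $(\Phi_X)_{[t]}=(\Phi_{X'})_{[A^Tt]}$ for every $t\in\mathbb{R}^n$. Combined with Lemma~\ref{nkqndwknwfkn}(2), which says any two generators of $L$ differ by an element of $\textup{GL}(n,\mathbb{Z})$ (together with its immediate converse, that $AX'$ is again a generator whenever $X'$ is one and $A\in\textup{GL}(n,\mathbb{Z})$, since such an $A$ preserves the lattice $L\cap T_pM$ fiberwise), this formula lets me transport the intertwining relation for $F$ from one generator to another while tracking how the torus parameter gets twisted.

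For the implication (a)$\Rightarrow$(b), I would suppose $X_0\in\textup{gen}(L)$ satisfies $F\circ(\Phi_{X_0})_{[s]}=\Phi_{[s]}\circ F$ for all $[s]\in\mathbb{T}^n$, and take $X\in\textup{gen}(L)$ arbitrary. First I would invoke Lemma~\ref{nkqndwknwfkn}(2) to write $X=BX_0$ for a unique $B\in\textup{GL}(n,\mathbb{Z})$. Lemma~\ref{nkendwkddknk} then gives $(\Phi_X)_{[t]}=(\Phi_{X_0})_{[B^Tt]}$, so composing with $F$ and applying hypothesis (a) with parameter $[s]=[B^Tt]$ yields $F\circ(\Phi_X)_{[t]}=\Phi_{[B^Tt]}\circ F$. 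Setting $A:=B^T\in\textup{GL}(n,\mathbb{Z})$ establishes (b).

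For (b)$\Rightarrow$(a), the crux is choosing the right generator so as to cancel the twist. I would fix any $X_0\in\textup{gen}(L)$ and let $A_0\in\textup{GL}(n,\mathbb{Z})$ be the matrix furnished by (b), so $F\circ(\Phi_{X_0})_{[t]}=\Phi_{[A_0t]}\circ F$. I then set $X':=(A_0^{-1})^T X_0$, which again lies in $\textup{gen}(L)$ since $(A_0^{-1})^T=(A_0^T)^{-1}\in\textup{GL}(n,\mathbb{Z})$. Because $X_0=A_0^T X'$, Lemma~\ref{nkendwkddknk} gives $(\Phi_{X_0})_{[t]}=(\Phi_{X'})_{[A_0t]}$. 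Substituting into the identity from (b) produces $F\circ(\Phi_{X'})_{[A_0t]}=\Phi_{[A_0t]}\circ F$, and since $A_0\in\textup{GL}(n,\mathbb{Z})$ the map $[t]\mapsto[A_0t]$ is a bijection of $\mathbb{T}^n$; reparametrizing $[s]:=[A_0t]$ then gives $F\circ(\Phi_{X'})_{[s]}=\Phi_{[s]}\circ F$ for all $[s]\in\mathbb{T}^n$, which is (a).

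The computations are routine; the only points demanding care are the transpose-and-inverse bookkeeping in the parameter twists (one must not confuse $A$ with $A^T$ when passing through Lemma~\ref{nkendwkddknk}) and the observation, used in both directions, that the twisted parameter map $[t]\mapsto[A_0t]$ is a torus automorphism, which is exactly what lets the reparametrization in (b)$\Rightarrow$(a) sweep out all of $\mathbb{T}^n$. I expect the main (minor) obstacle to be verifying that the specific generator $X'=(A_0^{-1})^TX_0$ is the correct choice to absorb the twist, rather than one of the other transpose/inverse variants.
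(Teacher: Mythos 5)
Your proof is correct and takes the same route as the paper: the paper's entire proof is the one-line instruction ``Use Lemma \ref{nkendwkddknk}'', and your argument is exactly the intended expansion of that hint, combining it with Lemma \ref{nkqndwknwfkn}(2) and handling the transpose/inverse bookkeeping correctly (in particular, the choice $X'=(A_0^{-1})^{T}X_0$ in (b)$\Rightarrow$(a) is the right one, since $X_0=A_0^{T}X'$ then yields $(\Phi_{X_0})_{[t]}=(\Phi_{X'})_{[A_0t]}$ and the twist cancels).
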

\begin{proof}
	Use Lemma \ref{nkendwkddknk}. 
\end{proof}
\begin{definition}
	Let $L\subset TM$ be a parallel lattice with respect to $\nabla$, $U\subseteq N$ a $\mathbb{T}^{n}$-invariant set and $F:M_{L}\to U$ a map.
	We shall say that $F$ is \textit{equivariant with respect to} $L$, or 
	simply $L$-\textit{equivariant}, if any of the conditions in Lemma \ref{nfeknkwnkwnwknk} holds. 
\end{definition}

	Now we can define the main concept of this paper. 

\begin{definition}\label{nksdnkfnksdn}
	We shall say that $N$ is a \textit{torification} 
	of $M$ if there exist a parallel lattice $L\subset TM$ with respect to $\nabla$
	and a $L$-equivariant holomorphic and isometric diffeomorphism $F:M_{L}\to N^{\circ }$. 
\end{definition}

	By abuse of language, we will often say that the torus action $\Phi:\mathbb{T}^{n}\times N\to N$ is a torification of $M$.

\begin{remark}
	\textbf{}
	\begin{enumerate}[(1)]
	\item If $N$ is a torification of $M$, then so does $N^{\circ}$. Therefore torifications are not unique in general.
	\item Let $A\in \textup{GL}(n,\mathbb{Z})$. If $\Phi:\mathbb{T}^{n}\times N\to N$ is a torification of $M$, then so does 
		$\widetilde{\Phi}:\mathbb{T}^{n}\times N\to N$, $([t],p)\mapsto\Phi([At],p)$. 
	\item If $L\subset TM$ is a parallel lattice with respect to $\nabla$ with generator $X$, then $\Phi_{X}:\mathbb{T}^{n}\times M_{L}\to M_{L}$ is
		trivially a torification of $M$. 	
	\end{enumerate}
\end{remark}

	Below is an alternative definition, in terms of covering maps. 

\begin{proposition}\label{nkwdnkkfenknk}
	Let $M$ and $N$ be as defined in the beginning of this section. The following are equivalent:   
	\begin{enumerate}[(1)]
	\item $N$ is a torification of $M$. 
	\item There exist a holomorphic and isometric covering map $\tau: TM\to N^{\circ}$, a parallel lattice $L\subset TM$ 
		with respect to $\nabla$ and $X=(X_{1},...,X_{n})\in \textup{gen}(L)$ such that:
			\begin{enumerate}[(i)]
			\item $\Gamma(L)=\textup{Deck}(\tau)$ (= Deck transformation group of $\tau$).
			\item $\tau\circ T_{t}	= \Phi_{[t]}\circ \tau$
				for every $t\in \mathbb{R}^{n}$, where $T:\mathbb{R}^{n}\times TM\to TM$ is the Lie group 
				action of $\mathbb{R}^{n}$ on $TM$ given by $T(t,u)=u+t_{1}X_{1}+...+t_{n}X_{n},$
				where $t=(t_{1},...,t_{n})\in \mathbb{R}^{n}$ and $u\in TM$. 
			\end{enumerate}
	\end{enumerate}
\end{proposition}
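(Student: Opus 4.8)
The plan is to prove the two implications separately, the whole argument being organized around the quotient covering map $q_{L}:TM\to M_{L}$ and its properties established in Section \ref{nmknfkenkfnk}: it is a holomorphic, locally isometric covering map with $\textup{Deck}(q_{L})=\Gamma(L)$ (Corollary \ref{nfkndkefnknk} and the discussion following it), and it intertwines the translation action with the torus action, $(\Phi_{X})_{[t]}\circ q_{L}=q_{L}\circ T_{t}$, by the very definition \eqref{nndkfneknkw} of $\Phi_{X}$.

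For $(1)\Rightarrow(2)$, I would start from an $L$-equivariant holomorphic isometric diffeomorphism $F:M_{L}\to N^{\circ}$ and simply set $\tau:=F\circ q_{L}$. Since $q_{L}$ is a holomorphic locally isometric covering map and $F$ is a holomorphic isometric diffeomorphism, $\tau$ is a holomorphic isometric covering map. Condition (i) is immediate: as $F$ is a bijection, $\tau\circ\gamma=\tau$ holds exactly when $q_{L}\circ\gamma=q_{L}$, so $\textup{Deck}(\tau)=\textup{Deck}(q_{L})=\Gamma(L)$. For (ii), I would pick $X\in\textup{gen}(L)$ witnessing the $L$-equivariance of $F$, i.e. $F\circ(\Phi_{X})_{[t]}=\Phi_{[t]}\circ F$ (Lemma \ref{nfeknkwnkwnwknk}), and compute $\tau\circ T_{t}=F\circ q_{L}\circ T_{t}=F\circ(\Phi_{X})_{[t]}\circ q_{L}=\Phi_{[t]}\circ F\circ q_{L}=\Phi_{[t]}\circ\tau$. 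This direction is routine.

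For $(2)\Rightarrow(1)$, I would use (i) to descend $\tau$ through $q_{L}$. Every $\gamma\in\Gamma(L)=\textup{Deck}(\tau)$ satisfies $\tau\circ\gamma=\tau$, so $\tau$ is constant on $\Gamma(L)$-orbits and factors uniquely as $\tau=F\circ q_{L}$ for a map $F:M_{L}\to N^{\circ}$. Because $q_{L}$ is a surjective local diffeomorphism and $\tau$ is a local diffeomorphism, $F$ is a surjective holomorphic isometric local diffeomorphism; and pushing (ii) down through the surjection $q_{L}$ via $(\Phi_{X})_{[t]}\circ q_{L}=q_{L}\circ T_{t}$ gives $F\circ(\Phi_{X})_{[t]}=\Phi_{[t]}\circ F$, so $F$ is $L$-equivariant. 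The one remaining, and genuinely substantial, point is that $F$ is injective.

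Injectivity of $F$ is equivalent to $\tau$ being a regular covering, i.e. to its deck group $\Gamma(L)$ acting transitively on every fibre $\tau^{-1}(y)$; this is the main obstacle. Here I would exploit (ii) together with the freeness of $\Phi$ on $N^{\circ}$. Since $X$ is a frame, the orbits of the $\mathbb{R}^{n}$-action $T$ are exactly the fibres of $\pi:TM\to M$, and by (ii) each such fibre is carried by $\tau$ onto the torus orbit $\Phi(\mathbb{T}^{n},y)$; as $\Phi$ is free at $y\in N^{\circ}$, the restriction of $\tau$ to one $\pi$-fibre is precisely the universal covering $\mathbb{R}^{n}\to\mathbb{T}^{n}$ of that orbit, whose fibre over $y$ is the $\Gamma(L)$-orbit of a chosen preimage. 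Thus the real content is to show that $\pi$ is constant on $\tau^{-1}(y)$, equivalently that the local isometry $\bar\sigma:M\to N^{\circ}/\mathbb{T}^{n}$ induced by $\tau$ on orbit spaces is fibrewise injective (it is well defined because $\tau$ sends $T$-orbits to $\mathbb{T}^{n}$-orbits, and is a local isometry because $\tau$ maps the $\pi$-horizontal distribution isometrically onto the horizontal distribution of the orbit projection $N^{\circ}\to N^{\circ}/\mathbb{T}^{n}$). Concretely, if $\tau(u)=\tau(v)$ then (ii) yields $\tau(T_{t}u)=\tau(T_{t}v)$ for all $t\in\mathbb{R}^{n}$; tracking this one-parameter family inside the fibre product $TM\times_{N^{\circ}}TM$, whose first projection restricts on each connected component to a covering of the connected base $TM$, produces a covering transformation of $\tau$ sending $u$ to $v$, which by (i) must be a translation $T_{k}$ with $k\in\mathbb{Z}^{n}$; hence $v=u+k_{1}X_{1}+\dots+k_{n}X_{n}$ and $q_{L}(u)=q_{L}(v)$. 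I expect this transitivity step to be where the care is needed; its cleanest instance is when $TM$ (equivalently $M$) is simply connected, for then $\tau$ is automatically the universal covering and hence normal, and the factored map $F$ is a diffeomorphism with no further work.
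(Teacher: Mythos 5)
Your $(1)\Rightarrow(2)$ is exactly the paper's argument ($\tau:=F\circ q_{L}$, with $\textup{Deck}(\tau)=\textup{Deck}(q_{L})=\Gamma(L)$ because $F$ is a bijection, and (ii) from the $L$-equivariance of $F$), and it is correct. For $(2)\Rightarrow(1)$ you follow the same descent through $q_{L}$ that the paper sketches, and you have correctly isolated the one point of real content, which the paper's sketch passes over in silence when it says $\tau$ ``descends to a K\"{a}hler isomorphism'': the descended map $F:M_{L}\to N^{\circ}$ is a priori only a surjective local K\"{a}hler isomorphism, and its injectivity is equivalent to $\textup{Deck}(\tau)=\Gamma(L)$ acting transitively on the fibres of $\tau$, i.e.\ to $\tau$ being a normal covering.

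However, your proposed mechanism for closing that gap does not close it. The fibre-product construction produces a deck transformation carrying $u$ to $v$ only if the connected component of $(u,v)$ in $TM\times_{N^{\circ}}TM$ projects one-sheetedly onto $TM$; when $TM$ is not simply connected (the hypotheses allow, e.g., $M$ a flat torus with its parallel frame, so $TM\simeq \mathbb{T}^{2}\times\mathbb{R}^{2}$) that component can be a nontrivial covering of $TM$, and a deck transformation sending $u$ to $v$ exists if and only if $\tau_{*}\pi_{1}(TM,u)=\tau_{*}\pi_{1}(TM,v)$ — which is not implied by (i) and (ii). Moreover, ``tracking the one-parameter family $T_{t}$'' cannot repair this: every $\varphi\in\textup{Deck}(\tau)$ automatically commutes with the $\mathbb{R}^{n}$-action (by (ii), $t\mapsto T_{-t}\circ\varphi\circ T_{t}$ is a continuous family in the discrete group $\textup{Deck}(\tau)$, hence constant), so the translations move points only along the fibres of $\pi$, whereas the obstruction to transitivity is the monodromy of $\pi_{1}(M)$ acting on the fibres of the induced covering $\bar{\tau}:M\to N^{\circ}/\mathbb{T}^{n}$ — a direction the $\mathbb{R}^{n}$-family never sees. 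So your argument is complete exactly in the simply connected case you honestly flag, and that case does cover every application the paper makes of this proposition (in Theorem \ref{nckdnwknknednenkenk}(2), Proposition \ref{ncekndwkneknk} and Proposition \ref{nfeknwknwknk}(5) one has $M=\mathbb{R}^{n}$, $TM=\mathbb{C}^{n}$); but as a proof of the proposition as stated, with $M$ merely connected, the transitivity step is missing. In fairness, the paper's own ``straightforward'' sketch silently assumes the same step; a general argument would have to exploit the K\"{a}hler rigidity of the situation — for instance, any local branch $\psi$ with $\tau\circ\psi=\tau$ is a holomorphic isometry commuting with the $T_{t}$, and since translation of $TM$ by a non-parallel vector field is never holomorphic, its fibre-shift is forced to be a constant vector, which constrains the possible monodromy — rather than pure covering-space theory.
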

\begin{proof}[Sketch of proof]
	$(1)\Rightarrow (2)$. If $N$ is a torification of $M$, then there exist a parallel lattice $L\subset TM$ with respect to $\nabla$, 
	$X\in \textup{gen}(L)$ and a holomorphic and isometric diffeomorphism $F:M_{L}\to N^{\circ }$ such that $F\circ (\Phi_{X})_{a}=\Phi_{a}\circ F$ 
	for all $a\in \mathbb{T}^{n}$. Then it is easy to check that the map $\tau:TM\to N^{\circ}$ defined by $\tau(u)=(F\circ q_{L})(u)$ has 
	the required properties. 

	\noindent $(2)\Rightarrow (1)$. Let $\tau: TM\to N^{\circ}$, $L\subset TM$ and $X\in \textup{gen}(L)$ be as in the second item of the proposition. 
	Because $\tau$ is $\Gamma(L)$-invariant, it descends to a K\"{a}hler isomorphism $F:M_{L}\to N^{\circ}$, and a straightforward computation using 
	$\tau\circ T_{t}= \Phi_{[t]}\circ \tau$ shows that $F$ is equivariant in the sense that $F\circ (\Phi_{X})_{a}=\Phi_{a}\circ F$ for all $a\in \mathbb{T}^{n}$. 
	It follows that $N$ is a torification of $M$. 
%
\end{proof}

	Before proceeding, we introduce some terminology.

\begin{definition}
	Suppose $\Phi:\mathbb{T}^{n}\times N\to N$ is a torification of $(M,h,\nabla)$. 
	\begin{enumerate}[(1)]
	\item A \textit{toric parametrization} is a triple
		$(L,X,F)$, where 
		\begin{itemize}
			\item $L\subset TM$ is parallel lattice with respect to $\nabla$, generated by $X$, 
		\item $F:M_{L}\to N^{\circ}$ is a holomorphic and isometric diffeomorphism that is equivariant in the sense that
			$F\circ (\Phi_{X})_{a}=\Phi_{a}\circ F$ for all $a\in \mathbb{T}^{n}$.
		\end{itemize}
	\item  Let $\tau:TM\to N^{\circ}$ and $\kappa:N^{\circ}\to M$ be smooth maps. We say 
	that the pair $(\tau,\kappa)$ is a \textit{toric factorization} if there exists a toric parametrization $(L,X,F)$ that makes the following diagram 
	commutative: 
	\begin{eqnarray*} 
	\begin{tikzcd}
		TM \arrow[dd,bend right=70,swap,"\displaystyle\pi"] \arrow{rd}{\displaystyle \tau} \arrow[swap]{d}{\displaystyle q_{{L}}} & \\
		M_{{L}}   \arrow[swap]{d}{\displaystyle \pi_{{L}}} \arrow[swap]{r}{\displaystyle F} & N^{\circ} \arrow{ld}{\displaystyle \kappa} \\
		M  &   
	\end{tikzcd}.
	\end{eqnarray*}
	In this case, we say that $(\tau, \kappa)$ is \textit{induced by the toric parametrization} $(L,X,F)$. 
\item We say that $\kappa:N^{\circ}\to M$ is the \textit{compatible projection induced by the toric parametrization} $(L,X,F)$ 
	if there exists a map $\tau:TM\to N^{\circ}$ such that $(\tau,\kappa)$ is the toric factorization induced by $(L,X,F)$. 
	When it is not necessary to mention $(L,X,F)$ explicitly, we just say 
	that $\kappa$ is a \textit{compatible projection}. Analogously, one defines a \textit{compatible covering map} $\tau:TM\to N^{\circ}$. 

	\end{enumerate}
\end{definition}

	By abuse of language, we will often say that the formula $\pi=\kappa\circ \tau$ is a toric factorization. If $\pi=\kappa\circ \tau$ is a toric factorization, 
	then $\tau$ is a K\"{a}hler covering map whose Deck transformation group is $\Gamma(L)$, and $\kappa$ is naturally a principal $\mathbb{T}^{n}$-bundle and 
	a Riemannian submersion. 

\begin{proposition}\label{ndknqkefnkwnkndk}
	Let $(M,h,\nabla)$ and $(M',h',\nabla')$ be connected dually flat spaces and $f:M\to M'$ an 
	isomorphism of dually flat spaces. Suppose $\Phi:\mathbb{T}^{n}\times N\to N$ is a torification of $M$, with toric factorization $\pi=\kappa\circ \tau:TM\to M$. 
	Then $\Phi:\mathbb{T}^{n}\times N\to N$ is a torification of $M'$, with toric factorization $\pi'=\kappa'\circ \tau':TM'\to M'$, where $\tau'=\tau\circ (f_{*})^{-1}$ and 
	$\kappa'=f\circ \kappa.$ 
\end{proposition}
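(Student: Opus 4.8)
The plan is to show that the derivative $f_{*}:TM\to TM'$ is a K\"{a}hler isomorphism which simultaneously conjugates every ingredient of the torification of $M$ into the corresponding ingredient over $M'$, and then to invoke the covering-map characterization of torification (Proposition \ref{nkwdnkkfenknk}). First I would record that, being an isomorphism of dually flat spaces, $f$ is in particular an isometric affine diffeomorphism; hence by Proposition \ref{nfeknwknekn} its derivative $f_{*}:TM\to TM'$ is a holomorphic isometric diffeomorphism, i.e.\ a K\"{a}hler isomorphism, and so is $(f_{*})^{-1}$. Consequently $\tau'=\tau\circ(f_{*})^{-1}:TM'\to N^{\circ}$ is the composite of a K\"{a}hler isomorphism with the holomorphic isometric covering map $\tau$, hence is itself a holomorphic isometric covering map onto $N^{\circ}$.

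Next I would produce the parallel lattice on $M'$. Let $(L,X,F)$ be the toric parametrization inducing $\pi=\kappa\circ\tau$, with $X=(X_{1},\dots,X_{n})\in\textup{gen}(L)$. Because $f$ is affine, $f^{*}\nabla'=\nabla$, which unwinds to $f_{*}(\nabla_{Y}Z)=\nabla'_{f_{*}Y}f_{*}Z$; applying this to $\nabla_{Y}X_{i}\equiv 0$ and using that $f_{*}Y$ ranges over all vector fields on $M'$ gives $\nabla'_{W}(f_{*}X_{i})\equiv 0$ for every $W$, so each $f_{*}X_{i}$ is $\nabla'$-parallel. Since $f_{*}$ is a fiberwise linear isomorphism, $X'=(f_{*}X_{1},\dots,f_{*}X_{n})$ is a global frame of $\nabla'$-parallel fields, and therefore $X'=f_{*}X$ generates a parallel lattice $L'=L_{X'}=f_{*}(L)\subset TM'$ with respect to $\nabla'$.

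Then I would establish the two intertwining identities that drive everything. Writing $T_{t}$ and $T'_{t}$ for the $\mathbb{R}^{n}$-translations determined by $X$ and $X'$, a direct fiberwise computation gives $f_{*}\circ T_{t}=T'_{t}\circ f_{*}$ for all $t\in\mathbb{R}^{n}$ (both send $u\in T_{p}M$ to $f_{*_{p}}u+\sum_{k}t_{k}X'_{k}(f(p))$), equivalently $T'_{t}=f_{*}\circ T_{t}\circ(f_{*})^{-1}$; and naturality of the tangent projection gives $\pi'\circ f_{*}=f\circ\pi$. With these I verify conditions (i) and (ii) of Proposition \ref{nkwdnkkfenknk} for $\tau'$ and $X'$. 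For (ii): $\tau'\circ T'_{t}=\tau\circ(f_{*})^{-1}\circ f_{*}\circ T_{t}\circ(f_{*})^{-1}=\tau\circ T_{t}\circ(f_{*})^{-1}=\Phi_{[t]}\circ\tau\circ(f_{*})^{-1}=\Phi_{[t]}\circ\tau'$, using (ii) for the original factorization. For (i): conjugation by $f_{*}$ turns $\Gamma(L)$ into $\Gamma(L')$ (since $T'_{k}=f_{*}T_{k}(f_{*})^{-1}$ for $k\in\mathbb{Z}^{n}$), while $\gamma'\in\textup{Deck}(\tau')$ iff $\tau\circ\big((f_{*})^{-1}\gamma'f_{*}\big)=\tau$, i.e.\ $\textup{Deck}(\tau')=f_{*}\,\textup{Deck}(\tau)\,(f_{*})^{-1}$; since $\textup{Deck}(\tau)=\Gamma(L)$ by hypothesis, $\textup{Deck}(\tau')=\Gamma(L')$. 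By Proposition \ref{nkwdnkkfenknk}, $N$ is a torification of $M'$. The asserted factorization then follows from $\kappa\circ\tau=\pi$ and $\pi'\circ f_{*}=f\circ\pi$:
\[
\kappa'\circ\tau'=f\circ\kappa\circ\tau\circ(f_{*})^{-1}=f\circ\pi\circ(f_{*})^{-1}=\pi'\circ f_{*}\circ(f_{*})^{-1}=\pi',
\]
so $\pi'=\kappa'\circ\tau'$ is the toric factorization induced by the toric parametrization $(L',X',F')$, where $F'$ is the K\"{a}hler isomorphism to which $\tau'$ descends on $M_{L'}=TM'/\Gamma(L')$.

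The hard part will be only the bookkeeping of two equivariances at once, namely matching $\textup{Deck}(\tau')$ with $\Gamma(L')$ and the $\mathbb{R}^{n}$-equivariance with $\Phi$. Both, however, collapse to the single structural fact that $f_{*}$ carries the generating parallel frame $X$ to the generating parallel frame $f_{*}X$, which is precisely where affineness of $f$ enters; once that is in hand, every remaining step is a formal conjugation argument.
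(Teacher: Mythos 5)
Your proposal is correct and follows essentially the same route as the paper's (sketched) proof: invoke Proposition \ref{nfeknwknekn} to see that $f_{*}$ is a K\"{a}hler isomorphism, conclude that $\tau'=\tau\circ(f_{*})^{-1}$ is a K\"{a}hler covering map, and apply the covering-map characterization of Proposition \ref{nkwdnkkfenknk}. Your write-up merely supplies the details the paper leaves implicit --- that $f_{*}X$ is a $\nabla'$-parallel generating frame, the conjugation identities $T'_{t}=f_{*}T_{t}(f_{*})^{-1}$ and $\textup{Deck}(\tau')=f_{*}\,\textup{Deck}(\tau)\,(f_{*})^{-1}$, and the factorization check $\kappa'\circ\tau'=\pi'$ --- all of which are verified correctly.
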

\begin{proof}[Sketch of proof]
	By Proposition \ref{nfeknwknekn}, $f_{*}:TM\to TM'$ is a K\"{a}hler isomorphism, and thus $\tau'=\tau\circ (f_{*})^{-1}:TM'\to N^{\circ}$ is a K\"{a}hler covering map. 
	Now apply Proposition \ref{nkwdnkkfenknk}. 
\end{proof}

	Now we focus our attention on torifications whose torus action is Hamiltonian. 

\begin{proposition}\label{unfenkwneknknk}
	Let $\Phi:\mathbb{T}^{n}\times N\to N$ be a torification of $(M,h,\nabla)$. Suppose that 
	$\Phi$ is Hamiltonian with momentum map $\moment:N\to \mathbb{R}^{n}$ and that $(x,y)$ are global pair of dual coordinate systems on $M$.
	\begin{enumerate}[(1)]
	\item Let $\kappa:N^{\circ}\to M$ be the compatible projection induced by a toric parametrization $(L,X,F)$. 
		Then there exists a constant $C\in \mathbb{R}^{n}$ such that on $N^{\circ}$,
		\begin{eqnarray*}
			\moment=-A\circ y\circ \kappa+C,
		\end{eqnarray*}
		where $A=(a_{ij})\in \textup{GL}(n,\mathbb{R})$ is the matrix defined via the formula $X_{i}=\sum_{j=1}^{n}a_{ij}\tfrac{\partial}{\partial x_{j}}$, $i=1,...,n$ 
		(here $X=(X_{1},...,X_{n})$ and $x=(x_{1},...,x_{n})$). In particular, $\moment(N^{\circ})$ is an open subset of $\mathbb{R}^{n}$. 
	\item If $x(M)=\mathbb{R}^{n},$ then $\moment(N^{\circ})$ is a convex subset of $\mathbb{R}^{n}$. 
		If in addition $\moment$ is proper, that is, if $\moment^{-1}(K)$ is compact whenever $K\subseteq \mathbb{R}^{n}$ is compact, 
		then $\moment(N)\subseteq \mathbb{R}^{n}$ is convex. 
	\item $\moment:N^{\circ}\to \moment(N^{\circ})$ is naturally a principal $\mathbb{T}^{n}$-bundle. In particular, there exists a unique Riemannian metric 
		$k$ on $\moment(N^{\circ})$ that makes $\moment:N^{\circ}\to \moment(N^{\circ})$ a Riemannian submersion. 
	\end{enumerate} 
\end{proposition}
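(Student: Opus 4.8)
The plan is to reduce all three statements to the explicit momentum map computed on $M_{L}$ in Proposition \ref{nkdnenfkkn}, transported to $N^{\circ}$ through a toric parametrization. For (1), I fix a toric parametrization $(L,X,F)$ inducing $\kappa$, so that $\kappa\circ F=\pi_{L}$ (read off the toric factorization diagram) and $F:M_{L}\to N^{\circ}$ is an equivariant K\"{a}hler isomorphism, hence an equivariant symplectomorphism ($F$ preserves $g$ and $J$, so it preserves $\omega=g(J\cdot,\cdot)$). By Proposition \ref{nkdnenfkkn}, $\Phi_{X}$ is Hamiltonian on $M_{L}$ with momentum map $\moment_{X}=-A\circ y\circ\pi_{L}$. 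Differentiating the equivariance relation $F\circ(\Phi_{X})_{[t]}=\Phi_{[t]}\circ F$ gives $F_{*}\xi_{M_{L}}=\xi_{N}\circ F$ for every $\xi$, and together with $F^{*}\omega^{N}=\omega^{M_{L}}$ this shows that $\moment\circ F$ is a momentum map for $\Phi_{X}$. Since $M_{L}=TM/\Gamma(L)$ is connected, two momentum maps for the same torus action differ by a constant (see Section \ref{ndnkneknkdnksk}), so $\moment\circ F=-A\circ y\circ\pi_{L}+C$ for some $C\in\mathbb{R}^{n}$. Substituting $\pi_{L}=\kappa\circ F$ and cancelling the bijection $F$ yields $\moment=-A\circ y\circ\kappa+C$ on $N^{\circ}$. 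As $\kappa$ is surjective onto $M$ and $y$ is a global chart, $\moment(N^{\circ})=-A\cdot y(M)+C$ is the image of the open set $y(M)$ under the affine isomorphism $z\mapsto -Az+C$, hence open.

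For (2), assume $x(M)=\mathbb{R}^{n}$, so $x:M\to\mathbb{R}^{n}$ is a diffeomorphism and $M$ is simply connected. By Proposition \ref{neknkneknekenfkenk} there is $\psi:M\to\mathbb{R}$ with $y=\textup{grad}(\psi)$ and $\textup{Hess}(\psi)=[h_{ij}]$ positive definite; read in the chart $x$ this says $y=\nabla\psi$ for a smooth strictly convex function on $\mathbb{R}^{n}$. The crux is that the image of the gradient of such a function is open and convex. I would argue this via the convex conjugate $\psi^{*}$: the range of $\nabla\psi$ coincides with $\textup{dom}(\partial\psi^{*})$, which satisfies $\textup{int}(\textup{dom}\,\psi^{*})\subseteq\textup{range}(\nabla\psi)\subseteq\textup{dom}\,\psi^{*}$; since $\nabla\psi$ is a local diffeomorphism its range is open, forcing $\textup{range}(\nabla\psi)=\textup{int}(\textup{dom}\,\psi^{*})$, which is convex. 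Then $\moment(N^{\circ})=-A\cdot y(M)+C$ is an affine image of $y(M)=\nabla\psi(\mathbb{R}^{n})$ and is therefore convex.

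For the properness claim, I use that a proper continuous map into the locally compact space $\mathbb{R}^{n}$ is closed, so $\moment(N)$ is closed; since $N^{\circ}$ is dense in $N$ and $\moment$ is continuous, $\moment(N)\subseteq\overline{\moment(N^{\circ})}\subseteq\overline{\moment(N)}=\moment(N)$, whence $\moment(N)=\overline{\moment(N^{\circ})}$ is the closure of a convex set and hence convex. For (3), I write $\moment=\beta\circ\kappa$ on $N^{\circ}$, where $\beta=-A\circ y+C:M\to\moment(N^{\circ})$ is a diffeomorphism (because $y$ is a global chart and $A$ is invertible). Since $\kappa:N^{\circ}\to M$ is a principal $\mathbb{T}^{n}$-bundle coming from the toric factorization, composing with the diffeomorphism $\beta$ shows that $\moment:N^{\circ}\to\moment(N^{\circ})$ is again a principal $\mathbb{T}^{n}$-bundle, with fibers the $\mathbb{T}^{n}$-orbits. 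Finally, because $\mathbb{T}^{n}$ acts on $N^{\circ}$ by isometries and transitively on each fiber of $\moment$, the metric $g$ descends to a well-defined metric $k$ on $\moment(N^{\circ})$ by declaring $\moment_{*}$ to be a linear isometry on the orthogonal complement of the fibers; transitivity of the isometric action on fibers guarantees this is independent of the chosen point, and it is the unique metric making $\moment$ a Riemannian submersion.

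I expect the only genuinely substantive step to be the convexity of the gradient image in (2); everything else, including the properness argument and the bundle structure in (3), is essentially formal once the identity $\moment=-A\circ y\circ\kappa+C$ from (1) is established.
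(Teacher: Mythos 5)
Your proof is correct and follows essentially the same route as the paper: transport the explicit momentum map $-A\circ y\circ\pi_{L}$ of Proposition \ref{nkdnenfkkn} through the equivariant symplectomorphism $F$ and use uniqueness of momentum maps up to a constant on the connected set $N^{\circ}$; reduce (2) to convexity of $y(M)=\textup{grad}(\psi)(\mathbb{R}^{n})$ and handle properness by the same density--closedness argument; obtain (3) by composing the principal bundle $\kappa$ with the diffeomorphism $-A\circ y+C$. The only divergence is cosmetic: where the paper invokes the Legendre-type theorem (Theorem \ref{neknknfkfneknk}) to get convexity of the gradient image, you prove the same fact directly via the subdifferential inclusions $\textup{ri}(\textup{dom}\,\psi^{*})\subseteq\textup{range}(\nabla\psi)\subseteq\textup{dom}\,\psi^{*}$ together with openness of the range, which is a valid self-contained argument within the same convex-duality toolkit.
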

\begin{proof} (1) By Proposition \ref{nkdnenfkkn}, $\Phi_{X}:\mathbb{T}^{n}\times M_{L}\to M_{L}$ 
	is Hamiltonian with momentum map $\moment':M_{L}\to \mathbb{R}^{n}$ given by $\moment'=-A\circ y\circ \pi_{L}$.
	Since $F:M_{L}\to N^{\circ}$ is an equivariant symplectomorphism, $\moment'\circ F^{-1}:N^{\circ}\to \mathbb{R}^{n}$ 
	is a momentum map with respect to the action $\Phi:\mathbb{T}^{n}\times N^{\circ}\to N^{\circ}$. 
	But then $\moment'\circ F^{-1}$ and $\moment$ are two momentum maps for the same torus action 
	on the connected set $N^{\circ}$. Therefore there is a constant $C\in \mathbb{R}^{n}$ such that 
	$\moment=\moment'\circ F^{-1}+C$ on $N^{\circ}$. The lemma follows from this and the fact that $\moment'\circ F^{-1}=-A\circ y\circ \pi_{L}\circ F^{-1}=
	-A\circ y\circ \kappa.$

	\noindent (2) By (1), there are an invertible matrix $A=(a_{ij})$ and a constant $C\in \mathbb{R}^{n}$ such that 
	$\moment(N^{\circ})=-A(y(M))+C$. Therefore $\moment(N^{\circ})$ is convex if and only if 
	$y(M)$ is convex. By Proposition \ref{neknkneknekenfkenk}, there is 
	a smooth function $\psi:M \to \mathbb{R}$ such that $y=\textup{grad}(\psi)=(\tfrac{\partial \psi}{\partial x_{1}},...,\tfrac{\partial\psi}{\partial x_{n}})$
	and $\tfrac{\partial^{2}\psi}{\partial x_{i}\partial x_{j}}=h_{ij}=h(\tfrac{\partial}{\partial x_{i}},\tfrac{\partial}{\partial x_{j}})$ for all $i,j=1,...,n$ on $M$. 
	Thus $y$ can be regarded as a strictly convex function on the convex set $x(M)=\mathbb{R}^{n}$. 
	By the general properties of the Legendre transform (see Theorem \ref{neknknfkfneknk}), 
	$y(\mathbb{R}^{n})$ is convex. If $\moment$ is proper, then a simple exercice in topology shows that 
	$\moment(N)=\overline{\moment(A)}$ for any dense subset $A$ of $N$, where $\overline{\moment(A)}$ denotes the closure of 
	$\moment(A)$ in $\mathbb{R}^{n}$. Since $N^{\circ}$ is dense in $N$, $\moment(N)=\overline{\moment(N^{\circ})}$. 
	It follows from this and the fact that the closure of a convex set is convex that $\moment(N)$ is convex. 

	\noindent (3) By (1), the following diagram is commutative:
	\begin{eqnarray*} 
	\begin{tikzcd}
		&   N^{\circ}  \arrow{rd}{\displaystyle \moment}\arrow[swap]{ld}{\displaystyle \kappa}  & \\
		M   \arrow[swap]{rr}{\displaystyle -A\circ y+C}  & &   \moment(N^{\circ})
	\end{tikzcd}
	\end{eqnarray*}
	Since $\kappa$ is a principal $\mathbb{T}^{n}$-bundle and $-A\circ y+C$ is a diffeomorphism, 
	$\moment:N^{\circ}\to \moment(N^{\circ})$ is a principal $\mathbb{T}^{n}$-bundle.
\end{proof}

\begin{lemma}\label{neekwnkvnkdnsk}
	Let $\mathcal{O}\subset N$ be the $\mathbb{T}^{n}$-orbit of $p\in N^{\circ}$. Then $T_{p}N=T_{p}\mathcal{O}\oplus J\big(T_{p}\mathcal{O}\big)$, 
	and the direct sum is orthogonal relative to the K\"{a}hler metric $g$ of $N$. 
\end{lemma}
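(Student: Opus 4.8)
The plan is to reduce the whole statement to the single fact that the orbit $\mathcal{O}$ is isotropic for $\omega$, and then to verify that fact in the Dombrowski model. First I would record the dimensions. Since $p\in N^{\circ}$ the orbit map $\mathbb{T}^{n}\to\mathcal{O}$ is an immersion, so $\mathcal{O}$ is an $n$-dimensional submanifold and $T_{p}\mathcal{O}$ is spanned by the fundamental vector fields $(\xi_{1})_{N}(p),\dots,(\xi_{n})_{N}(p)$; as $J$ is a linear isomorphism, $J(T_{p}\mathcal{O})$ is again $n$-dimensional, while $\dim_{\mathbb{R}}T_{p}N=2n$. Using the Kähler relation $\omega(\cdot,\cdot)=g(J\cdot,\cdot)$ and the skew-symmetry of $J$ with respect to $g$, for $u,v\in T_{p}\mathcal{O}$ one gets $g(u,Jv)=-\omega(u,v)$. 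Hence $T_{p}\mathcal{O}\perp J(T_{p}\mathcal{O})$ is \emph{equivalent} to $\omega|_{T_{p}\mathcal{O}}\equiv 0$, i.e.\ to $\mathcal{O}$ being isotropic. Because $g$ is positive definite, such orthogonality already forces $T_{p}\mathcal{O}\cap J(T_{p}\mathcal{O})=\{0\}$, and the count $n+n=2n$ then upgrades this to the asserted orthogonal direct sum. So the only real content is the isotropy of the orbit.

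Second I would prove isotropy using a toric parametrization $(L,X,F)$, transporting the question to $TM$. Since $F:M_{L}\to N^{\circ}$ is a holomorphic isometry and $q_{L}:TM\to M_{L}$ a holomorphic local isometry intertwining the torus action with the fibrewise translations $T_{t}(u)=u+t_{1}X_{1}+\dots+t_{n}X_{n}$ (Lemma \ref{nkwnkfenkwn}), the subspace $T_{p}\mathcal{O}$ corresponds to the tangent space of a $T_{t}$-orbit in $TM$, which is the vertical subspace: under the identification $T_{u}(TM)\cong T_{\pi(u)}M\oplus T_{\pi(u)}M$ of Proposition \ref{cor:2.7} it is $\{0\}\oplus T_{\pi(u)}M$. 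From the Dombrowski formulas for $g,J,\omega$ (Section \ref{nknknfeknaekn}; cf.\ Proposition \ref{prop:4.1}), $\omega\big((0,w),(0,w')\big)=h(0,w')-h(w,0)=0$, so the orbit is isotropic; moreover $J(0,w)=(-w,0)$ sends the vertical to the horizontal subspace $T_{\pi(u)}M\oplus\{0\}$, and $g\big((0,w),(-w',0)\big)=0$, which gives orthogonality and the complementary splitting directly. Transporting back through $F$ and $q_{L}$, which preserve both $g$ and $J$, yields the claim at $p\in N^{\circ}$.

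The step I expect to be the crux is precisely the isotropy. The bare hypotheses ``isometric and holomorphic'' do not by themselves suffice (a linear $\mathbb{T}^{n}$-translation on a flat complex torus has non-isotropic orbits, on which $\omega$ restricts to a nonzero area form), so one genuinely uses the torification structure — equivalently the Hamiltonian structure, in which case the computation is even shorter: $\omega(\xi_{N},\eta_{N})=\eta_{N}(\moment^{\xi})=0$, since the momentum map $\moment$ is $\mathbb{T}^{n}$-invariant (Proposition \ref{nkdnenfkkn}). Both routes amount to the same vanishing. The only delicate point is checking that the orbit direction is exactly the vertical (fibre) direction of the model, independently of the chosen generator $X$; this follows from Lemma \ref{nkendwkddknk}, which shows that changing $X$ only reparametrizes the torus and leaves $T_{p}\mathcal{O}$ unchanged.
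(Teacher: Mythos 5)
Your proposal is correct, and your first paragraph (dimension count, the equivalence of $T_{p}\mathcal{O}\perp J(T_{p}\mathcal{O})$ with isotropy of the orbit, and positive-definiteness of $g$ killing the intersection) is essentially the skeleton of the paper's own argument. Where you differ is in how you establish isotropy: your primary route transports the orbit into the Dombrowski model via a toric parametrization $(L,X,F)$, identifies $T_{p}\mathcal{O}$ with the vertical subspace $\{0\}\oplus T_{\pi(u)}M$, and reads off both $\omega$-isotropy and $g$-orthogonality from the block formulas. The paper never invokes the torification structure at all: it computes, for $u=\xi_{N}(p)$, $v=\eta_{N}(p)\in T_{p}\mathcal{O}$,
\begin{eqnarray*}
	g_{p}(Jv,u)=\omega_{p}(v,u)=(d\moment^{\eta})_{p}(\xi_{N})=0,
\end{eqnarray*}
using only the $\mathbb{T}^{n}$-invariance of $\moment$ — exactly the ``shorter'' alternative you sketch at the end. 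This distinction matters more than your word ``equivalently'' suggests: the torification and Hamiltonian hypotheses are genuinely different (a torification need not carry a global momentum map, and a Hamiltonian toric K\"{a}hler manifold is not yet known to be a torification), and the paper needs the Hamiltonian version. Indeed the lemma is applied in Section \ref{nekdwknknwkdnknkn} — via Lemma \ref{nfekwndknfeknkdnk}(2) and Lemma \ref{nfeknkenfknd}(d), inside the proof of Theorem \ref{nckdnwknknednenkenk} — to a K\"{a}hler manifold for which being a torification is precisely what is being proved; your Dombrowski route would be circular there, while your momentum-map computation is the one that carries the load. Two small quibbles: your flat-torus counterexample requires $n\geq 2$ with the translation directions spanning a complex line (for $n=1$ a one-dimensional orbit is automatically isotropic), and the worry about generator-independence of the orbit direction is unnecessary, since a single toric parametrization suffices for the transport.
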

\begin{proof}
	Because $\Phi$ is free at $p$, $\textup{dim}(\mathcal{O})=\textup{dim}(\mathbb{T}^{n})=n$ and hence 
	$\textup{dim}(T_{p}\mathcal{O})=\textup{dim}(JT_{p}\mathcal{O})=n$. Thus, to prove that $T_{p}N=T_{p}\mathcal{O}\oplus J\big(T_{p}\mathcal{O}\big)$, it suffices 
	to show that $T_{p}\mathcal{O}\cap J\big(T_{p}\mathcal{O}\big)$ is trivial (recall that $\textup{dim}(N)=2n$ by hypothesis). So let 
	$u=Jv\in T_{p}\mathcal{O}\cap J\big(T_{p}\mathcal{O}\big)$ be arbitrary, where $u,v\in T_{p}\mathcal{O}$. 
	Since $T_{p}\mathcal{O}=\{\xi_{N}(p)\,\,\vert\,\,\xi\in \textup{Lie}(\mathbb{T}^{n})\}$, where $\xi_{N}$ denotes the fundamental vector field on $N$ associated 
	to $\xi\in \textup{Lie}(\mathbb{T}^{n})$, there are 
	$\xi,\eta\in \mathbb{R}^{n}=\textup{Lie}(\mathbb{T}^{n})$ such that $u=\xi_{N}(p)$ and $v=\eta_{N}(p)$. From  this it follows that 
	\begin{eqnarray*}
		g_{p}(u,u)=g_{p}(Jv,u)=\omega_{p}(v,u)=\omega_{p}(\eta_{N},\xi_{N})=(d\moment^{\eta})_{p}(\xi_{N})=\dfrac{d}{dt}\bigg\vert_{0}\moment^{\eta}(\Phi(\textup{exp}(t\xi),p))=0,
	\end{eqnarray*}
	where we have used the following facts: (1) $\moment^{\eta}$ is constant along $\mathbb{T}^{n}$-orbits and (2) $(t,p)\mapsto 
	\Phi(\textup{exp}(t\xi),p)$ is the flow of $\xi_{N}$. Thus $u=0$. It follows that $T_{p}N=T_{p}\mathcal{O}\oplus J\big(T_{p}\mathcal{O}\big)$. 
	The computation above also shows that this direct sum is orthogonal. 
\end{proof}

\begin{proposition}
	Let the hypotheses be as in Proposition \ref{unfenkwneknknk}. 
	Let $k$ be the Riemannian metric on $\moment(N^{\circ})$ described in Proposition \ref{unfenkwneknknk}. 
	Given $p\in N^{\circ}$ and $\xi,\eta\in \textup{Lie}(\mathbb{T}^{n})$, we have 
	\begin{eqnarray}\label{nfekwnkenfkw}
		k_{\moment(p)}\big(\moment_{*_{p}}J\xi_{N},\moment_{*_{p}}J\eta_{N}\big)=g_{p}(\xi_{N},\eta_{N}). 
	\end{eqnarray}
\end{proposition}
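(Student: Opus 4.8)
The plan is to derive \eqref{nfekwnkenfkw} from three ingredients already available: the orthogonal splitting of Lemma \ref{neekwnkvnkdnsk}, the defining property of $k$ as the metric making $\moment$ a Riemannian submersion (Proposition \ref{unfenkwneknknk}(3)), and the compatibility of the K\"ahler metric $g$ with the complex structure $J$. The whole argument is a bookkeeping of the horizontal and vertical distributions of the submersion $\moment:N^{\circ}\to \moment(N^{\circ})$ at the point $p$.

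First I would identify the vertical space $\ker \moment_{*_{p}}$. Since $\moment$ is equivariant and the coadjoint action of the torus is trivial, $\moment$ is constant along the $\mathbb{T}^{n}$-orbit $\mathcal{O}$ through $p$, so $T_{p}\mathcal{O}\subseteq \ker \moment_{*_{p}}$. By Proposition \ref{unfenkwneknknk}(3), $\moment:N^{\circ}\to \moment(N^{\circ})$ is a principal $\mathbb{T}^{n}$-bundle onto the open set $\moment(N^{\circ})\subseteq \mathbb{R}^{n}$; hence $\moment_{*_{p}}$ has rank $n$ and $\dim \ker \moment_{*_{p}}=2n-n=n=\dim T_{p}\mathcal{O}$. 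Combined with the inclusion above, this forces $\ker \moment_{*_{p}}=T_{p}\mathcal{O}$. By Lemma \ref{neekwnkvnkdnsk}, the $g$-orthogonal complement of this vertical space, i.e.\ the horizontal space, is exactly $J\big(T_{p}\mathcal{O}\big)$.

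Next I would invoke the definition of $k$. By construction $k$ is the unique metric for which $\moment:(N^{\circ},g)\to(\moment(N^{\circ}),k)$ is a Riemannian submersion, which means precisely that $\moment_{*_{p}}$ restricts to a linear isometry from the horizontal space $J\big(T_{p}\mathcal{O}\big)$ onto $T_{\moment(p)}\moment(N^{\circ})$. Because $\xi_{N}(p),\eta_{N}(p)\in T_{p}\mathcal{O}$, the vectors $J\xi_{N}(p)$ and $J\eta_{N}(p)$ are horizontal, so
\[
	k_{\moment(p)}\big(\moment_{*_{p}}J\xi_{N},\moment_{*_{p}}J\eta_{N}\big)=g_{p}\big(J\xi_{N},J\eta_{N}\big).
\]
Finally, since $g$ is K\"ahler, $J$ is a $g$-isometry, so $g_{p}(J\xi_{N},J\eta_{N})=g_{p}(\xi_{N},\eta_{N})$, which is \eqref{nfekwnkenfkw}. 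The only step that genuinely requires care is the identification $\ker \moment_{*_{p}}=T_{p}\mathcal{O}$: here the principal-bundle structure from Proposition \ref{unfenkwneknknk}(3) together with the dimension count are the essential inputs, while Lemma \ref{neekwnkvnkdnsk} does the remaining work of pinning down the horizontal space as $J\big(T_{p}\mathcal{O}\big)$.
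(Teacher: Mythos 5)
Your proof is correct and follows essentially the same route as the paper's (which simply cites three facts: $\moment$ is a Riemannian submersion, its fibers over $N^{\circ}$ are $\mathbb{T}^{n}$-orbits, and the horizontal space is $J\big(T_{p}\mathcal{O}\big)$ by Lemma \ref{neekwnkvnkdnsk}); you merely fill in the details, in particular the dimension count identifying $\ker \moment_{*_{p}}=T_{p}\mathcal{O}$ and the final use of the $J$-invariance $g_{p}(J\xi_{N},J\eta_{N})=g_{p}(\xi_{N},\eta_{N})$, both of which are implicit in the paper's sketch.
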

\begin{proof}
	This follows from the following facts: (1) $\moment:N^{\circ}\to \moment(N^{\circ})$ is a Riemannian submersion, (2) the fibers of $\moment\vert_{N^{\circ}}$ 
	are $\mathbb{T}^{n}$-orbits and (3) the orthogonal complement of $T_{p}\mathcal{O}$ is $J(T_{p}\mathcal{O})$, by Lemma \ref{neekwnkvnkdnsk}. 
\end{proof}

	Let $\nabla^{\textup{flat}}$ denote the canonical flat connection on $\mathbb{R}^{n}$, or any open subset of it (see \eqref{nkwndkddnkwnneknfnk}). 

\begin{definition}\label{nkndknefkfnkwn}
	Let the hypotheses be as in Proposition \ref{unfenkwneknknk}. Let $\nabla^{k}$ be the dual connection of $\nabla^{\textup{flat}}$ with respect to 
	the Riemannian metric $k$ on $\moment(N^{\circ})$. We call $(k,\nabla^{k},\nabla^{\textup{flat}})$ the \textit{canonical dualistic structure} of 
	$\moment(N^{\circ})$ and $(\moment(N^{\circ}),k,\nabla^{k})$ the \textit{canonical dually flat space} associated to $\Phi:\mathbb{T}^{n}\times N\to N.$
\end{definition}

	The terminology is justified by the following result.

\begin{proposition}\label{nfekwnkefnkwnk}
	Let the hypotheses be as in Proposition \ref{unfenkwneknknk}. 
	Let $(k,\nabla^{k},\nabla^{\textup{flat}})$ be the canonical dualistic structure of $\moment(N^{\circ})$. The following hold. 
	\begin{enumerate}[(1)]
		\item $(k,\nabla^{k},\nabla^{\textup{flat}})$ is dually flat.
		\item $\Phi:\mathbb{T}^{n}\times N\to N$ is a torification of $(\moment(N^{\circ}),k,\nabla^{k})$. 
		\item Given a compatible projection $\kappa:N^{\circ}\to M$, there exists an isomorphism of dually flat spaces $f$ from $(M,h,\nabla)$ to 
			$(\moment(N^{\circ}),k,\nabla^{k})$ such that $f\circ\kappa=\moment$ on $N^{\circ}$. 
	\end{enumerate}
\end{proposition}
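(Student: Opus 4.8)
The plan is to manufacture a single diffeomorphism $f:M\to \moment(N^{\circ})$ that simultaneously carries $h$ to $k$, $\nabla^{*}$ to $\nabla^{\textup{flat}}$ and $\nabla$ to $\nabla^{k}$; once this map is in hand, all three assertions drop out. The candidate is dictated by Proposition \ref{unfenkwneknknk}(1): given a compatible projection $\kappa$ induced by a toric parametrization $(L,X,F)$, set $f:=-A\circ y+C$, where $A=(a_{ij})$ is the matrix with $X_{i}=\sum_{j}a_{ij}\tfrac{\partial}{\partial x_{j}}$ and $C\in\mathbb{R}^{n}$ is the constant appearing there. Since $y$ is a global coordinate system on $M$ and $A$ is invertible, $f$ is a diffeomorphism of $M$ onto $\moment(N^{\circ})=-A(y(M))+C$, and the commutative triangle in Proposition \ref{unfenkwneknknk}(3) reads precisely $f\circ\kappa=\moment$ on $N^{\circ}$.

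First I would record the two pullback identities. Because $y$ is $\nabla^{*}$-affine while the standard coordinates on $\moment(N^{\circ})\subseteq\mathbb{R}^{n}$ are $\nabla^{\textup{flat}}$-affine, the coordinate expression of $f$ is the affine map $w\mapsto -Aw+C$; hence $f$ is affine in the sense of the paper and, being a diffeomorphism, satisfies $f^{*}\nabla^{\textup{flat}}=\nabla^{*}$. The geometric heart of the matter is the isometry $f^{*}k=h$. Here I would use that $\kappa:N^{\circ}\to(M,h)$ and $\moment:N^{\circ}\to(\moment(N^{\circ}),k)$ are two Riemannian submersions of the \emph{same} K\"{a}hler metric $g$ whose fibres both coincide with the $\mathbb{T}^{n}$-orbits (each map being a principal $\mathbb{T}^{n}$-bundle). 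By Lemma \ref{neekwnkvnkdnsk} the common vertical distribution $T_{p}\mathcal{O}$ has $g$-orthogonal complement $J(T_{p}\mathcal{O})$, so the two submersions share the same horizontal distribution and restrict there to linear isometries onto $(T_{\kappa(p)}M,h)$ and $(T_{\moment(p)}\moment(N^{\circ}),k)$, respectively. Writing $\moment_{*p}=f_{*\kappa(p)}\circ\kappa_{*p}$ on this horizontal space then forces $f_{*\kappa(p)}$ to be a linear isometry, i.e.\ $f^{*}k=h$; this is just a restatement of the identity \eqref{nfekwnkenfkw}.

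With $f^{*}k=h$ and $f^{*}\nabla^{\textup{flat}}=\nabla^{*}$ established, Lemma \ref{ncekndwkneknku} — applied with $\nabla^{*}$ on the source and $\nabla^{\textup{flat}}$ on the target, whose respective duals are $\nabla$ and $\nabla^{k}$ — yields $f^{*}\nabla^{k}=\nabla$. For (1), the connection $\nabla^{\textup{flat}}$ is flat by construction, and since $f$ is a diffeomorphism with $f^{*}\nabla^{k}=\nabla$, the torsion and curvature of $\nabla^{k}$ are the $f$-transports of those of the flat connection $\nabla$ and hence vanish (the vanishing of the curvature is in any case immediate from Proposition \ref{neewknkfenknkn}); thus $(k,\nabla^{k},\nabla^{\textup{flat}})$ is dually flat. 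For (3), the three identities $f^{*}k=h$, $f^{*}\nabla^{k}=\nabla$, $f^{*}\nabla^{\textup{flat}}=\nabla^{*}$ are exactly the statement that $f$ is an isomorphism of dually flat spaces, and $f\circ\kappa=\moment$ was already noted. Finally (2) follows at once: $\Phi$ is a torification of $(M,h,\nabla)$ and $f$ is an isomorphism of dually flat spaces onto $(\moment(N^{\circ}),k,\nabla^{k})$, so Proposition \ref{ndknqkefnkwnkndk} transports the torification along $f$. The main obstacle is the isometry $f^{*}k=h$; everything else is formal once the two submersions are seen to have identical fibres and horizontal spaces.
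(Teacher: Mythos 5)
Your proposal is correct and follows essentially the same route as the paper: the same candidate map $f=-A\circ y+C$ from Proposition \ref{unfenkwneknknk}(1), the same reduction of $f^{*}k=h$ to the fact that $\kappa$ and $\moment$ are Riemannian submersions of the same K\"{a}hler metric with common $\mathbb{T}^{n}$-orbit fibres and common horizontal space $J(T_{p}\mathcal{O})$ (the paper phrases this as $\moment^{*}k=\kappa^{*}h$ via the orthogonal decomposition of Lemma \ref{neekwnkvnkdnsk}), and the same use of Lemma \ref{ncekndwkneknku} to get $f^{*}\nabla^{k}=\nabla$, with (1) forced by transport of flatness and (2) obtained from Proposition \ref{ndknqkefnkwnkndk}.
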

\begin{proof}
	Let $\kappa:N^{\circ}\to M$ be a compatible projection. By Proposition \ref{unfenkwneknknk}(1), 
	there exist a constant $C\in \mathbb{R}^{n}$ and an invertible matrix $A=(a_{ij})\in \textup{GL}(n,\mathbb{R})$ such that 
	$\moment=-A\circ y\circ \kappa+C$ on $N^{\circ}$. In particular, the map $f=-A\circ y+C$ is a diffeomorphism from 
	$M$ to $\moment(N^{\circ})$ satisfying $f\circ \kappa=\moment$ on $N^{\circ}$. We claim that $f$ is an isometry.  Indeed, since $\kappa$ is a surjective submersion, we have
	\begin{eqnarray*}
		f^{*}k=h \,\,\,\,\,\,\,\,\,\,\Leftrightarrow \,\,\,\,\,\,\,\,\,\,\kappa^{*}f^{*}k=\kappa^{*}h \,\,\,\,\,\,\,\,\,\,
		\Leftrightarrow& \,\,\,\,\,\,\,\,\,\,\moment^{*}k=\kappa^{*}h\,\,\,\textup{on}\,\,\moment(N^{\circ}).
	\end{eqnarray*}
	Therefore it suffices to show that $\moment^{*}k=\kappa^{*}h$ on $N^{\circ}$. Let $p\in N^{\circ}$ and $u,v\in T_{p}N$ be arbitrary. Let $\mathcal{O}\subset N$ denotes the 
	$\mathbb{T}^{n}$-orbit of $p$ and let $T_{p}\mathcal{O}^{\perp}=J\big(T_{p}\mathcal{O}\big)$ denotes the orthogonal complement of $T_{p}\mathcal{O}$ in $T_{p}N$ with respect to the 
	K\"{a}hler metric $g$. Write $u=u_{1}+u_{2}$ and $v=v_{1}+v_{2}$, where $u_{1},v_{1}\in T_{p}\mathcal{O}$ and $u_{2},v_{2}\in T_{p}\mathcal{O}^{\perp}$. Because 
	$\moment:N^{\circ}\to \moment(N^{\circ})$ is a Riemannian submersion with fiber $\moment^{-1}(\moment(p))=\mathcal{O}$, we have $\moment_{*_{p}}u_{1}=\moment_{*_{p}}v_{1}=0$ and 
	$k_{\moment(p)}(\moment_{*_{p}}u_{2},\moment_{*_{p}}v_{2})=g_{p}(u_{2},v_{2})$. Thus 
	\begin{eqnarray*}
		(\moment^{*}k)_{p}(u,v)= k_{\moment(p)}(\moment_{*_{p}}u_{2},\moment_{*_{p}}v_{2})= g_{p}(u_{2},v_{2}). 
	\end{eqnarray*}
	The exact same argument applied to $\kappa$ shows that $(\kappa^{*}h)_{p}(u,v)=g_{p}(u_{2},v_{2})$. It follows that $(\moment^{*}k)_{p}(u,v)=(\kappa^{*}h)_{p}(u,v)$
	and concludes the proof of the claim. Let $\nabla^{*}$ be the dual connection of $\nabla$ with respect to $\nabla.$
	The map $f$ is trivially affine from $(M,\nabla^{*})$ to $(\moment(N^{\circ}),\nabla^{\textup{flat}})$ (since $f$ is an affine function 
	of $y$ and $y$ is a $\nabla^{*}$-affine coordinate system on $M$). By Lemma \ref{ncekndwkneknku} and the claim, $f$ is also affine 
	from $(M,\nabla)$ to $(\moment(N^{\circ}),\nabla^{k})$. This forces $\nabla^{k}$ to be flat. It follows that 
	$(k,\nabla^{k},\nabla^{\textup{flat}})$ is dually flat and that $f$ is an isomorphism of dually flat spaces. 
	This shows (1) and (3). (2) is a consequence of $(3)$ and Proposition \ref{ndknqkefnkwnkndk}. 
\end{proof}

\begin{proposition}\label{nenwknkfwnknk}
	Given $i=1,2,$ let $(M_{i},h_{i},\nabla_{i})$ be a connected dually flat space that has a global pair of dual coordinate systems. 
	Suppose that $\Phi_{i}:\mathbb{T}^{n}\times N_{i}\to N_{i}$ is a torification of $(M_{i},h_{i},\nabla_{i})$, $i=1,2$, and that 
	$\Phi_{i}$ is Hamiltonian. If there exists a K\"{a}hler isomorphism $G:N_{1}\to N_{2}$ and a Lie group isomorphism $\rho:\mathbb{T}^{n}\to \mathbb{T}^{n}$ 
	such that $G\circ (\Phi_{1})_{a}=(\Phi_{2})_{\rho(a)}\circ G$ for all $a\in \mathbb{T}^{n}$, then 
	there exists an isomorphism of dually flat spaces between $(M_{1},h_{1},\nabla_{1})$ and $(M_{2},h_{2},\nabla_{2})$. 
\end{proposition}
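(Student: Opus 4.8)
The plan is to reduce everything to the canonical dually flat spaces attached to $\Phi_1$ and $\Phi_2$, and then transport one onto the other using $G$. Write $\moment_1,\moment_2$ for momentum maps of $\Phi_1,\Phi_2$ and let $(\moment_i(N_i^\circ),k_i,\nabla^{k_i})$ be the canonical dually flat space of $\Phi_i$ (Definition \ref{nkndknefkfnkwn}), which is well defined since each $(M_i,h_i,\nabla_i)$ admits a global pair of dual coordinate systems. By Proposition \ref{nfekwnkefnkwnk}(3) there are isomorphisms of dually flat spaces $f_i:(M_i,h_i,\nabla_i)\to(\moment_i(N_i^\circ),k_i,\nabla^{k_i})$. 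Hence it suffices to produce an isomorphism of dually flat spaces $\alpha:(\moment_1(N_1^\circ),k_1,\nabla^{k_1})\to(\moment_2(N_2^\circ),k_2,\nabla^{k_2})$; the composite $f_2^{-1}\circ\alpha\circ f_1$ will then be the desired isomorphism between $M_1$ and $M_2$.

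First I would record how $G$ intertwines the two momentum maps. Since $\rho$ is a Lie group automorphism of $\mathbb{T}^n=\mathbb{R}^n/\mathbb{Z}^n$, it lifts to a linear automorphism of $\mathbb{R}^n$ preserving $\mathbb{Z}^n$, given by a matrix $B\in\textup{GL}(n,\mathbb{Z})$; differentiating $G\circ(\Phi_1)_a=(\Phi_2)_{\rho(a)}\circ G$ along one-parameter subgroups yields $G_*\xi_{N_1}=(B\xi)_{N_2}$ for every $\xi\in\mathbb{R}^n=\textup{Lie}(\mathbb{T}^n)$. Using that $G$ is a symplectomorphism ($G^*\omega_2=\omega_1$) together with the defining relation $\omega_i(\xi_{N_i},\,\cdot\,)=d\moment_i^\xi$, a short computation gives $d(\moment_2^\xi\circ G)=\omega_1((B^{-1}\xi)_{N_1},\,\cdot\,)=d\moment_1^{B^{-1}\xi}$ for all $\xi$. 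Since $N_1$ is connected, this forces
\begin{eqnarray*}
	\moment_2\circ G=(B^{T})^{-1}\moment_1+c
\end{eqnarray*}
for some constant $c\in\mathbb{R}^n$. Writing $\alpha(w):=(B^{T})^{-1}w+c$, the map $\alpha$ is an affine isomorphism of $\mathbb{R}^n$ with $\moment_2\circ G=\alpha\circ\moment_1$; because $G(N_1^\circ)=N_2^\circ$ (as $\rho$ is an isomorphism, $G$ carries free orbits to free orbits), $\alpha$ restricts to a diffeomorphism $\moment_1(N_1^\circ)\to\moment_2(N_2^\circ)$.

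It remains to check that this $\alpha$ is an isomorphism of the two canonical dually flat spaces. Being the restriction of an affine map of $\mathbb{R}^n$, $\alpha$ trivially satisfies $\alpha^*\nabla^{\textup{flat}}=\nabla^{\textup{flat}}$, so by Lemma \ref{ncekndwkneknku} (applied with $\nabla^{\textup{flat}}$ as primary connection and $\nabla^{k_i}$ as its dual) it is enough to prove $\alpha^*k_2=k_1$. This is where the geometry enters and is the step I expect to be the crux. I would argue pointwise: fix $p\in N_1^\circ$, let $\mathcal{O}_i$ denote the relevant $\mathbb{T}^n$-orbit, and recall from Lemma \ref{neekwnkvnkdnsk} that the $\moment_i$-horizontal space is $J_i(T\mathcal{O}_i)$. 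Since $G$ is a holomorphic isometry sending orbits to orbits, $G_*$ sends $J_1(T_p\mathcal{O}_1)$ isometrically onto $J_2(T_{G(p)}\mathcal{O}_2)$, i.e.\ horizontal vectors to horizontal vectors. For a horizontal $v$ at $p$ one then computes, using that $\moment_1,\moment_2$ are Riemannian submersions (Proposition \ref{unfenkwneknknk}(3)) and $\moment_2\circ G=\alpha\circ\moment_1$,
\begin{eqnarray*}
	k_2\big(\alpha_*\moment_{1*}v,\alpha_*\moment_{1*}v\big)=k_2\big(\moment_{2*}G_*v,\moment_{2*}G_*v\big)=g_2(G_*v,G_*v)=g_1(v,v)=k_1\big(\moment_{1*}v,\moment_{1*}v\big).
\end{eqnarray*}
As $\moment_{1*}$ maps the horizontal space isomorphically onto $T_{\moment_1(p)}\moment_1(N_1^\circ)$, polarization yields $\alpha^*k_2=k_1$. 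Thus $\alpha$ is an isometry, hence an isomorphism of dually flat spaces, and $f_2^{-1}\circ\alpha\circ f_1$ is the required isomorphism between $(M_1,h_1,\nabla_1)$ and $(M_2,h_2,\nabla_2)$. The only genuinely delicate point is the isometry $\alpha^*k_2=k_1$, for which the compatibility of $G$ with the horizontal distributions $J_i(T\mathcal{O}_i)$ is essential.
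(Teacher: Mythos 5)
Your proposal is correct and follows essentially the same route as the paper's proof: reduction to the canonical dually flat spaces via Proposition \ref{nfekwnkefnkwnk}(3), the intertwining relation $\moment_{2}\circ G=(B^{T})^{-1}\moment_{1}+c$ (the paper instead normalizes by \emph{defining} $\moment_{2}=(L^{-1})^{*}\circ\moment_{1}\circ G^{-1}$ with $L=\rho_{*_{e}}$, so its comparison map is linear rather than affine), the isometry check through the Riemannian-submersion structure of $\moment_{i}$ together with the fact that $G_{*}$ preserves the horizontal distributions, and Lemma \ref{ncekndwkneknku} for the dual connection. The only cosmetic deviation is that you establish horizontality-preservation from holomorphy of $G$ and the identification of the horizontal space as $J_{i}(T\mathcal{O}_{i})$ in Lemma \ref{neekwnkvnkdnsk}, whereas the paper's claim (b) proves the same fact by a direct orthogonality computation using only equivariance and the isometry property.
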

\begin{proof}
	Let $\moment_{1}:N_{1}\to \mathbb{R}^{n}$ be a momentum map and $L=\rho_{*_{e}}:\mathbb{R}^{n}\to \mathbb{R}^{n}=\textup{Lie}(\mathbb{T}^{n})$. 
	Because  $G$ is equivariant relative to $\rho$, the fundamental vector fields of $\xi\in \textup{Lie}(\mathbb{T}^{n})$ and $L(\xi)$ are related by 
	\begin{eqnarray}\label{nekdnwknknkn}
		G_{*_{p}}\xi_{N_{1}}(p)=(L(\xi))_{N_{2}}(G(p))
	\end{eqnarray}
	for all $p\in N_{1}$. From this, a straightforward verification shows that 
	$\moment_{2}=(L^{-1})^{*}\circ \moment_{1}\circ G^{-1}$ is a momentum map with respect to $\Phi_{2}$, where $(L^{-1})^{*}$ denotes the adjoint of $L^{-1}$ 
	with respect to the Euclidean scalar product (thus $\langle L^{-1}(x),y\rangle=\langle x,(L^{-1})^{*}(y)\rangle$ for 
	all $x,y\in \mathbb{R}^{n}$). Let $(\moment_{1}(N_{1}^{\circ}),k_{1},\nabla^{k_{1}})$ and $(\moment_{2}(N_{2}^{\circ}),k_{2},\nabla^{k_{2}})$ be the corresponding 
	canonical dually flat manifolds (see Definition \ref{nkndknefkfnkwn}). Given $p\in N_{i}$, $i=1,2,$ the K\"{a}hler metric $g_{i}$ induces an orthogonal decomposition 
	$T_{p}N_{i}=T_{p}\mathcal{O}_{i}\oplus (T_{p}\mathcal{O}_{i})^{\perp}$, where $\mathcal{O}_{i}\subset N_{i}$ is the $\mathbb{T}^{n}${-}orbit of $p$. 
	We shall denote by $u^{\perp}\in (T_{p}\mathcal{O}_{i})^{\perp}$ the orthogonal projection of $u\in T_{p}N_{i}$ on 
	$(T_{p}\mathcal{O}_{i})^{\perp}$. We claim that:
	\begin{enumerate}[(a)]
		\item $(k_{i})_{\moment_{i}(p)}((\moment_{i})_{*_{p}}u,(\moment_{i})_{*_{p}}v)=(g_{i})_{p}(u^{\perp},v^{\perp})$ for all $p\in N_{i}^{\circ}$ and all $u,v\in T_{p}N_{i}$.
		\item $\big(G_{*_{p}}(u)\big)^{\perp}=G_{*_{p}}(u^{\perp})$ for all $p\in N_{1}^{\circ}$ and all $u\in T_{p}N_{1}$.  
	\end{enumerate}
	The fist item is a consequence of the fact that $\moment_{i}:N_{i}^{\circ}\to \moment_{i}(N_{i}^{\circ})$ is a Riemannian submersion whose fibers are $\mathbb{T}^{n}$-orbits. To see (b), 
	let $u\in T_{p}N_{1}^{\circ}$ be arbitrary. Since the tangent space of an orbit at a given point is spanned by the fundamental vector fields at that point, 
	there exists $\xi\in \textup{Lie}(\mathbb{T}^{n})$ such that $u=\xi_{N_{1}}(p)+u^{\perp}$. It follows from this and \eqref{nekdnwknknkn} that 
	\begin{eqnarray*}
		G_{*_{p}}(u)=(L(\xi))_{N_{2}}(G(p))+G_{*_{p}}(u^{\perp}).
	\end{eqnarray*}
	Projecting onto $\big(T_{G(p)}\mathcal{O}\big)^{\perp}$, where $\mathcal{O}$ is the $\mathbb{T}^{n}$-orbit of $G(p)$, 
	we get $(G_{*_{p}}(u))^{\perp}=\big(G_{*_{p}}(u^{\perp})\big)^{\perp}$. Thus, it suffices to show that 
	$\big(G_{*_{p}}(u^{\perp})\big)^{\perp}=G_{*_{p}}(u^{\perp})$, or equivalently, 
	that $G_{*_{p}}(u^{\perp})$ is orthogonal to $T_{G(p)}\mathcal{O}$. Since $\rho:\mathbb{T}^{n}\to \mathbb{T}^{n}$ 
	is a Lie group isomorphism, $L=\rho_{*_{e}}$ is a linear bijection and hence $T_{G(p)}\mathcal{O}$ is spanned by elements of the form $(L(\xi))_{N_{2}}(G(p))$, 
	where $\xi\in \mathbb{R}^{n}=\textup{Lie}(\mathbb{T}^{n})$. Given $\xi\in \textup{Lie}(\mathbb{T}^{n})$, we compute:
	\begin{eqnarray*}
		\lefteqn{(g_{2})_{G(p)}((L(\xi))_{N_{2}}(G(p)),G_{*_{p}}(u^{\perp}))}\\
		&=&(g_{2})_{G(p)}(G_{*_{p}}\xi_{N_{1}}(G(p)),G_{*_{p}}(u^{\perp}))\\
		&=&  (G^{*}g_{2})_{p}(\xi_{N_{1}},u^{\perp}) =(g_{1})_{p}(\xi_{N_{1}},u^{\perp})=0,
	\end{eqnarray*}
	where we have used \eqref{nekdnwknknkn}. This shows that $G_{*_{p}}(u^{\perp})\in \big(T_{G(p)}\mathcal{O}\big)^{\perp}$ and concludes the proof of the claim.

	Next we prove that $f=(L^{-1})^{*}:\moment_{1}(N_{1}^{\circ})\to \moment_{2}(N_{2}^{\circ})$ is an isomorphism of dually flat spaces. The injectivity of $f$ is 
	a consequence of the injectivity of $(L^{-1})^{*}$. The fact that $f$ is surjective is a consequence of the formula 
	$\moment_{2}=(L^{-1})^{*}\circ \moment_{1}\circ G^{-1}$. Thus $f$ is a bijection from $\moment_{1}(N_{1}^{\circ})$ to $\moment_{2}(N_{2}^{\circ})$. 
	Let $p\in N_{1}^{\circ}$ and $u,v\in T_{p}N_{1}$. We compute:
	\begin{eqnarray*}
		\lefteqn{(f^{*}k_{2})_{\moment_{1}(p)}\big((\moment_{1})_{*_{p}}(u), (\moment_{1})_{*_{p}}(v)\big)}\\
		&=& (k_{2})_{(L^{-1})^{*}(\moment_{1}(p))}\big((L^{-1})^{*}(\moment_{1})_{*_{p}}(u), (L^{-1})^{*}(\moment_{1})_{*_{p}}(v)\big)\\
		&=& (k_{2})_{(\moment_{2}\circ G)(p)}\big((\moment_{2}\circ G)_{*_{p}}(u), (\moment_{2}\circ G)_{*_{p}}(v)\big)\\
		&=& (g_{2})_{G(p)}\big(G_{*_{p}}(u^{\perp}), G_{*_{p}}(v^{\perp})\big) \,\,\,\,\,\,\,(\textup{see (a) and (b)})\\ 
		&=& (G^{*}g_{2})_{p}(u^{\perp},v^{\perp})=(g_{1})_{p}(u^{\perp},v^{\perp})\\
		&=& (k_{1})_{\moment_{1}(p)}((\moment_{1})_{*_{p}}(u),(\moment_{1})_{*_{p}}(v)) \,\,\,\,\,\,\,(\textup{see (a)})
	\end{eqnarray*}
	where we have used $\moment_{2}\circ G=(L^{-1})^{*}\circ \moment_{1}$. 
	This shows that $f^{*}k_{2}=k_{1}$, that is, $f$ is an isometry. Clearly, $f$ is affine from $(\moment(N_{1}^{\circ}),\nabla^{\textup{flat}})$ to 
	$(\moment(N_{2}^{\circ}),\nabla^{\textup{flat}})$, since it is the restriction of a linear map. 
	By Lemma \ref{ncekndwkneknku}, it is also affine from $(\moment(N_{1}^{\circ}),\nabla^{k_{1}})$ to $(\moment(N_{2}^{\circ}),\nabla^{k_{2}})$. 
	It follows that $f$ is an isomorphism of dually flat spaces. 

	By Proposition \ref{nfekwnkefnkwnk}, there are isomorphisms of dually flat spaces $\phi_{1}:M_{1}\to \moment(N_{1}^{\circ})$ and 
	$\phi_{2}:M_{2}\to \moment(N_{2}^{\circ})$. Therefore $\phi_{2}^{-1}\circ f\circ \varphi_{1}$ is an isomorphism of dually flat spaces between 
	$(M_{1},h_{1},\nabla_{1})$ and $(M_{2},h_{2},\nabla_{2})$.  
\end{proof}

\section{The canonical example (complex point of view)}\label{nekdwknknwkdnknkn}


	In this section, we re-examine some well-known results of symplectic toric geometry (complex and symplectic point of views, Legendre transform) 
	by using systematically the point of view of torification and the language of dually flat spaces. 

	We continue to use the notation $x=(x_{1},...,x_{n})$ to denote the standard coordinates on $\mathbb{R}^{n}$. The flat connection on $\mathbb{R}^{n}$ 
	is denoted by $\nabla^{\textup{flat}}$ (see \eqref{nkwndkddnkwnneknfnk}).

	The theorem below is the main result of this section. 
\begin{theorem}\label{nckdnwknknednenkenk}
	Let $(N,g,J)$ be a connected K\"{a}hler manifold of real dimension $2n$ and let 
	$\Phi:\mathbb{T}^{n}\times N\to N$ be an effective, Hamiltonian and holomorphic torus action. 
	Suppose that for every $\xi\in \textup{Lie}(\mathbb{T}^{n})$, the vector field $-J\xi_{N}$ 
	is complete, where $\xi_{N}$ is the fundamental vector field of $\xi$; let $\varphi^{\xi}:\mathbb{R}\times N\to N$ be the corresponding flow, that is, 
	$\varphi^{\xi}(0,q)=q$ and $\tfrac{d}{dt}\varphi^{\xi}(t,q)=-J\xi_{N}(\varphi^{\xi}(t,q))$ for all 
	$(t,q)\in \mathbb{R}\times N$. Fix $p\in N^{\circ}$ and consider the correspondence $h$ that associates to $x\in \mathbb{R}^{n}$ 
	the bilinear form $h_{x}$ on $T_{x}\mathbb{R}^{n}=\mathbb{R}^{n}$ defined by 
	\begin{eqnarray*}	
		h_{x}(u,v)=g_{\varphi^{x}(1,p)}(u_{N},v_{N}),  \,\,\,\,\,\,\,\,\,\,(x,u,v\in \mathbb{R}^{n}=\textup{Lie}(\mathbb{T}^{n})).
	\end{eqnarray*}
	Then,
	\begin{enumerate}[(1)]
		\item $(\mathbb{R}^{n},h,\nabla^{\textup{flat}})$ is a dually flat manifold.
		\item $\Phi:\mathbb{T}^{n}\times N\to N$ is a torification of $(\mathbb{R}^{n},h,\nabla^{\textup{flat}})$. 
		\item Given a momentum map $\moment:N\to \mathbb{R}^{n}$, there are coordinates $y=(y_{1},...,y_{n})$ on $\mathbb{R}^{n}$ such that 
			\begin{enumerate}[(a)]
				\item $(x,y)$ is a global pair of dual coordinate systems on $(\mathbb{R}^{n},h,\nabla^{\textup{flat}})$
					(in particular, $y$ is affine with respect to the dual connection of $\nabla^{\textup{flat}}$ with respect to $h$).
				\item $-y$ is an isomorphism of dually flat spaces between $(\mathbb{R}^{n},h,\nabla^{\textup{flat}})$ and the 
					canonical dually flat space $(\moment(N^{\circ}),k,\nabla^{k})$. 
			\end{enumerate}
			If in addition there is no $C\in \mathbb{R}^{n}-\{0\}$ such that $\moment(N^{\circ})=\moment(N^{\circ})+C$ (for example if $N$ is compact), 
			then $y$ satisfying (a) and (b) is unique. 
	\end{enumerate} 
	\end{theorem}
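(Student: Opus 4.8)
The plan is to realize $N^{\circ}$ as a single complexified orbit of $p$, using the $-J\xi_{N}$ directions as the ``real'' part and the torus directions as the ``imaginary'' part, and then to recognise this orbit map as a holomorphic isometric covering from $T\mathbb{R}^{n}=\mathbb{C}^{n}$. As preliminaries I would first check that the vector fields $\xi_{N}$ and $-J\eta_{N}$ ($\xi,\eta\in\mathbb{R}^{n}=\textup{Lie}(\mathbb{T}^{n})$) pairwise commute: $[\xi_{N},\eta_{N}]=0$ by commutativity of $\mathbb{T}^{n}$; $[\xi_{N},J\eta_{N}]=0$ because the action is holomorphic (so $\mathcal{L}_{\xi_{N}}J=0$); and $[J\xi_{N},J\eta_{N}]=0$ then follows from the vanishing of the Nijenhuis tensor (integrability of $J$). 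With the completeness hypothesis these flows assemble into a smooth $\mathbb{C}^{n}$-action $\widehat{\Phi}$ on $N$ whose imaginary part is $\Phi$ and whose real part is generated by the $-J\xi_{N}$; set $\Psi(x):=\varphi^{x}(1,p)=\widehat{\Phi}(x,p)$. Since $\Phi_{[a]}$ commutes with each $\varphi^{\xi}$, if $q=\varphi^{s}(1,p)$ satisfies $\Phi([a],q)=q$ then $\varphi^{s}(1,\Phi([a],p))=\varphi^{s}(1,p)$, and injectivity of the time-one diffeomorphism forces $\Phi([a],p)=p$, hence $[a]=e$; thus the whole $\mathbb{C}^{n}$-orbit of $p$, and in particular every $\Psi(x)$, lies in $N^{\circ}$.

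For part (1) I would differentiate the momentum map along $\Psi$. Using $d\moment^{\eta}=\omega(\eta_{N},\,\cdot\,)$ and $\omega(\,\cdot\,,\,\cdot\,)=g(J\,\cdot\,,\,\cdot\,)$, together with the commuting-flow identity $\Psi_{*}(\partial/\partial x_{k})=-J(e_{k})_{N}$, one gets $\partial_{x_{k}}(\moment^{e_{j}}\circ\Psi)=-g_{\Psi(x)}((e_{k})_{N},(e_{j})_{N})=-h_{x}(e_{k},e_{j})=-h_{jk}(x)$. As $[h_{jk}]$ is symmetric, the $1$-form $\sum_{j}(\moment^{e_{j}}\circ\Psi)\,dx_{j}$ is closed, hence exact on $\mathbb{R}^{n}$, giving $\psi$ with $\textup{Hess}(\psi)=[h_{jk}]$ and $\textup{grad}(\psi)=-\moment\circ\Psi+\textup{const}$. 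Since each $\Psi(x)\in N^{\circ}$, the map $u\mapsto u_{N}(\Psi(x))$ is injective, so $h=\textup{Hess}(\psi)$ is positive definite; by the lemma stating that the Hessian of a convex function yields a dually flat structure, $(\mathbb{R}^{n},h,\nabla^{\textup{flat}})$ is dually flat.

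For part (2) I would define $\tau:T\mathbb{R}^{n}=\mathbb{C}^{n}\to N^{\circ}$ by $\tau(x+it)=\Phi([t],\Psi(x))$. Computing $\tau_{*}(\partial/\partial b_{k})=(e_{k})_{N}$ and $\tau_{*}(\partial/\partial x_{k})=\Phi_{[t]*}(-J(e_{k})_{N})=-J(e_{k})_{N}$ (fundamental fields are $\Phi_{[t]}$-invariant and $\Phi_{[t]}$ is holomorphic), and recalling from Corollary \ref{nfednwkneknkn} that $J_{0}$ is the standard structure on $\mathbb{C}^{n}$, one verifies $\tau_{*}J_{0}=J\tau_{*}$, so $\tau$ is holomorphic; the metric formula of Corollary \ref{nfednwkneknkn} together with the orthogonal splitting $T_{q}N=T_{q}\mathcal{O}\oplus J(T_{q}\mathcal{O})$ of Lemma \ref{neekwnkvnkdnsk}, the definition of $h$, and $\Phi_{[t]}$-invariance of $g$ give $\tau^{*}g=g^{T\mathbb{R}^{n}}$, so $\tau$ is a local isometry. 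With $L$ the standard lattice generated by $X_{j}=\partial/\partial x_{j}$, one has $\Gamma(L)=\mathbb{Z}^{n}$ acting by fibre translations and $\tau\circ T_{t}=\Phi_{[t]}\circ\tau$ by construction; thus $\tau$ descends to $F:M_{L}\to N^{\circ}$. Injectivity of $F$ follows cleanly: if $\Phi([t],\Psi(x))=\Phi([t'],\Psi(x'))$, applying $\moment$ gives $\moment\circ\Psi(x)=\moment\circ\Psi(x')$, whence $x=x'$ by injectivity of $\textup{grad}(\psi)=-\moment\circ\Psi+\textup{const}$ (strict convexity), and then freeness at $\Psi(x)\in N^{\circ}$ gives $[t]=[t']$. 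Surjectivity is the delicate topological point: at every free point the fields $(e_{k})_{N}$ and $J(e_{k})_{N}$ span the tangent space (Lemma \ref{neekwnkvnkdnsk}), so every $\mathbb{C}^{n}$-orbit in $N^{\circ}$ is open; since $N^{\circ}$ is connected and partitioned into disjoint open orbits, there is only one, equal to the image of $\tau$. Hence $F$ is a $\mathbb{T}^{n}$-equivariant K\"ahler isomorphism onto $N^{\circ}$ and, by Proposition \ref{nkwdnkkfenknk}, $\Phi$ is a torification of $(\mathbb{R}^{n},h,\nabla^{\textup{flat}})$.

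For part (3), choosing the additive constant in $\psi$ so that $y:=\textup{grad}(\psi)=-\moment\circ\Psi$, Lemma \ref{nefkwnknkenknk} (applied on the convex set $x(\mathbb{R}^{n})=\mathbb{R}^{n}$) shows $(x,y)$ is a global pair of dual coordinate systems, which is (a). The compatible projection induced by $(L,X,F)$ is $\kappa=\Psi^{-1}$, so $(-y)\circ\kappa=\moment$ on $N^{\circ}$, and since $X_{j}=\partial/\partial x_{j}$ gives $A=I$ in Propositions \ref{unfenkwneknknk} and \ref{nfekwnkefnkwnk}, the map $f=-y$ is exactly the isomorphism of dually flat spaces from $(\mathbb{R}^{n},h,\nabla^{\textup{flat}})$ onto $(\moment(N^{\circ}),k,\nabla^{k})$ provided by Proposition \ref{nfekwnkefnkwnk}, which is (b). For uniqueness, condition (a) forces any admissible $y$ to be a $\nabla^{*}$-affine dual coordinate, hence determined up to an additive constant $c$ (Proposition \ref{neknkneknekenfkenk}); if $y$ and $y+c$ both satisfy (b) then $\moment(N^{\circ})=(-y)(\mathbb{R}^{n})=(-y-c)(\mathbb{R}^{n})=\moment(N^{\circ})+C$ with $C=-c$, so the stated non-invariance hypothesis gives $c=0$. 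For compact $N$, $\moment(N)$ is compact convex and $\moment(N^{\circ})$ is its bounded interior, which admits no nonzero translation symmetry. I expect the main obstacle to be precisely the surjectivity/single-orbit step in part (2): the holomorphicity and isometry of $\tau$ are routine coordinate computations, and injectivity is bought by the convexity established in part (1), but controlling the image globally — so that the open $\mathbb{C}^{n}$-orbit exhausts the connected $N^{\circ}$ — is the point requiring genuine care, and it is what makes the logical ordering (convex potential first, covering second) essential.
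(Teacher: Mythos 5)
Your proposal is correct, and its skeleton --- complexifying the action via the commuting flows $[\xi_{N},\eta_{N}]=[\xi_{N},J\eta_{N}]=[J\xi_{N},J\eta_{N}]=0$, realizing $N^{\circ}$ as the single open orbit of $p$, and exhibiting $\tau(x+it)=\Phi([t],\varphi^{x}(1,p))$ as a holomorphic, locally isometric covering inducing the Dombrowski structure on $\mathbb{C}^{n}$ --- is the same as the paper's (Lemmas \ref{nefknwdknefknk}, \ref{nfekwndknfeknkdnk} and \ref{nfeknkenfknd}). However, you handle three key steps by genuinely different means. First, you prove part (1) by differentiating $\moment\circ\Psi$ to produce the potential $\psi$ with $h=\textup{Hess}(\psi)$ and then quoting the elementary Hessian lemma of Section \ref{nfekkdwnknk}, whereas the paper proves (1) by showing the pullback metric $(T_{p})^{*}g$ is K\"{a}hler and coincides with Dombrowski's construction, then invoking the equivalence of Proposition \ref{prop:3.8}; your route front-loads the potential (the paper extracts it only in part (3)) and is more elementary, at the price of committing you to the ordering you flag at the end. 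Second --- the most substantive divergence --- the paper's injectivity of the orbit map rests on the hard direction of Lemma \ref{nfekwndknfeknkdnk}(1) ($\Phi$ free at $p$ implies $\Phi^{\mathbb{C}}$ free at $p$), proved by showing $\moment^{x}$ is nonincreasing along the flow of $-Jx_{N}$ yet constant on torus orbits, forcing $g_{p}(x_{N},x_{N})=0$; you bypass this entirely, obtaining injectivity of $F$ from strict convexity of $\psi$ (the gradient-monotonicity inequality already used in Lemma \ref{nefkwnknkenknk}) together with torus-invariance of $\moment$, and keeping the complexified orbit inside $N^{\circ}$ by the elementary commutation/injectivity-of-the-time-one-map argument, which needs no momentum map at all. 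Both approaches ultimately differentiate $\moment$ along the $-Jx_{N}$ directions, but yours reuses the computation you need anyway for (3), so nothing extra is spent. Third, for (3)(b) you cite Proposition \ref{nfekwnkefnkwnk}(3) with $A=I_{n}$ and $C=0$ instead of redoing the paper's pullback computation $(\moment\circ T_{p})^{*}k=\pi^{*}h$; since that proposition is established in Section \ref{nfeknkwdnkk} independently of the theorem, there is no circularity.

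Two small expository points, neither fatal: your freeness-propagation argument is stated only for the orbit of $p$, but the surjectivity step (``$N^{\circ}$ is partitioned into disjoint open orbits'') needs it for the $\mathbb{C}^{n}$-orbit of \emph{every} $q\in N^{\circ}$ --- the same argument applies verbatim, but you should say so explicitly. Also, $\kappa$ is the base-point map $\Phi([t],\Psi(x))\mapsto x$ rather than literally $\Psi^{-1}$, though the identity $(-y)\circ\kappa=\moment$ you deduce from it is exactly right.
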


	The proof of Theorem \ref{nckdnwknknednenkenk} proceeds in a series of lemmas. We begin with a discussion of the 
	holomorphic extension of a torus action. This is a standard construction, which, to our knowledge, was first discribed in \cite{Guillemin-Geometric}. 
	Our presentation follows closely \cite{Ishida}. 

	Let $\Phi:\mathbb{T}^{n}\times N\to N$ be a Lie group action of the torus $\mathbb{T}^{n}=\mathbb{R}^{n}/\mathbb{Z}^{n}$ 
	on a complex manifold $N$ by holomorphic diffeomorphisms. We will identify the Lie algebra of the torus $\mathbb{T}^{n}$ with 
	$\mathbb{R}^{n}$ via the derivative at $0\in \mathbb{R}^{n}$ of the quotient map $\mathbb{R}^{n}\to \mathbb{R}^{n}/\mathbb{Z}^{n}$, $t\mapsto [t]$. 
	Under this identification, the exponential map $\textup{exp}:\textup{Lie}(\mathbb{T}^{n})\to \mathbb{T}^{n}$ is just the quotient map 
	$\mathbb{R}^{n}\to \mathbb{R}^{n}/\mathbb{Z}^{n}.$ Given $\xi\in \mathbb{R}^{n}=\textup{Lie}(\mathbb{T}^{n})$, 
	we will denote by $\xi_{N}\in \mathfrak{X}(N)$ the corresponding fundamental vector field on $N$. 
	We will assume that $J\xi_{N}$ is complete for all $\xi\in \textup{Lie}(\mathbb{T}^{n})$, where $J:TN\to TN$ is the complex structure of $N$.  

	As a matter of notation, we will denote by $\varphi^{X}_{t}$ the flow of a vector field $X$. Thus $\tfrac{d}{dt}\varphi^{X}_{t}(p)=
	X(\varphi^{X}_{t}(p))$. For later use, recall that if $X$ and $Y$ are complete commuting vector fields, then 
	$\varphi_{t}^{X}=\varphi_{1}^{tX}$ and $\varphi_{t}^{X}\circ \varphi_{t}^{Y}=\varphi^{X+Y}_{t}$ for all $t\in \mathbb{R}$ (in particular, $X+Y$ is complete). 
	
	Let $\{e_{1},...,e_{k}\}$ be the canonical basis of $\mathbb{R}^{n}=\textup{Lie}(\mathbb{T}^{n})$. Define $T:\mathbb{C}^{n}\times N\to N$ by 
	\begin{eqnarray}\label{nnksnkenksn}
		T(z,p)=\big(\varphi^{-J(e_{1})_{N}}_{x_{1}}\circ \cdots \circ \varphi_{x_{n}}^{-J(e_{n})_{N}}\circ \varphi_{y_{1}}^{(e_{1})_{N}}
		\circ \cdots \circ \varphi_{y_{n}}^{(e_{n})_{N}}\big)(p),
	\end{eqnarray}
	where $z=(z_{1},...,z_{n})\in \mathbb{C}^{n}$, $z_{k}=x_{k}+iy_{k}$, $x_{k},y_{k}\in \mathbb{R}$, $k=1,...,n$ and $p\in N.$

\begin{lemma}\label{nefknwdknefknk}
	\textbf{}
	\begin{enumerate}[(1)]
	\item $T$ is a Lie group action of $\mathbb{C}^{n}$ on $N$. 
	\item $T(tz,p)=\varphi_{t}^{-Jx_{N}+y_{N}}(p)=\varphi^{-Jx_{N}}_{t}(\Phi([y],p))$ for all $t\in \mathbb{R}$, $z=x+iy\in \mathbb{R}^{n}+i\mathbb{R}^{n}$, $p\in N$. 
	\item The derivative of $T$ at $(z,p)\in \mathbb{C}^{n}\times N$ in the direction $(w,u)\in \mathbb{C}^{n}\times T_{p}N$ is given by
		\begin{eqnarray*}
			T_{*_{(z,p)}}(w,u)= (-Ja_{N}+b_{N})\big(T(z,p)\big)+(T_{z})_{*_{p}}u, 
		\end{eqnarray*}
			where $w=a+ib$, $a,b\in \mathbb{R}^{n}=\textup{Lie}(\mathbb{T}^{n})$ and $T_{z}:N\to N$, $p\mapsto T(z,p)$. 
	\item The map $T:\mathbb{C}^{n}\times N\to N$ is holomorphic. 
	\end{enumerate}
\end{lemma}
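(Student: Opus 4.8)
The plan is to reduce all four statements to a single structural fact: the $2n$ vector fields $(e_1)_N,\dots,(e_n)_N,J(e_1)_N,\dots,J(e_n)_N$ commute pairwise and each generates a flow by holomorphic diffeomorphisms. First I would record that, since $\Phi$ acts by holomorphic maps and $\varphi^{\xi_N}_s=\Phi_{[s\xi]}$ is the flow of $\xi_N$, differentiating $(\Phi_{[s\xi]})_*\circ J=J\circ(\Phi_{[s\xi]})_*$ at $s=0$ gives $\mathcal{L}_{\xi_N}J=0$ for every $\xi\in\mathbb{R}^n$; equivalently $[\xi_N,JY]=J[\xi_N,Y]$ for all $Y$. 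Since $\mathbb{T}^n$ is abelian, $[\xi_N,\eta_N]=0$, whence $[\xi_N,J\eta_N]=J[\xi_N,\eta_N]=0$ and $[J\xi_N,\eta_N]=0$. Feeding these three vanishing brackets into the Nijenhuis tensor of $J$, which vanishes because $J$ is integrable, forces $[J\xi_N,J\eta_N]=0$, so all brackets vanish. The same two hypotheses ($\mathcal{L}_{\xi_N}J=0$ and integrability) also yield $\mathcal{L}_{J\xi_N}J=0$, so $J\xi_N$ is a real holomorphic vector field; being complete by assumption, its flow $\varphi^{-J(e_k)_N}$ is by holomorphic diffeomorphisms, as is $\varphi^{(e_k)_N}=\Phi_{[\,\cdot\, e_k]}$.

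With commutativity in hand I would prove (1) and (2) together. Because the generators commute and are complete, the stated identities $\varphi^X_t=\varphi^{tX}_1$ and $\varphi^X_t\circ\varphi^Y_t=\varphi^{X+Y}_t$ apply to every linear combination, and the ordered composition in the definition of $T$ collapses to the closed form $T(z,p)=\varphi^{-Jx_N+y_N}_1(p)$ for $z=x+iy$. From this, $T(0,p)=p$ and the group law $T(z,T(w,p))=T(z+w,p)$ are immediate, since the time-$1$ flows of the commuting fields $-Jx_N+y_N$ and $-Jx'_N+y'_N$ compose to the flow of their sum; this gives (1). Replacing $z$ by $tz$ and using $\varphi^{tX}_1=\varphi^X_t$ gives $T(tz,p)=\varphi^{-Jx_N+y_N}_t(p)$, and splitting off the commuting factor $\varphi^{y_N}_t$ yields the second equality of (2).

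For (3) I would exploit that $T$ is now known to be an action of the abelian group $\mathbb{C}^n$. Decompose $(w,u)=(w,0)+(0,u)$: the fibre part gives $T_{*_{(z,p)}}(0,u)=(T_z)_{*_p}u$ by definition of $T_z$, while abelianness gives $T(z+tw,p)=T(tw,T(z,p))$, so $T_{*_{(z,p)}}(w,0)=\frac{d}{dt}\big|_0T(tw,T(z,p))$ is the fundamental vector field of $w=a+ib$ for the $T$-action at $T(z,p)$, which by (2) equals $-Ja_N+b_N$. This is exactly the claimed formula. Part (4) is then a direct check that $T_*$ intertwines the product complex structure with $J$: inserting $(iw,Ju)$, where $\mathrm{Re}(iw)=-b$ and $\mathrm{Im}(iw)=a$, into the formula from (3) and comparing with $J\,T_{*_{(z,p)}}(w,u)$, the group terms coincide automatically (both equal $(a_N+Jb_N)(T(z,p))$), and the fibre terms agree precisely because $T_z$ is holomorphic, which I established in the first paragraph as a composite of holomorphic flows.

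The main obstacle is the content of the first paragraph: the two facts $[J\xi_N,J\eta_N]=0$ and $\mathcal{L}_{J\xi_N}J=0$ are what make $T$ simultaneously a well-defined action and a holomorphic map, and both genuinely require integrability of $J$ on top of the holomorphicity of $\Phi$. Everything afterward is bookkeeping with commuting complete flows and the chain rule. I would take care to fix the Nijenhuis sign convention explicitly so that the manipulations producing $[J\xi_N,J\eta_N]=0$ and $\mathcal{L}_{J\xi_N}J=0$ are unambiguous.
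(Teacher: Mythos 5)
Your proposal is correct and follows essentially the same route as the paper: the commutation relations $[\xi_N,\eta_N]=[J\xi_N,\eta_N]=[J\xi_N,J\eta_N]=0$ obtained from holomorphicity of $\Phi$ (the paper's Lemma \ref{fneknkenkfnk}) plus the vanishing Nijenhuis tensor, the collapse of the ordered composition of commuting complete flows for (1)--(2), the split of $T_{*_{(z,p)}}(w,u)$ into a fundamental-vector-field part and a fibre part for (3), and the check $T_{*_{(z,p)}}(iw,Ju)=JT_{*_{(z,p)}}(w,u)$ using holomorphicity of the flows $\varphi^{-J(e_k)_N}_s$ for (4). Your derivation of $\mathscr{L}_{J\xi_N}J=0$ via the Nijenhuis identity is exactly the paper's computation in part (4), so there is nothing genuinely different to compare.
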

\begin{proof}
	(1) Obviously, $T(0,p)=p$ for every $p\in N$. To prove the formula $T(z,T(z',p))=T(z+z',p)$, 
	it suffices to show that all the flows in \eqref{nnksnkenksn} commute, which is equivalent to show that for any pair 
	$\xi,\eta\in \mathbb{R}^{n}=\textup{Lie}(\mathbb{T}^{n})$, 
	\begin{eqnarray}\label{nfkwndkdnk}
		[\xi_{N},\eta_{N}]=[J\xi_{N},\eta_{N}]=[J\xi_{N},J\eta_{N}]=0.
	\end{eqnarray}	
	The vanishing of $[\xi_{N},\eta_{N}]$ and $[J\xi_{N},\eta_{N}]$ is a consequence of Lemma \ref{fneknkenkfnk} (see below) together with the fact that 
	$\mathbb{R}^{n}\to \mathfrak{X}(N)$, $\xi\mapsto \xi_{N}$, is an antihomomorphism of commutative Lie algebras. The vanishing of $[J\xi_{N},J\eta_{N}]$ 
	follows from the vanishing of the other two Lie brackets together with the vanishing of the Nijenhuis tensor:
	\begin{eqnarray*}
		0=[\xi_{N},\eta_{N}]+J[J\xi_{N},\eta_{N}]+J[\xi_{N},J\eta_{N}]-[J\xi_{N},J\eta_{N}]=-[J\xi_{N},J\eta_{N}]. 
	\end{eqnarray*}
	It follows that $T$ is a well defined Lie group action. 

	(2) This follows easily from the fact that all the flows in \eqref{nnksnkenksn} commute (see above). 

	(3) By a direct calculation. 

	(4) First we show that for every $s\in \mathbb{R}$ and every $k=1,...,n$, the diffeomorphism $\varphi^{-J(e_{k})_{N}}_{s}:N\to N$ is holomorphic. 
	In view of Lemma \ref{fneknkenkfnk}, 
	this is equivalent to show that $[J(e_{k})_{N},JY]=J[J(e_{k})_{N},Y]$ for all vector fields $Y$ on $N$. So let $Y$ be an arbitrary vector field on $N$. 
	Because the flow of $(e_{k})_{N}$ consists of holomorphic transformations, $[(e_{k})_{N},JY]=J[(e_{k})_{N},Y]$. 
	It follows from this and the vanishing of the Nijenhuis tensor that 
	\begin{eqnarray*}
		0&=& [J(e_{k})_{N},JY]-J[(e_{k})_{N},JY]-J[J(e_{k})_{N},Y]-[(e_{k})_{N},Y]\\
		&=&[J(e_{k})_{N},JY]+[(e_{k})_{N},Y]-J[J(e_{k})_{N},Y]-[(e_{k})_{N},Y]\\
		&=&[J(e_{k})_{N},JY]-J[J(e_{k})_{N},Y].  
	\end{eqnarray*}
	Thus $\varphi^{-J(e_{k})_{N}}_{s}$ is holomorphic. Now, a straightforward computation using (3) and the fact that the flows 
	$\varphi_{x_{k}}^{-J(e_{k})_{N}}$ and $\varphi_{y_{k}}^{(e_{k})_{N}}$ are all holomorphic transformations of $N$, shows that 
	$T_{*(z,p)}(iw,Ju)=JT_{*_{(z,p)}}(w,u)$, which means that $T$ is holomorphic.  
%
%
\end{proof}

\begin{lemma}\label{fneknkenkfnk}
	Let $X$ be a vector field on a complex manifold $W.$ The following are equivalent.
	\begin{enumerate}[(1)]
		\item The flow of $X$ consists of holomorphic transformations of $W$. 
		\item $[X,JY]=J[X,Y]$ for all vector field $Y$ on $W$. 
	\end{enumerate}
\end{lemma}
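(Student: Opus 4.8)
The plan is to recognize that condition (2) is nothing but the coordinate-free statement that the Lie derivative $\mathcal{L}_{X}J$ of the complex structure tensor along $X$ vanishes, and that condition (1) is the integrated version of the same infinitesimal condition. Concretely, I would route both conditions through the intermediate statement $\mathcal{L}_{X}J = 0$ and establish $(1)\Leftrightarrow \mathcal{L}_{X}J = 0 \Leftrightarrow (2)$.

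For the equivalence $\mathcal{L}_{X}J = 0 \Leftrightarrow (2)$, I would use that $J$, regarded as a $(1,1)$-tensor field (an endomorphism of $TW$), satisfies the Leibniz-type identity $(\mathcal{L}_{X}J)(Y) = \mathcal{L}_{X}(JY) - J(\mathcal{L}_{X}Y) = [X,JY] - J[X,Y]$ for every vector field $Y$, since $\mathcal{L}_{X}Z = [X,Z]$ on vector fields. Thus $\mathcal{L}_{X}J = 0$ holds if and only if $[X,JY] = J[X,Y]$ for all $Y$, which is exactly (2). This step is a routine tensorial identity and requires no work beyond spelling it out.

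For the equivalence $(1)\Leftrightarrow \mathcal{L}_{X}J = 0$, I would use the standard characterization that a diffeomorphism $\varphi$ of a complex manifold is holomorphic precisely when its derivative commutes with the complex structure, i.e.\ $\varphi_{*}\circ J = J\circ \varphi_{*}$, equivalently $\varphi^{*}J = J$ as tensor fields, where $(\varphi^{*}J)_{p} = (\varphi_{*_{p}})^{-1}J_{\varphi(p)}\varphi_{*_{p}}$. Letting $\varphi_{t}$ denote the (local) flow of $X$, I would invoke the fundamental relation $\frac{d}{dt}\varphi_{t}^{*}J = \varphi_{t}^{*}(\mathcal{L}_{X}J)$. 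If $\mathcal{L}_{X}J = 0$, then $t\mapsto \varphi_{t}^{*}J$ is constant and equals $\varphi_{0}^{*}J = J$, so every $\varphi_{t}$ preserves $J$ and is therefore holomorphic; conversely, if each $\varphi_{t}$ is holomorphic then $\varphi_{t}^{*}J \equiv J$, and differentiating at $t=0$ gives $\mathcal{L}_{X}J = 0$. Since each $\varphi_{t}^{*}$ is invertible, constancy in $t$ is detected by the single derivative at $t=0$.

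The computations are light; the only points requiring care are (i) the identification of holomorphicity of a map with preservation of the tensor $J$, which is the Cauchy--Riemann content on a complex manifold and which I would either cite or verify directly in local holomorphic coordinates, and (ii) the fact that $X$ need not be complete, so $\varphi_{t}$ is only a local flow. The latter causes no genuine difficulty, since ``the flow consists of holomorphic transformations'' is a local condition and both the flow formula and the differentiation argument remain valid on the open sets where $\varphi_{t}$ is defined; I expect the main (minor) obstacle to be phrasing this local-flow argument cleanly rather than any analytic subtlety. I would also remark that integrability of $J$ is not actually used in the equivalence itself: it enters only implicitly through the terminology, in calling a $J$-preserving diffeomorphism ``holomorphic''.
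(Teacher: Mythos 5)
Your proof is correct and is essentially the same argument the paper relies on: the paper gives no proof of its own but cites Moroianu's Lemma 2.7, whose proof is precisely your route, namely the tensorial identity $(\mathcal{L}_{X}J)(Y)=[X,JY]-J[X,Y]$ combined with the flow formula $\tfrac{d}{dt}\varphi_{t}^{*}J=\varphi_{t}^{*}(\mathcal{L}_{X}J)$. Your two points of care (holomorphicity as $J$-preservation, and working with the local flow since $X$ need not be complete) are exactly the right ones and are handled correctly.
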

\begin{proof}
	See \cite{moroianu_2007}, Lemma 2.7. 
\end{proof}

	Let $(\mathbb{C}^{*})^{n}$ be the algebraic torus and let $\varepsilon:\mathbb{C}^{n}\to (\mathbb{C}^{*})^{n}$ be the map defined by 
	\begin{eqnarray*}
		\varepsilon(z)=(e^{2\pi z_{1}},...,e^{2\pi z_{n}}), 
	\end{eqnarray*}
	where $z=(z_{1},...,z_{n})\in \mathbb{C}^{n}$. Then $\varepsilon$ is a surjective Lie group homomorphism and a holomorphic covering map 
	with Deck transformation group $i\mathbb{Z}^{n}$. If $\varepsilon(z)=\varepsilon(w)$ for some $z,w\in \mathbb{C}^{n}$, 
	then there exists $k\in \mathbb{Z}^{n}$ such that $z=w+ik$, and so 
	\begin{eqnarray*}
		T(z,p)=T(w+ik,p)=T(w,T(ik,p))=T\big(w,\varphi_{1}^{-J(0)_{N}}(\Phi([k],p))\big)=T(w,p),
	\end{eqnarray*}
	where we have used Proposition \ref{nefknwdknefknk}(2). Therefore there exists a unique map $\Phi^{\mathbb{C}}:(\mathbb{C}^{*})^{n}\times N\to N$ such that 
	\begin{eqnarray*}
		\Phi^{\mathbb{C}}(\varepsilon(z),p)=T(z,p)
	\end{eqnarray*}
	for all $z\in \mathbb{C}^{n}$ and $p\in N$. 
	It follows from the properties of $\varepsilon$ and $T$ that $\Phi^{\mathbb{C}}:(\mathbb{C}^{*})^{n}\times N\to N$ is a Lie group action and a holomorphic map.
	Moreover, $\Phi^{\mathbb{C}}((e^{2i\pi t_{1}},...,e^{2i\pi t_{n}}),p)=\Phi^{\mathbb{C}}(\varepsilon(it),p)=\Phi([t],p)$ for all $t=(t_{1},...,t_{n})\in \mathbb{R}^{n}$ and all $p\in N.$
	Therefore $\Phi^{\mathbb{C}}$ is an extension of $\Phi$, provided $\mathbb{T}^{n}$ is identified with 
	$\{(e^{i2\pi t_{1}},...,e^{i2\pi t_{n}})\in \mathbb{C}^{n}\,\,\vert\,\,t_{1},...,t_{n}\in \mathbb{R}\}\subset \mathbb{C}^{n}$. \\

	We now specialize to the case when $N$ is a connected K\"{a}hler manifold of real dimension $2n$ and 
	$\Phi:\mathbb{T}^{n}\times N\to N$ is effective, Hamiltonian and holomorphic. We continue to use the notation 
	$\Phi^{\mathbb{C}}$ to denote the holomorphic extension of $\Phi$. 

	Let $N^{\circ}$ be the dense connected open subset of $N$ of points $p\in N$ where the action $\Phi$ is free. 

\begin{lemma}\label{nfekwndknfeknkdnk}
	The following hold. 
	\begin{enumerate}[(1)]
	\item The action $\Phi$ is free at $p\in N$ if and only if $\Phi^{\mathbb{C}}$ is free at $p$. 
	\item $N^{\circ}$ is a $(\mathbb{C}^{*})^{n}$-orbit. 
	\item Given $p\in N^{\circ}$, the orbit map $\Phi_{p}^{\mathbb{C}}:(\mathbb{C}^{*})^{n}\to N^{\circ}$ is a holomorphic diffeomorphism, which 
		is equivariant in the sense that $\Phi_{p}^{\mathbb{C}}(zw)=(\Phi^{\mathbb{C}}_{z}\circ \Phi_{p}^{\mathbb{C}})(w)$ 
			for all $z,w\in (\mathbb{C}^{*})^{n}$. 
	\end{enumerate}
\end{lemma}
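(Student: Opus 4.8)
The plan is to prove (1) first and then deduce (2) and (3) from it together with the orthogonal splitting of Lemma \ref{neekwnkvnkdnsk}. For (1), one implication is immediate: since $\mathbb{T}^{n}$ is identified with a subgroup of $(\mathbb{C}^{*})^{n}$ and $\Phi$ is the restriction of $\Phi^{\mathbb{C}}$, freeness of $\Phi^{\mathbb{C}}$ at $p$ forces freeness of $\Phi$ at $p$. The substance is the converse. Suppose $\Phi$ is free at $p$ and that $\Phi^{\mathbb{C}}(w,p)=p$ for some $w\in(\mathbb{C}^{*})^{n}$. Writing $w=\varepsilon(z)$ with $z=x+iy$, $x,y\in\mathbb{R}^{n}$, Lemma \ref{nefknwdknefknk}(2) turns this into $\varphi^{V}_{1}(p)=p$ for the real vector field $V=-Jx_{N}+y_{N}$, i.e. $p$ is a period-one point of the flow of $V$.

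The key is to exploit that $-Jx_{N}$ is, up to sign, the gradient of the momentum component $\moment^{x}$. Using $\omega(\cdot,\cdot)=g(J\cdot,\cdot)$ and the defining relation $d\moment^{x}=\omega(x_{N},\cdot)$, I would compute along the flow of $V$
\[
\frac{d}{dt}\,\moment^{x}\!\big(\varphi^{V}_{t}(p)\big)=d\moment^{x}(-Jx_{N}+y_{N})=-g(x_{N},x_{N})+\omega(x_{N},y_{N})=-\,g(x_{N},x_{N}),
\]
where $\omega(x_{N},y_{N})=d\moment^{x}(y_{N})=0$ because $\moment^{x}$ is constant along $\mathbb{T}^{n}$-orbits (equivariance for an abelian group). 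Hence $\moment^{x}$ is non-increasing along the orbit; since $\varphi^{V}_{1}(p)=p$ closes the orbit, $\moment^{x}$ must be constant on it, forcing $x_{N}\equiv 0$ along the orbit and in particular $x_{N}(p)=0$. Freeness of $\Phi$ at $p$ makes the isotropy Lie algebra $\{\xi:\xi_{N}(p)=0\}$ trivial, so $x=0$. Then $V=y_{N}$ and $p=\varphi^{y_{N}}_{1}(p)=\Phi([y],p)$, and freeness again gives $[y]=e$, i.e. $y\in\mathbb{Z}^{n}$; thus $z=iy\in i\mathbb{Z}^{n}$ and $w=\varepsilon(z)=1$. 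This proves $\Phi^{\mathbb{C}}$ is free at $p$.

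For (2), first note that $N^{\circ}$ is a union of $(\mathbb{C}^{*})^{n}$-orbits: isotropy groups are constant along orbits, so by (1) the free locus of $\Phi$ coincides with the free locus of $\Phi^{\mathbb{C}}$, which is $(\mathbb{C}^{*})^{n}$-invariant. I would then show each such orbit is open. At $p\in N^{\circ}$ the differential of the orbit map $\Phi^{\mathbb{C}}_{p}$ has image $\mathrm{span}\{y_{N}(p)\}+\mathrm{span}\{-Jx_{N}(p)\}=T_{p}\mathcal{O}\oplus J(T_{p}\mathcal{O})$ (read off from Lemma \ref{nefknwdknefknk}(2)), which equals $T_{p}N$ by Lemma \ref{neekwnkvnkdnsk}; since $\dim_{\mathbb{R}}(\mathbb{C}^{*})^{n}=2n=\dim_{\mathbb{R}}N$, the orbit map is a local diffeomorphism and the orbit is open. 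Thus $N^{\circ}$ is partitioned into open $(\mathbb{C}^{*})^{n}$-orbits, each of which is therefore clopen in $N^{\circ}$; as $N^{\circ}$ is connected, there is exactly one, proving that $N^{\circ}$ is a single orbit.

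Finally (3) is a corollary: fixing $p\in N^{\circ}$, the orbit map $\Phi^{\mathbb{C}}_{p}:(\mathbb{C}^{*})^{n}\to N^{\circ}$ is injective by freeness (part (1)), surjective by (2), a local diffeomorphism by the immersion computation above, and holomorphic since $\Phi^{\mathbb{C}}$ is; an injective surjective local diffeomorphism is a diffeomorphism. The equivariance $\Phi^{\mathbb{C}}_{p}(zw)=\Phi^{\mathbb{C}}_{z}(\Phi^{\mathbb{C}}_{p}(w))$ is just the action axiom for $\Phi^{\mathbb{C}}$. The main obstacle is the converse in (1): a priori the ``imaginary'' one-parameter subgroups could introduce isotropy not detected by $\mathbb{T}^{n}$, and the gradient-flow monotonicity of $\moment^{x}$ is precisely what rules this out.
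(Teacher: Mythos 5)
Your proof is correct and follows essentially the same route as the paper: the converse in (1) via monotonicity of $\moment^{x}$ along the flow of $-Jx_{N}$, then (2) via constancy of isotropy along orbits plus openness of the orbit from the splitting $T_{p}N=T_{p}\mathcal{O}\oplus J(T_{p}\mathcal{O})$ of Lemma \ref{neekwnkvnkdnsk}, and (3) as a standard consequence. The only (cosmetic) difference is that you run the argument along the single flow of $V=-Jx_{N}+y_{N}$ and use that $p$ is a period-one point, killing the extra term by $d\moment^{x}(y_{N})=0$, whereas the paper follows the curve $t\mapsto T(tx,p)$ and invokes $\mathbb{T}^{n}$-invariance of $\moment^{x}$ to compare $\alpha(0)$ and $\alpha(1)$ in the same orbit.
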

\begin{proof}
	(1) Let $p\in N$ be arbitrary. Let $K_{1}\subseteq \mathbb{T}^{n}$ and $K_{2}\subseteq (\mathbb{C}^{*})^{n}$ be the corresponding 
	stabilizers. We must show that $K_{1}$ is trivial if and only if $K_{2}$ is trivial. Suppose $K_{2}$ trivial. 
	Because $\Phi^{\mathbb{C}}$ is an extension of $\Phi$, we have 
	$K_{1}=\mathbb{T}^{n}\cap K_{2}\subset K_{2}=\{e\}$ and hence $K_{1}$ is trivial. Conversely, suppose $K_{1}$ trivial. Let 
	$\varepsilon(z)=\varepsilon(x+iy)\in K_{2}$ be arbitrary, where $x,y\in \mathbb{R}^{n}$. 
	Consider the curve $\alpha:\mathbb{R}\to N,$ $t\mapsto \Phi^{\mathbb{C}}(\varepsilon(tx),p)=T(tx,p)$. Note that $\alpha(0)$ and 
	$\alpha(1)$ belong to the same $\mathbb{T}^{n}$-orbit. Indeed, 
	\begin{eqnarray*}	
		\alpha(0)=p=\Phi_{\varepsilon(z)}^{\mathbb{C}}(p)=T(z,p)=T(iy,T(x,p))=\Phi_{[y]}(\alpha(1)), 
	\end{eqnarray*}
	where we have used Lemma \ref{nefknwdknefknk}(2). By Lemma \ref{nefknwdknefknk}(2), $\alpha$ is an integral curve of $-Jx_{N}$. 
	Let $\omega$ be the symplectic form on $N$ and let $\moment:N\to \mathbb{R}^{n}$ be a momentum map for the action $\Phi$. We denote 
	by $\moment^{x}:N\to \mathbb{R}$ the map defined by $\moment^{x}(q)=\langle\moment(q),x\rangle$, where $\langle\,,\,\rangle$ is the 
	Euclidean pairing on $\mathbb{R}^{n}$.  By definition of the momentum map, 
	the derivative of $\moment^{x}$ along $\alpha$ is given by 
	\begin{eqnarray*}
		\dfrac{d}{dt}\moment^{x}\big(\alpha(t)\big)=\omega\big(x_{N},\dot{\alpha}(t)\big)=-\omega_{\alpha(t)}(x_{N},Jx_{N})=
		-g_{\alpha(t)}(x_{N},x_{N})\leq 0.
	\end{eqnarray*}
	Thus $\moment^{x}$ is nonincreasing along $\alpha$. 
	Since $\moment^{x}$ is $\mathbb{T}^{n}$-invariant and $\alpha(0),\alpha(1)$ belong to the same $\mathbb{T}^{n}$-orbit, 
	it follows that $\moment^{x}\circ \alpha$ is constant on $[0,1]$. In particular, $0=\tfrac{d}{dt}\big\vert_{0}\moment^{x}(\alpha(t))=-g_{p}(x_{N},x_{N})$ and hence
	$x_{N}(p)=0$. Therefore $x\in \textup{Lie}(K_{1})=\{\xi\in \textup{Lie}(\mathbb{T}^{n})\,\,\vert\,\,\xi_{N}(p)=0\}$. Since $K_{1}$ is trivial, 
	$x=0$. It follows that $\varepsilon(z)=\varepsilon(iy)=(e^{2i\pi y_{1}},...,e^{2i\pi y_{n}})\in \mathbb{T}^{n}$. 
	Thus $\varepsilon(z)\in \mathbb{T}^{n}\cap K_{2}=K_{1}=\{e\}$. It follows that $K_{2}$ is trivial and concludes the proof of (1). 

	(2) Let $\mathcal{O}^{\mathbb{C}}$ be the $(\mathbb{C}^{*})^{n}$-orbit of an arbitrary point $p\in N^{\circ}$. By (1), 
	$\Phi^{\mathbb{C}}$ is free at $p$. Since $(\mathbb{C}^{*})^{n}$ is commutative, the stabilizer subgroups associated to $\Phi^{\mathbb{C}}$ 
	are constant along $(\mathbb{C}^{*})^{n}$-orbits. Thus $\Phi^{\mathbb{C}}$ is free at every point $q\in \mathcal{O}^{\mathbb{C}}$. By (1), this implies 
	$\mathcal{O}^{\mathbb{C}}\subseteq N^{\circ}$. To prove that $\mathcal{O}^{\mathbb{C}}=N^{\circ}$, it suffices to show that $\mathcal{O}^{\mathbb{C}}$ is open 
	in $N^{\circ}$, because then $N^{\circ}$ would be a disjoint union of open $(\mathbb{C}^{*})^{n}$-orbits and $N^{\circ}$ is connected. To see that 
	$\mathcal{O}^{\mathbb{C}}$ is open, recall that $\mathcal{O}^{\mathbb{C}}$ is the image of the orbit map $\Phi_{p}^{\mathbb{C}}:(\mathbb{C}^{*})^{n}\to N$. 
	Since $\Phi^{\mathbb{C}}_{p}\circ \varepsilon=T_{p}$ and $\varepsilon$ is a submersion, the image of $(\Phi^{\mathbb{C}}_{p})_{*_{e}}$ 
	coincides with the image of $(T_{p})_{*_{0}}$. By Lemma \ref{nefknwdknefknk}(3), the image of $(T_{p})_{*_{0}}$ is $T_{p}\mathcal{O}+JT_{p}\mathcal{O}$, where 
	$\mathcal{O}$ is the $\mathbb{T}^{n}$-orbit of $p$. By Lemma \ref{neekwnkvnkdnsk}, $T_{p}\mathcal{O}+JT_{p}\mathcal{O}=T_{p}N$. Thus $(T_{p})_{*_{0}}$ is surjective. It 
	follows that $\Phi_{p}^{\mathbb{C}}$ is a submersion at $e.$ Because orbit maps have constant rank, $\Phi^{\mathbb{C}}_{p}$ is a submersion, which implies that 
	its image is open. This shows (2).  

	(3) We already know that $\Phi_{p}^{\mathbb{C}}:(\mathbb{C}^{*})^{n}\to N$ is an holomorphic map and a submersion (see above). 
	Because $\textup{dim}((\mathbb{C}^{*})^{n})=\textup{dim}(N)$, $\Phi_{p}^{\mathbb{C}}$ is a local diffeomorphism. Since $\Phi^{\mathbb{C}}$ is free at $p$, 
	it is also injective. Thus $\Phi^{\mathbb{C}}_{p}$ is an holomorphic diffeomorphism. The equivariance property comes from the fact the $\Phi_{p}^{\mathbb{C}}$ is an 
	orbit map. 
\end{proof}

	Let $p\in N^{\circ}$ be fixed. Consider the following commutative diagram: 
\begin{figure}[h!]
                \begin{minipage}[c]{0.33\linewidth}
		\begin{eqnarray*}
		\begin{tikzcd}
		& \mathbb{C}^{n} \arrow[dd,bend right=70,swap,"\pi"] \arrow{d}{ \varepsilon} \arrow{dr}{ T_{p}}&   \\
		&  (\mathbb{C}^{*})^{n} \arrow{d}{ \sigma} \arrow[swap]{r}{ \Phi^{\mathbb{C}}_{p}}  & N^{\circ}\\
		&\mathbb{R}^{n} &
		\end{tikzcd}
		\end{eqnarray*} 
		\end{minipage}
                \begin{minipage}[c]{.67\linewidth}
	\begin{itemize}
	\item $\varepsilon(z_{1},...,z_{n})=(e^{2\pi z_{1}},...,e^{2\pi z_{n}})$, 
	\item $\sigma(z_{1},...,z_{n})=\big(\tfrac{\ln(|z_{1}|)}{2\pi},...,\tfrac{\ln(|z_{n}|)}{2\pi}\big)$,
	\item $\pi(z_{1},...,z_{n})=\big(\textup{Real}(z_{1}),...,\textup{Real}(z_{n})\big)$.
	\end{itemize}
		\end{minipage}
\end{figure}

	Let $g$ be the K\"{a}hler metric on $N$. Because $\varepsilon$ is a covering map and $\Phi_{p}^{\mathbb{C}}$ is a diffeomorphism, $T_{p}$ is 
	a covering map, and so the pullback $\widetilde{g}=(T_{p})^{*}g$ is a metric on $\mathbb{C}^{n}$. Let $J$ be the canonical complex structure on $\mathbb{C}^{n}$ 
	($J=$ multiplication by $i$). Given $x\in \mathbb{R}^{n}$, let $h_{x}$ be the bilinear form on $T_{x}\mathbb{R}^{n}=\mathbb{R}^{n}$ 
	defined by $h_{x}(u,v)=g_{T_{p}(x)}(u_{N},v_{n})$. Note that $T_{p}(x)=\varphi^{-Jx_{N}}(1,p)$ by Lemma \ref{nefknwdknefknk}(2). Thus
	\begin{eqnarray*}
		h_{x}(u,v)=g_{\varphi^{-Jx_{N}}(1,p)}(u_{N},v_{N}).
	\end{eqnarray*}
\begin{lemma}\label{nfeknkenfknd}
	\textbf{}
	\begin{enumerate}[(a)]
		\item $(\mathbb{C}^{n},\widetilde{g},J)$ is a K\"{a}hler manifold. 
		\item Given $t\in \mathbb{R}^{n}$, the map $f_{t}:\mathbb{C}^{n}\to \mathbb{C}^{n}$, $z\mapsto z+it$, is an isometry, that is, $f_{t}^{*}\widetilde{g}=\widetilde{g}$. 
		\item $\widetilde{g}_{z}(w,w')=\widetilde{g}_{\pi(z)}(w,w')$ for all $z,w,w'\in \mathbb{C}^{n}$,
		\item $\widetilde{g_{z}}(ia,b)=0$ for all $z\in \mathbb{C}^{n}$ and all $a,b\in \mathbb{R}^{n}$. 
		\item $\widetilde{g_{z}}(ia,ib)=\widetilde{g_{z}}(a,b)$ for all $z\in \mathbb{C}^{n}$ and all $a,b\in \mathbb{R}^{n}$. 
		\item The correspondence $x\in \mathbb{R}^{n}$, $x\mapsto h_{x}$ is a Riemannian metric on $\mathbb{R}^{n}$. 
		\item $\widetilde{g}_{x+iy}(a+ib,a'+ib')=h_{x}(a,a')+h_{x}(b,b')$ for all $x,y,a,a',b,b'\in \mathbb{R}^{n}$. 
	\end{enumerate}
\end{lemma}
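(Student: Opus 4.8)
The plan is to let the explicit derivative formula of Lemma \ref{nefknwdknefknk}(3) do essentially all of the work. Restricting that formula to the orbit map $T_p:\mathbb{C}^n\to N^\circ$, $T_p(z)=T(z,p)$, gives, for $w=a+ib$ with $a,b\in\mathbb{R}^n$,
\[
(T_p)_{*_z}(a+ib)=\big(-Ja_N+b_N\big)(T_p(z)),
\]
so a real direction $a$ maps to $-Ja_N$ and a purely imaginary direction $ia$ maps to $a_N$, both evaluated at $T_p(z)\in N^\circ$. Since $\widetilde g=(T_p)^{*}g$ by definition, every assertion will reduce to evaluating the Kähler metric $g$ of $N$ on fundamental vector fields and their $J$-images. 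I would record this formula first and refer back to it throughout.

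For (a) I would note that $T_p=\Phi_p^{\mathbb{C}}\circ\varepsilon$ is a holomorphic covering map by Lemma \ref{nfekwndknfeknkdnk}(3) and the properties of $\varepsilon$, hence a local biholomorphism; therefore $\widetilde g$ is Hermitian with respect to the standard $J$ and its fundamental form is $(T_p)^{*}\omega$, which is closed because $\omega$ is, so $\widetilde g$ is Kähler. For (b) the key identity is $T_p\circ f_t=\Phi_{[t]}\circ T_p$, which follows from $T(it,q)=\Phi([t],q)$ (Lemma \ref{nefknwdknefknk}(2)) and the group law for $T$; since each $\Phi_{[t]}$ is a $g$-isometry and $(f_t)_{*}=\mathrm{Id}$, one gets $f_t^{*}\widetilde g=\widetilde g$. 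Then (c) is immediate: $f_t$ being an isometry with identity derivative forces $\widetilde g_{z+it}=\widetilde g_z$ for all $t\in\mathbb{R}^n$, i.e. $\widetilde g_z$ depends only on $\pi(z)=\mathrm{Real}(z)$.

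The substantive point is (d). Using the derivative formula, $\widetilde g_z(ia,b)=g_{T_p(z)}(a_N,-Jb_N)=\omega_{T_p(z)}(a_N,b_N)$, after invoking $g(u,-Jv)=\omega(u,v)$ (here I must keep the convention $\omega(u,v)=g(Ju,v)$ together with the symmetry of $g$ and antisymmetry of $\omega$). This vanishes because the torus orbits are isotropic: the Hamiltonian condition gives $\omega(a_N,\,\cdot\,)=d\moment^a$, and $\moment^a$ is constant along orbits, so $\omega(a_N,b_N)=d\moment^a(b_N)=0$. This is the one place where the Hamiltonian hypothesis is genuinely used, and I expect getting the $J/\omega$ sign bookkeeping exactly right to be the main obstacle; everything else is linear algebra. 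Part (e) then follows from the $J$-invariance of the Hermitian metric $\widetilde g$ established in (a) (since $ia=Ja$), or directly from $g(-Ja_N,-Jb_N)=g(a_N,b_N)$. For (f), the identity $h_x(a,b)=\widetilde g_x(ia,ib)$ exhibits $h$ as a restriction of the Riemannian metric $\widetilde g$; positive-definiteness comes from freeness of the action on $N^{\circ}$, which makes $a\mapsto a_N(T_p(x))$ injective, so $h_x(a,a)=g(a_N,a_N)>0$ for $a\neq 0$, while symmetry and smoothness are clear.

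Finally, for (g) I would assemble the pieces: by (c), reduce $\widetilde g_{x+iy}$ to $\widetilde g_x$; expand $\widetilde g_x(a+ib,a'+ib')$ bilinearly; the two mixed terms vanish by (d) and symmetry; the imaginary–imaginary term becomes $\widetilde g_x(b,b')$ by (e); and the identification $\widetilde g_x(a,a')=g(-Ja_N,-Ja'_N)=g(a_N,a'_N)=h_x(a,a')$ handles the remaining terms. Summing yields $\widetilde g_{x+iy}(a+ib,a'+ib')=h_x(a,a')+h_x(b,b')$, which is exactly the Dombrowski form of Corollary \ref{nfednwkneknkn}(2) and sets up the conclusion that $\Phi$ is a torification of $(\mathbb{R}^n,h,\nabla^{\mathrm{flat}})$.
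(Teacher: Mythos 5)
Your proof is correct and follows essentially the same route as the paper: the derivative formula of Lemma \ref{nefknwdknefknk}(3), the intertwining $T_{p}\circ f_{t}=\Phi_{[t]}\circ T_{p}$, and the orthogonality of orbit and $J$-orbit directions. The only cosmetic differences are that for (d) you inline the isotropy computation $\widetilde{g}_{z}(ia,b)=\omega(a_{N},b_{N})=d\moment^{a}(b_{N})=0$ (which is precisely the computation in the proof of Lemma \ref{neekwnkvnkdnsk}, cited by the paper at that step, and which incidentally holds at every point of $N$, not just on $N^{\circ}$), and that in (f) you get positive-definiteness from freeness of the action on $N^{\circ}$ rather than from the paper's observation that $h=j^{*}\widetilde{g}$ is the pullback of the Riemannian metric $\widetilde{g}$ along the canonical injection $j:\mathbb{R}^{n}\to\mathbb{C}^{n}$; both variants are sound.
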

\begin{proof}
	(a) By Lemma \ref{nefknwdknefknk}(4), $T_{p}:\mathbb{C}^{n}\to N$ is holomorphic, and since $T_{p}$ is an isometric covering map, it is locally a holomorphic 
	isometry. This forces $(\mathbb{C}^{n},\widetilde{g},J)$ to be a K\"{a}hler manifold. 

	\noindent (b) A simple diagram chase shows that $T_{p}\circ f_{t}=\Phi_{[t]}\circ T_{p}$. Since $T_{p}$ is a K\"{a}hler covering, this implies that $f_{t}$ is isometric 
	if and only if $\Phi_{[t]}$ is isometric, which is the case. 

	\noindent (c) Follows from (b). To see (d), we use Lemma \ref{nefknwdknefknk}(3):
	
	\begin{eqnarray*}
		\widetilde{g}_{z}(ia,b)=g_{T_{p}(z)}((T_{p})_{*}ia, (T_{p})_{*}b)=g_{T_{p}(z)}(a_{N},-Jb_{N}).
	\end{eqnarray*}
	By Lemma \ref{nfekwndknfeknkdnk}(2), the point $T_{p}(z)=\Phi^{\mathbb{C}}(\varepsilon(z),p)$ belongs to $N^{\circ}$ and hence 
	the decomposition $T_{T_{p}(z)}\mathcal{O}\oplus J(T_{T_{p}(z)}\mathcal{O})$ is orthogonal 
	relative to the K\"{a}hler metric $g$, where $\mathcal{O}$ denotes the 
	$\mathbb{T}^{n}$-orbit of $T_{p}(z)$ in $N$ (see Lemma \ref{neekwnkvnkdnsk}). It follows that $g_{T_{p}(z)}(a_{N},-Jb_{N})=0$, and thus $\widetilde{g}_{z}(ia,b)=0$. 
	This shows (d). Analogously, one shows (e). (f) Let $j:\mathbb{R}^{n}\to \mathbb{C}^{n}$ be the canonical injection. Given $x,u,v\in \mathbb{R}^{n}$, we compute:
	\begin{eqnarray*}
		(j^{*}\widetilde{g})_{x}(u,v)&=&\widetilde{g}_{x}(u,v)=\big((T_{p})^{*}g\big)_{x}(u,v)=g_{T_{p}(x)}\big((T_{p})_{*}u,(T_{p})_{*}v\big)\\
		&=& g_{T_{p}(x)}(-Ju_{N},-Jv_{N})=g_{T_{p}(x)}(u_{N},v_{N})=h_{x}(u,v),
	\end{eqnarray*}
	where we have used Lemma \ref{nefknwdknefknk}(3). Thus $h=j^{*}\widetilde{g}$. This shows that $h$ is a Riemannian metric on $\mathbb{R}^{n}$. 
	(g) By a direct calculation using (c),(d) and $(e)$.  This concludes the proof.  
\end{proof}

\begin{proof}[Proof of Theorem \ref{nckdnwknknednenkenk}]
	\noindent (1) To prove that $(\mathbb{R}^{n},h,\nabla^{\textup{flat}})$ is dually flat, it suffices to show that the almost Hermitian manifold associated to 
	$(h,\nabla^{\text{flat}})$ via Dombrowski's construction is K\"{a}hler (see Proposition \ref{prop:3.8}). By Corollary \ref{nfednwkneknkn} and Lemma \ref{nfeknkenfknd}(g), 
	$(\mathbb{C}^{n}, \widetilde{g},J)$ is the almost Hermitian manifold associated to $(h,\nabla^{\textup{flat}})$ via Dombrowski's construction. By Lemma \ref{nfeknkenfknd}(1), 
	it is a K\"{a}hler manifold. 
	
	\noindent (2) Apply Proposition \ref{nkwdnkkfenknk} to the covering map $T_{p}:\mathbb{C}^{n}\to N^{\circ}$ and the parallel lattice $L\subset T\mathbb{R}^{n}$ 
	generated by the vector fields $\tfrac{\partial}{\partial x_{1}},...,\tfrac{\partial}{\partial x_{n}}$.  

	\noindent (3) Let $\moment:N\to \mathbb{R}^{n}$ be a momentum map. Since $\moment\circ\Phi_{p}^{\mathbb{C}}$ is 
	$\mathbb{T}^{n}$-invariant and $\sigma$ is a principal $\mathbb{T}^{n}$-bundle, 
	there is a smooth map $f=(f^{1},...,f^{n}):\mathbb{R}^{n}\to \mathbb{R}^{n}$ such that $f\circ \sigma=\moment\circ \Phi_{p}^{\mathbb{C}}$. 
	We claim that $\tfrac{\partial f^{i}}{\partial x_{j}}(x)=-h_{ij}(x):=-h_{x}(\tfrac{\partial}{\partial x_{i}},\tfrac{\partial}{\partial x_{j}})$ for every $x\in \mathbb{R}^{n}$ and every 
	$i,j=1,...,n$. To see this, let $x\in \mathbb{R}^{n}$ be arbitrary. Since the coordinate functions of the curve $t\mapsto x+te_{j}$ are real-valued, we have 
	$x+te_{j}=\pi(x+te_{j})=(\sigma\circ \varepsilon)(x+te_{j})$ (see the diagram before Lemma \ref{nfeknkenfknd}), and so 
	\begin{eqnarray*}
		\dfrac{\partial f^{i}}{\partial x_{j}}(x)&=&\dfrac{d}{dt}\bigg\vert_{0}f^{i}(x+te_{j})=
		\dfrac{d}{dt}\bigg\vert_{0}(f^{i}\circ\sigma\circ\varepsilon)(x+te_{j})\\
		&=&\dfrac{d}{dt}\bigg\vert_{0} (\moment^{e_{i}}\circ \Phi_{p}^{\mathbb{C}}\circ \varepsilon\big)(x+te_{j})\big)
		=\dfrac{d}{dt}\bigg\vert_{0} (\moment^{e_{i}}\circ T_{p})(x+te_{j}),
	\end{eqnarray*}
	where we have used $\Phi_{p}^{\mathbb{C}}\circ \varepsilon=T_{p}$. Since $(T_{p})_{*_{x}}(e_{j})=-J(e_{j})_{N}(T_{p}(x))$ by Lemma \ref{nefknwdknefknk}(3), 
	we get 
	\begin{eqnarray*}
		\dfrac{\partial f^{i}}{\partial x_{j}}(x)&=&-(\moment^{e_{i}})_{*_{T_{p}(x)}}J(e_{j})_{N}=-\omega_{T_{p}(x)}\big((e_{i})_{N},J(e_{j})_{N}\big)\\
		&=&  -g_{T_{p}(x)}\big((e_{i})_{N},(e_{j})_{N}\big)  =-h_{x}(e_{i},e_{j})=-h_{ij}(x). 
	\end{eqnarray*}
	This concludes the proof of the claim. Consider the 1-form $\theta$ on $\mathbb{R}^{n}$ defined by 
	$\theta=\sum_{k=1}^{n}f^{k}dx_{k}.$ The form $\theta$ is closed ($d\theta=0$) if and only if $\tfrac{\partial f^{i}}{\partial x_{j}}-
	\tfrac{\partial f^{j}}{\partial x_{i}}=0$ on $\mathbb{R}^{n}$ for all $i,j=1,...,n$, which is the case since $\tfrac{\partial f^{j}}{\partial x_{i}}=-h_{ij}$. Therefore there exists 
	a smooth function $\psi:\mathbb{R}^{n}\to \mathbb{R}$ such that $\theta=-d\psi$. Clearly  $\tfrac{\partial \psi}{\partial x_{i}}=-f^{i}$ and hence 
	$\tfrac{\partial^{2} \psi}{\partial x_{i} \partial x_{j}}=-\tfrac{\partial f^{i}}{\partial x_{j}}=h_{ij}$ for all $i,j=1,...,n$. Thus $h$ is the Hessian of $\psi$. 
	Let $y=\textup{grad}(\psi)=\big(\tfrac{\partial \psi}{\partial x_{1}},...,\tfrac{\partial \psi}{\partial x_{n}}\big)$ be the gradient map of $\psi.$
	By Lemma \ref{nefkwnknkenknk}, $y=(y_{1},...,y_{n})$ are coordinates on $\mathbb{R}^{n}$ and $(x,y)$ is a global pair of dual coordinate
	systems on $(\mathbb{R}^{n},h,\nabla^{\textup{flat}}).$ This shows (a). Next we prove (b). To see that $-y:\mathbb{R}^{n}\to \moment(N^{\circ})$ is an isometry,
	let $\pi=\sigma\circ \varepsilon:\mathbb{C}^{n}\to \mathbb{R}^{n}$ be the canonical projection of the tangent bundle 
	$T\mathbb{R}^{n}=\mathbb{C}^{n}$. Since $-y=f$ and $f\circ \sigma=\moment\circ \Phi_{p}^{\mathbb{C}}$, we have 
	\begin{eqnarray*}
		-y\circ \pi= f\circ \sigma\circ \varepsilon= \moment\circ \Phi_{p}^{\mathbb{C}}\circ \varepsilon= \moment\circ T_{p}
	\end{eqnarray*}
	and hence 
	\begin{eqnarray*}
		(-y)^{*}k=h \,\,\,\,\,\,\,\,\,\,&\Leftrightarrow& \,\,\,\,\,\,\,\,\,\,\pi^{*}(-y)^{*}k=\pi^{*}h\\
		&\Leftrightarrow& \,\,\,\,\,\,\,\,\,\,\big((-y)\circ \pi\big)^{*}k=\pi^{*}h\\
		&\Leftrightarrow& \,\,\,\,\,\,\,\,\,\,\big(\moment \circ T_{p})^{*}k=\pi^{*}h.
	\end{eqnarray*}
	Thus it suffices to show that $\big(\moment \circ T_{p})^{*}k=\pi^{*}h$. Let $z=x+iy,w=a+ib,w'=a'+ib'\in \mathbb{C}^{n}$ be arbitrary, 
	where $x,y,a,a',b,b'\in \mathbb{R}^{n}$. By Lemma \ref{nefknwdknefknk}(3) and the fact that $\moment$ is constant along 
	$\mathbb{T}^{n}$-orbits, we have $\moment_{*}(T_{p})_{*_{z}}w=-\moment_{*}Ja_{N}(T_{p}(z))$, and so 
	\begin{eqnarray*}
		\Big(\big(\moment \circ T_{p})^{*}k\Big)_{z}(w,w')&=& k_{(\moment\circ T_{p})(z)}(\moment_{*}(T_{p})_{*_{z}}w,\moment_{*}(T_{p})_{*_{z}}w')\\
		&=&k_{(\moment\circ T_{p})(z)}(\moment_{*}Ja_{N}(T_{p}(z)),\moment_{*}Ja'_{N}(T_{p}(z)))\\
		&=& g_{T_{p}(z)}(a_{N}(T_{p}(z)),a'_{N}(T_{p}(z))). 
	\end{eqnarray*}
	Using again Lemma \ref{nefknwdknefknk}(3), we see that $a_{N}(T_{p}(z))=(T_{p})_{*_{z}}ia$, and so
	\begin{eqnarray*}
		\Big(\big(\moment \circ T_{p})^{*}k\Big)_{z}(w,w')= g_{T_{p}(z)}((T_{p})_{*_{z}}ia, (T_{p})_{*_{z}}ia')=\widetilde{g}_{z}(ia,ia'), 
	\end{eqnarray*}
	where $\widetilde{g}=(T_{p})^{*}g$. It follows from this and Lemma \ref{nfeknkenfknd}(g), that 
	\begin{eqnarray}\label{nckndkwenknk}
		\Big(\big(\moment \circ T_{p})^{*}k\Big)_{z}(w,w')=h_{x}(a,a').
	\end{eqnarray}

	On the other hand, it follows from the linearity of the map $\pi:\mathbb{C}^{n}\to \mathbb{R}^{n},$ $(z_{1},...,z_{n})\mapsto 
	(\textup{Real}(z_{1}),...,\textup{Real}(z_{n})) $ that 
	\begin{eqnarray}\label{nekenknckn}
		(\pi^{*}h)_{z}(w,w')=h_{\pi(z)}(\pi(w),\pi(w'))=h_{x}(a,a'). 
	\end{eqnarray}
	Comparing \eqref{nckndkwenknk} and \eqref{nekenknckn} we get $\big(\moment \circ T_{p})^{*}k=\pi^{*}h$. This concludes the proof that $-y$ is an isometry. 
	Obviously $-y$ is affine from $(\mathbb{R}^{n},(\nabla^{\textup{flat}})^{*})$ to $(\moment(N^{\circ}),\nabla^{\textup{flat}})$ (since $y=(y_{1},...,y_{n})$ are 
	$(\nabla^{\textup{flat}})^{*}$-affine coordinates). The fact that it is affine from 
	$(\mathbb{R}^{n},\nabla^{\textup{flat}})$ to $(\moment(N^{\circ}),\nabla^{k})$ follows from Lemma \ref{ncekndwkneknku}. This concludes the proof of (b). 
	Finally, suppose there is no $C\in \mathbb{R}^{n}-\{0\}$ such that $\moment(N^{\circ})=\moment(N^{\circ})+C$. 
	Let $y'=(y'_{1},...,y_{n}')$ be another system of coordinates on $\mathbb{R}^{n}$ satisfying (a) and (b). By Proposition 
	\ref{neknkneknekenfkenk}, there is a smooth function $\psi':\mathbb{R}^{n}\to \mathbb{R}$ such that $y'=\textup{grad}(\psi')=(\tfrac{\partial \psi'}{\partial x_{1}},...,
	\tfrac{\partial \psi'}{\partial x_{n}})$ and $\tfrac{\partial^{2}\psi'}{\partial x_{i}\partial x_{j}}= 
	h(\tfrac{\partial}{\partial x_{i}},\tfrac{\partial}{\partial x_{j}})$ for all $i,j=1,...,n$. Thus 
	$\tfrac{\partial^{2}}{\partial x_{i}\partial x_{j}}(\psi-\psi')= 0$ for all $i,j=1,...,n$, where $\psi$ is the function defined above, in the proof of (a). Thus 
	there are real numbers $c_{0},...,c_{n}$ such that 
	$\psi(x)=\psi'(x)+c_{0}+c_{1}x_{1}+...+c_{n}x_{n}$ for all $x=(x_{1},...,x_{n})\in \mathbb{R}^{n}$. Taking the gradient yields $y=y'+C$, 
	where $C=(c_{1},...,c_{n})$. It follows from this and (b) that $\moment(N^{\circ})=\moment(N^{\circ})+C$. By our assumption, $C=0.$ Thus $y=y'$. 
\end{proof}

	Combining the results of Proposition \ref{unfenkwneknknk} and Theorem \ref{nckdnwknknednenkenk}, we obtain the following corollary. 

\begin{corollary}\label{nefknknkfnknkn}
	Let the hypotheses be as in Theorem \ref{nckdnwknknednenkenk}. If $\moment:N\to \mathbb{R}^{n}$ is a momentum map, then 
	$\moment(N^{\circ})$ is an open convex subset of $\mathbb{R}^{n}$. If in addition $\moment$ is proper, 
	that is, if $\moment^{-1}(K)$ is compact whenever $K\subseteq \mathbb{R}^{n}$ is compact, then $\moment(N)\subset \mathbb{R}^{n}$ is convex.  
\end{corollary}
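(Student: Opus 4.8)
The plan is to observe that this corollary is obtained simply by feeding the output of Theorem \ref{nckdnwknknednenkenk} into Proposition \ref{unfenkwneknknk}, so the proof is a matter of checking that the standing hypotheses of the latter proposition are satisfied by the dually flat manifold produced by the former. First I would invoke Theorem \ref{nckdnwknknednenkenk}(1)--(2): under the given hypotheses (that $N$ is connected K\"{a}hler of real dimension $2n$, that $\Phi$ is effective, Hamiltonian and holomorphic, and that $-J\xi_{N}$ is complete for every $\xi$), the correspondence $h$ defines a dually flat structure on $\mathbb{R}^{n}$, and $\Phi:\mathbb{T}^{n}\times N\to N$ is a torification of $(\mathbb{R}^{n},h,\nabla^{\textup{flat}})$. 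The essential point is that the base dually flat manifold here is $\mathbb{R}^{n}$ itself, carried by its standard global coordinate chart $x=(x_{1},...,x_{n})$.

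Next I would apply Theorem \ref{nckdnwknknednenkenk}(3)(a) to the given momentum map $\moment:N\to \mathbb{R}^{n}$: it furnishes coordinates $y=(y_{1},...,y_{n})$ so that $(x,y)$ is a \emph{global} pair of dual coordinate systems on $(\mathbb{R}^{n},h,\nabla^{\textup{flat}})$. This verifies the standing hypothesis of Proposition \ref{unfenkwneknknk} that a global pair of dual coordinate systems exists. Moreover, since $x$ is the identity chart of $\mathbb{R}^{n}$, we have the crucial equality $x(M)=\mathbb{R}^{n}$, which is precisely the extra condition needed to trigger the convexity conclusions of that proposition.

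Finally I would read off the two conclusions from Proposition \ref{unfenkwneknknk}. Part (1) of that proposition gives directly that $\moment(N^{\circ})$ is an open subset of $\mathbb{R}^{n}$. Part (2), whose hypothesis $x(M)=\mathbb{R}^{n}$ has just been established, gives that $\moment(N^{\circ})$ is convex; and if in addition $\moment$ is proper, the same part yields that $\moment(N)\subseteq \mathbb{R}^{n}$ is convex. Assembling these statements is exactly the assertion of the corollary.

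There is no real obstacle here beyond bookkeeping: one should note only that Proposition \ref{unfenkwneknknk} is phrased for an abstract dually flat manifold $M$, so the entire content of the argument is the identification $M=\mathbb{R}^{n}$ with its flat structure coming from Theorem \ref{nckdnwknknednenkenk}, together with the observation that the momentum map appearing in the proposition's hypotheses is the very same $\moment$ we are given. Since any two momentum maps for the same action differ by a constant and translations preserve both openness and convexity, no ambiguity arises even if a different normalization of $\moment$ were used internally. The heavy lifting (the Legendre-transform convexity argument) has already been carried out inside Proposition \ref{unfenkwneknknk}, so nothing further needs to be proved.
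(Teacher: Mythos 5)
Your proposal is correct and matches the paper's own argument: the paper derives this corollary precisely by combining Proposition \ref{unfenkwneknknk} with Theorem \ref{nckdnwknknednenkenk}, using Theorem \ref{nckdnwknknednenkenk}(1)--(2) to realize $N$ as a torification of $(\mathbb{R}^{n},h,\nabla^{\textup{flat}})$, part (3)(a) to supply the global pair of dual coordinates, and the identity chart $x(M)=\mathbb{R}^{n}$ to trigger Proposition \ref{unfenkwneknknk}(2). Your closing remark about momentum maps differing by translations, while not needed by the paper, is harmless and correct.
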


\begin{remark}
	Stronger and/or more general results on the convexity properties of the momentum map are available in the litterature 
	\cite{Atiyah, Ratiu2,Ratiu1,Molino,Guillemin82, Neeb,Kirwan,Sjamaar}. With the notable exception of \cite{Sjamaar}, they are all based 
	on either Morse theoretical techniques \cite{Atiyah,Guillemin82,Kirwan} or some ``Lokal-global-Prinzip" \cite{Ratiu2,Ratiu1,Molino,Neeb}.
	Our approach is interesting because it relies only on classical convexity theory and Legendre transform (see the appendix).
\end{remark}

	The following corollary is immediate.
\begin{corollary}\label{nknkwkkdnkn}
	Let the hypotheses be as in Theorem \ref{nckdnwknknednenkenk}. Then $\Phi:\mathbb{T}^{n}\times N\to N$ is a torification of $(\moment(N^{\circ}),k,\nabla^{k}).$ 
\end{corollary}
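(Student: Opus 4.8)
The plan is to deduce the corollary by transporting the torification established in Theorem~\ref{nckdnwknknednenkenk} along the isomorphism of dually flat spaces furnished by part~(3)(b) of that same theorem, invoking the functoriality of torifications recorded in Proposition~\ref{ndknqkefnkwnkndk}.

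First I would recall that by Theorem~\ref{nckdnwknknednenkenk}(2), the action $\Phi:\mathbb{T}^{n}\times N\to N$ is a torification of the dually flat manifold $(\mathbb{R}^{n},h,\nabla^{\textup{flat}})$. Since $\Phi$ is Hamiltonian by hypothesis, I fix a momentum map $\moment:N\to\mathbb{R}^{n}$ and let $y=(y_{1},\ldots,y_{n})$ be the coordinate system on $\mathbb{R}^{n}$ produced by Theorem~\ref{nckdnwknknednenkenk}(3). By part~(b) of that statement, the map $-y:\mathbb{R}^{n}\to\moment(N^{\circ})$ is an isomorphism of dually flat spaces between $(\mathbb{R}^{n},h,\nabla^{\textup{flat}})$ and the canonical dually flat space $(\moment(N^{\circ}),k,\nabla^{k})$.

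Next I would apply Proposition~\ref{ndknqkefnkwnkndk} with $M=(\mathbb{R}^{n},h,\nabla^{\textup{flat}})$, $M'=(\moment(N^{\circ}),k,\nabla^{k})$, and $f=-y$. The hypotheses are met: both spaces are connected (the second as the continuous image of the connected set $N^{\circ}$ under $\moment$), $f$ is an isomorphism of dually flat spaces by the previous paragraph, and $\Phi$ is a torification of $M$. The conclusion of the proposition is precisely that $\Phi:\mathbb{T}^{n}\times N\to N$ is a torification of $M'=(\moment(N^{\circ}),k,\nabla^{k})$, with the toric factorization transported via $\tau'=\tau\circ(f_{*})^{-1}$ and $\kappa'=f\circ\kappa$, which is exactly the assertion of the corollary.

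There is essentially no analytic obstacle here, since all the substance has already been carried out in Theorem~\ref{nckdnwknknednenkenk} and Proposition~\ref{ndknqkefnkwnkndk}; the only points requiring a moment's care are the verification that the domain and codomain are connected dually flat spaces and the bookkeeping identification of $M'$ with $(\moment(N^{\circ}),k,\nabla^{k})$, so that the output of Proposition~\ref{ndknqkefnkwnkndk} matches the claimed statement verbatim.
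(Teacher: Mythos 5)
Your proposal is correct and follows essentially the paper's own route: the paper treats the corollary as immediate from Theorem \ref{nckdnwknknednenkenk} together with Proposition \ref{nfekwnkefnkwnk}(2), and that proposition's proof is exactly the transport argument you spell out, namely the isomorphism of dually flat spaces (here $-y$ from Theorem \ref{nckdnwknknednenkenk}(3)(b)) combined with Proposition \ref{ndknqkefnkwnkndk}. You have merely unwound the citation of Proposition \ref{nfekwnkefnkwnk}(2) into its constituent steps, with the connectedness check correctly handled.
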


\begin{proposition}[\textbf{Complement of Theorem \ref{nckdnwknknednenkenk}}]\label{ncekndwkneknk}
	Let the hypotheses be as in Theorem \ref{nckdnwknknednenkenk}. 	Let $L\subset T\mathbb{R}^{n}$ be the parallel lattice generated by 
	$X=(\tfrac{\partial}{\partial x_{1}},...,\tfrac{\partial}{\partial x_{n}})$. Under the identifications $(\mathbb{C}^{*})^{n}=\mathbb{C}^{n}/i\mathbb{Z}^{n}=T\mathbb{R}^{n}/\Gamma(L)$, 
	the triple $(L,X,\Phi^{\mathbb{C}}_{p})$ is a toric parametrization. The induced 
	toric factorization is $\big(T_{p},\sigma\circ (\Phi_{p}^{\mathbb{C}})^{-1}\big) $. 
\end{proposition}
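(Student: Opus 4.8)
The plan is to verify, one by one, the three defining conditions of a toric parametrization for the triple $(L,X,\Phi^{\mathbb{C}}_{p})$ and then to read off the induced factorization by a diagram chase, all the analytic substance having already been established in Theorem \ref{nckdnwknknednenkenk} and its supporting lemmas. First I would make the identifications explicit. Under $T\mathbb{R}^{n}=\mathbb{C}^{n}$, the vector $\sum_{k}b_{k}\tfrac{\partial}{\partial x_{k}}\big\vert_{a}$ corresponds to $a+ib$, so the parallel frame $X=(\tfrac{\partial}{\partial x_{1}},\dots,\tfrac{\partial}{\partial x_{n}})$ generates translations in the imaginary directions: a generator $u\mapsto u+k_{1}X_{1}+\cdots+k_{n}X_{n}$ of $\Gamma(L)$ becomes $z\mapsto z+ik$ with $k\in\mathbb{Z}^{n}$. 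Thus $\Gamma(L)$ is exactly the Deck transformation group $i\mathbb{Z}^{n}$ of $\varepsilon\colon\mathbb{C}^{n}\to(\mathbb{C}^{*})^{n}$, which identifies $M_{L}=\mathbb{C}^{n}/\Gamma(L)=\mathbb{C}^{n}/i\mathbb{Z}^{n}=(\mathbb{C}^{*})^{n}$ and the quotient map $q_{L}$ with $\varepsilon$. With this, the basic relation $T_{p}=\Phi^{\mathbb{C}}_{p}\circ\varepsilon$ from the diagram before Lemma \ref{nfeknkenfknd} reads $T_{p}=\Phi^{\mathbb{C}}_{p}\circ q_{L}$.

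Next I would check that $F:=\Phi^{\mathbb{C}}_{p}\colon M_{L}\to N^{\circ}$ is a holomorphic and isometric diffeomorphism. It is a holomorphic diffeomorphism onto $N^{\circ}$ by Lemma \ref{nfekwndknfeknkdnk}(3). For the isometric property relative to the Dombrowski metric on $M_{L}$, recall from the proof of Theorem \ref{nckdnwknknednenkenk}(1) (via Corollary \ref{nfednwkneknkn} and Lemma \ref{nfeknkenfknd}(g)) that the Dombrowski metric on $T\mathbb{R}^{n}=\mathbb{C}^{n}$ is precisely $\widetilde{g}=T_{p}^{*}g$ and that $T_{p}$ is a holomorphic isometric covering. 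Since $q_{L}=\varepsilon$ is a holomorphic locally isometric covering (Corollary \ref{nfkndkefnknk} and the discussion following it) and $T_{p}=F\circ\varepsilon$, cancelling the surjective local isometry $\varepsilon$ in $\varepsilon^{*}F^{*}g=\widetilde{g}=\varepsilon^{*}g_{M_{L}}$ gives $F^{*}g=g_{M_{L}}$, and similarly $F$ is holomorphic. For equivariance, note that by the definition \eqref{nndkfneknkw} and the identification above, $(\Phi_{X})_{[t]}$ acts on $(\mathbb{C}^{*})^{n}$ as multiplication by $\varepsilon(it)=(e^{2\pi i t_{1}},\dots,e^{2\pi i t_{n}})$. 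Combining the orbit-map equivariance $\Phi^{\mathbb{C}}_{p}(zw)=\Phi^{\mathbb{C}}_{z}\Phi^{\mathbb{C}}_{p}(w)$ of Lemma \ref{nfekwndknfeknkdnk}(3) with the identity $\Phi^{\mathbb{C}}_{\varepsilon(it)}=\Phi_{[t]}$ yields $F\circ(\Phi_{X})_{[t]}=\Phi_{[t]}\circ F$ for all $[t]\in\mathbb{T}^{n}$. This confirms that $(L,X,\Phi^{\mathbb{C}}_{p})$ is a toric parametrization.

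Finally I would identify the induced factorization $(\tau,\kappa)$. By definition $\tau=F\circ q_{L}=\Phi^{\mathbb{C}}_{p}\circ\varepsilon=T_{p}$. For $\kappa=\pi_{L}\circ F^{-1}$ it remains to see $\pi_{L}=\sigma$: since $|e^{2\pi w_{k}}|=e^{2\pi\,\textup{Real}(w_{k})}$, one has $\sigma\circ\varepsilon=\textup{Real}=\pi=\pi_{L}\circ q_{L}=\pi_{L}\circ\varepsilon$, and surjectivity of $\varepsilon$ forces $\pi_{L}=\sigma$. Hence $\kappa=\sigma\circ(\Phi^{\mathbb{C}}_{p})^{-1}$, and as a consistency check $\kappa\circ\tau=\sigma\circ(\Phi^{\mathbb{C}}_{p})^{-1}\circ\Phi^{\mathbb{C}}_{p}\circ\varepsilon=\sigma\circ\varepsilon=\pi$. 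I expect no genuine analytic obstacle here; the only real care needed is the bookkeeping of the three identifications $T\mathbb{R}^{n}=\mathbb{C}^{n}$, $M_{L}=(\mathbb{C}^{*})^{n}$ and $q_{L}=\varepsilon$, together with keeping straight that the metric carried on $M_{L}$ is the one pushed down from the Dombrowski metric $\widetilde{g}=T_{p}^{*}g$.
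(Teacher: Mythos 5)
Your proposal is correct and is essentially the paper's own argument: the paper proves this proposition simply ``by inspection of the proof of Theorem \ref{nckdnwknknednenkenk},'' and you have carried out exactly that inspection — identifying $\Gamma(L)$ with the Deck group $i\mathbb{Z}^{n}$ of $\varepsilon$ so that $q_{L}=\varepsilon$ and $M_{L}=(\mathbb{C}^{*})^{n}$, using Lemma \ref{nfekwndknfeknkdnk}(3) for the diffeomorphism and equivariance, Corollary \ref{nfednwkneknkn} together with Lemma \ref{nfeknkenfknd}(g) to match $\widetilde{g}=T_{p}^{*}g$ with the Dombrowski metric, and the relations $T_{p}=\Phi_{p}^{\mathbb{C}}\circ\varepsilon$ and $\sigma\circ\varepsilon=\pi$ to read off the factorization $\big(T_{p},\sigma\circ(\Phi_{p}^{\mathbb{C}})^{-1}\big)$. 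All citations and bookkeeping check out; there is nothing to correct.
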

\begin{proof}
	By inspection of the proof of Theorem \ref{nckdnwknknednenkenk}.
\end{proof}

	Given a smooth function $f:U\subseteq \mathbb{R}^{n}\to \mathbb{R}$, we will denote by $\textup{grad}(f):U\to \mathbb{R}^{n}$ its gradient map. 
	Thus $\textup{grad}(f)(x)=\big(\tfrac{\partial f}{\partial x_{1}}(x),...,\tfrac{\partial f}{\partial x_{n}}(x)\big)$, where $(x_{1},...,x_{n})$ are standard 
	coordinates on $\mathbb{R}^{n}$. 

\begin{definition}
	In the situation of Theorem \ref{nckdnwknknednenkenk}, we say that a smooth function $\psi:\mathbb{R}^{n}\to \mathbb{R}$ is a \textit{potential} 
	if $\tfrac{\partial^{2}\psi}{\partial x_{i}\partial x_{j}}=h_{ij}=h(\tfrac{\partial}{\partial x_{i}},\tfrac{\partial}{\partial x_{j}})$ for all $i,j=1,...,n$. 
	We sall say that a potential $\psi:\mathbb{R}^{n}\to \mathbb{R}$ is \textit{compatible} with the momentum map $\moment:N\to \mathbb{R}^{n}$ 
	if the gradient map $y=\textup{grad}(\psi)$ satisfies (a) and (b) of Theorem \ref{nckdnwknknednenkenk}.  
\end{definition}

	Note that (see Proposition \ref{neknkneknekenfkenk} and Lemma \ref{nefkwnknkenknk}):
	\begin{itemize} 
	\item If $y=(y_{1},...,y_{n})$ is a system of coordinates on $\mathbb{R}^{n}$ satisfying (a) of Theorem \ref{nckdnwknknednenkenk}, 
		then $y=\textup{grad}(\psi)$ for some smooth function $\psi:\mathbb{R}^{n}\to \mathbb{R}$ and $\psi$ is automatically a potential. 
	\item Given a momentum map $\moment:N\to \mathbb{R}^{n}$, there exists a potential $\psi$ compatible with $\moment$. 
	\item If $\psi:\mathbb{R}^{n}\to \mathbb{R}$ is a potential, then $(x,\textup{grad}(\psi))$ is a global pair of dual coordinate systems on 
		$(\mathbb{R}^{n},h,\nabla^{\textup{flat}})$, but $-y=-\textup{grad}(\psi)$ may fail to satisfy (b) of Theorem \ref{nckdnwknknednenkenk}. 
	\end{itemize}

\begin{lemma}\label{nfekwdnkenfkfnk}
	Let the hypotheses be as in Theorem \ref{nckdnwknknednenkenk}. Let $\moment:N\to \mathbb{R}^{n}$ be a momentum map.
	Suppose there is no $C\in \mathbb{R}^{n}-\{0\}$ such that $\moment(N^{\circ})=\moment(N^{\circ})+C$. 
	If $\psi:\mathbb{R}^{n}\to \mathbb{R}$ a potential satisfying $(-\textup{grad}(\psi))(\mathbb{R}^{n})=\moment(N^{\circ})$, 
	then $\psi$ is compatible with $\moment$. 
\end{lemma}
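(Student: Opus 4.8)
The plan is to reduce the statement to the uniqueness bookkeeping already carried out at the end of the proof of Theorem \ref{nckdnwknknednenkenk}(3). Write $y=\textup{grad}(\psi)$. First I would dispose of condition (a) of Theorem \ref{nckdnwknknednenkenk}, which holds for free: since $\psi$ is a potential, $h$ is the Hessian of $\psi$ on the convex set $\mathbb{R}^{n}$, so by Lemma \ref{nefkwnknkenknk} the pair $(x,y)$ is a global pair of dual coordinate systems on $(\mathbb{R}^{n},h,\nabla^{\textup{flat}})$. Thus the whole content of the lemma is condition (b): that $-y$ is an isomorphism of dually flat spaces from $(\mathbb{R}^{n},h,\nabla^{\textup{flat}})$ onto the canonical dually flat space $(\moment(N^{\circ}),k,\nabla^{k})$.

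To prove (b) I would fix a reference potential coming from the theorem itself. Theorem \ref{nckdnwknknednenkenk}(3) produces coordinates $y_{0}$ satisfying (a) and (b); by (a) together with Proposition \ref{neknkneknekenfkenk}(4) we may write $y_{0}=\textup{grad}(\psi_{0})$ for a potential $\psi_{0}$, and $\psi_{0}$ is compatible with $\moment$. In particular $-y_{0}$ is an isomorphism of dually flat spaces, hence a bijection, so $(-y_{0})(\mathbb{R}^{n})=\moment(N^{\circ})$.

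Next I would compare $\psi$ with $\psi_{0}$. Both are potentials, so
\begin{eqnarray*}
\dfrac{\partial^{2}}{\partial x_{i}\partial x_{j}}(\psi-\psi_{0})=h_{ij}-h_{ij}=0
\qquad\textup{for all }i,j=1,\dots,n,
\end{eqnarray*}
which forces $\psi-\psi_{0}$ to be an affine function of $x$, and therefore $y=y_{0}+C$ for some constant $C\in\mathbb{R}^{n}$. Passing to images and translating the set $(-y_{0})(\mathbb{R}^{n})=\moment(N^{\circ})$ by $-C$, I get $(-y)(\mathbb{R}^{n})=\moment(N^{\circ})-C$. The standing hypothesis $(-\textup{grad}(\psi))(\mathbb{R}^{n})=\moment(N^{\circ})$ then yields $\moment(N^{\circ})-C=\moment(N^{\circ})$, i.e. $\moment(N^{\circ})=\moment(N^{\circ})+C$. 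Since by assumption no nonzero $C$ can satisfy this, we conclude $C=0$, hence $y=y_{0}$, so $-y=-y_{0}$ inherits property (b) from $y_{0}$; this shows $\psi$ is compatible with $\moment$.

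I do not expect a genuine obstacle: the argument is the exact translation-invariance computation underlying the uniqueness clause of Theorem \ref{nckdnwknknednenkenk}, now run in reverse to upgrade a range condition into compatibility. The only point demanding care is the set-translation step, where one must track signs so that the identity reads $\moment(N^{\circ})=\moment(N^{\circ})+C$; in either orientation the non-periodicity hypothesis forces $C=0$, and the rest is immediate.
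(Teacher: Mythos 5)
Your proof is correct and is essentially the paper's own argument in expanded form: the paper disposes of the lemma by citing Proposition \ref{neknkneknekenfkenk} and the uniqueness clause of Theorem \ref{nckdnwknknednenkenk}(3), whose underlying computation is exactly your Hessian comparison $\psi-\psi_{0}$ affine $\Rightarrow$ $y=y_{0}+C$, followed by the image-translation step forcing $C=0$. Your handling of the sign in $\moment(N^{\circ})=\moment(N^{\circ})+C$ is also right, since the non-periodicity hypothesis kills $C$ in either orientation.
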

\begin{proof}
	This follows from Proposition \ref{neknkneknekenfkenk} and the uniqueness part of Theorem \ref{nckdnwknknednenkenk}(3).
\end{proof}

\begin{proposition}[\textbf{Holomorphic versus symplectic}]\label{nekfnkwdnkneknknk}
	Let the hypotheses be as in Theorem \ref{nckdnwknknednenkenk}. Let $\psi:\mathbb{R}^{n}\to \mathbb{R}$ be a potential compatible 
	with the momentum map $\moment:N\to \mathbb{R}^{n}$. Define $\phi:\moment(N^{\circ})\to \mathbb{R}$ by 
	\begin{eqnarray}\label{nfknwdknfknknk}
		\phi(x)=-\big\langle x,(-\textup{grad}(\psi))^{-1}(x)\big\rangle-\psi\big((-\textup{grad}(\psi))^{-1}(x)\big),
	\end{eqnarray}
	where $\langle\,,\,\rangle$ is the Euclidean pairing. Then,
	\begin{enumerate}[(1)]
	\item $(\textup{grad}(\phi),x)$ is a global pair of dual coordinate systems on $(\moment(N^{\circ}),k,\nabla^{k})$. 
	\item $-\textup{grad}(\phi)$ is an isomorphism of dually flat spaces from
		$(\moment(N^{\circ}),k,\nabla^{k})$ to $(\mathbb{R}^{n},h,\nabla^{\textup{flat}})$, 
		with inverse $(-\textup{grad}(\phi))^{-1}=-\textup{grad}(\psi)$. 
	\item $k$ is the Hessian of $\phi$, that is, 
		$\tfrac{\partial^{2} \phi}{\partial x_{i}\partial x_{j}}=k(\tfrac{\partial}{\partial x_{i}},\tfrac{\partial}{\partial x_{j}})$ 
		for all $i,j=1,...,n$.  
	\end{enumerate}
%
\end{proposition}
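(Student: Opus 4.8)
The plan is to recognize $\phi$ as a sign-adjusted Legendre transform of $\psi$ and to reduce all three assertions to the single identity $-\textup{grad}(\phi)=(-\textup{grad}(\psi))^{-1}$. By hypothesis $\psi$ is compatible with $\moment$, so by Theorem \ref{nckdnwknknednenkenk}(3)(b) the map $-\textup{grad}(\psi)$ is an isomorphism of dually flat spaces from $(\mathbb{R}^{n},h,\nabla^{\textup{flat}})$ to $(\moment(N^{\circ}),k,\nabla^{k})$; in particular it is a diffeomorphism, so $(-\textup{grad}(\psi))^{-1}:\moment(N^{\circ})\to\mathbb{R}^{n}$ is well defined and smooth and the formula \eqref{nfknwdknfknknk} for $\phi$ makes sense.

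First I would carry out the key computation. Write $w=w(x):=(-\textup{grad}(\psi))^{-1}(x)$, so that $\textup{grad}(\psi)(w)=-x$, i.e. $\tfrac{\partial\psi}{\partial x_{k}}(w)=-x_{k}$ for all $k$. Differentiating $\phi(x)=-\langle x,w(x)\rangle-\psi(w(x))$ and inserting this relation, the two sums involving $\tfrac{\partial w_{k}}{\partial x_{j}}$ cancel:
\begin{eqnarray*}
	\frac{\partial\phi}{\partial x_{j}} &=& -w_{j}-\sum_{k}x_{k}\frac{\partial w_{k}}{\partial x_{j}}-\sum_{k}\frac{\partial\psi}{\partial x_{k}}(w)\frac{\partial w_{k}}{\partial x_{j}} \\
	&=& -w_{j}.
\end{eqnarray*}
Hence $\textup{grad}(\phi)(x)=-w(x)$, that is $-\textup{grad}(\phi)=(-\textup{grad}(\psi))^{-1}$. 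This is exactly the inverse relation in (2), and since the inverse of an isomorphism of dually flat spaces is again one (the defining pullback conditions invert), $-\textup{grad}(\phi)$ is an isomorphism of dually flat spaces from $(\moment(N^{\circ}),k,\nabla^{k})$ to $(\mathbb{R}^{n},h,\nabla^{\textup{flat}})$. This proves (2).

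For (3) I would read off the Hessian of $\phi$ from the identity just obtained. The Jacobian of $-\textup{grad}(\psi)$ is $-\textup{Hess}(\psi)=-[h_{ij}]$ (since $\psi$ is a potential), so the Jacobian of its inverse $-\textup{grad}(\phi)$ is $-[h_{ij}]^{-1}=-[h^{ij}]$; comparing with $-\textup{Hess}(\phi)$ gives $\tfrac{\partial^{2}\phi}{\partial x_{i}\partial x_{j}}=h^{ij}$, evaluated at the corresponding point $w(x)$. On the other hand $-\textup{grad}(\phi)$ is an isometry $(\moment(N^{\circ}),k)\to(\mathbb{R}^{n},h)$, and pulling $h$ back through the Jacobian $-[h^{ij}]$ yields $[k_{ij}]=[h^{ij}]\,[h_{ij}]\,[h^{ij}]=[h^{ij}]$. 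Thus $\textup{Hess}(\phi)=[h^{ij}]=[k_{ij}]=k$, proving (3).

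Finally, for (1) I would invoke Lemma \ref{nefkwnknkenknk} applied to the dually flat space $(\moment(N^{\circ}),k,\nabla^{\textup{flat}})$, which (duality being symmetric) is the same dually flat manifold as $(\moment(N^{\circ}),k,\nabla^{k})$ with the two connections interchanged: the standard coordinate $x$ is $\nabla^{\textup{flat}}$-affine, its image $\moment(N^{\circ})$ is convex by Corollary \ref{nefknknkfnknkn}, and $\phi$ is a potential for $k$ in this coordinate by (3). The lemma then gives that $(x,\textup{grad}(\phi))$ is a pair of dual coordinate systems for $(k,\nabla^{\textup{flat}},\nabla^{k})$, which is precisely the assertion that $(\textup{grad}(\phi),x)$ is a global pair of dual coordinate systems on $(\moment(N^{\circ}),k,\nabla^{k})$. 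I expect the main obstacle to be purely bookkeeping: securing the cancellation in the key computation and keeping track of \emph{at which point} each of the matrices $[h_{ij}]$, $[h^{ij}]$, $[k_{ij}]$ is evaluated under the diffeomorphism $-\textup{grad}(\psi)$. Once the Legendre identity $-\textup{grad}(\phi)=(-\textup{grad}(\psi))^{-1}$ is established, (2)--(1) follow quickly.
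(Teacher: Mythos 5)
Your proof is correct, and it reaches the conclusion by a route that partially coincides with, and partially reorders, the paper's argument. The central step is the same in both: the Legendre identity $-\textup{grad}(\phi)=(-\textup{grad}(\psi))^{-1}$, obtained by differentiating \eqref{nfknwdknfknknk} and using $\textup{grad}(\psi)(w)=-x$ to cancel the chain-rule terms. (The paper differentiates the composite $\phi\circ(-\textup{grad}(\psi))$ in the \emph{source} variable $a$ and cancels against the Hessian columns $H_{i}$, invoking invertibility of $[h_{ij}]$; you differentiate in the \emph{target} variable $x$ via the chain rule on $w(x)$ -- the two computations are equivalent, and your premise that $w$ is smooth is correctly secured by Theorem \ref{nckdnwknknednenkenk}(3)(b).) Item (2) is then deduced identically in both proofs. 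Where you genuinely diverge is in the order and means of (1) and (3): the paper proves (1) \emph{first}, by transporting the global dual pair $(x,y)$ on $\mathbb{R}^{n}$ through the isomorphism $-y$ to obtain $(x\circ(-y)^{-1},y\circ(-y)^{-1})=(-\textup{grad}(\phi),-x)$, hence $(\textup{grad}(\phi),x)$, as a dual pair on $\moment(N^{\circ})$, and then reads (3) off Proposition \ref{neknkneknekenfkenk}; you prove (3) \emph{first} by a direct Jacobian/pullback computation ($\textup{Hess}(\phi)(x)=[h^{ij}](w(x))=[k_{ij}](x)$, using symmetry of the matrices) and then obtain (1) from Lemma \ref{nefkwnknkenknk}, which additionally requires the convexity of $\moment(N^{\circ})$ supplied by Corollary \ref{nefknknkfnknkn}. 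Both routes are sound and free of circularity (your proof of (3) does not use (1)). The paper's transport argument is lighter -- it needs neither convexity nor any Jacobian bookkeeping -- while your version has the merit of making (3) independent of the dual-coordinate formalism and of flagging explicitly the base-point tracking (the matrix $[h^{ij}]$ evaluated at $w(x)$, not at $x$) that the paper leaves implicit.
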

\begin{proof}
	First we prove that $-\textup{grad}(\phi)=(-\textup{grad}(\psi))^{-1}$. Let $b=-\textup{grad}(\psi)(a)\in \moment(N^{\circ})$ be arbitrary. 
	By definition of $\phi$, we have 
	\begin{eqnarray*}
		\phi((-\textup{grad}(\psi))(a))=-\big\langle (-\textup{grad}(\psi))(a),a\big\rangle-\psi(a).
	\end{eqnarray*}
	Taking the derivative with respect to $a$ in the direction $\tfrac{\partial}{\partial x_{i}}$, we get 
	\begin{eqnarray*}
		\big\langle \big(\textup{grad}(\phi)\circ (-\textup{grad}(\psi))\big)(a), -H_{i}\big\rangle=\big\langle H_{i}, a\big\rangle 
		+\Big\langle \textup{grad}(\psi),\dfrac{\partial}{\partial x_{i}}\Big\rangle -\dfrac{\partial \psi}{\partial x_{i}}=\big\langle H_{i}, a\big\rangle,
	\end{eqnarray*}
	where $H_{i}=\tfrac{\partial}{\partial x_{i}}\textup{grad}(\psi)=
	(\tfrac{\partial^{2}\psi}{\partial x_{i}\partial x_{1}},...,\tfrac{\partial^{2}\psi}{\partial x_{i}\partial x_{n}})=(h_{1i},...,h_{ni})$. 
	Thus 
	\begin{eqnarray*}
		\big\langle \big((-\textup{grad}(\phi))\circ (-\textup{grad}(\psi))\big)(a) -a , H_{i}\big\rangle=0
	\end{eqnarray*}
	for all $a\in \mathbb{R}^{n}$ and all $i=1,...,n$. Since $[h_{ij}]$ is invertible, this implies $\big((-\textup{grad}(\phi))\circ (-\textup{grad}(\psi))\big)(a)=a$ 
	for all $a\in \mathbb{R}^{n}$. Therefore $-\textup{grad}(\phi)=(-\textup{grad}(\psi))^{-1}$. Since $-\textup{grad}(\psi)$ is an isomorphism of dually flat spaces, 
	so does $-\textup{grad}(\phi)$. This shows (2). Because $(x,y)$ is a global pair of dual coordinates systems on $\mathbb{R}^{n}$ and $-y$ is 
	an isomorphism of dually flat spaces, $(x\circ (-y)^{-1},y\circ (-y)^{-1})=(-\textup{grad}(\phi),-x)$ is a global pair of dual coordinate systems 
	on $\moment(N^{\circ})$. Obviously, the same is true for $(\textup{grad}(\phi),x)$. This shows (1). Finally, (3) is a consequence of (1) and Proposition 
	\ref{neknkneknekenfkenk}. 
\end{proof}

\begin{remark}
	The function $\phi$ defined in \eqref{nfknwdknfknknk} satisfies $\phi(x)=\psi^{*}(-x)$ for all $x\in \moment(N^{\circ})$, where 
	$\psi^{*}$ is the Legendre transform of $\psi$ (see Definition \ref{jefkwjkjkejk}). 
\end{remark}

\begin{definition}
	In the situation of the preceding proposition, we say that $\phi$ is the \textit{dual} of $\psi$ (see \eqref{nfknwdknfknknk}).
\end{definition}

\begin{example}\label{ncenknkenk}
	Let $\Phi:\mathbb{T}^{n}\times \mathbb{C}^{n}\to \mathbb{C}^{n}$ be defined by $\Phi([t],z)=(e^{2i\pi t_{1}}z_{1},...,e^{2i\pi t_{n}}z_{n})$. 
	We endow $\mathbb{C}^{n}$ with the standard flat K\"{a}hler structure. Then $\Phi$ is isometric and Hamiltonian, with momentum map 
	$\moment:\mathbb{C}^{n}\to \mathbb{R}^{n}$ given by $\moment(z)=-\pi (|z_{1}|^{2}, ..., |z_{n}|^{2})$. 

	Given $\xi\in \mathbb{R}^{n}=\textup{Lie}(\mathbb{T}^{n})$, the fundamental vector field $\xi_{\mathbb{C}^{n}}$ at $z\in \mathbb{C}^{n}$ is 
	given by
	\begin{eqnarray*}
		\xi_{\mathbb{C}^{n}}(z)=\tfrac{d}{dt}\big\vert_{0}\Phi(\textup{exp}(t\xi),x)=2i\pi (\xi_{1}z_{1},...,\xi_{n}z_{n}). 
	\end{eqnarray*}
	Integral curves of $-i\xi_{\mathbb{C}^{n}}$ are of the form $\alpha(t)=(\lambda_{1}e^{2\pi \xi_{1}t},...,\lambda_{k}e^{2\pi\xi_{n}t})$, 
	where $\lambda_{1},...,\lambda_{n}\in \mathbb{C}$. Therefore $-i\xi_{\mathbb{C}^{n}}$ is complete and its flow $\varphi^{\xi}(t,z)$ is given by 
	$\varphi^{\xi}(t,z)=(z_{1}e^{2\pi \xi_{1}t},...,z_{n}e^{2\pi \xi_{n}t})$. Let $p=(1,1,...,1)\in \mathbb{C}^{n}$ and let $h$ 
	be the metric on $\mathbb{R}^{n}$ defined by $h_{x}(u,v)=g_{\varphi^{x}(1,p)}(u_{\mathbb{C}^{n}},v_{\mathbb{C}^{n}})$, $x,u,v\in \mathbb{R}^{n}$, and where $g$ 
	is the Euclidean metric on $\mathbb{C}^{n}$. Let $\langle z,w\rangle=\overline{z}_{1}w_{1}+...+\overline{z}_{n}w_{n}$ be the Hermitian product on $\mathbb{C}^{n}$. 
	We have $g_{z}(w,w')=\textup{Real}\langle w,w\rangle$ for all $z,w,'w\in \mathbb{C}^{n}$, and so 
	\begin{eqnarray*}
		h_{x}(u,v)&=& \textup{Real}\big\langle u_{\mathbb{C}^{n}}(\varphi^{x}(1,p)),v_{\mathbb{C}^{n}}(\varphi^{x}(1,p))\big\rangle\\
		&=& \textup{Real} \big\langle 2i\pi (u_{1}e^{2\pi x_{1}},...,u_{n}e^{2i\pi x_{n}}), 2i\pi (v_{1}e^{2\pi x_{1}},...,v_{n}e^{2i\pi x_{n}}) \big\rangle\\
		&=& 4\pi^{2}\big(u_{1}v_{1}e^{4\pi x_{1}}+...+u_{n}v_{n}e^{4\pi x_{n}}\big). 
	\end{eqnarray*}
	Therefore the matrix representation of $h$ at $x\in \mathbb{R}^{n}$ is given by 
	\begin{eqnarray*}
		4\pi^{2}
		\begin{bmatrix}
			e^{4\pi x_{1}}	 & \cdots  & 0 \\
					 & \ddots &   \\
			0                &         & e^{4\pi x_{n}}
		\end{bmatrix}.
	\end{eqnarray*}
	It is the Hessian of the function $\psi:\mathbb{R}^{n}\to \mathbb{R}$, $x\mapsto \tfrac{1}{4}(e^{4\pi x_{1}}+...+e^{4\pi x_{n}})$. The image of 
	$-\textup{grad}(\psi)=-\pi (e^{4\pi x_{1}},...,e^{4\pi x_{n}})$ is the ``negative quadrant" 
	$Q=\{(x_{1},...,x_{n})\in \mathbb{R}^{n}\,\,\big\vert\,\,x_{k}<0\,\,\textup{for all}\,\,k=1\}$. It is immediate to verify that 
	$Q=\moment((\mathbb{C}^{n})^{\circ})$ and hence the potential $\psi$ is compatible with $\moment$. 
	Let $\phi:Q\to \mathbb{R}$ be the dual of $\psi$. A direct computation shows that 
	\begin{eqnarray*}
		\phi(x_{1},...,x_{n})=-\dfrac{1}{4\pi}\sum_{k=1}^{n}\Big[x_{k}\ln\Big(\dfrac{-x_{k}}{\pi}\Big)-x_{k}\Big]. 
	\end{eqnarray*}
	Taking the Hessian yields the matrix representation of the Riemannian metric $k$ induced on $Q$ at $(x_{1},...,x_{n})\in Q$:
	\begin{eqnarray*}
		-\dfrac{1}{4\pi}
		\begin{bmatrix}
			\tfrac{1}{x_{1}}  &  \cdots & 0 \\
					  &  \ddots &   \\
			0                 &         & \tfrac{1}{x_{n}}
		\end{bmatrix}.
	\end{eqnarray*}
\end{example}

	A slightly more involved example is the complex projective space. Let $\mathbb{P}_{n}(c)$ denote the complex projective space of complex dimension $n$, 
	endowed with the Fubini-Study metric $g_{c}$ normalized in such a way that the holomorphic sectional curvature is $c$.

	Let $\Phi:\mathbb{T}^{n}\times \mathbb{P}_{n}(c)\to \mathbb{P}_{n}(c)$ be the torus action defined by 
	\begin{eqnarray*}
		\Phi([t],[z])= \big[e^{2i\pi t_{1}}z_{1},...,e^{2i\pi t_{n}}z_{n},z_{n+1}\big],
	\end{eqnarray*}
	where $[t]=[t_{1},...,t_{n}]\in \mathbb{T}^{n}$ and 
	$[z]=[z_{1},...,z_{n+1}]\in \mathbb{P}_{n}(c)$ (homogeneous coordinates). Then $\Phi$ is isometric and Hamiltonian, with momentum map 
	$\moment_{c}:\mathbb{P}_{n}(c)\to \mathbb{R}^{n}$ given by 
	\begin{eqnarray*}
		\moment_{c}([z])=-\dfrac{4\pi}{c} \,\bigg(\dfrac{|z_{1}|^{2}}{\langle z,z \rangle}, ..., \dfrac{|z_{n}|^{2}}{\langle z,z \rangle}\bigg),
	\end{eqnarray*}
	where $\langle z,w\rangle= \overline{z}_{1}w_{1}+...+\overline{z}_{n+1}w_{n+1}$ is the standard Hermitian product on $\mathbb{C}^{n+1}$. 
	The image of $\mathbb{P}_{n}(c)^{\circ}=\{[z_{1},...,z_{n+1}]\,\,\vert\,\,z_{k}\neq 0\,\,\forall k=1,...,n+1\}$ under $\moment_{c}$ is $-\tfrac{4\pi}{c}S_{n}$, where 
	$S_{n}\subset \mathbb{R}^{n}$ is the set $\{(x_{1},...,x_{n})\in \mathbb{R}^{n}\,\,\vert\,\,x_{k}>0\,\,
	\textup{for all}\,\,k\,\, \textup{and}\,\,\,\sum_{k=1}^{n}x_{k}< 1\big\}$.

\begin{proposition}\label{nfknwkdnkdnkn}
	Let $\Phi:\mathbb{T}^{n}\times \mathbb{P}_{n}(c)\to \mathbb{P}_{n}(c)$ and $\moment_{c}:\mathbb{P}_{n}(c)\to \mathbb{R}^{n}$ be as defined above.  
	Let $h_{c}$ be the Riemannian metric on $\mathbb{R}^{n}$ associated to $p=[1,...,1]\in \mathbb{P}_{n}(c)^{\circ}$. 
	Then the function $\psi_{c}:\mathbb{R}^{n}\to \mathbb{R}$, defined by
	\begin{eqnarray*}
		\psi_{c}(x)=\dfrac{1}{c}\,\ln\big(1+e^{4\pi x_{1}}+...+e^{4\pi x_{n}}\big),
	\end{eqnarray*}
	is a potential compatible with $\moment_{c}.$ Its dual 
	$\phi_{c}:\moment(\mathbb{P}_{n}(c)^{\circ})\to \mathbb{R}$ is given by: 
	\begin{eqnarray*}
		\phi_{c}(x)=-\dfrac{1}{4\pi }\sum_{k=1}^{n}x_{k}\ln(-x_{k})+\dfrac{1}{4\pi}\bigg(\dfrac{4\pi}{c}+\sum_{k=1}^{n}x_{k}\bigg)
		\ln\bigg(\dfrac{4\pi}{c}+\sum_{k=1}^{n}x_{k}\bigg)-\dfrac{1}{c}\,\ln\bigg(\dfrac{4\pi}{c}\bigg). 
	\end{eqnarray*}
\end{proposition}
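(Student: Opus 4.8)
The plan is to avoid computing the Fubini--Study metric $h_c$ on $\mathbb{R}^n$ by hand, and instead to read everything off the momentum map $\moment_c$ through Theorem \ref{nckdnwknknednenkenk}. The starting point is to identify the holomorphic extension of $\Phi$. The map $(\zeta,[z])\mapsto[\zeta_1 z_1,\ldots,\zeta_n z_n,z_{n+1}]$ for $\zeta=(\zeta_1,\ldots,\zeta_n)\in(\mathbb{C}^{*})^{n}$ is holomorphic and restricts to $\Phi$ on $\mathbb{T}^{n}$, so by uniqueness of the extension it is $\Phi^{\mathbb{C}}$. Consequently the orbit map through $p=[1,\ldots,1]$ is $\Phi^{\mathbb{C}}_{p}(\zeta)=[\zeta_1,\ldots,\zeta_n,1]$.

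Next I would identify the function $f=(f^1,\ldots,f^n)$ that appears in the proof of Theorem \ref{nckdnwknknednenkenk}(3), characterized by $f\circ\sigma=\moment_c\circ\Phi^{\mathbb{C}}_{p}$. Since $\sigma(\zeta)=x$ forces $|\zeta_k|^2=e^{4\pi x_k}$, a direct substitution into the given formula for $\moment_c$ yields
\[
	f^i(x)=-\frac{4\pi}{c}\cdot\frac{e^{4\pi x_i}}{1+\sum_{k}e^{4\pi x_k}}.
\]
Differentiating $\psi_c$ directly shows $\tfrac{\partial\psi_c}{\partial x_i}=\tfrac{4\pi}{c}\tfrac{e^{4\pi x_i}}{S}=-f^i$, where $S=1+\sum_k e^{4\pi x_k}$, i.e. $\textup{grad}(\psi_c)=-f$. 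The theorem records the relation $h_{ij}=-\tfrac{\partial f^i}{\partial x_j}$, so $\tfrac{\partial^2\psi_c}{\partial x_i\partial x_j}=h_{ij}$, which is precisely the statement that $\psi_c$ is a potential. For compatibility I observe that the potential built in the proof of Theorem \ref{nckdnwknknednenkenk}(3) satisfies the same identity $\textup{grad}(\psi)=-f$; hence $\textup{grad}(\psi_c)=\textup{grad}(\psi)$ satisfies conditions (a) and (b) verbatim, so $\psi_c$ is compatible with $\moment_c$. (Since $\mathbb{P}_n(c)$ is compact, the no-translation hypothesis holds and the compatible gradient is in fact unique; alternatively one may invoke Lemma \ref{nfekwdnkenfkfnk} using $(-\textup{grad}(\psi_c))(\mathbb{R}^{n})=\moment_c(\mathbb{P}_n(c)^{\circ})$, which follows because $\mathbb{P}_n(c)^{\circ}$ is a single $(\mathbb{C}^{*})^{n}$-orbit.)

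For the dual potential I would invert $-\textup{grad}(\psi_c)$. Writing $x=(-\textup{grad}(\psi_c))(\xi)$ and summing the coordinate relations $-\tfrac{c}{4\pi}x_i=e^{4\pi\xi_i}/S$ gives $1/S=1+\tfrac{c}{4\pi}\sum_k x_k=\tfrac{c}{4\pi}\bigl(\tfrac{4\pi}{c}+\sum_k x_k\bigr)$, which rearranges to
\[
	\xi_i=(-\textup{grad}(\psi_c))^{-1}(x)_i=\frac{1}{4\pi}\ln\!\left(\frac{-x_i}{\tfrac{4\pi}{c}+\sum_k x_k}\right),
\]
and the same relation expresses $\psi_c(\xi)=\tfrac{1}{c}\ln S=\tfrac{1}{c}\bigl[\ln\tfrac{4\pi}{c}-\ln(\tfrac{4\pi}{c}+\sum_k x_k)\bigr]$. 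Substituting $\xi$ and $\psi_c(\xi)$ into the definition \eqref{nfknwdknfknknk}, namely $\phi_c(x)=-\langle x,\xi\rangle-\psi_c(\xi)$, and collecting the coefficient of $\ln(\tfrac{4\pi}{c}+\sum_k x_k)$ into $\tfrac{1}{4\pi}(\tfrac{4\pi}{c}+\sum_k x_k)$, produces exactly the stated expression for $\phi_c$.

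The computation is essentially routine once the momentum map is used in place of the metric. The only genuinely delicate points are the algebraic inversion of the gradient map and the careful bookkeeping of the normalization constant $\tfrac{4\pi}{c}$ when assembling the logarithmic terms of $\phi_c$; the identification of $\Phi^{\mathbb{C}}$ and the deduction of compatibility are both immediate from the results already established.
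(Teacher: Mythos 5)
Your proof is correct, but it follows a genuinely different route from the paper's. The paper works \emph{metric-first}: it computes the fundamental vector fields and the flows $\varphi_{t}^{-J\xi_{\mathbb{P}_{n}(c)}}([z])=[e^{2\pi t\xi_{1}}z_{1},...,e^{2\pi t\xi_{n}}z_{n},z_{n+1}]$, then evaluates $h_{c}$ along the orbit through $p=[1,...,1]$ via the Hopf fibration, the normalization $g_{c}=\tfrac{4}{c}g_{4}$ and the orthogonal projection onto $z^{\perp}$, obtains the explicit matrix $[(h_{c})_{ij}]$, recognizes it as $\textup{Hess}(\psi_{c})$, and gets compatibility from Lemma \ref{nfekwdnkenfkfnk} by checking that the image of $-\textup{grad}(\psi_{c})$ is $-\tfrac{4\pi}{c}S_{n}=\moment_{c}(\mathbb{P}_{n}(c)^{\circ})$; the dual is then found by the same inversion you perform. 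You instead work \emph{momentum-map-first}: you read $f=-\textup{grad}(\psi_{c})$ off the identity $f\circ\sigma=\moment_{c}\circ\Phi_{p}^{\mathbb{C}}$ and invoke the relation $\tfrac{\partial f^{i}}{\partial x_{j}}=-h_{ij}$ established inside the proof of Theorem \ref{nckdnwknknednenkenk}(3), so the Hopf-fibration computation of $h_{c}$ disappears entirely, and compatibility is immediate because $\textup{grad}(\psi_{c})$ then coincides with the canonical gradient $y$ constructed there (your fallback via Lemma \ref{nfekwdnkenfkfnk}, using that $\mathbb{P}_{n}(c)^{\circ}$ is a single $(\mathbb{C}^{*})^{n}$-orbit and that compactness rules out nonzero translations of $\moment_{c}(\mathbb{P}_{n}(c)^{\circ})$, also works). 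What each buys: the paper's computation independently produces the closed formula for $(h_{c})_{ij}$ and verifies the theorem's claim concretely, whereas your argument is shorter and shows the proposition is essentially forced by Theorem \ref{nckdnwknknednenkenk} once $\Phi^{\mathbb{C}}$ is identified. Two small caveats: your appeal to ``uniqueness of the extension'' is not stated in the paper --- it is true, since for fixed $[z]$ two holomorphic maps $(\mathbb{C}^{*})^{n}\to N$ agreeing on the maximal totally real torus $\mathbb{T}^{n}$ agree everywhere by the identity principle, but it is just as quick to verify the flows directly as the paper does; and the relation $\tfrac{\partial f^{i}}{\partial x_{j}}=-h_{ij}$ is a claim inside a proof rather than a stated result, so you should note explicitly that the hypotheses of Theorem \ref{nckdnwknknednenkenk} hold here (compactness of $\mathbb{P}_{n}(c)$ gives completeness of $-J\xi_{N}$, and $\Phi$ is effective, holomorphic, isometric and Hamiltonian with the given $\moment_{c}$). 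Your inversion of the gradient map and the bookkeeping of the constant $\tfrac{4\pi}{c}$ in $\phi_{c}$ check out exactly.
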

%
\begin{proof}
	Let $\Phi'$ denote the action of $\mathbb{T}^{n}$ on $\mathbb{C}^{n+1}-\{0\}$ defined by $\Phi'([t],z)= (e^{2i \pi t_{1}}z_{1},...,e^{2i\pi t_{n}}z_{n},z_{n+1})$.
	Let $f:\mathbb{C}^{n+1}-\{0\}\to \mathbb{P}_{n}(c)$, $z\mapsto [z]$. Because $f$ is $\mathbb{T}^{n}$-equivariant and holomorphic, we have 
	\begin{itemize}
		\item $\xi_{\mathbb{P}_{n}(c)}([z])= f_{*_{z}}\xi_{\mathbb{C}^{n+1}-\{0\}}(z)$ and 
		\item $J\xi_{\mathbb{P}_{n}(c)}([z])=f_{*_{z}}i\xi_{\mathbb{C}^{n+1}-\{0\}}(z)$
	\end{itemize}
	for all $\xi\in \mathbb{R}^{n}=\textup{Lie}(\mathbb{T}^{n})$ and all $z\in \mathbb{C}^{n+1}-\{0\}$, where $J$ denotes the complex structure on $\mathbb{P}_{n}(c)$.
	Clearly
	\begin{eqnarray*}
		\xi_{\mathbb{C}^{n+1}-\{0\}}(z)=\dfrac{d}{dt}\bigg\vert_{0}\Phi'([t\xi],z)= 2i\pi (\xi_{1}z_{1},..., \xi_{n}z_{n},0),
	\end{eqnarray*}
	and so $-J\xi_{\mathbb{P}_{n}(c)}([z])= 2\pi f_{*_{z}} (\xi_{1}z_{1},..., \xi_{n}z_{n},0).$ It is then easy to verify that 
	\begin{eqnarray*}
		\varphi_{t}^{-J\xi_{\mathbb{P}_{n}(c)}}([z])=\big[e^{2\pi t\xi_{1}}z_{1},...,e^{2\pi t\xi_{n}}z_{n},z_{n+1}\big]
	\end{eqnarray*}
	is the flow of $-J\xi_{\mathbb{P}_{n}(c)}$. 

	Let $h_{c}$ be the Riemannian metric on $\mathbb{R}^{n}$ associated to $p=[1,...,1]\in\mathbb{P}_{n}(c)^{\circ}$. Thus 
	$(h_{c})_{x}(u,v)=(g_{c})_{\varphi_{1}^{-Jx_{\mathbb{P}_{n}(c)}}(p)}(u_{\mathbb{P}_{n}(c)},v_{\mathbb{P}_{n}(c)})$, where 
	$x,u,v\in \mathbb{R}^{n}=\textup{Lie}(\mathbb{T}^{n})$. To compute the matrix representation of $h_{c}$, 
	we use the following facts:
	\begin{enumerate}[(1)]
		\item The Hopf fibration $\pi_{H}=f\vert_{S^{2n+1}}:S^{2n+1}\to \mathbb{P}_{n}(4)$ is a Riemannian submersion and $g_{c}=\tfrac{4}{c}g_{4}$. 
		\item $u_{\mathbb{C}^{n+1}-\{0\}}$ is tangent to $S^{2n+1}$ and for every $z\in S^{2n+1}$, $(\pi_{H})_{*_{z}}u_{\mathbb{C}^{n+1}-\{0\}}(z)
			=u_{\mathbb{P}_{n}(c)}([z])$.
		\item Given $z\in S^{2n+1}$, $T_{z}S^{2n+1}=\mathbb{R}iz\oplus z^{\perp}$, where 
			$z^{\perp}=\{w\in \mathbb{C}^{n+1}\big\vert\,\langle w,z\rangle=0 \}$. In this decomposition, $\mathbb{R}iz$ is the tangent space 
			of the fiber $\pi_{H}^{-1}(\pi_{H}(z))$ and $z^{\perp}$ is its orthogonal complement in $T_{z}S^{2n+1}$. 
		\end{enumerate}
	Given $w\in T_{z}S^{2n+1}$, we will denote by $w^{\perp}$ the unique element in $z^{\perp}$ such that $w-w^{\perp}\in \mathbb{R}iz$. Clearly, 
	$w^{\perp}=w+\langle w,z\rangle z$. It follows from the facts above that
	\begin{eqnarray}\label{nfwknwkenkwnk}
		(g_{c})_{[z]}(u_{\mathbb{P}_{n}(c)},v_{\mathbb{P}_{n}(c)})=\dfrac{4}{c}\,\textup{Real}\Big\langle \Big(u_{\mathbb{C}^{n+1}-\{0\}}(z)\Big)^{\perp},
		\Big(v_{\mathbb{C}^{n+1}-\{0\}}(z)\Big)^{\perp}\Big\rangle, 
	\end{eqnarray}
	where $z\in S^{2n+1}$ and $u,v\in \mathbb{R}^{n}=\textup{Lie}(\mathbb{T}^{n})$. A direct calculation using \eqref{nfwknwkenkwnk} then shows that 
	\begin{eqnarray*}
		(g_{c})_{[z]}(u_{\mathbb{P}_{n}(c)},v_{\mathbb{P}_{n}(c)})=\dfrac{(4\pi)^{2}}{c}\,\bigg[\sum_{k=1}^{n}u_{k}v_{k}|z_{k}|^{2}-
		\bigg(\sum_{a=1}^{n}u_{a}|z_{a}|^{2}\bigg)\bigg(\sum_{b=1}^{n}v_{b}|z_{b}|^{2}\bigg)\bigg],
	\end{eqnarray*}
	where $z\in S^{2n+1}$. Taking $[z]=\varphi_{1}^{-Jx_{\mathbb{P}_{n}(c)}}(p) =[e^{2\pi x_{1}},...,e^{2\pi x_{n}},1]$ and normalizing 
	appropriately, one finds the matrix representation $[(h_{c})_{ij}]$ of $h_{c}$ at $x\in \mathbb{R}^{n}$: 
	\begin{eqnarray*}
		(h_{c})_{ij}= \dfrac{(4\pi)^{2}}{c}\,\bigg(\dfrac{\delta_{ij}e^{4\pi x_{i}}}{1+e^{4\pi x_{1}}+...+e^{4\pi x_{n}}}
		-\dfrac{e^{4\pi(x_{i}+x_{j})}}{(1+e^{4\pi x_{1}}+...+e^{4\pi x_{n}})^{2}}\bigg). 
	\end{eqnarray*}
	It is the Hessian of the function $\psi_{c}(x)=\tfrac{1}{c}\ln\big(1+e^{4\pi x_{1}}+...+e^{4\pi x_{n}}\big)$. 
	A simple verification shows that the image of $-\textup{grad}(\psi_{c})$ is $-\tfrac{4\pi}{c}S_{n}=\moment_{c}(\mathbb{P}_{n}(c)^{\circ})$ 
	and hence $\psi_{c}$ is compatible with $\moment_{c}$ (see Lemma \ref{nfekwdnkenfkfnk}). The dual of $\psi_{c}$ is obtained by a direct 
	calculation. 
\end{proof}

\section{Examples from Information Theory}\label{nfknwkdenknwkn}

	For an introduction to information geometry, see for example \cite{Jost2,Amari-Nagaoka,Murray}.
\begin{definition}\label{def:5.1}
	A \textit{statistical manifold} is a pair $(S,j)$, where $S$ is a manifold and where $j$ 
	is an injective map from $S$ to the space of all probability density functions $p$ 
	defined on a fixed measure space $(\Omega,dx)$: 
	\begin{eqnarray*}
		j:S\hookrightarrow\Bigl\{ p:\Omega\to\mathbb{R}\;\Bigl|\; p\:\textup{is measurable, }p\geq 
		0\textup{ and }\int_{\Omega}p(x)dx=1\Bigr\}.
	\end{eqnarray*}
\end{definition}

	If $\xi=(\xi_{1},...,\xi_{n})$ is a coordinate system on a statistical manifold $S$, then we shall 
	indistinctly write $p(x;\xi)$ or $p_{\xi}(x)$ for the probability density function determined by $\xi$.

	Given a ``reasonable" statistical manifold $S$, it is possible to define a metric $h_{F}$ and a 
	family of connections $\nabla^{(\alpha)}$ on $S$ $(\alpha\in\mathbb{R})$ in the following way: 
	for a chart $\xi=(\xi_{1},...,\xi_{n})$ of $S$, define
	\begin{alignat*}{1}
		\bigl(h_F\bigr)_{\xi}\bigl(\partial_{i},\partial_{j}\bigr) & :=
		\mathbb{E}_{p_{\xi}}\bigl(\partial_{i}\ln\bigl(p_{\xi}\bigr)\cdotp\partial_{j}\ln\bigl(p_{\xi}\bigr)\bigr),
		\nonumber\\
		\Gamma_{ij,k}^{(\alpha)}\bigl(\xi\bigr) & :=
		\mathbb{E}_{p_{\xi}}\bigl[\bigl(\partial_{i}\partial_{j}\ln\bigl(p_{\xi}\bigr)
		+\tfrac{1-\alpha}{2}\partial_{i}\ln\bigl(p_{\xi}\bigr)\cdotp\partial_{j}\ln\bigl(p_{\xi}\bigr)\bigr),
		\partial_{k}\ln\bigl(p_{\xi}\bigr)\bigr],\label{eq:48}\nonumber
	\end{alignat*}
	where $\mathbb{E}_{p_{\xi}}$ denotes the mean, or expectation, with respect to the probability 
	$p_{\xi}dx$, and where $\partial_{i}$ is a shorthand for $\tfrac{\partial}{\partial\xi_{i}}$. 
	It can be shown that if the above expressions are defined and smooth for every chart of $S$, 
	then $h_F$ is a well defined metric on $S$ called the \textit{Fisher metric}, and that the 
	$\Gamma_{ij,k}^{(\alpha)}$'s define a connection $\nabla^{(\alpha)}$ via the formula 
	$\Gamma_{ij,k}^{(\alpha)}(\xi)=(h_F)_{\xi}(\nabla_{\partial_{i}}^{(\alpha)}\partial_{i},\partial_{j})$, 
	which is called the \textit{$\alpha$-connection}.

	Among the $\alpha$-connections, the $(\pm1)$-connections are particularly important; the 1-connection is
	usually referred to as the \textit{exponential connection}, also denoted by $\nabla^{(e)}$, while the 
	$(−1)$-connection is referred to as the \textit{mixture connection}, denoted by $\nabla^{(m)}$.

	In this paper, we will only consider statistical manifolds $S$ for which the Fisher metric and
	$\alpha$-connections are well defined.

\begin{proposition}\label{prop:5.2}
	Let $S$ be a statistical manifold. Then, $(h_F,\nabla^{(\alpha)},\nabla^{(-\alpha)})$ is a dualistic structure on $S$. 
	In particular, $\nabla^{(-\alpha)}$ is the dual connection of $\nabla^{(\alpha)}$.
\end{proposition}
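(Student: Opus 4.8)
The plan is to verify the duality identity directly in local coordinates, exploiting that the dual connection with respect to $h_F$ is unique: it therefore suffices to exhibit $\nabla^{(-\alpha)}$ as a connection satisfying $X\,h_F(Y,Z)=h_F(\nabla^{(\alpha)}_X Y,Z)+h_F(Y,\nabla^{(-\alpha)}_X Z)$ for all vector fields $X,Y,Z$. Fixing a chart $\xi=(\xi_1,\dots,\xi_n)$ and abbreviating $\partial_i=\tfrac{\partial}{\partial\xi_i}$ and $\ell=\ln(p_\xi)$, and taking $X=\partial_i$, $Y=\partial_j$, $Z=\partial_k$, this reduces to the single coordinate identity
\[
	\partial_i (h_F)_{jk}=\Gamma^{(\alpha)}_{ij,k}+\Gamma^{(-\alpha)}_{ik,j},
\]
where $\Gamma^{(\alpha)}_{ij,k}=h_F(\nabla^{(\alpha)}_{\partial_i}\partial_j,\partial_k)$ are the Christoffel symbols of the first kind given by the formula preceding the proposition, and where I have used $h_F(\partial_j,\nabla^{(-\alpha)}_{\partial_i}\partial_k)=\Gamma^{(-\alpha)}_{ik,j}$ together with the symmetry of $h_F$.

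First I would differentiate the Fisher metric. Using $\partial_i p_\xi=p_\xi\,\partial_i\ell$ and interchanging $\partial_i$ with the integral in $(h_F)_{jk}=\int_\Omega p_\xi\,\partial_j\ell\,\partial_k\ell\,dx$, the Leibniz rule yields
\[
	\partial_i (h_F)_{jk}=\mathbb{E}_{p_\xi}\!\big[\partial_i\ell\,\partial_j\ell\,\partial_k\ell\big]+\mathbb{E}_{p_\xi}\!\big[\partial_i\partial_j\ell\cdot\partial_k\ell\big]+\mathbb{E}_{p_\xi}\!\big[\partial_j\ell\cdot\partial_i\partial_k\ell\big].
\]
Next I would expand the right-hand side of the target identity straight from the definition, obtaining $\Gamma^{(\alpha)}_{ij,k}=\mathbb{E}_{p_\xi}[\partial_i\partial_j\ell\cdot\partial_k\ell]+\tfrac{1-\alpha}{2}\mathbb{E}_{p_\xi}[\partial_i\ell\,\partial_j\ell\,\partial_k\ell]$ and $\Gamma^{(-\alpha)}_{ik,j}=\mathbb{E}_{p_\xi}[\partial_i\partial_k\ell\cdot\partial_j\ell]+\tfrac{1+\alpha}{2}\mathbb{E}_{p_\xi}[\partial_i\ell\,\partial_k\ell\,\partial_j\ell]$. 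Adding the two and using
\[
	\tfrac{1-\alpha}{2}+\tfrac{1+\alpha}{2}=1
\]
collapses the two $\alpha$-dependent terms into the single symmetric expectation $\mathbb{E}_{p_\xi}[\partial_i\ell\,\partial_j\ell\,\partial_k\ell]$, while the two second-order terms match those in the previous display term by term (here the symmetry of $\partial_i\partial_j\ell$ in $i,j$ is what makes the pairing correct). This establishes the coordinate identity, and hence $\nabla^{(-\alpha)}$ is the dual of $\nabla^{(\alpha)}$; in particular $(h_F,\nabla^{(\alpha)},\nabla^{(-\alpha)})$ is a dualistic structure.

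The computation is entirely formal once the interchange of $\partial_i$ with $\int_\Omega\cdots dx$ is justified and the pointwise relation $\partial_i p_\xi=p_\xi\,\partial_i\ell$ is available; this is the only place where regularity enters, and it is covered by the standing assumption that $h_F$ and the $\nabla^{(\alpha)}$ are everywhere defined and smooth. I expect no genuine obstacle beyond bookkeeping: the substance of the statement is precisely the cancellation $\tfrac{1-\alpha}{2}+\tfrac{1+\alpha}{2}=1$, which is exactly what forces the dual of $\nabla^{(\alpha)}$ to be $\nabla^{(-\alpha)}$ rather than any other $\alpha$-connection. For completeness one could instead simply cite \cite{Amari-Nagaoka}, but the direct verification above is short and self-contained.
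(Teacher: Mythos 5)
Your verification is correct: the Leibniz expansion of $\partial_i (h_F)_{jk}$ using $\partial_i p_\xi = p_\xi\,\partial_i\ell$ produces exactly the three expectations $\mathbb{E}_{p_\xi}[\partial_i\ell\,\partial_j\ell\,\partial_k\ell]+\mathbb{E}_{p_\xi}[\partial_i\partial_j\ell\cdot\partial_k\ell]+\mathbb{E}_{p_\xi}[\partial_i\partial_k\ell\cdot\partial_j\ell]$, which match $\Gamma^{(\alpha)}_{ij,k}+\Gamma^{(-\alpha)}_{ik,j}$ term by term thanks to the cancellation $\tfrac{1-\alpha}{2}+\tfrac{1+\alpha}{2}=1$, and uniqueness of the dual connection together with $C^\infty$-linearity (so that checking coordinate frames suffices) closes the argument. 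The paper itself only cites \cite{Amari-Nagaoka} for this proposition, and your computation is precisely the standard proof given there, so your proposal is a correct, self-contained version of the same approach.
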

\begin{proof}
	See \cite{Amari-Nagaoka}.
\end{proof}

	We now recall the definition of an exponential family. 
\begin{definition}\label{def:5.3} 
	An \textit{exponential family} $\mathcal{E}$ on a measure space $(\Omega,dx)$ is a set of probability 
	density functions $p(x;\theta)$ of the form
	\begin{eqnarray*}
		p(x;\theta)=\exp\biggl\{ C(x)+\sum_{i=1}^{n}\theta_{i}F_{i}(x)-\psi(\theta)\biggr\},\label{eq:49}
	\end{eqnarray*}
	where $C,F_1...,F_n$ are measurable functions on $\Omega$, $\theta=(\theta_{1},...,\theta_{n})$ 
	is a vector varying in an open subset $\Theta$ of $\mathbb{R}^{n}$ and where $\psi$ is a function defined on $\Theta$.
\end{definition}
	In the above definition it is assumed that the family of functions $\{1,F_1,...,$ $F_n\}$ 
	is linearly independent, so that the map $p(x,\theta)\mapsto\theta$ becomes a bijection, hence defining a global 
	chart for $\mathcal{E}$. The parameters $\theta_{1},...,\theta_{n}$ are called the 
	\textit{natural} or \textit{canonical parameters} of the exponential family $\mathcal{E}$. 
	The function $\psi$ is called \textit{cumulant generating function}. 

\begin{example}[\textbf{Poisson distribution}]\label{nfeknwdkenfknk}
	A Poisson distribution is a distribution over $\Omega=\mathbb{N}=\{0,1,...\}$ of the form 
	\begin{eqnarray*}
		 p(k;\lambda)=e^{-\lambda}\dfrac{\lambda^{k}}{k!}, 
	\end{eqnarray*}
	where $k\in \mathbb{N}$ and $\lambda>0$. Let $\mathscr{P}$ denote the set of all Poisson distributions 
	$p(\,.\,;\lambda)$, $\lambda>0$. The set $\mathscr{P}$ is an exponential family, because 
	$p(k,\lambda)=\textup{exp}\big(C(k)+F(k)\theta-\psi(\theta)\big),$ where 
	\begin{eqnarray*}
		 C(k)=-\ln(k!), \,\,\,\,\,\,\,F(k)=k, \,\,\,\,\,\,\,\theta=\ln(\lambda), \,\,\,\,\,\,\,
		\psi(\theta)=\lambda=e^{\theta}.
	\end{eqnarray*}
\end{example}

\begin{example}[\textbf{Categorical distribution}]\label{exa:5.5}
	Given a finite set $\Omega=\{ x_{1},...,x_{n}\}$, define
 	\begin{eqnarray*}
		\mathcal{P}_{n}^{\times}\:=\biggl\{ p:\Omega\to\mathbb{R}\:\Bigl|\: p(x)>0 
		\textup{ for all }x\in\Omega\textup{ and }\sum_{k=1}^{n}p(x_{k})=1\biggr\}.
	\end{eqnarray*}
	Elements of $\mathcal{P}_{n}^{\times}$ can be parametrized as follows: 
	$p(x;\theta)=\exp\big\{\sum_{i=1}^{n-1}\theta_{i}F_{i}(x)-\psi(\theta)\big\}$,
	where 
	\begin{eqnarray*}
		&&\theta=(\theta_{1},...,\theta_{n-1})\in\mathbb{R}^{n-1},\,\,\,\,\,\,
		F_{i}(x_{j})=\delta_{ij}\,\,\,\,\,\,\textup{(Kronecker delta)},\\
		&& \psi(\theta)=\ln\bigg(1+\sum_{i=1}^{n-1}e^{\theta_{i}}\bigg). 
	\end{eqnarray*}
	Therefore $\mathcal{P}_{n}^{\times}$ is an exponential family of dimension $n-1$. 
\end{example}
\begin{example}[\textbf{Binomial distribution}]\label{exa:5.6}
	The set of binomial distributions defined over $\Omega:=\{0,...,n\}$,
	\begin{eqnarray*}
		p(k)=\binom{n}{k}q^{k}\bigl(1-q\bigr)^{n-k},\quad (k\;\in\;\Omega,\;q\in(0,1)),
	\end{eqnarray*}
	where $\binom{n}{k}=\tfrac{n!}{(n-k)!k!}$, is a $1$-dimensional statistical manifold, 
	denoted by $\mathcal{B}(n)$, parametrized 
	by $q\in\bigl(0,1\bigr)$. It is an exponential family, because 
	$p(k)=\exp\big\{ C(k)+\theta F(k)-\psi(\theta)\big\}$,
	where
	\begin{eqnarray*}
		&&\theta=\ln\Bigl(\frac{q}{1-q}\Bigr),\quad C\bigl(k\bigr)=
		\ln\binom{n}{k},\quad F(k)= k,\quad\\
		&& \psi(\theta)= n\ln\bigl(1+e^{\theta}\bigr).
	\end{eqnarray*}
\end{example}
\begin{example}[\textbf{Multinomial distribution}]\label{jeknkkdefjdk}
	Let $A_{m,n}=\{(x_{1},...,x_{m})\in \mathbb{R}^{m}\,\,\vert\,\,x_{i}\geq 0\,\,\textup{for all}\, 
	i=1,...,m\,\,\textup{and}\,\,\sum_{i=1}^{m}x_{i}=n\}$. Let $\Omega_{m,n}=A_{m,n}\cap \mathbb{Z}^{m}$. 
	Given $k\in \Omega_{m,n}$ and $\pi\in A_{m,1}$, let 
	\begin{eqnarray*}
		p(k;\pi)=\dfrac{n!}{k_{1}!...k_{m}!}\pi_{1}^{k_{1}}...\, \pi_{m}^{k_{m}},
	\end{eqnarray*}
	where $k=(k_{1},...,k_{m})$ and $\pi=(\pi_{1},...,\pi_{m})$. Let $\mathcal{M}(m,n)$ be the set of all maps 
	$\Omega_{m,n}\to \mathbb{R}, k\mapsto p(k,\pi)$, where $\pi\in A_{m,1}$. Each element in $\mathcal{M}(m,n)$ is 
	called a \textit{multinomial distribution.} They form an exponential familly, because $p(k,\pi)=\textup{exp}(C(k)+\sum_{i=1}^{m-1}F_{i}(k)\theta_{i}-\psi(\theta))$, 
	where 
	\begin{eqnarray*}
		&&\theta_{i}=\ln\Bigl(\frac{\pi_{i}}{1-\sum_{j=1}^{m-1}\pi_{j}}\Bigr),\quad C\bigl(k\bigr)=
		\ln\bigg(\dfrac{n!}{k_{1}!... k_{m}!}\bigg),\quad F_{i}(k)= k_{i},\quad\,\,\,\,\,i=1,...,m-1,\\
		&& \psi(\theta)= n\ln\bigl(1+e^{\theta_{1}}+...+e^{\theta_{m-1}}\bigr).
	\end{eqnarray*}
\end{example}

\begin{example}[\textbf{Negative Binomial distribution}]
	Let $\Omega=\mathbb{N}=\{0,1,....\}$ and $r\in \Omega$, $r\geq 1$. Let $\mathcal{NB}(r)$ denote the set of functions $p:\Omega\to \mathbb{R}$ of the form
	\begin{eqnarray*}
		p(k;q)=\binom{k+r-1}{r-1}(1-q)^{k}q^{r}, \,\,\,\,\,\,\,\,\,\,\,\,\,\,\,\,\,\,(k=0,1,2,...)
	\end{eqnarray*}
	where $q\in (0,1)$ and $\binom{k+r-1}{r-1}=\tfrac{(k+r-1)!}{k!(r-1)!}$. Using the fact that $\tfrac{1}{(1-t)^{r}}=\sum_{k\geq 0}\binom{k+r-1}{r-1}t^{k}$ for $|t|<1$, 
	one sees that $\sum_{k\geq 0}p(k;q)=1$. Each element of $\mathcal{NB}(r)$ is a called a \textit{negative Binomial distribution}.  
	The set $\mathcal{NB}(r)$ is an exponential family, because $p(k;q)=p(k;\theta)=\exp\big\{ C(k)+\theta F(k)-\psi(\theta)\big\}$,
	where
	\begin{eqnarray*}
		&&\theta=\ln(1-q)\in (-\infty,0),\quad C\bigl(k\bigr)=
		\ln\binom{k+r-1}{r-1},\quad F(k)= k,\quad\\
		&& \psi(\theta)= -r\ln\bigl(1-e^{\theta}\bigr).
	\end{eqnarray*}
\end{example}

\begin{proposition}\label{prop:5.7}
	Let $\mathcal{E}$ be an exponential family such as in Definition \ref{def:5.3}. Then 
	$(\mathcal{E},h_F,\nabla^{(e)})$ is a dually flat manifold and the natural parameters 
	$\theta=(\theta_{1},...,\theta_{n})$ form a global $\nabla^{(e)}$-affine coordinate system on $\mathcal{E}$.  
\end{proposition}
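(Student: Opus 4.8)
The plan is to work entirely in the natural coordinate system $\theta=(\theta_{1},\dots,\theta_{n})$ and to show first that the exponential connection has vanishing Christoffel symbols there, and then to deduce dual flatness from results already established. Writing $\ell(x;\theta):=\ln p(x;\theta)=C(x)+\sum_{i}\theta_{i}F_{i}(x)-\psi(\theta)$, the two basic computations I would record are
\begin{eqnarray*}
	\partial_{i}\ell=F_{i}-\partial_{i}\psi,\qquad \partial_{i}\partial_{j}\ell=-\partial_{i}\partial_{j}\psi,
\end{eqnarray*}
the second being independent of $x$. Differentiating the normalization $\int_{\Omega}p(x;\theta)\,dx=1$ under the integral sign gives the familiar identity $\mathbb{E}_{p_{\theta}}(\partial_{k}\ell)=\int_{\Omega}\partial_{k}p\,dx=0$; that is, the score has zero mean.

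First I would verify that $\theta$ is $\nabla^{(e)}$-affine. Since $\nabla^{(e)}=\nabla^{(1)}$ corresponds to $\alpha=1$, the quadratic term in the defining formula for $\Gamma^{(\alpha)}_{ij,k}$ drops out, leaving
\begin{eqnarray*}
	\Gamma^{(1)}_{ij,k}(\theta)=\mathbb{E}_{p_{\theta}}\big(\partial_{i}\partial_{j}\ell\cdot\partial_{k}\ell\big)
	=-\,\partial_{i}\partial_{j}\psi\;\mathbb{E}_{p_{\theta}}(\partial_{k}\ell)=0,
\end{eqnarray*}
where I pulled the $x$-independent factor $\partial_{i}\partial_{j}\psi$ out of the expectation and used the zero-mean property of the score. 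Hence all lowered Christoffel symbols of $\nabla^{(e)}$ vanish in the global chart $\theta$, and by nondegeneracy of $h_{F}$ so do the connection coefficients $\nabla^{(e)}_{\partial_{i}}\partial_{j}$. Thus $\theta$ is a global $\nabla^{(e)}$-affine coordinate system and $\nabla^{(e)}$ is flat (both torsion and curvature vanish, their local expressions being built from these coefficients).

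To obtain dual flatness I would next identify the Fisher metric. Using the information-matrix identity $\mathbb{E}_{p_{\theta}}(\partial_{i}\ell\cdot\partial_{j}\ell)=-\mathbb{E}_{p_{\theta}}(\partial_{i}\partial_{j}\ell)$ (again a consequence of differentiating the normalization twice) together with $\partial_{i}\partial_{j}\ell=-\partial_{i}\partial_{j}\psi$, I get
\begin{eqnarray*}
	(h_{F})_{ij}(\theta)=\partial_{i}\partial_{j}\psi(\theta),
\end{eqnarray*}
so in the affine chart $\theta$ the Fisher metric is exactly $\textup{Hess}(\psi)$. Transporting the situation through the diffeomorphism $\theta:\mathcal{E}\to\Theta\subseteq\mathbb{R}^{n}$, which by the previous paragraph intertwines $\nabla^{(e)}$ with $\nabla^{\textup{flat}}$, the triple $(\mathcal{E},h_{F},\nabla^{(e)})$ is isomorphic to $(\Theta,\textup{Hess}(\psi),\nabla^{\textup{flat}})$, which is dually flat by the earlier lemma on Hessian metrics. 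Alternatively, and without invoking that lemma, one notes that every $\alpha$-connection is torsion-free because $\Gamma^{(\alpha)}_{ij,k}$ is symmetric in $i,j$; since the curvature of $\nabla^{(e)}$ vanishes, Proposition \ref{neewknkfenknkn} forces the curvature of the dual connection $\nabla^{(m)}=\nabla^{(e)*}$ (Proposition \ref{prop:5.2}) to vanish as well, and a torsion-free connection with zero curvature is flat.

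The only genuinely delicate point is the legitimacy of differentiating under the integral sign, which underlies both the zero-mean identity and the information-matrix identity. This is precisely the regularity folded into the standing hypothesis of the section — that $h_{F}$ and the $\alpha$-connections are defined and smooth on $\mathcal{E}$ — so in this setting it may be taken for granted; in a fully rigorous treatment one would justify it by dominated convergence using the analytic dependence of $p(x;\theta)$ on $\theta$.
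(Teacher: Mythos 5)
Your proof is correct, and since the paper's own ``proof'' is simply a citation to Amari--Nagaoka, your argument reconstructs exactly the standard computation found there: $\partial_{i}\partial_{j}\ln p_{\theta}=-\partial_{i}\partial_{j}\psi$ is constant in $x$, the score has zero mean, so $\Gamma^{(1)}_{ij,k}\equiv 0$ in the natural parameters, and flatness of $\nabla^{(m)}=\nabla^{(e)*}$ follows from Proposition \ref{neewknkfenknkn} together with torsion-freeness of the $\alpha$-connections. Both of your closing routes to dual flatness (via the Hessian-metric lemma or via the curvature-duality proposition) are valid and consistent with the machinery the paper sets up.
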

\begin{proof}
	See \cite{Amari-Nagaoka}. 
\end{proof}

	Given a smooth function $\psi:\mathbb{R}^{n}\to \mathbb{R}$, we will use the notation $\textup{Hess}(\psi)$ 
	to denote the Hessian $\big[\tfrac{\partial^{2}\psi}{\partial x_{i}\partial x_{j}}\big]$ of $\psi.$ When $\textup{Hess}(\psi)$ 
	is positive definite at each point of $\mathbb{R}^{n}$, we regard $\textup{Hess}(\psi)$ as a Riemannian metric on $\mathbb{R}^{n}$.

	Under mild assumptions (that we will always assume in this paper), it can be shown that the Hessian of 
	the cumulent generating function $\psi:\Theta\to \mathbb{R}$ of an exponential family $\mathcal{E}$ is the coordinate expression 
	for the Fisher metric $h_{F}$ in the natural parameters $\theta_{i}$ (see \cite{Amari-Nagaoka}). 
	Therefore the natural parameters form an isomorphism of dually flat spaces: 
	\begin{eqnarray*}
		(\mathcal{E},h_{F},\nabla^{(e)}) \overset{(\theta_{1},...,\theta_{n})}{\longrightarrow} (\Theta,\textup{Hess}(\psi),\nabla^{\textup{flat}}),
	\end{eqnarray*}
	where $\nabla^{\textup{flat}}$ is the canonical flat connection on $\mathbb{R}^{n}$.

\begin{lemma}\label{nfekwnwkwfnknk}
	Let $\psi:\mathbb{R}^{n}\to \mathbb{R}$ be a smooth function whose Hessian is positive definite at each point of $\mathbb{R}^{n}$ 
	and let $f:\mathbb{R}^{n}\to \mathbb{R}^{n}$ be an invertible affine map (thus $f(x)=Ax+B$, where $A$ is an invertible $n\times n$ matrix and $B\in \mathbb{R}^{n}$). 
	Then 
	\begin{eqnarray*}
		f^{*}\textup{Hess}(\psi)=\textup{Hess}(\psi\circ f). 
	\end{eqnarray*}
\end{lemma}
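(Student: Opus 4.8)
The plan is to verify the identity pointwise in the standard coordinates of $\mathbb{R}^{n}$ by a direct application of the chain rule, the only real content being that the affineness of $f$ kills the second-order correction term that normally appears in the transformation law of a Hessian.

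First I would record the coordinate expression for the pullback metric. Writing $f(x)=Ax+B$, the Jacobian of $f$ at every point is the constant matrix $A$, so for the Riemannian metric $\textup{Hess}(\psi)$ the pullback $f^{*}\textup{Hess}(\psi)$ has, at a point $x\in\mathbb{R}^{n}$, the matrix representation $A^{T}\,[\textup{Hess}(\psi)]_{f(x)}\,A$, where $[\textup{Hess}(\psi)]_{f(x)}=\big[\tfrac{\partial^{2}\psi}{\partial x_{k}\partial x_{l}}(f(x))\big]_{k,l}$. This is just the usual formula $(f^{*}k)_{x}(u,v)=k_{f(x)}(f_{*_{x}}u,f_{*_{x}}v)$ written out with $f_{*_{x}}=A$.

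Next I would compute the Hessian of the composite $g:=\psi\circ f$. By the chain rule $\tfrac{\partial g}{\partial x_{j}}(x)=\sum_{k}\tfrac{\partial\psi}{\partial x_{k}}(f(x))\,A_{kj}$, and differentiating once more,
\[
	\frac{\partial^{2} g}{\partial x_{i}\partial x_{j}}(x)=\sum_{k,l}\frac{\partial^{2}\psi}{\partial x_{k}\partial x_{l}}(f(x))\,A_{li}\,A_{kj}.
\]
The crucial point is that, because $f$ is affine, all second derivatives of its component functions $f^{k}$ vanish, so there is no extra term $\sum_{k}\tfrac{\partial\psi}{\partial x_{k}}(f(x))\,\tfrac{\partial^{2}f^{k}}{\partial x_{i}\partial x_{j}}$. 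The right-hand side above is exactly the $(i,j)$-entry of $A^{T}\,[\textup{Hess}(\psi)]_{f(x)}\,A$ (using the symmetry of the Hessian), which coincides with the expression for $f^{*}\textup{Hess}(\psi)$ found in the previous step. Comparing the two yields $\textup{Hess}(\psi\circ f)=f^{*}\textup{Hess}(\psi)$.

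I do not expect any genuine obstacle here: the substance of the lemma is precisely that the nonlinear correction in the transformation of a Hessian drops out for affine $f$, a fact one could also phrase conceptually by observing that $f$ carries $\nabla^{\textup{flat}}$-parallel vector fields to $\nabla^{\textup{flat}}$-parallel vector fields. I would also remark that the positive-definiteness hypothesis on $\textup{Hess}(\psi)$ and the invertibility of $A$ are not needed for the identity itself; they serve only to guarantee that both sides are bona fide Riemannian metrics.
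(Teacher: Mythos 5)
Your proof is correct and is exactly the ``direct calculation'' that the paper's proof invokes without writing out: apply the chain rule twice and observe that the affineness of $f$ makes the second-order correction term vanish, so $\textup{Hess}(\psi\circ f)_{x}=A^{T}[\textup{Hess}(\psi)]_{f(x)}A=(f^{*}\textup{Hess}(\psi))_{x}$. Your closing remark that positive definiteness of the Hessian and invertibility of $A$ are irrelevant to the identity itself is also accurate.
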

\begin{proof}
	By a direct calculation.
\end{proof}

	Let $\mathbb{P}_{n}$ and $\mathbb{D}$ denote the complex projective space of complex dimension $n$ and unit disk in $\mathbb{C}$, respectively (both regarded 
	as complex manifolds). We denote by 
	\begin{itemize}
	\item $\mathbb{P}_{n}(c)$ the complex projective space endowed with the Fubini-Study metric normalized in such a way that the holomorphic sectional curvature is $c>0$.
	\item $\mathbb{D}(c)$ the disk $\mathbb{D}$ endowed with the Hyperbolic metric $ds^{2}=-\tfrac{4}{c}\tfrac{dx^{2}+dy^{2}}{(1-x^{2}-y^{2})^{2}}$ 
		of constant holomorphic sectional curvature $c<0$.
	\end{itemize}

	Let $\Phi_{n}$ be the action of $\mathbb{T}^{n}$ on $\mathbb{P}_{n}$ defined by 
	\begin{eqnarray*}
		\Phi_{n}([t],[z])= [e^{2i\pi t_{1}}z_{1},...,e^{2i\pi t_{n}}z_{n},z_{n+1}]. 
	\end{eqnarray*}
\begin{proposition}\label{nfeknwknwknk}
	\textbf{}
	\begin{enumerate}[(1)]
		\item $\mathbb{T}^{1}\times \mathbb{C}\to \mathbb{C}$, $([t],z)\mapsto e^{2i\pi t}z$ is a torification of $\mathscr{Poisson}$. 
		\item $\Phi_{n}:\mathbb{T}^{n}\times \mathbb{P}_{n}(1)\to \mathbb{P}_{n}(1)$ is a torification of $\mathcal{P}_{n+1}^{\times}$.
		\item $\Phi_{1}:\mathbb{T}^{1}\times \mathbb{P}_{1}(\tfrac{1}{n})\to \mathbb{P}_{1}(\tfrac{1}{n})$ is a torification of $\mathcal{B}(n)$.
		\item $\Phi_{m}:\mathbb{T}^{m}\times \mathbb{P}_{m}(\tfrac{1}{n})\to \mathbb{P}_{m}(\tfrac{1}{n})$ is a torification of $\mathcal{M}(m+1,n)$.
		\item $\mathbb{T}^{1}\times \mathbb{D}(-\tfrac{1}{r})\to \mathbb{D}(-\tfrac{1}{r})$, $([t],z)\mapsto e^{2i\pi t}z$ is a torification of $\mathcal{NB}(r)$. 
	\end{enumerate}
\end{proposition}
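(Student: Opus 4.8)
The plan is to reduce each of the five cases to a comparison of two Hessian metrics on a flat parameter domain, and then transport the resulting torification through an affine change of parameters. Recall (from the discussion following Proposition~\ref{prop:5.7}) that the natural parameters of an exponential family $\mathcal{E}$ with cumulant generating function $\psi$ realize an isomorphism of dually flat spaces $(\mathcal{E},h_{F},\nabla^{(e)})\cong(\Theta,\textup{Hess}(\psi),\nabla^{\textup{flat}})$. Hence, by Proposition~\ref{ndknqkefnkwnkndk}, it is enough in each case to produce a dually flat space $(\Theta',\textup{Hess}(\Psi),\nabla^{\textup{flat}})$ of which the stated K\"ahler manifold is a torification, together with an invertible affine map $f$ for which $\Psi\circ f-\psi$ is affine. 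Indeed, Lemma~\ref{nfekwnwkwfnknk} then gives $f^{*}\textup{Hess}(\Psi)=\textup{Hess}(\Psi\circ f)=\textup{Hess}(\psi)$, so that $f$, being affine and isometric, is an isomorphism of dually flat spaces, and the torification carries over by Proposition~\ref{ndknqkefnkwnkndk}.

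For cases (1)--(4) I would invoke Theorem~\ref{nckdnwknknednenkenk}. The torus actions on $\mathbb{C}^{n}$ and on $\mathbb{P}_{n}(c)$ are effective, holomorphic and Hamiltonian, and for every $\xi$ the field $-J\xi_{N}$ is complete, with flow the real reparametrization $z\mapsto(e^{2\pi\xi_{1}t}z_{1},\dots)$; thus the theorem applies and exhibits each manifold as a torification of $(\mathbb{R}^{n},\textup{Hess}(\Psi_{N}),\nabla^{\textup{flat}})$. The potentials are already computed: the $n=1$ instance of Example~\ref{ncenknkenk} gives $\Psi_{\mathbb{C}}(x)=\tfrac14 e^{4\pi x}$, and Proposition~\ref{nfknwkdnkdnkn} gives $\Psi_{c}(x)=\tfrac1c\ln(1+\sum_{k}e^{4\pi x_{k}})$. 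Comparing with the cumulant generating functions of Examples~\ref{nfeknwdkenfknk}, \ref{exa:5.5}, \ref{exa:5.6} and \ref{jeknkkdefjdk}, the affine map $f(\theta)=\tfrac{1}{4\pi}\theta$ (for Poisson, precomposed with the translation absorbing the factor $\tfrac14$) yields $\textup{Hess}(\Psi_{N}\circ f)=\textup{Hess}(\psi)$, provided the curvature is normalized to $c=1$ for $\mathcal{P}_{n+1}^{\times}$ and $c=\tfrac1n$ for $\mathcal{B}(n)$ and $\mathcal{M}(m+1,n)$, exactly as in the statement. The parameter domains also agree, since the natural parameters of these four families fill all of $\mathbb{R}^{n}$; this settles (1)--(4).

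The essentially different case is (5), and it is where the main obstacle lies. For the rotation action on $\mathbb{D}$ one has $-J\xi_{\mathbb{D}}(z)=2\pi\xi z$, a radial outward field whose flow leaves the disk in finite time; hence $-J\xi_{\mathbb{D}}$ is \emph{not} complete, the action does not extend to a holomorphic $(\mathbb{C}^{*})$-action, and Theorem~\ref{nckdnwknknednenkenk} is unavailable. Instead I would verify the torification by hand through the covering-map criterion of Proposition~\ref{nkwdnkkfenknk}. Put $\Theta=(-\infty,0)$ and $\phi(\theta)=-r\ln(1-e^{4\pi\theta})$, identify $T\Theta$ with the left half-plane $\{\theta+is : \theta<0\}$ carrying the standard complex structure (Corollary~\ref{nfednwkneknkn}), and set $\tau(w)=e^{2\pi w}$. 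Then $\tau$ is a holomorphic covering of $\mathbb{D}^{\circ}=\mathbb{D}\setminus\{0\}$ whose Deck group is generated by $w\mapsto w+i$, which is precisely $\Gamma(L)$ for the lattice $L$ generated by $X=\tfrac{\partial}{\partial\theta}$; and $\tau\circ T_{t}=\Phi_{[t]}\circ\tau$ because $e^{2\pi(w+it)}=e^{2i\pi t}e^{2\pi w}$. The one computation to carry out is the isometry: pulling the hyperbolic metric $4r\,|dz|^{2}/(1-|z|^{2})^{2}$ back along $\tau$ gives $\tfrac{16\pi^{2}r\,e^{4\pi\theta}}{(1-e^{4\pi\theta})^{2}}\,(d\theta^{2}+ds^{2})$, and since $\phi''(\theta)=\tfrac{16\pi^{2}r\,e^{4\pi\theta}}{(1-e^{4\pi\theta})^{2}}$ this equals the Dombrowski metric of $(\textup{Hess}(\phi),\nabla^{\textup{flat}})$ (Proposition~\ref{prop:4.1}). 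Proposition~\ref{nkwdnkkfenknk} then shows $\mathbb{D}(-\tfrac1r)$ is a torification of $(\Theta,\textup{Hess}(\phi),\nabla^{\textup{flat}})$. Finally $f(\theta)=4\pi\theta$ satisfies $\phi=\psi\circ f$ with $\psi(\theta)=-r\ln(1-e^{\theta})$ the cumulant generating function of $\mathcal{NB}(r)$, so by Lemma~\ref{nfekwnwkwfnknk} and Proposition~\ref{ndknqkefnkwnkndk} the torification descends to $\mathcal{NB}(r)$.

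Thus the difficulty is entirely concentrated in (5): recognizing that the general machinery of Theorem~\ref{nckdnwknknednenkenk} breaks down for the non-extendable disk action, and replacing it with the explicit exponential covering $w\mapsto e^{2\pi w}$ together with the direct check, via Proposition~\ref{nkwdnkkfenknk}, that it is a K\"ahler covering. Everything else is bookkeeping with affine reparametrizations and the already-computed potentials.
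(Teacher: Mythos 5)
Your proof is correct and follows essentially the same route as the paper: cases (1)--(4) by comparing the cumulant generating functions with the potentials of Example \ref{ncenknkenk} and Proposition \ref{nfknwkdnkdnkn} via Lemma \ref{nfekwnwkwfnknk} and Proposition \ref{ndknqkefnkwnkndk}, and case (5) by applying Proposition \ref{nkwdnkkfenknk} to an explicit exponential covering of $\mathbb{D}(-\tfrac{1}{r})^{\circ}$ --- the paper uses $z\mapsto e^{z/2}$ on $\{z\in\mathbb{C}\,\vert\,\textup{Real}(z)<0\}$ in the natural parameter, which is your map $w\mapsto e^{2\pi w}$ up to the affine rescaling $f(\theta)=4\pi\theta$ you then undo anyway. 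Your explicit pullback-metric computation and your observation that Theorem \ref{nckdnwknknednenkenk} is unavailable for the disk because $-J\xi_{\mathbb{D}}$ is incomplete are both correct and simply make explicit what the paper's sketch leaves implicit.
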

\begin{proof}
	\noindent (1) Let $\psi_{1},\psi_{2}:\mathbb{R}\to \mathbb{R}$ be defined by $\psi_{1}=\tfrac{1}{4}e^{4\pi x}$ and $\psi_{2}(x)=e^{x}$, respectively. 
	Then $\mathbb{T}^{1}\times \mathbb{C}\to \mathbb{C}$ is a torification of $(\mathbb{R},\textup{Hess}(\psi_{1}),\nabla^{\textup{flat}})$ 
	(see Example \ref{ncenknkenk}) and $(\mathbb{R},\textup{Hess}(\psi_{2}),\nabla^{\textup{flat}})$ is isomorphic to $(\mathscr{P},h_{F},\nabla^{(e)})$ 
	(see Example \ref{nfeknwdkenfknk}). Let $f:\mathbb{R}\to \mathbb{R}$, $x\mapsto 4\pi x-\ln(4)$. Since $\psi_{1}=\psi_{2}\circ f$, 
	Lemma \ref{nfekwnwkwfnknk} implies that $f^{*}\textup{Hess}(\psi_{2})=\textup{Hess}(\psi_{1})$. It follows that $f$ can be regarded as 
	an isomorphism of dually flat spaces from $(\mathbb{R},\textup{Hess}(\psi_{1}),\nabla^{\textup{flat}})$ to $(\mathscr{P},h_{F},\nabla^{(e)})$. 
	By Proposition \ref{ndknqkefnkwnkndk}, $\mathbb{T}^{1}\times \mathbb{C}\to \mathbb{C}$ is a torification of $(\mathscr{P},h_{F},\nabla^{(e)})$. 
	This concludes the proof of (1). 

	\noindent (2)--(4) The proof is entirely analogous: for each exponential family, just compare the cumulant generating function to the potential 
	$\psi_{c}$ described in Proposition \ref{nfknwkdnkdnkn}, and use Lemma \ref{nfekwnwkwfnknk}.

	\noindent (5) Apply Proposition \ref{nkwdnkkfenknk} to the K\"{a}hler covering map $T\mathcal{NB}(r)\cong \{z\in \mathbb{C}\,\,\vert\,\,\textup{Real}(z)<0\}
	\to \mathbb{D}(-\tfrac{1}{r})^{\circ}$, $z\mapsto e^{z/2}$. 
\end{proof}

	In what follows, we will identify the tangent bundle of $\mathcal{E}=\mathscr{P},\mathcal{P}_{n+1}^{\times},\mathcal{B}(n),\mathcal{M}(m+1,n),\mathcal{NB}(r)$ with 
	$\mathbb{C}^{\textup{dim}(\mathcal{E})}$ via the map $T\mathcal{E}\to \mathbb{C}^{\textup{dim}(\mathcal{E})}$, 
	$\sum\dot{\theta}_{k}\tfrac{\partial}{\partial \theta_{k}}\big\vert_{p(.,\theta)}\mapsto (z_{1},...,z_{\textup{dim}(\mathcal{E})})$, where the $\theta_{k}$'s are natural parameters
	and $z_{k}=\theta_{k}+i\dot{\theta}_{k}$.

\begin{proposition}[\textbf{Complement of Proposition \ref{nfeknwknwknk}}]\label{njewndknknknk}
	In each case below, the pair $(\tau,\kappa)$ is a toric factorization of the indicated exponential family  $\mathcal{E}$. 
	\begin{enumerate}
		\item $\mathcal{E}=\mathscr{P},$ \,\,\,\,\,\,  $\mathbb{C}=T\mathscr{P}\overset{\tau}{\longrightarrow} \mathbb{C}^{*}\overset{\kappa}{\longrightarrow} \mathscr{P}$,
			\begin{eqnarray*}
				\tau(z)	= 2e^{z/2}, \,\,\,\,\,\,\,\,\,\,\,\,\,\,\,\,\,\,\,\,
				\kappa(z)(k)=e^{-|z|^{2}/4}\dfrac{1}{k!}\bigg(\dfrac{|z|^{2}}{4}\bigg)^{k},\,\,\,k=0,1,2...  
			\end{eqnarray*}
		\item $\mathcal{E}=\mathcal{P}_{n+1}^{\times},$ \,\,\,\,\,\, $\mathbb{C}^{n}=T\mathcal{P}_{n+1}^{\times}\overset{\tau}{\longrightarrow} \mathbb{P}_{n}(1)^{\circ}
			\overset{\kappa}{\longrightarrow} \mathcal{P}_{n+1}^{\times},$
			\begin{eqnarray*}
				\tau(z)=\big[e^{z_{1}/2},...,e^{z_{n}/2},1\,\big], 
				\,\,\,\,\,\,\,\,\,\,\,\,\,\,\,\,\,\,\,\,\kappa([z])(x_{k})=\dfrac{|z_{k}|^{2}}{|z_{1}|^{2}+...+|z_{n+1}|^{2}},\,\,\,k=1,2...,n+1.
			\end{eqnarray*}
		\item $\mathcal{E}=\mathcal{B}(n),$ \,\,\,\,\,\, $\mathbb{C}=T\mathcal{B}(n)\overset{\tau}{\longrightarrow} \mathbb{P}_{1}(\tfrac{1}{n})^{\circ}
			\overset{\kappa}{\longrightarrow}\mathcal{B}(n),$
			\begin{eqnarray*}
				\tau(z)	= \big[e^{z/2},1\big], \,\,\,\,\,\,\,\,\,\,\,\,\,\,\,\,\,\,\,\,
				\kappa([z_{1},z_{2}])(k)=\binom{n}{k}\dfrac{|z_{1}^{k}z_{2}^{n-k}|^{2}}{(|z_{1}|^{2}+|z_{2}|^{2})^{n}},\,\,\,k=0,1,2...,n.
			\end{eqnarray*}
		\item $\mathcal{E}=\mathcal{M}(m+1,n),$ \,\,\,\,\,\, $\mathbb{C}^{m}=T\mathcal{M}(m+1,n)\overset{\tau}{\longrightarrow}\mathbb{P}_{m}(\tfrac{1}{n})^{\circ}
			\overset{\kappa}{\longrightarrow}\mathcal{M}(m+1,n),$
			\begin{eqnarray*}
				\tau(z)=\big[e^{z_{1}/2},...,e^{z_{m}/2},1\,\big], 
				\,\,\,\,\,\,\,\,\,\,\,\,\,\,\,\,\,\,\,\,
				\kappa([z])(k)=\left\lbrace
				\begin{array}{lll}
				& \dfrac{n!}{k_{1}!...k_{m+1}!}\,\dfrac{|z_{1}^{k_{1}}...\,z_{m+1}^{k_{m+1}}|^{2}}{(|z_{1}|^{2}+...+|z_{m+1}|^{2})^{n}},\\[1.3em]
				& k=(k_{1},...,k_{m+1})\in \mathbb{N}^{m+1},\\[0.5em] 
				& k_{1}+...+k_{m+1}=n.
				\end{array}
				\right.
			\end{eqnarray*}
		\item $\mathcal{E}=\mathcal{NB}(r),$ \,\,\,\,\,\,  $\mathbb{C}=T\mathcal{NB}(r)\overset{\tau}{\longrightarrow} \mathbb{D}(-\tfrac{1}{r})^{\circ}
			\overset{\kappa}{\longrightarrow} \mathcal{NB}(r)$,
			\begin{eqnarray*}
				\tau(z)	= e^{z/2}, \,\,\,\,\,\,\,\,\,\,\,\,\,\,\,\,\,\,\,\,
				\kappa(z)(k)=\binom{k+r-1}{r-1}|z|^{2k}(1-|z|^{2})^{r},\,\,\,k=0,1,2...  
			\end{eqnarray*}

	\end{enumerate}
\end{proposition}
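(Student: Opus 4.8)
The plan is to derive each toric factorization from the ``canonical'' one supplied by Proposition \ref{ncekndwkneknk} and then transport it along the isomorphisms of dually flat spaces already used in the proof of Proposition \ref{nfeknwknwknk}. Recall from Proposition \ref{ncekndwkneknk} that for the model action on $\mathbb{P}_{n}(c)$ (resp. on $\mathbb{C}$) the toric factorization over $(\mathbb{R}^{n},h_{c},\nabla^{\textup{flat}})$ is $\big(T_{p},\,\sigma\circ(\Phi^{\mathbb{C}}_{p})^{-1}\big)$, with $p=[1,\dots,1]$ (resp. $p=1$). The transport tool is Proposition \ref{ndknqkefnkwnkndk}: if $f:M\to M'$ is an isomorphism of dually flat spaces and $\pi=\kappa\circ\tau$ is a toric factorization of $M$, then $\pi'=\kappa'\circ\tau'$ is one for $M'$, with $\tau'=\tau\circ(f_{*})^{-1}$ and $\kappa'=f\circ\kappa$. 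Thus for each exponential family $\mathcal{E}$ it suffices to (i) read off the transporting isomorphism $f$ from Proposition \ref{nfeknwknwknk}, which matches the cumulant generating function to the potential $\psi_{c}$ of Proposition \ref{nfknwkdnkdnkn} via Lemma \ref{nfekwnwkwfnknk}; (ii) compute $\tau'=T_{p}\circ(f_{*})^{-1}$ explicitly; and (iii) check that the stated $\kappa$ equals $f\circ\sigma\circ(\Phi^{\mathbb{C}}_{p})^{-1}$.

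The key simplification is that, once $\tau$ is known to be a compatible covering map, the compatible projection $\kappa$ is \emph{uniquely} determined by $\kappa\circ\tau=\pi$, since $\tau$ is surjective and $\pi$ is constant on the fibres of $\tau$ (the $\Gamma(L)$-orbits, on which $\pi\circ\gamma=\pi$). Hence in step (iii) it is enough to verify the single identity $\kappa\circ\tau=\pi$, where $\pi:T\mathcal{E}\to\mathcal{E}$ sends $z=\theta+i\dot\theta$ to the distribution $p(\,\cdot\,;\theta)$ with natural parameter $\theta=\textup{Re}(z)$. For step (ii) I would use the explicit flows computed earlier: for $\mathbb{P}_{n}(c)$, the flow of $-J\xi_{N}$ is $\varphi_{t}^{-J\xi_{N}}([z])=[e^{2\pi t\xi_{1}}z_{1},\dots,e^{2\pi t\xi_{n}}z_{n},z_{n+1}]$ (proof of Proposition \ref{nfknwkdnkdnkn}), which with Lemma \ref{nefknwdknefknk}(2) gives $T_{p}(z)=[e^{2\pi z_{1}},\dots,e^{2\pi z_{n}},1]$; for $\mathbb{C}$, Example \ref{ncenknkenk} gives $T_{p}(z)=e^{2\pi z}$. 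Since each $f$ is a linear (or affine) map on natural parameters, $(f_{*})^{-1}$ is an explicit affine map on $\mathbb{C}^{n}$, and composing yields the stated formulas: $f(x)=4\pi x$ in the categorical, binomial and multinomial cases gives $\tau(z)=[e^{z_{1}/2},\dots,1]$, while $f(x)=4\pi x-\ln 4$ in the Poisson case gives $\tau(z)=2e^{z/2}$.

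Finally, step (iii) reduces to a change of variables between $|z_{k}|^{2}$ and the canonical parameters. Writing $w=\tau(z)$, one finds $e^{\theta_{k}}=|w_{k}|^{2}/|w_{n+1}|^{2}$ in the categorical case, whence $e^{\psi(\theta)}=\big(\textstyle\sum_{j}|w_{j}|^{2}\big)/|w_{n+1}|^{2}$ and $p(x_{k};\theta)=|w_{k}|^{2}/\sum_{j}|w_{j}|^{2}$, matching the stated $\kappa$; the binomial and multinomial cases are identical after relabelling, and the Poisson case follows from $\lambda=|w|^{2}/4$. The negative-binomial case is the one that does not pass through a $\mathbb{P}_{n}(c)$ model: there I would instead invoke Proposition \ref{nkwdnkkfenknk} directly, as in the proof of Proposition \ref{nfeknwknwknk}(5), taking $\tau(z)=e^{z/2}$ as the compatible covering map of $\mathbb{D}(-\tfrac1r)^{\circ}$ and determining $\kappa$ by $\kappa\circ\tau=\pi$, which gives $q=1-|w|^{2}$. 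The main obstacle is purely bookkeeping: keeping the normalization constants (the factors $4\pi$, the shift $\ln 4$, and the sign conventions in $\sigma$, $\moment$ and $-\textup{grad}(\psi)$) consistent across the covering map $\varepsilon$, the isomorphism $f$, and the momentum-map identifications, so that the explicit constants in $\tau$ and $\kappa$ come out exactly as stated.
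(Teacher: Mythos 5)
Your proposal is correct and follows essentially the same route as the paper's own (sketched) proof: the paper likewise takes the canonical factorization $\big(T_{p},\sigma\circ(\Phi_{p}^{\mathbb{C}})^{-1}\big)$ from Proposition \ref{ncekndwkneknk} (resp.\ invokes Proposition \ref{nkwdnkkfenknk} directly for $\mathcal{NB}(r)$) and transports it via Proposition \ref{ndknqkefnkwnkndk} along the isomorphism $f$ that matches the cumulant generating function to the potential of Proposition \ref{nfknwkdnkdnkn} through Lemma \ref{nfekwnwkwfnknk}, yielding $\tau'=\tau\circ(f_{*})^{-1}$ and $\kappa'=f\circ\kappa$ exactly as you describe. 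Your remark that $\kappa$ is forced by the single identity $\kappa\circ\tau=\pi$ (since $\tau$ is surjective and $\pi$ is constant on $\Gamma(L)$-orbits) is a harmless streamlining of the same verification the paper performs by computing $f\circ\kappa$ directly.
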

\begin{proof}[Sketch of proof]
	(1)  Let $\Phi:\mathbb{T}^{1}\times \mathbb{C}\to \mathbb{C}$, $([t],z)\mapsto e^{2i\pi t}z$. Recall that the dually flat manifold associated to $\Phi$ and the point 
	$p=1\in \mathbb{C}$ is $(\mathbb{R},\textup{Hess}(\psi_{1}),\nabla^{\textup{flat}})$, where $\psi_{1}:\mathbb{R}\to \mathbb{R}$, $x\mapsto\tfrac{1}{4}e^{4\pi x}$ 
	(see Example \ref{ncenknkenk}). Let $\Phi^{\mathbb{C}}:\mathbb{C}^{*}\times \mathbb{C}\to \mathbb{C}$ be the holomorphic extension of $\Phi$. A direct 
	calculation shows that $\Phi^{\mathbb{C}}(z,w)=zw$. Then, using Proposition \ref{ncekndwkneknk}, it is easy to check that 
	$\tau:\mathbb{C}\to \mathbb{C}^{*},$ $z\mapsto e^{2 \pi z}$ and $\kappa:\mathbb{C}^{*}\to \mathbb{R}$, $z\mapsto \tfrac{\ln(|z|)}{2\pi}$ form 
	a toric factorization.
	Now let $\psi_{2}:\mathbb{R}\to \mathbb{R},$ $x\mapsto e^{x}$ and $f:\mathbb{R}\to \mathbb{R}$, $x\mapsto 4\pi x -\ln(4)$. 
	As we saw in the proof of Proposition \ref{nfeknwknwknk}, $f$ is an isomorphism of dually flat spaces from $(\mathbb{R}, \textup{Hess}(\psi_{1}),\nabla^{\textup{flat}})$ 
	to $(\mathbb{R},\textup{Hess}(\psi_{2}),\nabla^{\textup{flat}})\cong (\mathscr{P},h_{F},\nabla^{(e)})$. By Proposition \ref{ndknqkefnkwnkndk}, 
	$(\tau',\kappa')=(\tau\circ (f_{*})^{-1},f\circ \kappa)$ is a toric factorization of $(\mathscr{P},h_{F},\nabla^{(e)})$. Under the identification 
	$T\mathscr{P}=\mathbb{C}$, we have $f_{*}z=4\pi z-\ln(4)$ and hence $\tau'(z)=\tau(\tfrac{1}{4\pi}(z+\ln(4)))=2e^{z/2}$ and 
	$\kappa'(z)=(f\circ \kappa)(z)=f(\ln(|z|)/2\pi)=\ln(|z|^{2})-\ln(4)=:\theta$. The corresponding distribution $p(.;\theta)$ 
	is a Poisson distribution with parameter $\lambda=e^{\theta}=e^{\ln(|z|^{2})-\ln(4)}=\tfrac{1}{4}|z|^{2}$ (see Example \ref{nfeknwdkenfknk}). 
	This concludes the proof of (1). The other cases are analogous, or simpler.
\end{proof}

\section{Lifting isometric affine maps}\label{nfwkwnkenkwndk}

	Let $\Phi:\mathbb{T}^{n}\times N\to N$ and $\Phi':\mathbb{T}^{d}\times N'\to N'$ be torifications of the dually flat manifolds 
	$(M,h,\nabla)$ and $(M',h',\nabla')$, respectively. 

\begin{definition}
	Let $f:M\to M'$ and $m:N\to N'$ be smooth maps. We say that $m$ is a \textit{lift of} $f$ 
	if there are compatible covering maps $\tau:TM\to N^{\circ}$ and $\tau':TM'\to (N')^{\circ}$ such that 
	$m\circ \tau= \tau'\circ f_{*}$.  
	In this case, we say that $m$ \textit{is a lift of} $f$ \textit{with respect to} $\tau$ and $\tau'$.  
\end{definition}

	The terminology is motivated by the following lemma.

\begin{lemma}
	Let $\pi=\kappa\circ \tau:TM\to M$ and $\pi'=\kappa'\circ \tau':TM'\to M'$ be toric factorizations. 
	If $m$ is a lift of $f:M\to M'$ with respect to $\tau$ and $\tau'$, then $m(N^{\circ})\subseteq (N')^{\circ}$ and $\kappa'\circ m=f\circ\kappa$.
\end{lemma}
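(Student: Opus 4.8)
The plan is to deduce both assertions from a single naturality identity for the tangent map, namely
\[
\pi'\circ f_{*}=f\circ \pi,
\]
which holds for any smooth $f$ because $f_{*}$ is the fiberwise derivative of $f$ and hence sends $T_{q}M$ into $T_{f(q)}M'$; concretely, $\pi'(f_{*}u)=f(\pi(u))$ for every $u\in TM$. I would record this first, since it is the only place where the specific nature of $f_{*}$ (as opposed to an arbitrary map $TM\to TM'$) enters the argument.

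For the inclusion $m(N^{\circ})\subseteq (N')^{\circ}$, I would use that a compatible covering map $\tau:TM\to N^{\circ}$ is in particular surjective onto $N^{\circ}$, and that $\tau'$ takes values in $(N')^{\circ}$. Given $p\in N^{\circ}$, choose $u\in TM$ with $\tau(u)=p$ and apply the lift relation $m\circ\tau=\tau'\circ f_{*}$ to write $m(p)=m(\tau(u))=\tau'(f_{*}u)\in (N')^{\circ}$. This establishes the inclusion, which must be proved first: the domain of $\kappa'$ is $(N')^{\circ}$, so without it the composite $\kappa'\circ m$ is not even defined on $N^{\circ}$.

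For the identity $\kappa'\circ m=f\circ\kappa$ on $N^{\circ}$, the idea is to verify it after precomposing with the surjection $\tau$ and then invoke surjectivity. Using the toric factorizations $\pi=\kappa\circ\tau$ and $\pi'=\kappa'\circ\tau'$, the lift relation, and the naturality identity above, I would compute, for arbitrary $u\in TM$,
\[
(\kappa'\circ m)(\tau(u))=\kappa'\big(\tau'(f_{*}u)\big)=\pi'(f_{*}u)=f(\pi(u))=f\big(\kappa(\tau(u))\big)=(f\circ\kappa)(\tau(u)).
\]
Since $\tau$ is onto $N^{\circ}$, this yields $\kappa'\circ m=f\circ\kappa$ on all of $N^{\circ}$.

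There is no genuine obstacle here: the statement is essentially a diagram chase through the two toric factorizations. The only points requiring attention are organizational rather than mathematical — one must dispatch the inclusion $m(N^{\circ})\subseteq (N')^{\circ}$ before the second identity makes sense, and one must remember that both equalities are obtained by pulling back along the surjective covering $\tau$ rather than by arguing directly on $N^{\circ}$.
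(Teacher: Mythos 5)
Your proof is correct and is essentially the paper's own argument: the paper likewise derives the inclusion immediately from $m\circ\tau=\tau'\circ f_{*}$ and establishes the identity via the same chain $\kappa'\circ m\circ\tau=\kappa'\circ\tau'\circ f_{*}=\pi'\circ f_{*}=f\circ\pi=f\circ\kappa\circ\tau$, cancelling the surjective map $\tau$. Your write-up merely makes explicit two points the paper leaves implicit — the naturality identity $\pi'\circ f_{*}=f\circ\pi$ and the need to prove the inclusion first so that $\kappa'\circ m$ is defined — which is fine.
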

\begin{proof}
	The inclusion $m(N^{\circ})\subseteq (N')^{\circ}$ is an immediate consequence of the formula $m\circ \tau=\tau'\circ f_{*}$. 
	It follows from the same formula and the definition of a toric factorization that 
	\begin{eqnarray*}
		\kappa'\circ m\circ \tau=\kappa'\circ\tau'\circ f_{*} =\pi'\circ f_{*}=f\circ \pi=f\circ \kappa\circ \tau,
	\end{eqnarray*}
	and thus $\kappa'\circ m\circ \tau=f\circ \kappa\circ \tau$. This implies $\kappa'\circ m=f\circ \kappa$. 
\end{proof}
\begin{eqnarray*}
	\begin{tikzcd}
		TM \arrow{rrr}{\displaystyle f_{*}} \arrow[dd,swap,"\displaystyle\pi"] \arrow{rd}{\displaystyle \tau} &    &   & 
			TM' \arrow[swap]{ld}{\displaystyle \tau'} \arrow[dd,"\displaystyle\pi'"]  \\
		& N^{\circ} \arrow{r}{\displaystyle m} \arrow{ld}{\displaystyle \kappa}  
			& (N')^{\circ} \arrow[swap]{dr}{\displaystyle \kappa'}  & \\
		M \arrow[swap]{rrr}{\displaystyle f}  &     && M'
	\end{tikzcd}
\end{eqnarray*}

\begin{proposition}\label{ncekndkvndknk}
	Let $m:N\to N'$ be a lift of $f:M\to M'$. 
	\begin{enumerate}[(1)]
	\item The map  $m$ is a K\"{a}hler immersion if and only if $f$ is an isometric affine immersion. 
	\item If $f$ is an isometric affine immersion, then there exists a unique Lie group homomorphism 
		$\rho:\mathbb{T}^{n}\to \mathbb{T}^{d}$ with finite kernel such that $m\circ \Phi_{a}=\Phi_{\rho(a)}'\circ m$ 
			for all $a\in \mathbb{T}^{n}$.  
	\end{enumerate}
\end{proposition}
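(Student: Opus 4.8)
The plan is to prove (1) by transporting the conclusion of Proposition~\ref{nfeknwknekn} through the covering maps $\tau$ and $\tau'$, and to prove (2) by analysing how $f_*$ intertwines the two lattice--translation actions. Throughout I fix compatible covering maps $\tau:TM\to N^{\circ}$ and $\tau':TM'\to (N')^{\circ}$ with $m\circ\tau=\tau'\circ f_*$, and I recall that $\tau,\tau'$ are holomorphic and locally isometric covering maps, hence local K\"ahler isomorphisms onto open subsets of $N^{\circ}$ and $(N')^{\circ}$.

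For (1), note first that $m\circ\tau=\tau'\circ f_*$ and $\tau(TM)=N^{\circ}$ give $m(N^{\circ})=\tau'(f_*(TM))\subseteq (N')^{\circ}$, so near any $u\in TM$ we may write $m=\tau'\circ f_*\circ\tau^{-1}$ using local inverses of $\tau$ and $\tau'$. Since these local inverses are K\"ahler isomorphisms, $m|_{N^{\circ}}$ is a K\"ahler immersion if and only if $f_*$ is one, and by the final assertion of Proposition~\ref{nfeknwknekn} this holds exactly when $f$ is an isometric affine immersion. I would then extend the equivalence from $N^{\circ}$ to $N$: the conditions $m_*\circ J=J'\circ m_*$ and $m^*g'=g$ are pointwise, hold on the dense open set $N^{\circ}$, and therefore persist on all of $N$ by continuity; positivity of $g$ forces $m_*$ to be injective everywhere. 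This yields (1).

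For (2) assume $f$ is an isometric affine immersion and take the toric parametrisations $(L,X,F)$ and $(L',X',F')$ underlying $\tau,\tau'$, so that $\tau\circ T_t=\Phi_{[t]}\circ\tau$ and $\tau'\circ T'_s=\Phi'_{[s]}\circ\tau'$, where $T_t,T'_s$ are the translations by $\sum_i t_iX_i$ and $\sum_k s_kX'_k$ (Proposition~\ref{nkwdnkkfenknk}). Because $f$ is affine and each $X_i$ is $\nabla$--parallel, $f_*X_i$ is $\nabla'$--parallel along $f$, and Lemma~\ref{nkqndwknwfkn}(1) supplies constants $c_{ik}$ with $f_*(X_i)=\sum_k c_{ik}\,X'_k\circ f$. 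A direct fibrewise computation then gives $f_*\circ T_t=T'_{C^{T}t}\circ f_*$ with $C=(c_{ik})$, whence $m\circ\Phi_{[t]}\circ\tau=\tau'\circ f_*\circ T_t=\Phi'_{[C^{T}t]}\circ m\circ\tau$; surjectivity of $\tau$ onto $N^{\circ}$ together with continuity upgrades this to $m\circ\Phi_{[t]}=\Phi'_{[C^{T}t]}\circ m$ on all of $N$, and I would set $\rho([t])=[C^{T}t]$.

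It remains to verify that $\rho$ is a well-defined homomorphism with finite kernel, and that it is unique; this is where the real work sits. Well-definedness requires $C$ to have integer entries, and I expect the cleanest route is to read this off from the hypothesis that $m$ is a genuine lift: since $m\circ\tau=m\circ F\circ q_L$ is automatically $\Gamma(L)$--invariant, $\tau'\circ f_*$ must be $\Gamma(L)$--invariant, which (as the fibres of $\tau'$ are exactly the $L'$--translates in each tangent space) forces $f_*(X_i(p))=\sum_k c_{ik}X'_k(f(p))\in L'\cap T_{f(p)}M'$, hence $c_{ik}\in\mathbb{Z}$; this is the content of Proposition~\ref{nfekwnknfkenk}. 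Finiteness of $\ker\rho$ is equivalent to injectivity of $C^{T}:\mathbb{R}^n\to\mathbb{R}^d$, i.e.\ to linear independence of the vectors $f_*(X_i(p))$; since the $X'_k(f(p))$ form a basis, this is precisely injectivity of $df_p$, which holds because $f$ is an immersion. Finally, uniqueness follows from freeness: if $\rho_1,\rho_2$ both satisfy the intertwining relation, then $\Phi'_{\rho_1(a)\rho_2(a)^{-1}}$ fixes every point of $m(N^{\circ})\subseteq (N')^{\circ}$, and freeness of $\Phi'$ on $(N')^{\circ}$ gives $\rho_1(a)=\rho_2(a)$. The one genuinely delicate point I anticipate is the integrality of $C$; deducing it from the well-definedness of $m$ on $N^{\circ}$ (rather than arguing metrically) keeps the proof short.
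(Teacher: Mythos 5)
Your proposal is correct. Part (1) follows the paper essentially verbatim: you transport the question through local inverses of $\tau$ and $\tau'$, invoke Proposition \ref{nfeknwknekn}, and extend from the dense open set $N^{\circ}$ to $N$ by continuity — exactly the paper's argument, with the immersion property recovered from the isometry as you note. Part (2), however, takes a genuinely different and more constructive route. The paper deduces (2) from Lemma \ref{nfenkneknkdnk}, which rests on the rigidity statement Lemma \ref{nwfkndkwnk}: any two K\"{a}hler immersions over the same projection $f\circ\kappa$ differ by a single torus translation, proved via a holomorphicity computation combined with the orthogonal splitting $T_{p}N=T_{p}\mathcal{O}\oplus J(T_{p}\mathcal{O})$ of Lemma \ref{neekwnkvnkdnsk}; the homomorphism $\rho$ is then obtained pointwise, its smoothness checked by trivializing the bundles $\kappa,\kappa'$, and finiteness of the kernel requires $m$ to be an immersion (a fundamental-vector-field argument). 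You instead exhibit $\rho$ explicitly as $[t]\mapsto[C^{T}t]$, where $C=(c_{ik})$ is the constant matrix with $f_{*}X_{i}=\sum_{k}c_{ik}\,X'_{k}\circ f$: smoothness and the homomorphism property come for free, integrality of $C$ follows from the fact that $\tau'$ is a normal covering whose fibres are $\Gamma(L')$-orbits (so the difference $\sum_{k}c_{ik}X'_{k}(f(p))$ must lie in $L'\cap T_{f(p)}M'$), finiteness of $\ker\rho$ follows from injectivity of $df$ alone (no immersion hypothesis on $m$ is needed at this stage), and uniqueness follows from freeness of $\Phi'$ on $(N')^{\circ}$. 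Your approach buys explicitness — it identifies $\rho_{*_{e}}$ with the integer matrix $C^{T}$ and in effect proves Proposition \ref{nfekwnknfkenk} en route, reversing the paper's logical order, in which that proposition is derived later from this one — while the paper's approach buys the reusable rigidity Lemma \ref{nwfkndkwnk}, which it needs again, e.g., for the conjugacy of lifts (Proposition \ref{neknmfkrnknk}). Two small points to tighten: Lemma \ref{nkqndwknwfkn}(1) concerns vector fields on $M$, not fields along a map, so the step producing the constants $c_{ik}$ needs its parallel-transport (or affine-coordinate) argument redone for $p\mapsto f_{*_{p}}X_{i}(p)$ — easy, since $f$ is affine and hence affine-linear in affine charts and the $X_{i}$, $X'_{k}$ are constant there; and your parenthetical ``this is the content of Proposition \ref{nfekwnknfkenk}'' must remain an observation rather than a citation, since the paper's proof of that proposition depends on the present one and citing it here would be circular. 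Your own Deck-group derivation avoids this, so neither point is a gap.
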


	The proof of Proposition \ref{ncekndkvndknk} proceeds in a series of lemmas.
	Let $\pi=\kappa\circ \tau:TM\to M$ and $\pi'=\kappa'\circ \tau':TM'\to M'$ be fixed toric factorizations. 

\begin{lemma}\label{nwfkndkwnk}
	Let $f:M\to M'$ be an isometric affine immersion. Given $i=1,2,$ let $m_{i}:N\to N'$ be a K\"{a}hler immersion
	satisfying $m_{i}(N^{\circ})\subseteq (N')^{\circ}$ and $\kappa'\circ m_{i}=f\circ \kappa$ on $N^{\circ}$. Then 
	there exists a unique $a\in \mathbb{T}^{d}$ such that $m_{1}=\Phi'_{a}\circ m_{2}$.  
\end{lemma}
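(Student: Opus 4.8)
The plan is to exploit that, for the toric factorization $\pi'=\kappa'\circ\tau'$, the compatible projection $\kappa'\colon(N')^{\circ}\to M'$ is a principal $\mathbb{T}^{d}$-bundle whose fibers are exactly the $\mathbb{T}^{d}$-orbits in $(N')^{\circ}$. Since $\kappa'\circ m_{1}=\kappa'\circ m_{2}=f\circ\kappa$ on $N^{\circ}$ and $m_{i}(N^{\circ})\subseteq(N')^{\circ}$, for every $p\in N^{\circ}$ the points $m_{1}(p)$ and $m_{2}(p)$ lie in the same fiber of $\kappa'$, hence in the same $\mathbb{T}^{d}$-orbit. Because $\Phi'$ is free on $(N')^{\circ}$, there is a \emph{unique} $a(p)\in\mathbb{T}^{d}$ with $m_{1}(p)=\Phi'_{a(p)}(m_{2}(p))$, and the resulting map $a\colon N^{\circ}\to\mathbb{T}^{d}$ is smooth by local triviality of the principal bundle (it is the composition of $p\mapsto(m_{1}(p),m_{2}(p))$ with the smooth translation map associated with $\kappa'$). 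The statement then reduces to proving that $a$ is constant on the connected set $N^{\circ}$: then $m_{1}=\Phi'_{a}\circ m_{2}$ on $N^{\circ}$, which extends to all of $N$ by continuity and the density of $N^{\circ}$ in $N$.

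To show $a$ is locally constant I would differentiate the identity $m_{1}=\Phi'\circ(a,m_{2})$. Fixing $p\in N^{\circ}$ and $v\in T_{p}N$, and writing $\xi(v)\in\mathbb{R}^{d}=\textup{Lie}(\mathbb{T}^{d})$ for the Lie algebra element corresponding to $a_{*_{p}}v$ under translation by $a(p)^{-1}$ (legitimate since $\mathbb{T}^{d}$ is abelian, so $\textup{Ad}$ is trivial and fundamental vector fields are equivariant), the chain rule gives
\begin{eqnarray*}
(m_{1})_{*_{p}}v=\xi(v)_{N'}\big(m_{1}(p)\big)+(\Phi'_{a(p)})_{*}(m_{2})_{*_{p}}v.
\end{eqnarray*}
Now I would use that $m_{1}$, $m_{2}$ and each $\Phi'_{a(p)}$ are holomorphic. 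Applying $J'$ to this identity, using $(m_{1})_{*}J=J'(m_{1})_{*}$, $(\Phi'_{a(p)})_{*}J'=J'(\Phi'_{a(p)})_{*}$ and $(m_{2})_{*}J=J'(m_{2})_{*}$, and comparing with the same identity written for $v$ replaced by $Jv$, the term $(\Phi'_{a(p)})_{*}(m_{2})_{*_{p}}(Jv)$ cancels and I am left with
\begin{eqnarray*}
\xi(Jv)_{N'}\big(m_{1}(p)\big)=J'\,\xi(v)_{N'}\big(m_{1}(p)\big).
\end{eqnarray*}

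The decisive step is to feed this equality into the orthogonal splitting of Lemma \ref{neekwnkvnkdnsk}. Let $\mathcal{O}'$ be the $\mathbb{T}^{d}$-orbit of $m_{1}(p)\in(N')^{\circ}$; then $T_{m_{1}(p)}N'=T_{m_{1}(p)}\mathcal{O}'\oplus J'\big(T_{m_{1}(p)}\mathcal{O}'\big)$. The left-hand side above is a fundamental-vector-field value, hence lies in $T_{m_{1}(p)}\mathcal{O}'$, while the right-hand side lies in $J'\big(T_{m_{1}(p)}\mathcal{O}'\big)$; as the sum is direct, both sides vanish. In particular $J'\xi(v)_{N'}(m_{1}(p))=0$, so $\xi(v)_{N'}(m_{1}(p))=0$, and since $\Phi'$ is free at $m_{1}(p)$ the map $\eta\mapsto\eta_{N'}(m_{1}(p))$ is injective; therefore $\xi(v)=0$. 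As $v$ was arbitrary, $a_{*_{p}}=0$ for every $p\in N^{\circ}$, so $a$ is locally constant and hence constant, say $a(p)\equiv a$. Uniqueness of $a$ then follows by evaluating $m_{1}=\Phi'_{a}\circ m_{2}=\Phi'_{a'}\circ m_{2}$ at any $p\in N^{\circ}$ and using that $\Phi'$ acts freely on $(N')^{\circ}\ni m_{2}(p)$.

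I expect the only delicate points to be bookkeeping rather than genuine obstacles: establishing smoothness of $a$ from the principal-bundle structure, and keeping the holomorphicity cancellations straight so as to isolate the clean identity $\xi(Jv)_{N'}=J'\xi(v)_{N'}$ at $m_{1}(p)$. Everything substantive is then handled by the orthogonality $T\mathcal{O}'\perp J'(T\mathcal{O}')$ of Lemma \ref{neekwnkvnkdnsk} together with freeness of the action on $(N')^{\circ}$, which is precisely what forces the ``phase'' $a$ to be rigid.
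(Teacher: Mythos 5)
Your proposal is correct and follows essentially the same route as the paper's proof: use the principal $\mathbb{T}^{d}$-bundle structure of $\kappa'$ to produce a smooth phase map $a:N^{\circ}\to\mathbb{T}^{d}$, differentiate $m_{1}=\Phi'_{a}\circ m_{2}$, cancel via holomorphicity to isolate an identity of the form $\xi(Jv)_{N'}=J'\xi(v)_{N'}$, kill both sides with the orthogonal splitting $T_{q}N'=T_{q}\mathcal{O}'\oplus J'(T_{q}\mathcal{O}')$ of Lemma \ref{neekwnkvnkdnsk} and freeness of $\Phi'$ on $(N')^{\circ}$, then conclude by connectedness of $N^{\circ}$ and density in $N$. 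The only difference is cosmetic: you evaluate the fundamental vector fields at $m_{1}(p)$ after translating $a_{*_{p}}v$ to the identity, whereas the paper keeps the derivative $(\Phi'_{m_{2}(p)})_{*}\phi_{*_{p}}u$ and evaluates at $m_{2}(p)$.
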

\begin{proof}
	Because $\kappa'\circ m_{1}=\kappa'\circ m_{2}$ on $N^{\circ}$ and $\kappa':(N')^{\circ}\to M'$ is a trivial principal 
	fiber bundle with respect to the torus action $\Phi'$, there exists a unique smooth function $\phi:N^{\circ}\to \mathbb{T}^{d}$ 
	such that $m_{1}(p)=\Phi'(\phi(p),m_{2}(p))$ for all $p\in N^{\circ}$. Taking the derivative with respect to $p\in N^{\circ}$ 
	in the direction $u\in T_{p}N$ we obtain 
	\begin{eqnarray*}
		(m_{1})_{*_{p}}u=\big(\Phi'_{m_{2}(p)}\big)_{*}\phi_{*_{p}}u+\big(\Phi'_{\phi(p)}\big)_{*}(m_{2})_{*_{p}}u.
	\end{eqnarray*}
	The same formula with $Ju$ in place of $u$ yields 
	\begin{eqnarray}\label{nefnwdkkenk}
		(m_{1})_{*_{p}}Ju=\big(\Phi'_{m_{2}(p)}\big)_{*}\phi_{*_{p}}Ju+J'\big(\Phi'_{\phi(p)}\big)_{*}(m_{2})_{*_{p}}u,
	\end{eqnarray}
	where we have used the fact that $m_{2}$ and $\Phi'_{\phi(p)}$ are holomorphic. Since $m_{1}$ is holomorphic, we also have 
	\begin{eqnarray}\label{nfkenknkwnk}
		(m_{1})_{*_{p}}Ju= J'(m_{1})_{*_{p}}u=J'\big(\Phi'_{m_{2}(p)}\big)_{*}\phi_{*_{p}}u+J'\big(\Phi'_{\phi(p)}\big)_{*}(m_{2})_{*_{p}}u.
	\end{eqnarray}
	Comparing \eqref{nefnwdkkenk} and \eqref{nfkenknkwnk} we get 
	\begin{eqnarray}\label{nefknqkwnkndk}
		\big(\Phi'_{m_{2}(p)}\big)_{*_{\phi(p)}}\phi_{*_{p}}Ju= J'\big(\Phi'_{m_{2}(p)}\big)_{*_{\phi(p)}}\phi_{*_{p}}u.
	\end{eqnarray}
	Given $a\in \mathbb{T}^{d}$, let $L_{a}:\mathbb{T}^{d}\to \mathbb{T}^{d},$ $b\mapsto ab$. 
	Then $\Phi'_{m_{2}(p)}=\Phi'_{\phi(p)}\circ  \Phi'_{m_{2}(p)} \circ L_{\phi(p)^{-1}}$ and hence
	\begin{eqnarray*}	
		\big(\Phi'_{m_{2}(p)}\big)_{*_{\phi(p)}}=(\Phi'_{\phi(p)})_{*_{m_{2}(p)}} (\Phi'_{m_{2}(p)})_{*_{e}}(L_{\phi(p)^{-1}})_{*_{\phi(p)}}.
	\end{eqnarray*}
	Using this, we can rewrite \eqref{nefknqkwnkndk} as
	\begin{eqnarray}
		(\Phi'_{\phi(p)})_{*_{m_{2}(p)}}\big(\Phi'_{m_{2}(p)}\big)_{*_{e}}(L_{\phi(p)^{-1}})_{*_{\phi(p)}}\phi_{*_{p}}Ju= 
		J'(\Phi'_{\phi(p)})_{*_{m_{2}(p)}}\big(\Phi'_{m_{2}(p)}\big)_{*_{e}}(L_{\phi(p)^{-1}})_{*_{\phi(p)}}\phi_{*_{p}}u,
	\end{eqnarray}
	and since $(\Phi'_{\phi(p)})_{*_{m_{2}(p)}}$ is a linear bijection that commutes with $J'$, 
	\begin{eqnarray}\label{dnkwwndknknk}
		\big(\Phi'_{m_{2}(p)}\big)_{*_{e}}(L_{\phi(p)^{-1}})_{*_{\phi(p)}}\phi_{*_{p}}Ju= 
		J'\big(\Phi'_{m_{2}(p)}\big)_{*_{e}}(L_{\phi(p)^{-1}})_{*_{\phi(p)}}\phi_{*_{p}}u. 
	\end{eqnarray}
	Let $\xi:=(L_{\phi(p)^{-1}})_{*_{\phi(p)}}\phi_{*_{p}}Ju$ and $\eta:=(L_{\phi(p)^{-1}})_{*_{\phi(p)}}\phi_{*_{p}}u$. 
	Then $\xi,\eta\in \mathbb{R}^{d}=\textup{Lie}(\mathbb{T}^{d})$. We denote by $\xi_{N'}$ and $\eta_{N'}$ the corresponding fundamental vector fields 
	on $N'$ associated to $\Phi'$. With this notation, \eqref{dnkwwndknknk} can be rewritten as
	\begin{eqnarray*}
		\xi_{N'}(m_{2}(p))=J'\eta_{N'}(m_{2}(p)).
	\end{eqnarray*}
	By Lemma \ref{neekwnkvnkdnsk}, $\xi_{N'}(m_{2}(p))=\eta_{N'}(m_{2}(p))=0$. 
	The action $\Phi'$ being free at $m_{2}(p)\in (N')^{\circ}$, the orbit map $\Phi_{m_{2}(p)}':\mathbb{T}^{d}\to N'$ is immersive at $e$ and hence the equation 
	$0=\eta_{N'}(m_{2}(p))=(\Phi'_{m_{2}(p)})_{*_{e}}\eta$ has a unique solution $\eta=0.$ It follows that $\phi_{*_{p}}u=0$. 
	Since $p\in N^{\circ}$ and $u\in T_{p}N$ are arbitrary and $N^{\circ}$ is connected, $\phi$ is constant on 
	$N^{\circ}$, say $\phi\equiv a\in \mathbb{T}^{d}$. Then $m_{1}=\Phi'_{a}\circ m_{2}$ on $N^{\circ}$. 
	Since $N^{\circ}$ is dense in $N$, $m_{1}=\Phi'_{a}\circ m_{2}$ on $N$. Uniqueness of $a$ is a consequence of the uniqueness of the function $\phi$ 
	(see above). The lemma follows. 
\end{proof}

\begin{lemma}\label{nfenkneknkdnk}
	Let $f:M\to M'$ be an isometric affine immersion and let $m:N\to N'$ be a 
	K\"{a}hler immersion satisfying $m(N^{\circ})\subseteq (N')^{\circ}$ and $\kappa'\circ m=f\circ \kappa$ on $N^{\circ}$.
	Then there exists a unique Lie group homomorphism $\rho:\mathbb{T}^{n}\to \mathbb{T}^{d}$ such that 
	\begin{eqnarray*}
		m\circ \Phi_{a}=\Phi'_{\rho(a)}\circ m
	\end{eqnarray*}
	for all $a\in \mathbb{T}^{n}$. Moreover, the kernel of $\rho$ is finite.
\end{lemma}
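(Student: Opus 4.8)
The plan is to build $\rho$ one element at a time using the lemma just proved, Lemma \ref{nwfkndkwnk}, and then to upgrade the resulting set map to a smooth homomorphism with finite kernel. First I would check that for each fixed $a\in\mathbb{T}^{n}$ the map $m\circ\Phi_{a}$ satisfies exactly the three hypotheses that $m$ does in Lemma \ref{nwfkndkwnk}: it is a K\"ahler immersion (being the composite of the K\"ahler automorphism $\Phi_{a}$ with the K\"ahler immersion $m$); it sends $N^{\circ}$ into $(N')^{\circ}$ (since $\Phi_{a}(N^{\circ})=N^{\circ}$ and $m(N^{\circ})\subseteq(N')^{\circ}$); and it satisfies $\kappa'\circ(m\circ\Phi_{a})=f\circ\kappa$, because $\kappa\circ\Phi_{a}=\kappa$ (the compatible projection $\kappa$ is constant along $\mathbb{T}^{n}$-orbits) together with $\kappa'\circ m=f\circ\kappa$. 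Lemma \ref{nwfkndkwnk} then produces a \emph{unique} element, which I call $\rho(a)\in\mathbb{T}^{d}$, such that $m\circ\Phi_{a}=\Phi'_{\rho(a)}\circ m$. This both defines $\rho:\mathbb{T}^{n}\to\mathbb{T}^{d}$ and yields its uniqueness as a map with the asserted intertwining property, since any competitor $\rho'$ satisfies $\Phi'_{\rho(a)}\circ m=\Phi'_{\rho'(a)}\circ m$, and evaluating at a point of $N^{\circ}$, where $\Phi'$ is free, forces $\rho=\rho'$.

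Next I would establish the homomorphism property by a formal computation that leans on the uniqueness clause of Lemma \ref{nwfkndkwnk}. For $a,b\in\mathbb{T}^{n}$,
\begin{eqnarray*}
	m\circ\Phi_{ab}=m\circ\Phi_{a}\circ\Phi_{b}=\Phi'_{\rho(a)}\circ m\circ\Phi_{b}
	=\Phi'_{\rho(a)}\circ\Phi'_{\rho(b)}\circ m=\Phi'_{\rho(a)\rho(b)}\circ m,
\end{eqnarray*}
so uniqueness gives $\rho(ab)=\rho(a)\rho(b)$. For smoothness I would fix a point $p\in N^{\circ}$; then $m(p)\in(N')^{\circ}$, the orbit map $\Phi'_{m(p)}:\mathbb{T}^{d}\to(N')^{\circ}$ is an embedding onto the orbit of $m(p)$ (the compact torus $\mathbb{T}^{d}$ acts freely there), and the relation $m(\Phi(a,p))=\Phi'(\rho(a),m(p))$ exhibits $\rho$ as the composite of smooth maps $\rho(a)=(\Phi'_{m(p)})^{-1}\big(m(\Phi_{a}(p))\big)$. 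Hence $\rho$ is a genuine Lie group homomorphism.

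Finally, for the finiteness of $\ker\rho$ I would pass to the Lie algebra. Since $\rho$ is now known to be a Lie homomorphism, $\rho(\textup{exp}(t\xi))=\textup{exp}(t\,\rho_{*_{e}}\xi)$, and differentiating $m\circ\Phi_{\textup{exp}(t\xi)}=\Phi'_{\textup{exp}(t\,\rho_{*_{e}}\xi)}\circ m$ at $t=0$ gives $m_{*_{p}}\xi_{N}(p)=(\rho_{*_{e}}\xi)_{N'}(m(p))$ for all $p$ and all $\xi\in\textup{Lie}(\mathbb{T}^{n})$. If $\rho_{*_{e}}\xi=0$ then $m_{*_{p}}\xi_{N}(p)=0$ for every $p\in N^{\circ}$; as $m$ is an immersion this forces $\xi_{N}(p)=0$, and as $\Phi$ is free at such $p$ the infinitesimal orbit map $\xi\mapsto\xi_{N}(p)$ is injective, whence $\xi=0$. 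Thus $\rho_{*_{e}}$ is injective, so $\ker\rho$ is a closed subgroup of $\mathbb{T}^{n}$ with trivial identity component, i.e.\ finite. The one step that needs genuine care is the smoothness argument, since $\rho$ is produced purely through a uniqueness statement and must be recovered analytically from the free orbit map on $(N')^{\circ}$; the homomorphism property, uniqueness, and finite kernel are then formal consequences of Lemma \ref{nwfkndkwnk}, the freeness of $\Phi'$, and the immersivity of $m$.
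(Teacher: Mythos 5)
Your proof is correct and follows essentially the same route as the paper: you define $\rho(a)$ via the uniqueness clause of Lemma \ref{nwfkndkwnk} applied to $m\circ\Phi_{a}$ (after checking the three hypotheses, exactly as the paper does), obtain the homomorphism property from that same uniqueness, and rule out a positive-dimensional kernel by differentiating the intertwining relation and invoking the immersivity of $m$ and the freeness of $\Phi$ on $N^{\circ}$. The only difference is cosmetic: for smoothness the paper appeals to trivializing the principal bundles $\kappa$ and $\kappa'$, whereas you recover $\rho$ explicitly as $a\mapsto(\Phi'_{m(p)})^{-1}\big(m(\Phi_{a}(p))\big)$ through the embedded free orbit of $m(p)$, an equally valid (and somewhat more explicit) implementation of the same idea.
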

\begin{proof}
	Let $a\in \mathbb{T}^{n}$ be arbitrary. The map $m_{a}:=m\circ \Phi_{a}$ is a K\"{a}hler immersion satisfying $m_{a}(N^{\circ})\subseteq (N')^{\circ}$ 
	and $\kappa'\circ m_{a}=f\circ \kappa$. By Lemma \ref{nwfkndkwnk}, there exists a unique $\rho(a)\in \mathbb{T}^{d}$ such that 
	$m_{a}=m\circ \Phi_{a}=\Phi_{\rho(a)}'\circ m$. Let $\rho:\mathbb{T}^{n}\to \mathbb{T}^{d}$, $a\mapsto \rho(a)$. This is a smooth function, 
	as can be seen by trivializing the principal fiber bundles $\kappa:N^{\circ}\to M$ and $\kappa':(N')^{\circ}\to M'$. 
	Using the uniqueness of $\rho$, it is easily seen that $\rho(ab)=\rho(a)\rho(b)$ and $\rho(e)=e$. Therefore $\rho$ is a Lie group homomorphism. Let 
	$K\subseteq \mathbb{T}^{n}$ be the kernel of $\rho$. Suppose that $K$ is not discrete. Then there exists $\xi\in \textup{Lie}(\mathbb{T}^{n})$, $\xi\neq 0$,
	such that $\textup{exp}(t\xi)\in K$ for all $t\in\mathbb{R}$, where $\textup{exp}:\textup{Lie}(\mathbb{T}^{n})\to \mathbb{T}^{n}$ is the exponential map 
	of the torus. Let $p\in N^{\circ}$ be arbitrary and let $\xi_{N}$ denote the fundamental vector field of $\xi$ on $N$. We compute:
	\begin{eqnarray*}
		m_{*_{p}}\xi_{N}(p)=\dfrac{d}{dt}\bigg\vert_{0} (m\circ \Phi_{\textup{exp}(t\xi)})(p)=\dfrac{d}{dt}\bigg\vert_{0} \Phi'(\rho(\textup{exp}(t\xi)),m(p))=
		\dfrac{d}{dt}\bigg\vert_{0}m(p)=0.
	\end{eqnarray*}
	Thus $m_{*_{p}}\xi_{N}(p)=0$. Because $m$ is an immersion, this implies that $\xi_{N}(p)=0$, which means that $\xi$ belongs to the kernel of 
	$(\Phi_{p})_{*_{e}}$. Since $\Phi$ is free at $p$, this kernel is trivial and hence $\xi= 0$, a contradiction. This shows that $K$ is discrete. Since $\mathbb{T}^{n}$ 
	is compact, $K$ is finite.
\end{proof}

\begin{proof}[Proof of Proposition \ref{ncekndkvndknk}]
	(1) By definition, there are compatible covering maps $\tilde{\tau}$ and $\tilde{\tau}'$ such that 
	$m\circ \tilde{\tau}=\tilde{\tau}'\circ f_{*}$. Since compatible covering maps are locally K\"{a}hler isomorphisms, we have the following equivalences
	\begin{eqnarray*}
		f\,\,\,\textup{is an affine isometric map} \,\,\,\,\,\,\,\,\,\,&\Leftrightarrow &\,\,\,\,\,\,\,\,\,\,f_{*}\,\,\,\textup{is a K\"{a}hler immersion} \\
		&\Leftrightarrow &\,\,\,\,\,\,\,\,\,\,m:N^{\circ}\to N' \,\,\,\textup{is a K\"{a}hler immersion},
	\end{eqnarray*}
	where we have used Proposition \ref{nfeknwknekn}. Therefore we must show that $m:N\to N'$ is a K\"{a}hler immersion if and only if $m:N^{\circ}\to N'$ 
	is a K\"{a}hler immersion. One direction is obvious. So assume that $m:N^{\circ}\to N'$ is a K\"{a}hler immersion. 
	To see that $m$ is holomorphic on $N$, let $J$ and $J'$ denote 
	the complex structures on $N$ and $N'$, respectively. The maps $m_{*}\circ J$ and $J'\circ m_{*}$ 
	are continuous on $TN$ and coincide on $TN^{\circ}$ (since $m$ is holomorphic on $N^{\circ}$). Since $TN^{\circ}$ is dense in $TN$ and $TN'$ is Hausdorff, 
	this implies that $m_{*}\circ J$ and $J'\circ m_{*}$ coincide on $TN$, which means that $m$ is holomorphic on $N$. Analogously, one proves that 
	$m:N\to N'$ is isometric. This concludes the proof of (1). (2) This follows from Lemma \ref{nfenkneknkdnk}.  
\end{proof}

	Now we focus our attention on the existence of lifts. Recall that a Riemannian manifold $(M,g)$ is said to be 
	\textit{real analytic} if $M$ is a real analytic manifold and the metric $g$ is a real analytic tensor. 

\begin{theorem}\label{nedwknkdnkn}
	Let $M$	and $M'$ be real analytic Riemannian manifolds. If $M$ is connected and simply connected and if 
	$M'$ is complete, then every isometric immersion $f_{U}$ of a connected open subset $U$ of $M$ 
	into $M'$ can be uniquely extended to an isometric immersion $f$ of $M$ into $M'$. 
\end{theorem}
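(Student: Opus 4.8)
The plan is to establish the statement by analytic continuation along paths, using the completeness of $M'$ to keep the continuation from breaking down and the simple connectedness of $M$ to make it single-valued. Everything takes place in the real-analytic category, so I would repeatedly invoke the identity theorem: two real-analytic maps that coincide on a nonempty open subset of a connected real-analytic manifold coincide everywhere. Uniqueness is immediate from this. If $f_{1}$ and $f_{2}$ are two isometric immersions of $M$ into $M'$ extending $f_{U}$, set $A=\{x\in M\mid f_{1}\equiv f_{2}\ \text{near}\ x\}$. Then $A\supseteq U$ is nonempty, open by definition, and closed by the identity theorem (given $x\in\overline{A}$, work in a connected analytic chart about $x$ that meets $A$: there $f_{1}$ and $f_{2}$ agree on a nonempty open set, hence on the whole chart), so $A=M$ by connectedness.

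For existence, fix $p\in U$ and let $q\in M$ be arbitrary. First I would join $p$ to $q$ by a piecewise-smooth curve $\gamma:[0,1]\to M$ and continue the germ of $f_{U}$ at $p$ analytically along $\gamma$. Let $S\subseteq[0,1]$ be the set of $t$ for which the continuation exists on $[0,t]$; it contains $0$ and is a subinterval that is open to the right by local extendability of analytic germs, so it suffices to show it is closed. Suppose the continuation exists on $[0,t_{1})$. Since each continued germ is a local isometric immersion, it preserves the length of curves, so the continuation maps $\gamma|_{[0,t_{1})}$ to a curve of length at most $\mathrm{length}(\gamma)<\infty$; by completeness of $M'$ and the Hopf--Rinow theorem this curve converges to a point $q'\in M'$ and remains in a compact set $K\subseteq M'$. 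On $K$ the metric of $M'$ and its derivatives are uniformly bounded, and the isometric condition $\lVert f_{*}v\rVert=\lVert v\rVert$ bounds the first derivative of the continuation a priori; these bounds force the radius of analytic continuation to stay uniformly positive near $\gamma(t_{1})$, so the germ extends past $t_{1}$ and $t_{1}\in S$. Hence $S=[0,1]$ and a value $f(q)$ is produced.

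Next I would check that $f(q)$ does not depend on $\gamma$. By the uniqueness of analytic continuation (the monodromy theorem), continuation along two paths with the same endpoints that are homotopic rel endpoints yields the same germ at $q$; since $M$ is simply connected, any two such paths are homotopic, so $f:M\to M'$ is well defined and restricts to $f_{U}$ on $U$. The map $f$ is real analytic because near every point it is the analytic continuation of an analytic germ. Finally, the identity $f^{*}g'=g$ holds on $U$ and, being an identity of real-analytic tensors, propagates to all of $M$ by the identity theorem; in particular $\lVert f_{*}v\rVert=\lVert v\rVert$ for all $v$, so $f$ is an isometric immersion extending $f_{U}$.

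The hard part will be the closedness step in the second paragraph: confining the image to a compact set via completeness is routine, but one still has to rule out that the analytic continuation degenerates as $t\uparrow t_{1}$, i.e. that its radius of convergence shrinks to zero. This is precisely where the isometric-immersion hypothesis does the real work, since it furnishes the a priori first-derivative bound that, together with the analytic dependence of the immersion on the (uniformly bounded) geometric data over the compact set $K$, yields the uniform lower bound on the continuation radius. Making this last estimate precise is the only genuinely technical point; the remainder is the standard completeness-plus-monodromy packaging of analytic continuation, generalizing the classical rigidity of real-analytic local isometries.
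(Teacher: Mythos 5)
Your skeleton (analytic continuation along curves, openness-and-closedness of the set of continuable times, monodromy via simple connectedness, identity theorem for uniqueness) is exactly the standard one; the paper itself offers no argument but cites Kobayashi--Nomizu, Vol.~I, Theorem~VI.6.3, whose proof follows this outline. Your uniqueness and monodromy paragraphs are fine. The genuine gap is the closedness step, and it is not the "routine technicality" you describe: the claim that the isometric condition $\lVert f_{*}v\rVert=\lVert v\rVert$, together with bounded geometric data over the compact set $K$, \emph{forces the radius of analytic continuation to stay uniformly positive} is false. A $C^{1}$ bound never controls the radius of convergence of an analytic map, and for isometric immersions of positive codimension the radius genuinely collapses while all your hypotheses hold. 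Concretely, let $M=\mathbb{R}^{2}$ be the flat plane, $M'=\mathbb{R}^{3}$, let $U$ be an open sector of angle $2\pi/\sqrt{1+a^{2}}$, and let $f_{U}$ be the analytic isometric immersion unrolling $U$ onto the cone $z=a\sqrt{x^{2}+y^{2}}$ minus its apex. Along a path approaching the vertex of the sector the image converges in the complete target and stays in a compact set, yet the nonzero principal curvature blows up like the reciprocal of the distance to the vertex, so no $C^{2}$ (let alone analytic) extension past that point exists, and $f_{U}$ admits no isometric-immersion extension to $\mathbb{R}^{2}$ whatsoever. Thus, as you have set the problem up --- with $\dim M<\dim M'$ permitted, which the paper's wording indeed allows --- the statement you are trying to prove is false, and no estimate can rescue your closedness step.

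The theorem is correct, and is what Kobayashi--Nomizu actually prove, under the additional hypothesis (present in their Theorem~VI.6.3 but dropped in the paper's quotation) that $M$ and $M'$ have the \emph{same dimension}, so that an isometric immersion is a local isometry. In that case the closedness step is handled not by a radius estimate but by jet rigidity: a local isometry is an affine map, hence satisfies $f\circ\exp_{x}=\exp'_{f(x)}\circ f_{*x}$ and is determined by its $1$-jet at a single point. Along $\gamma$ the points $f(\gamma(t))$ converge by your length-plus-completeness argument, the differentials converge because $f$ commutes with parallel transport, and the continuation past $t_{1}$ is then \emph{constructed} from the limiting $1$-jet by the displayed exponential formula, completeness of $M'$ guaranteeing that $\exp'$ is globally defined (Kobayashi--Nomizu reduce to their affine extension theorem in precisely this way). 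Your proposal is missing exactly this mechanism, and it is the only one that makes the step work; note that the same equal-dimension caveat matters for the paper itself, which later invokes the theorem for isometric immersions between Kähler manifolds of different dimensions, a situation Theorem~VI.6.3 does not cover.
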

\begin{proof}
	See \cite{Kobayashi-Nomizu}, Theorem 6.3. 
\end{proof}

	The preceding theorem motivates the following definition.  

\begin{definition}
	A K\"{a}hler manifold $N$ is \textit{regular} if it is connected, simply connected, complete and if the K\"{a}hler metric is real analytic. 
	A torification $\Phi:\mathbb{T}^{n}\times N\to N$ is \textit{regular} if $N$ is regular.
\end{definition}

	Note that the requirement that the K\"{a}hler metric be real analytic makes sense, because complex manifolds are naturally real analytic manifolds\footnote{
	This follows from the following simple observation. 
	If a function $f:U\to \mathbb{C}$ is holomorphic on an open set $U\subseteq \mathbb{C}^{n}\cong \mathbb{R}^{2n}$, 
	then its real and imaginary parts $u:U\to \mathbb{R}$ and $v:U\to \mathbb{R}$ 
	are harmonic functions on $U$ (this follows from the Cauchy-Riemann equations) and hence they are real analytic. }.


\begin{proposition}[\textbf{Existence of lifts}]\label{ncnkwknknk}
	Suppose $\Phi:\mathbb{T}^{n}\times N\to N$ and $\Phi':\mathbb{T}^{d}\times N'\to N'$ are regular torifications of $(M,h,\nabla)$ and $(M',h',\nabla')$, respectively. 
	Let $\tau:TM\to N^{\circ}$ and $\tau':TM'\to (N')^{\circ}$ be compatible covering maps. Then every isometric affine immersion 
	$f:M\to M'$ has a unique lift $m:N\to N'$ with respect to $\tau$ and $\tau'$. 
\end{proposition}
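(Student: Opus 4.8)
The plan is to build $m$ first locally out of the given data $\tau$, $\tau'$ and $f_{*}$, then to extend it globally using the rigidity of real analytic isometric immersions, and finally to verify that the global extension is indeed a lift.

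First I would construct a local model. Since $\tau\colon TM\to N^{\circ}$ is a holomorphic and isometric covering map, it is a local Kähler isomorphism; choose a connected open set $V\subseteq TM$ such that $\tau|_{V}$ is a Kähler isomorphism onto an open set $U:=\tau(V)$, which is open in $N$ because $N^{\circ}$ is open in $N$. By Proposition \ref{nfeknwknekn} the affine isometric immersion $f$ has holomorphic and isometric derivative $f_{*}\colon TM\to TM'$, i.e. a Kähler immersion; composing with the holomorphic isometric map $\tau'$ gives a holomorphic isometric immersion $m_{U}:=\tau'\circ f_{*}\circ(\tau|_{V})^{-1}\colon U\to N'$, which by construction satisfies $m_{U}\circ\tau=\tau'\circ f_{*}$ on $V$.

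Next I would globalize. Both torifications being regular, $N$ is connected, simply connected and real analytic, while $N'$ is complete and real analytic; hence Theorem \ref{nedwknkdnkn} applies and extends the isometric immersion $m_{U}$ uniquely to an isometric immersion $m\colon N\to N'$, and this extension is real analytic (being obtained by analytic continuation of $m_{U}$). It then remains to promote the local identity to a global one. The manifolds $TM$ and $TM'$ carry their Dombrowski Kähler structures and are therefore real analytic manifolds; moreover $TM$ is connected because $M$ is, and the maps $\tau$, $\tau'$ and $f_{*}$ are holomorphic, hence real analytic. Consequently $m\circ\tau$ and $\tau'\circ f_{*}$ are two real analytic maps $TM\to N'$ that coincide on the nonempty open set $V$; by the identity theorem for real analytic maps on the connected manifold $TM$ they coincide everywhere, that is $m\circ\tau=\tau'\circ f_{*}$ on all of $TM$. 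Thus $m$ is a lift of $f$ with respect to $\tau$ and $\tau'$, and since $f$ is an isometric affine immersion, Proposition \ref{ncekndkvndknk}(1) shows that $m$ is in fact a Kähler immersion.

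For uniqueness, if $m_{1}$ and $m_{2}$ are both lifts of $f$ with respect to $\tau$ and $\tau'$, then $m_{1}\circ\tau=\tau'\circ f_{*}=m_{2}\circ\tau$; since $\tau$ is onto $N^{\circ}$ this forces $m_{1}=m_{2}$ on $N^{\circ}$, and as $N^{\circ}$ is dense in $N$ and the $m_{i}$ are continuous, $m_{1}=m_{2}$ on $N$. I expect the existence part to be the main obstacle, that is the passage from the local model to a globally defined map: this is exactly where the regularity hypotheses enter, since Theorem \ref{nedwknkdnkn} both produces the global isometric immersion $m$ and, through real analyticity and the identity theorem, forces the local relation $m\circ\tau=\tau'\circ f_{*}$ to propagate over the whole of $TM$. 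Note that this route automatically encodes the lattice equivariance of $f_{*}$ without its having to be verified by hand, since $m$ is defined on $N$ first and only afterwards shown to satisfy $m\circ\tau=\tau'\circ f_{*}$.
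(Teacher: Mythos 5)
Your proof is correct, and its skeleton is the same as the paper's: build the local model $\tau'\circ f_{*}\circ(\tau\vert_{V})^{-1}$, extend it to all of $N$ by Theorem \ref{nedwknkdnkn}, and get uniqueness from the density of $N^{\circ}$. Where you genuinely diverge is the globalization of the relation $m\circ\tau=\tau'\circ f_{*}$: the paper covers $TM$ by sheets $U_{\alpha}$ on which $\tau$ is invertible, extends \emph{each} local map $m_{\alpha}$ to $N$, and uses the connectedness of $TM$ (a well-chained chain of overlapping sheets) together with the uniqueness clause of Theorem \ref{nedwknkdnkn} to show all the extensions coincide, so that the covering relation holds sheet by sheet; you instead extend from a \emph{single} sheet and propagate the relation over all of $TM$ in one stroke via the identity principle for real analytic maps on the connected manifold $TM$. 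Your route is arguably cleaner, but it makes explicit a hypothesis the paper only uses implicitly: you must know that the Kobayashi--Nomizu extension $m$ is itself real analytic (it is, since the extension is produced by analytic continuation, and the local datum $m_{U}$ is holomorphic, hence analytic); without that, the identity theorem is unavailable, and indeed the extension/uniqueness statement of Theorem \ref{nedwknkdnkn} fails for merely smooth isometric immersions in positive codimension. The paper's chain argument avoids naming the identity theorem by leaning only on the uniqueness clause, but it is exposed to the same analyticity point, so the two arguments stand on the same foundation. Your closing remarks are also accurate: invoking Proposition \ref{ncekndkvndknk}(1) to upgrade $m$ to a K\"ahler immersion is legitimate (the paper defers this in exactly the same way), and neither proof needs to verify the lattice equivariance of $f_{*}$ by hand.
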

\begin{proof}
%
	Since $\tau$ is an isometric covering map, there is an open cover $\{U_{\alpha}\}_{\alpha\in A}$ of $TM$ such that for every $\alpha\in A$, 
	$U_{\alpha}$ is connected and $\tau\vert_{U_{\alpha}}:U_{\alpha}\to V_{\alpha}:=\tau(U_{\alpha})$ is an isometry. 
	Given $\alpha\in A$, let $m_{\alpha}:V_{\alpha}\to N'$ be defined by 
	\begin{eqnarray*}
		m_{\alpha}=\tau'\circ f_{*} \circ (\tau\vert_{U_{\alpha}})^{-1}.
	\end{eqnarray*}

	Then $m_{\alpha}$ is an isometric immersion from $V_{\alpha}\subseteq N$ into $N'$, because 
	$f_{*}$ is a K\"{a}hler immersion (Proposition \ref{nfeknwknekn}). By Theorem \ref{nedwknkdnkn}, $m_{\alpha}$ extends uniquely to an isometric immersion 
	$\widetilde{m_{\alpha}}:N\to N'$. 

	Let $\alpha,\beta\in A$ be arbitrary. We claim that $\widetilde{m_{\alpha}}=\widetilde{m_{\beta}}$. Indeed, since $TM$ is connected, 
	it is well-chained and hence there exist an integer $s\geq 1$ and indices $\alpha_{0},...,\alpha_{s}\in A$ 
	such that $\alpha_{0}=\alpha$, $\alpha_{s}=\beta$ and $U_{\alpha_{i-1}}\cap U_{\alpha_{i}}\neq \emptyset$ for all $i=1,...,s$. 
	For each $i=1,...,s,$ let $W_{i}$ be an open connected subset of $U_{\alpha_{i-1}}\cap U_{\alpha_{i}}$. Clearly, $\tau(W_{i})\subseteq V_{i-1}\cap V_{i}$ and 
	$(\tau\vert_{U_{\alpha_{i-1}}})^{-1}=(\tau\vert_{U_{\alpha_{i}}})^{-1}$ on $\tau(W_{i})$. 
	It follows that $m_{\alpha_{i-1}}$ and $m_{\alpha_{i}}$ coincide on the connected set $\tau(W_{i})$. By the uniqueness part of Theorem \ref{nedwknkdnkn}, 
	their global extensions coincide on $N$, that is, $\widetilde{m_{\alpha_{i-1}}}=\widetilde{m_{\alpha_{i}}}$. But then 
	$\widetilde{m_{\alpha}}=\widetilde{m_{\alpha_{0}}}=\widetilde{m_{\alpha_{1}}}=...=\widetilde{m_{\alpha_{s}}}=\widetilde{m_{\beta}}$ and hence 
	$\widetilde{m_{\alpha}}=\widetilde{m_{\beta}}$. This concludes the proof of the claim. 

	Set $m=\widetilde{m_{\alpha}}$, where $\alpha$ is any element in $A$ (this is independent of the choice of 
	$\alpha$ by virtue of the claim). By construction, $m$ is an isometric immersion from $N$ into $N'$ satisfying 
	$m\circ \tau=\tau'\circ f_{*}$. In particular, $m$ is a lift of $f$ with respect to $\tau$ and $\tau'$. 

	Uniqueness of $m$ follows from the formula $m\circ \tau=\tau'\circ f_{*}$ and the fact that $N^{\circ}$ is dense in $N$.  
\end{proof}

	As a matter of notation, we will denote by $m_{\tau',\tau}(f)$ the unique lift of the isometric affine immersion $f:M\to M'$ with respect to 
	$\tau:TM\to N^{\circ}$ and $\tau':TM'\to (N')^{\circ}$,
	and by $\rho_{\tau',\tau}:\mathbb{T}^{n}\to \mathbb{T}^{d}$ the corresponding Lie group homomorphism (see Proposition \ref{ncekndkvndknk}). 
\begin{proposition}\label{cnekdwnkndknk}
	Given $i=1,2,3$, let $\Phi_{i}:\mathbb{T}^{n_{i}}\times N_{i}\to N_{i}$ be a regular torification of $(M_{i},h_{i},\nabla_{i})$ with compatible 
	covering map $\tau_{i}:TM_{i}\to N_{i}^{\circ}$. Let $f_{1}:M_{1}\to M_{2}$ and $f_{2}:M_{2}\to M_{3}$ be isometric affine immersions.
	\begin{enumerate}[(1)]
		\item $m_{\tau_{3}\tau_{1}}(f_{2}\circ f_{1})=m_{\tau_{3}\tau_{2}}(f_{2})\circ m_{\tau_{2}\tau_{1}}(f_{1})$. 
		\item $m_{\tau_{1}\tau_{1}}(Id_{M_{1}})=Id_{N_{1}}$, where $Id_{M_{1}}$ and $Id_{N_{1}}$ are the identity maps on $M_{1}$ and $N_{1}$, 
			respectively.
		\item $\rho_{\tau_{3}\tau_{1}}(f_{2}\circ f_{1})=\rho_{\tau_{3}\tau_{2}}(f_{2})\circ \rho_{\tau_{2}\tau_{1}}(f_{1})$.
		\item $\rho_{\tau_{1}\tau_{1}}(Id_{M_{1}})=Id_{\mathbb{T}^{n_{1}}}$, where $Id_{\mathbb{T}^{n_{1}}}$ is the identity map on $\mathbb{T}^{n_{1}}$.
	\end{enumerate}
\end{proposition}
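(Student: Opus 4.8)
The plan is to derive all four identities from two facts established earlier, namely the uniqueness of the lift (Proposition \ref{ncnkwknknk}) and the uniqueness of the associated homomorphism $\rho$ (Proposition \ref{ncekndkvndknk}), combined with the elementary functoriality of the derivative, $(f_2\circ f_1)_* = (f_2)_*\circ (f_1)_*$ and $(\mathrm{Id}_{M})_* = \mathrm{Id}_{TM}$. Everything reduces to recognizing a candidate map as a lift and then appealing to uniqueness.

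For (1), write $m_1 = m_{\tau_2\tau_1}(f_1)$ and $m_2 = m_{\tau_3\tau_2}(f_2)$, so that $m_1\circ\tau_1 = \tau_2\circ (f_1)_*$ and $m_2\circ\tau_2 = \tau_3\circ (f_2)_*$ by definition. First I would note that $f_2\circ f_1$ is again an isometric affine immersion, so that its lift exists. Then I compute $(m_2\circ m_1)\circ\tau_1 = m_2\circ\tau_2\circ(f_1)_* = \tau_3\circ(f_2)_*\circ(f_1)_* = \tau_3\circ (f_2\circ f_1)_*$, which exhibits $m_2\circ m_1$ as a lift of $f_2\circ f_1$ with respect to $\tau_1$ and $\tau_3$. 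Uniqueness of the lift then gives $m_{\tau_3\tau_1}(f_2\circ f_1) = m_2\circ m_1$. Part (2) is the same argument with $f=\mathrm{Id}_{M_1}$: since $\mathrm{Id}_{N_1}\circ\tau_1 = \tau_1 = \tau_1\circ(\mathrm{Id}_{M_1})_*$, the map $\mathrm{Id}_{N_1}$ is a lift of $\mathrm{Id}_{M_1}$, and uniqueness yields the claim.

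For (3), I would combine (1) with the defining property of $\rho$, namely $m\circ\Phi_a = \Phi'_{\rho(a)}\circ m$. Setting $\rho_1 = \rho_{\tau_2\tau_1}(f_1)$ and $\rho_2 = \rho_{\tau_3\tau_2}(f_2)$, for every $a\in\mathbb{T}^{n_1}$ I compute $(m_2\circ m_1)\circ(\Phi_1)_a = m_2\circ(\Phi_2)_{\rho_1(a)}\circ m_1 = (\Phi_3)_{\rho_2(\rho_1(a))}\circ(m_2\circ m_1)$. Since $m_2\circ m_1 = m_{\tau_3\tau_1}(f_2\circ f_1)$ by (1), the uniqueness part of Proposition \ref{ncekndkvndknk}(2) forces $\rho_{\tau_3\tau_1}(f_2\circ f_1) = \rho_2\circ\rho_1$. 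For (4), part (2) gives $m_{\tau_1\tau_1}(\mathrm{Id}_{M_1}) = \mathrm{Id}_{N_1}$, so the associated $\rho = \rho_{\tau_1\tau_1}(\mathrm{Id}_{M_1})$ satisfies $(\Phi_1)_a = (\Phi_1)_{\rho(a)}$ for all $a$; effectiveness of the torus action (injectivity of $a\mapsto(\Phi_1)_a$) then forces $\rho = \mathrm{Id}_{\mathbb{T}^{n_1}}$.

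There is essentially no hard analytic step here: the whole argument is formal bookkeeping with the covering maps and the functoriality of $f\mapsto f_*$. The only points demanding a little care are \textbf{(a)} checking that compositions and identities remain isometric affine immersions, so that the uniqueness theorems of Propositions \ref{ncnkwknknk} and \ref{ncekndkvndknk} genuinely apply, and \textbf{(b)} invoking the correct uniqueness statement in each case — uniqueness of the lift for (1)--(2) and uniqueness of the homomorphism $\rho$ for (3)--(4). I expect the only mild subtlety to be that all these identities are first verified on $N^{\circ}$, where the compatible covering maps are defined, and then extended by density of $N^{\circ}$ in $N$; but this extension is already packaged into the uniqueness assertions being cited.
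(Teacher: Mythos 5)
Your proposal is correct and follows essentially the same route as the paper: exhibit $m_{2}\circ m_{1}$ (resp.\ $Id_{N_{1}}$) as a lift via the functoriality of $f\mapsto f_{*}$ and invoke the uniqueness of lifts from Proposition \ref{ncnkwknknk}, then derive (3) and (4) from the defining relation $m\circ \Phi_{a}=\Phi'_{\rho(a)}\circ m$ and the uniqueness of $\rho$ from Proposition \ref{ncekndkvndknk}. Your closing remarks on composites remaining isometric affine immersions and on density of $N^{\circ}$ are correct refinements already implicit in the paper's argument.
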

\begin{proof}
	For simplicity, write $m_{1}=m_{\tau_{2}\tau_{1}}(f_{1})$, $m_{2}=m_{\tau_{3}\tau_{2}}(f_{2})$, $\rho_{1}=\rho_{\tau_{2}\tau_{1}}(f_{1})$ and 
	$\rho_{2}=\rho_{\tau_{3}\tau_{2}}(f_{2})$. 

	\noindent (1) Taking into account the definition of a lift, we compute:
	\begin{eqnarray*}
		(m_{2} \circ m_{1})\circ \tau_{1}=m_{2}\circ \tau_{2}\circ (f_{1})_{*}=\tau_{3}\circ (f_{2})_{*}\circ (f_{1})_{*}=\tau_{3}\circ (f_{2}\circ f_{1})_{*}.
	\end{eqnarray*}
	This shows that $m_{2}\circ m_{1}$ is a lift of $f_{2}\circ f_{1}$ with respect to $\tau_{1}$ and $\tau_{3}$. 

	\noindent (2) Since $(Id_{M_{1}})_{*}=Id_{TM_{1}}$, we have $Id_{N_{1}}\circ \tau_{1}= \tau_{1}\circ (Id_{M_{1}})_{*}$. This shows that $Id_{N_{1}}$ is a lift 
	of $Id_{M_{1}}$ with respect to $\tau_{1}$ and $\tau_{1}$. 

	\noindent (3) Let $a\in \mathbb{T}^{n_{1}}$ be arbitrary. Taking into account the definition of $\rho_{i}$, we compute:
	\begin{eqnarray*}
		m_{\tau_{3}\tau_{1}}\circ (\Phi_{1})_{a}&=&  m_{2}\circ m_{1} \circ (\Phi_{1})_{a}=m_{2}\circ (\Phi_{2})_{\rho_{1}(a)}\circ m_{1}\\
		&=& (\Phi_{3})_{\rho_{2}(\rho_{1}(a))}\circ m_{2}\circ m_{1}=(\Phi_{3})_{\rho_{2}(\rho_{1}(a))}\circ m_{\tau_{3}\tau_{1}}. 
	\end{eqnarray*}
	This shows that $\rho_{\tau_{3}\tau_{1}}(f_{2}\circ f_{1})=\rho_{2}\circ \rho_{1}$. 

	\noindent (4) Given $a\in \mathbb{T}^{n_{1}}$, we have 
	\begin{eqnarray*}
		m_{\tau_{1}\tau_{1}}(Id_{M_{1}})\circ (\Phi_{1})_{a}=Id_{N_{1}}\circ (\Phi_{1})_{a}=(\Phi_{1})_{a}\circ Id_{N_{1}}=
		(\Phi_{1})_{a}\circ m_{\tau_{1}\tau_{1}}(Id_{M_{1}}).
	\end{eqnarray*}
	The result follows.  
\end{proof}

\begin{definition}
	Suppose $\Phi:\mathbb{T}^{n}\times N\to N$ and $\Phi':\mathbb{T}^{d}\times N'\to N'$ are torifications of $(M,h,\nabla)$ and $(M',h',\nabla')$, respectively. 
	We shall say that $N$ and $N'$ are \textit{equivalent} if there exist a Lie group isomorphism $\rho:\mathbb{T}^{n}\to \mathbb{T}^{d}$ 
	and a K\"{a}hler isomorphism $G:N\to N'$ such that 
	\begin{eqnarray*}
		G\circ \Phi_{a}=\Phi'_{\rho(a)}\circ G
	\end{eqnarray*}
	for all $a\in \mathbb{T}^{n}$. In this case, we write $N\sim N'$. 
\end{definition}

	\noindent Note that $\sim$ is an equivalence relation. 

\begin{theorem}\label{aaanfeknkenfknk}
	Regular torifications of the dually flat space $(M,h,\nabla)$ are equivalent. 
\end{theorem}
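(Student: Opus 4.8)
The plan is to apply the lifting machinery of Section~\ref{nfwkwnkenkwndk} to the identity map $\mathrm{Id}_M\colon M\to M$, which is trivially an isometric affine immersion. Let $\Phi\colon\mathbb{T}^{n}\times N\to N$ and $\Phi'\colon\mathbb{T}^{n}\times N'\to N'$ be two regular torifications of $(M,h,\nabla)$; since both torify the $n$-dimensional space $M$, both acting tori are the same $\mathbb{T}^{n}$. First I would fix compatible covering maps $\tau\colon TM\to N^{\circ}$ and $\tau'\colon TM\to (N')^{\circ}$, which exist by Proposition~\ref{nkwdnkkfenknk}, so that the lifting constructions of Propositions~\ref{ncnkwknknk}, \ref{ncekndkvndknk} and \ref{cnekdwnkndknk} become available for this fixed data.

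Next I would invoke Proposition~\ref{ncnkwknknk} to produce the unique lifts $m:=m_{\tau',\tau}(\mathrm{Id}_M)\colon N\to N'$ and $m':=m_{\tau,\tau'}(\mathrm{Id}_M)\colon N'\to N$. Because $\mathrm{Id}_M$ is an isometric affine immersion, Proposition~\ref{ncekndkvndknk}(1) guarantees that $m$ and $m'$ are K\"ahler immersions. The key step is then to show they are mutually inverse, which follows purely formally from the functoriality of lifts. Applying Proposition~\ref{cnekdwnkndknk}(1) to the three torifications $N,N',N$ with $f_{1}=f_{2}=\mathrm{Id}_M$ yields $m'\circ m=m_{\tau,\tau}(\mathrm{Id}_M\circ\mathrm{Id}_M)$, and part~(2) identifies $m_{\tau,\tau}(\mathrm{Id}_M)=\mathrm{Id}_N$; the symmetric computation gives $m\circ m'=\mathrm{Id}_{N'}$. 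Hence $m$ is a bijective K\"ahler immersion, i.e.\ a K\"ahler isomorphism $G:=m$.

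For equivariance, Proposition~\ref{ncekndkvndknk}(2) supplies a unique Lie group homomorphism $\rho\colon\mathbb{T}^{n}\to\mathbb{T}^{n}$ with $m\circ\Phi_{a}=\Phi'_{\rho(a)}\circ m$ for all $a$, and likewise a homomorphism $\rho'$ attached to $m'$. Parts~(3) and~(4) of Proposition~\ref{cnekdwnkndknk} give $\rho'\circ\rho=\rho_{\tau,\tau}(\mathrm{Id}_M)=\mathrm{Id}_{\mathbb{T}^{n}}$ and $\rho\circ\rho'=\mathrm{Id}_{\mathbb{T}^{n}}$, so $\rho$ is a Lie group isomorphism. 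Taking $G=m$ together with this $\rho$ exhibits $N\sim N'$ in the sense of the definition preceding the theorem, which completes the argument.

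The point worth stressing is that all the genuinely hard analysis has already been packaged into the earlier results: the real-analytic continuation underlying existence and uniqueness of lifts (Theorem~\ref{nedwknkdnkn} via Proposition~\ref{ncnkwknknk}), the immersion and finite-kernel statements (Proposition~\ref{ncekndkvndknk}), and the composition laws (Proposition~\ref{cnekdwnkndknk}). Consequently I expect no serious obstacle, only the bookkeeping of the subscript convention $m_{\tau_{\mathrm{target}},\tau_{\mathrm{source}}}$ when invoking functoriality, and the observation that the common base dimension $n$ forces both tori to coincide, so that $\rho$ is an endomorphism of $\mathbb{T}^{n}$ that the functoriality relations render invertible.
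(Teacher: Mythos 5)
Your proof is correct and follows essentially the same route as the paper: the paper likewise lifts $\mathrm{Id}_{M}$ through compatible covering maps and cites Proposition \ref{cnekdwnkndknk} to conclude that $m_{\tau'\tau}(\mathrm{Id}_{M})$ is a K\"ahler isomorphism with $\rho_{\tau'\tau}$ a Lie group isomorphism. You merely make explicit the mutual-inverse bookkeeping ($m'\circ m=\mathrm{Id}_{N}$, $\rho'\circ\rho=\mathrm{Id}_{\mathbb{T}^{n}}$, and symmetrically) that the paper's terse proof leaves implicit.
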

\begin{proof}
	Let $\Phi:\mathbb{T}^{n}\times N\to N$ and $\Phi':\mathbb{T}^{d}\times N'\to N'$ be regular torifications of $(M,h,\nabla)$. Let 
	$\tau:TM\to N^{\circ}$ and $\tau':TM'\to (N')^{\circ}$ be compatible covering maps. By Proposition \ref{cnekdwnkndknk}, $m=m_{\tau'\tau}(Id_{M}): N\to N'$ 
	is a K\"{a}hler isomorphism satisfying $m\circ \Phi_{a}=\Phi_{\rho(a)}\circ m$ for all $a\in \mathbb{T}^{n}$, where $\rho=\rho_{\tau'\tau}$. By 
	Proposition \ref{cnekdwnkndknk}, $\rho$ is a Lie group isomorphism. 
\end{proof}

\begin{definition}
	We shall say that a connected dually flat space $(M,h,\nabla)$ is \textit{toric} if it has a regular torification. 
\end{definition}

\begin{example}
	The torifications considered in Proposition \ref{nfeknwknwknk} are all regular. Therefore the exponential families 
	$\mathscr{P}$, $\mathcal{P}_{n}^{\times}$, $\mathcal{B}(n)$, $\mathcal{M}(m,n)$, $\mathcal{NB}(r)$ are toric.
\end{example}

	From now on, when we deal with a regular torification of a dually flat space $(M,h,\nabla)$, we will say ``the regular torification of $M$", 
	and keep in mind that it is only defined up to an equivariant K\"{a}hler isomorphism.\\

	Now we continue with some consequences of Proposition \ref{cnekdwnkndknk}. 
	Let $N$ be a K\"{a}hler manifold equipped with a torus action $\Phi:\mathbb{T}^{n}\times N\to N$, with K\"{a}hler metric $g$. We will use the following 
	notation:
	\begin{itemize}
		\item $\textup{Aut}(\mathbb{T}^{n})$ is the group of Lie group isomorphisms $\rho:\mathbb{T}^{n}\to \mathbb{T}^{n}$. 
		\item $\textup{Aut}(N,g)$ is the group of holomorphic and isometric transformations of $N$.
		\item $\textup{Aut}(N,g)^{\mathbb{T}^{n}}$ is the subgroup of $\textup{Aut}(N,g)$ characterized by the following condition: 
			for each $\varphi\in \textup{Aut}(N,g)^{\mathbb{T}^{n}}$, there exists $\rho\in \textup{Aut}(\mathbb{T}^{n})$ such that 
			$\varphi\circ \Phi_{a}=\Phi_{\rho(a)}\circ \varphi$ for all $a\in \mathbb{T}^{n}$. 
	\end{itemize}	
\begin{remark}\label{nfekdnkwfnwknk}
	With the usual identification $\mathbb{R}^{n}=\textup{Lie}(\mathbb{T}^{n})$, the exponential map $\textup{exp}:\mathbb{R}^{n}\to \mathbb{T}^{n}$ is 
	just the quotient map $t\mapsto [t]$. Therefore, if $\rho\in \textup{Aut}(\mathbb{T}^{n})$, then $\rho([t])=[\rho_{*_{e}}t]$ for all $t\in \mathbb{R}^{n}$. 
	This forces the matrix representation of $\rho_{*_{e}}$ with respect to the canonical basis 
	to be an element of $\textup{GL}(n,\mathbb{Z})$. It is then easy to see that $\textup{Aut}(\mathbb{T}^{n})\to \textup{GL}(n,\mathbb{Z})$, 
	$\rho \mapsto \rho_{*_{e}}$ is a group isomorphism.
\end{remark}
\begin{lemma}\label{efnkfenknfknk}
	Let $\Phi:\mathbb{T}^{n}\times N\to N$ be a regular torification of $(M,h,\nabla)$, with toric factorization $\pi=\kappa\circ \tau$. Let 
	$g$ be the K\"{a}hler metric on $N$. Two maps $\tau':TM\to N^{\circ}$ and $\kappa':N^{\circ}\to M$ form a toric factorization 
	if and only if there exists $G\in \textup{Aut}(N,g)^{\mathbb{T}^{n}}$ such that $\tau'=G\circ \tau$ and $\kappa'=\kappa\circ G^{-1}$. 
\end{lemma}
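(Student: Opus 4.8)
The plan is to prove the two implications separately, handling the ``if'' direction by directly exhibiting a toric parametrization and the ``only if'' direction by invoking the lifting machinery of Section~\ref{nfwkwnkenkwndk} applied to $\textup{Id}_{M}$. A fact I would record first, used in both directions: if $G\in\textup{Aut}(N,g)^{\mathbb{T}^{n}}$ satisfies $G\circ\Phi_{a}=\Phi_{\rho(a)}\circ G$ for some $\rho\in\textup{Aut}(\mathbb{T}^{n})$, then $G(N^{\circ})=N^{\circ}$. Indeed $G^{-1}\circ\Phi_{a}\circ G=\Phi_{\rho^{-1}(a)}$, so the stabilizer of $G(p)$ is the image under $\rho$ of the stabilizer of $p$; since $\rho$ is bijective, $p$ has trivial stabilizer iff $G(p)$ does. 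Thus $G$ restricts to a diffeomorphism of $N^{\circ}$.

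For the ``if'' direction, let $(L,X,F)$ be a toric parametrization inducing $(\tau,\kappa)$, so that $\tau=F\circ q_{L}$ and $\kappa=\pi_{L}\circ F^{-1}$, and set $F':=G\circ F:M_{L}\to N^{\circ}$, a holomorphic isometric diffeomorphism. By Remark~\ref{nfekdnkwfnwknk} I may write $\rho([t])=[At]$ with $A\in\textup{GL}(n,\mathbb{Z})$, and then $F'\circ(\Phi_{X})_{[t]}=G\circ\Phi_{[t]}\circ F=\Phi_{[At]}\circ F'$. Next I would choose the new generator $X':=(A^{-1})^{T}X\in\textup{gen}(L)$, which is legitimate by Lemma~\ref{nkqndwknwfkn} because $(A^{-1})^{T}\in\textup{GL}(n,\mathbb{Z})$; applying Lemma~\ref{nkendwkddknk} to the relation $X=A^{T}X'$ gives $(\Phi_{X})_{[t]}=(\Phi_{X'})_{[At]}$, whence $F'\circ(\Phi_{X'})_{[s]}=\Phi_{[s]}\circ F'$ for all $[s]\in\mathbb{T}^{n}$. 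Thus $(L,X',F')$ is a toric parametrization, and the factorization it induces is exactly $\bigl(G\circ\tau,\ \pi_{L}\circ(F')^{-1}\bigr)=(\tau',\kappa')$, since $\pi_{L}\circ(F')^{-1}=\pi_{L}\circ F^{-1}\circ G^{-1}=\kappa\circ G^{-1}=\kappa'$.

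For the ``only if'' direction, suppose $(\tau',\kappa')$ is a toric factorization. I would apply Proposition~\ref{ncnkwknknk} with both regular torifications taken to be $\Phi:\mathbb{T}^{n}\times N\to N$, the compatible covering maps $\tau$ and $\tau'$, and the isometric affine immersion $\textup{Id}_{M}$: this produces a unique lift $G:=m_{\tau'\tau}(\textup{Id}_{M})$ with $G\circ\tau=\tau'\circ(\textup{Id}_{M})_{*}=\tau'$. By Proposition~\ref{ncekndkvndknk}(1), $G$ is a K\"{a}hler immersion, and the functoriality of lifts (Proposition~\ref{cnekdwnkndknk}, with $f_{1}=f_{2}=\textup{Id}_{M}$) yields $m_{\tau\tau'}(\textup{Id})\circ G=\textup{Id}_{N}=G\circ m_{\tau\tau'}(\textup{Id})$, so $G$ is a K\"{a}hler isomorphism. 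Proposition~\ref{ncekndkvndknk}(2) supplies a homomorphism $\rho$ with $G\circ\Phi_{a}=\Phi_{\rho(a)}\circ G$, and the same functoriality applied to the associated homomorphisms gives $\rho_{\tau\tau'}(\textup{Id})\circ\rho=\textup{Id}_{\mathbb{T}^{n}}=\rho\circ\rho_{\tau\tau'}(\textup{Id})$, so $\rho\in\textup{Aut}(\mathbb{T}^{n})$ and hence $G\in\textup{Aut}(N,g)^{\mathbb{T}^{n}}$. Finally $\kappa'=\kappa\circ G^{-1}$ follows from $\kappa'\circ\tau'=\pi=\kappa\circ\tau=\kappa\circ G^{-1}\circ\tau'$ and the surjectivity of $\tau'$ onto $N^{\circ}$.

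The step I expect to be the main obstacle is the construction of the global automorphism $G$ in the ``only if'' direction. A priori $\tau$ and $\tau'$ are unrelated covering maps onto $N^{\circ}$, and there is no reason the naive formula $\tau'\circ\tau^{-1}$ is even well defined on $N^{\circ}$ (that would require the two parallel lattices to coincide). The resolution is to never define $G$ globally by such a formula: instead one glues the local isometries $\tau'\circ(\tau|_{U})^{-1}$ and extends them by analytic continuation (Theorem~\ref{nedwknkdnkn}), which is precisely the content of Proposition~\ref{ncnkwknknk} and is exactly where the regularity of $N$ — completeness, simple connectedness, and real-analyticity of $g$ — is indispensable. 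Once $G$ is produced, the remaining work is the routine $\textup{GL}(n,\mathbb{Z})$ bookkeeping guaranteeing that $\rho$ is an automorphism and the elementary diagram chase identifying $\kappa'$.
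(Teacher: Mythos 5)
Your proof is correct and takes essentially the same route as the paper: the ``only if'' direction via the lift $G=m_{\tau'\tau}(Id_{M})$ and the functoriality of lifts (Propositions \ref{ncnkwknknk}, \ref{ncekndkvndknk} and \ref{cnekdwnkndknk}), and the ``if'' direction via the re-parametrization $(L,X',G\circ F)$ with $X'=(A^{T})^{-1}X=(A^{-1})^{T}X$, using Remark \ref{nfekdnkwfnwknk} and Lemma \ref{nkendwkddknk}, exactly as in the paper. Your preliminary verification that $G(N^{\circ})=N^{\circ}$ is a detail the paper leaves implicit, and it is a welcome addition rather than a deviation.
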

\begin{proof}
	$(\Rightarrow)$ Suppose $\pi=\kappa\circ \tau=\kappa'\circ \tau'$ are toric factorizations. Let $G:=m_{\tau'\tau}(Id_{M})$. 
	By Proposition \ref{cnekdwnkndknk}, $G\in \textup{Aut}(N,g)^{\mathbb{T}^{n}}$ and $G\circ \tau=\tau'$. 
	To see that $\kappa'=\kappa\circ G^{-1}$, let $y=\tau'(x)$ be arbitrary. We have 
	\begin{eqnarray}
		\kappa'(y)=(\kappa'\circ \tau')(x)=\pi(x)=(\kappa\circ \tau)(x)=\kappa(G^{-1}(\tau'(x)))=(\kappa\circ G^{-1})(y).
	\end{eqnarray}
	$(\Leftarrow)$ Let $G\in \textup{Aut}(N,g)^{\mathbb{T}^{n}}$ be arbitrary. There exists $A\in \textup{GL}(n,\mathbb{Z})$ such that 
	$G\circ \Phi_{[t]}=\Phi_{[At]}\circ G$ for all $t\in \mathbb{R}^{n}$ (see Remark \ref{nfekdnkwfnwknk}). 
	The fact that $\pi=\kappa\circ \tau$ is a toric factorization means that there is a toric parametrization $(L,X,F)$ such that $\tau=F\circ q_{L}$ and 
	$\kappa=\pi_{L}\circ F^{-1}.$ Let $X'\in \textup{gen}(L)$ be defined by $X'=(A^{T})^{-1}X$. Using Lemma \ref{nkendwkddknk}, it is easy to check that 
	$(L,X',G\circ F)$ is a toric parametrization and that the induced toric factorization is $(\tau',\kappa')$, where $\tau'=G\circ \tau$ and $\kappa'=\kappa\circ G^{-1}$.  
\end{proof}

	In the next two propositions, $\Phi:\mathbb{T}^{n}\times N\to N$ and $\Phi':\mathbb{T}^{d}\times N'\to N'$ are regular torifications of the dually flat spaces
	$(M,h,\nabla)$ and $(M',h',\nabla')$, respectively. We denote by $g$ and $g'$ the K\"{a}lher metrics on $N$ and $N'$, respectively.

\begin{proposition}[\textbf{Lifts are conjugate}]\label{neknmfkrnknk}
	Let $m:N\to N'$ be a lift of the isometric affine immersion $f:M\to M'$. Given a smooth map $m':N\to N'$, the following are equivalent:
	\textbf{}
	\begin{enumerate}[(1)]
		\item $m'$ is a lift of $f$. 
		\item There are $G_{1}\in \textup{Aut}(N,g)^{\mathbb{T}^{n}}$ and 
			$G_{2}\in \textup{Aut}(N',g')^{\mathbb{T}^{d}}$ such that $m'\circ G_{1}=G_{2}\circ m$. 
	\end{enumerate}
\end{proposition}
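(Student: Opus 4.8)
The plan is to deduce both implications from Lemma \ref{efnkfenknfknk}, which guarantees that any two compatible covering maps of a fixed regular torification differ by an element of $\textup{Aut}(N,g)^{\mathbb{T}^{n}}$ (and the analogous statement for $N'$), combined with the observation that the defining identity $m\circ\tau=\tau'\circ f_{*}$ of a lift lives on the covering space $TM$, whose image under $\tau$ is the dense set $N^{\circ}$. Throughout I will use that an element of $\textup{Aut}(N,g)^{\mathbb{T}^{n}}$ preserves the free locus $N^{\circ}$, since it conjugates the torus action; this is already implicit in the ``if'' direction of Lemma \ref{efnkfenknfknk}, which asserts that $G\circ\tau$ is a compatible covering map for every $G\in\textup{Aut}(N,g)^{\mathbb{T}^{n}}$.

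For $(2)\Rightarrow(1)$: since $m$ is a lift of $f$, I would fix compatible covering maps $\tau:TM\to N^{\circ}$ and $\tau':TM'\to (N')^{\circ}$ with $m\circ\tau=\tau'\circ f_{*}$. Given $G_{1}\in\textup{Aut}(N,g)^{\mathbb{T}^{n}}$ and $G_{2}\in\textup{Aut}(N',g')^{\mathbb{T}^{d}}$ with $m'\circ G_{1}=G_{2}\circ m$, the ``if'' direction of Lemma \ref{efnkfenknfknk} (applied to $N$ and to $N'$) shows that $\sigma:=G_{1}\circ\tau$ and $\sigma':=G_{2}\circ\tau'$ are again compatible covering maps. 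A purely formal computation then gives $m'\circ\sigma=m'\circ G_{1}\circ\tau=G_{2}\circ m\circ\tau=G_{2}\circ\tau'\circ f_{*}=\sigma'\circ f_{*}$, exhibiting $m'$ as a lift of $f$ with respect to $\sigma$ and $\sigma'$.

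For $(1)\Rightarrow(2)$: if $m'$ is also a lift of $f$, I would choose compatible covering maps $\sigma,\sigma'$ with $m'\circ\sigma=\sigma'\circ f_{*}$. By the ``only if'' direction of Lemma \ref{efnkfenknfknk}, the two compatible covering maps $\sigma$ and $\tau$ of $N$ differ by some $G_{1}\in\textup{Aut}(N,g)^{\mathbb{T}^{n}}$, i.e. $\sigma=G_{1}\circ\tau$; likewise $\sigma'=G_{2}\circ\tau'$ for some $G_{2}\in\textup{Aut}(N',g')^{\mathbb{T}^{d}}$. Substituting and using $m\circ\tau=\tau'\circ f_{*}$ yields $m'\circ G_{1}\circ\tau=G_{2}\circ m\circ\tau$. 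Since $\tau$ is surjective onto $N^{\circ}$, this forces $m'\circ G_{1}=G_{2}\circ m$ on $N^{\circ}$, and since $N^{\circ}$ is dense in $N$ and $N'$ is Hausdorff, the equality extends to all of $N$.

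The argument is essentially bookkeeping, so I do not expect a genuine obstacle; the only points requiring care are invoking the correct direction of Lemma \ref{efnkfenknfknk} each time and checking that the automorphisms $G_{i}$ send compatible covering maps to compatible covering maps. The concluding density step — agreement of two continuous maps on $N^{\circ}$ forcing agreement on $N$ — is exactly the one already used in the proof of Proposition \ref{ncekndkvndknk}(1), and needs nothing beyond density of $N^{\circ}$.
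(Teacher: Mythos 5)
Your proof is correct and follows essentially the same route as the paper: the $(2)\Rightarrow(1)$ direction is identical (showing $G_{1}\circ\tau$ and $G_{2}\circ\tau'$ are again compatible covering maps and computing $m'\circ(G_{1}\circ\tau)=(G_{2}\circ\tau')\circ f_{*}$), and the density argument at the end matches the paper's. The only cosmetic difference is in $(1)\Rightarrow(2)$: the paper produces $G_{1}=m_{\tau_{2}\tau_{1}}(Id_{M})$ and $G_{2}=m_{\tau_{2}'\tau_{1}'}(Id_{M'})$ directly from Proposition \ref{cnekdwnkndknk}, whereas you invoke the ``only if'' direction of Lemma \ref{efnkfenknfknk} --- but that lemma is itself proved by lifting the identity, so the underlying mechanism is the same and your appeal to it (legitimate here, since regularity is a standing hypothesis for this proposition) is just a repackaging.
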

\begin{proof}
	$(1)\Rightarrow (2)$ Suppose $m,m':N\to N'$ are lifts of $f$. 
	This means that there are compatible covering maps $\tau_{1},\tau_{2}:TM\to N^{\circ}$ and $\tau_{1}',\tau_{2}':TM'\to (N')^{\circ}$ such that 
	$m\circ \tau_{1}=\tau_{1}'\circ f_{*}$ and $m'\circ \tau_{2}=\tau_{2}'\circ f_{*}$. Let $G_{1}=m_{\tau_{2}\tau_{1}}(Id_{M})$ and $G_{2}=m_{\tau_{2}'\tau_{1}'}(Id_{M'})$. 
	By Proposition \ref{cnekdwnkndknk}, $G_{1}\in \textup{Aut}(N,g)^{\mathbb{T}^{n}}$, $G_{2}\in \textup{Aut}(N',g')^{\mathbb{T}^{d}}$ and 
	$G_{1}\circ \tau_{1}=\tau_{2}$ and $G_{2}\circ \tau_{1}'=\tau_{2}'$. Thus
	\begin{eqnarray*}
		m'\circ \tau_{2}=\tau_{2}'\circ f_{*}  \,\,\,\,\,\,\,&\Rightarrow &\,\,\,\,\,\,\,m'\circ G_{1}\circ \tau_{1}= G_{2}\circ \tau_{1}' \circ f_{*} \\
		&\Rightarrow &\,\,\,\,\,\,\,m'\circ G_{1}\circ \tau_{1}= G_{2}\circ m\circ \tau_{1}\\
		&\Rightarrow &\,\,\,\,\,\,\,m'\circ G_{1}= G_{2}\circ m \,\,\,\textup{on}\,\,\,N^{\circ}.
	\end{eqnarray*}
	Since $N^{\circ}$ is dense in $N$, $m'\circ G_{1}= G_{2}\circ m$ on $N$. 

	$(2)\Rightarrow (1)$. Suppose that $m:N\to N'$ is a lift of $f$ and that $m':N\to N'$ is a smooth map 
	satisfying $m'\circ G_{1}=G_{2}\circ m$ for some $G_{1}\in \textup{Aut}(N,g)^{\mathbb{T}^{n}}$ and some $G_{2}\in \textup{Aut}(N',g')^{\mathbb{T}^{d}}$.
	Because $m$ is a lift, there are compatible projections $\tau:TM\to N^{\circ}$ and $\tau':TM'\to (N')^{\circ}$ such that $m\circ \tau=\tau'\circ f_{*}$, and so
	\begin{eqnarray}\label{neknwdknknk}
		m'\circ G_{1}=G_{2}\circ m \,\,\,\,\,\,\,&\Rightarrow& \,\,\,\,\,\,\,m'\circ G_{1}\circ \tau=G_{2}\circ m\circ \tau\nonumber\\
		&\Rightarrow& \,\,\,\,\,\,\,m'\circ G_{1}\circ \tau=G_{2}\circ \tau'\circ f_{*}.
	\end{eqnarray}
	By Lemma \ref{nfekdnkwfnwknk}, $G_{1}\circ \tau$ and $G_{2}\circ \tau'$ are compatible covering maps. It follows from this and \eqref{neknwdknknk} that 
	$m'$ is a lift of $f$ with respect to $G_{1}\circ \tau$ and $G_{2}\circ \tau'$. In particular, $m'$ is a lift of $f$. 
\end{proof}

\begin{proposition}[\textbf{The derivative of an isometric affine immersion is equivariant}]\label{nfekwnknfkenk}
	Let $(L,X,F)$ and $(L',X',F')$ be toric factorizations of $N$ and $N'$, respectively, with induced toric parametrizations 
	$\pi=\kappa\circ \tau:TM\to M$ and $\pi'=\kappa'\circ \tau':TM'\to M'$. If $f:M\to M'$ is an isometric affine immersion, then its derivative $f_{*}:TM\to TM'$ satisfies 
	\begin{eqnarray*}
		f_{*}\circ (T_{X})_{t}=(T_{X'})_{(\rho_{\tau'\tau}(f))_{*_{e}}t}\circ f_{*}
	\end{eqnarray*}
	for all $t\in\mathbb{R}^{n}$, where $T_{X}:\mathbb{R}^{n}\times TM\to TM,$ $(t,u)\mapsto u+\sum_{k=1}^{n}t_{k}X_{k}$ ($T_{X'}$ is defined 
	similarly). 
\end{proposition}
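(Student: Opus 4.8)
The plan is to split the problem into two essentially independent pieces: a pointwise linear-algebra statement describing how $f_*$ acts on the parallel frames, and an identification of the resulting linear map with $(\rho_{\tau'\tau}(f))_{*_e}$. Throughout I abbreviate $m=m_{\tau'\tau}(f)$ and $\rho=\rho_{\tau'\tau}(f)$, so that $m\circ\tau=\tau'\circ f_*$ and $m\circ\Phi_a=\Phi'_{\rho(a)}\circ m$ for all $a\in\mathbb{T}^n$ (Proposition \ref{ncekndkvndknk} and Lemma \ref{nfenkneknkdnk}). I also record the two intertwining identities $\tau\circ (T_X)_t=\Phi_{[t]}\circ\tau$ and $\tau'\circ (T_{X'})_s=\Phi'_{[s]}\circ\tau'$, valid for all $t\in\mathbb{R}^n$ and $s\in\mathbb{R}^d$; each follows by writing $\tau=F\circ q_L$, using $q_L\circ (T_X)_t=(\Phi_X)_{[t]}\circ q_L$ from \eqref{nndkfneknkw}, and the equivariance $F\circ (\Phi_X)_{[t]}=\Phi_{[t]}\circ F$ built into a toric parametrization.

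First I would produce a constant real $d\times n$ matrix $\Psi=(\Psi_{jk})$ such that
\begin{eqnarray*}
 f_{*_p}\big(X_k(p)\big)=\sum_{j=1}^{d}\Psi_{jk}\,X'_j(f(p))\qquad\text{for all }p\in M,\ k=1,\dots,n.
\end{eqnarray*}
To do this, fix $p_0$ and choose $\nabla$-affine coordinates $x$ near $p_0$ and $\nabla'$-affine coordinates $x'$ near $f(p_0)$; since $f$ is affine its local expression is $x'\circ f=Cx+D$ with $C$ constant, and since each $X_k$ is $\nabla$-parallel it is constant in the $x$-coordinates. Hence $f_{*_p}(X_k(p))$ is the value at $f(p)$ of a $\nabla'$-parallel vector field defined near $f(p_0)$. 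Because $X'$ is a global parallel frame, this field extends uniquely to a global $\nabla'$-parallel field, and by Lemma \ref{nkqndwknwfkn}(1) it equals a constant-coefficient combination $\sum_j\Psi_{jk}X'_j$. A standard open-closed argument, using connectedness of $M$ and coordinate-independence of $f_{*_p}(X_k(p))$, forces the displayed identity to hold globally with a single $\Psi$.

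With $\Psi$ in hand, equivariance up to $\Psi$ is a direct computation: for $u\in TM$ with $\pi(u)=p$ one has $\pi'(f_*(u))=f(p)$, so
\begin{eqnarray*}
 f_*\big((T_X)_t(u)\big)=f_{*_p}(u)+\sum_{k}t_k f_{*_p}(X_k(p))=f_{*_p}(u)+\sum_{j}(\Psi t)_j X'_j(f(p))=(T_{X'})_{\Psi t}\big(f_*(u)\big),
\end{eqnarray*}
that is, $f_*\circ (T_X)_t=(T_{X'})_{\Psi t}\circ f_*$ for every $t\in\mathbb{R}^n$. Working with $\Psi$ directly is precisely what lets me avoid cancelling $\tau'$ on the left, which is impossible as stated since $\tau'$ is only a covering map.

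It remains to show $\Psi=\rho_{*_e}$. Composing the previous identity with $\tau'$ and using $\tau'\circ (T_{X'})_s=\Phi'_{[s]}\circ\tau'$ gives $\tau'\circ f_*\circ (T_X)_t=\Phi'_{[\Psi t]}\circ(\tau'\circ f_*)$. On the other hand $\tau'\circ f_*=m\circ\tau$, and applying $\tau\circ (T_X)_t=\Phi_{[t]}\circ\tau$ followed by $m\circ\Phi_{[t]}=\Phi'_{\rho([t])}\circ m$ gives $\tau'\circ f_*\circ (T_X)_t=\Phi'_{\rho([t])}\circ(\tau'\circ f_*)$. Evaluating the two expressions at any point $q=\tau'(f_*(u))\in (N')^\circ$ and using that $\Phi'$ is free on $(N')^\circ$ yields $\rho([t])=[\Psi t]$; combined with $\rho([t])=[\rho_{*_e}t]$ from Remark \ref{nfekdnkwfnwknk}, this says $\Psi t-\rho_{*_e}t\in\mathbb{Z}^d$ for all $t$, and by continuity in $t$ together with vanishing at $t=0$ the discrete-valued difference is identically zero, so $\Psi=\rho_{*_e}$. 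Substituting back into $f_*\circ (T_X)_t=(T_{X'})_{\Psi t}\circ f_*$ gives the claim. The main obstacle is the first step: turning the immersion $f$ (which admits no pushforward of vector fields) into a single \emph{global} constant matrix $\Psi$, for which the existence of global parallel frames and Lemma \ref{nkqndwknwfkn}(1) are exactly the tools needed.
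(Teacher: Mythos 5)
Your proof is correct, but it takes a genuinely different route from the paper's. The paper argues in one stroke: it sets $H(t,u)=\big((T_{X'})_{\rho_{*_e}t}\circ f_{*}\circ (T_X)_{-t}\big)(u)$ with $\rho=\rho_{\tau'\tau}(f)$, verifies $\tau'(H(t,u))=\tau'(f_*(u))$ by exactly the chain of intertwining identities you use in your last step, and then differentiates in $t$: since $\tau'$ is a covering map, $\tau'_{*}$ is a pointwise linear isomorphism, so $\tfrac{\partial}{\partial t}H(t,u)=0$ and $H(t,u)=H(0,u)=f_*(u)$. You instead decompose the statement: first a purely affine-geometric step producing the constant matrix $\Psi$ with $f_{*_p}X_k(p)=\sum_j\Psi_{jk}X'_j(f(p))$ (using affine coordinates, Lemma \ref{nkqndwknwfkn}(1), and a locally-constant-plus-connected argument — this part never touches the torus actions), and then an identification $\Psi=\rho_{*_e}$ via freeness of $\Phi'$ on $(N')^{\circ}$ and discreteness of $\mathbb{Z}^d$. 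The paper's argument is shorter and avoids any frame analysis, at the cost of the slightly slick differentiation through the covering map; yours is more structural and yields a useful byproduct the paper's proof does not make explicit, namely that $f_*$ carries the lattice $L$ into $L'$ by the integer matrix $\rho_{*_e}$. One small citation stretch: Remark \ref{nfekdnkwfnwknk} is stated only for automorphisms of $\mathbb{T}^n$, whereas your $\rho:\mathbb{T}^n\to\mathbb{T}^d$ is merely a homomorphism with finite kernel; the identity $\rho([t])=[\rho_{*_e}t]$ you need is just naturality of $\exp$ under Lie group homomorphisms, so the step is fine, but you should invoke that general fact rather than the remark.
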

\begin{proof}
	Let $H:\mathbb{R}^{n}\times TM\to TM'$ be the map defined by $H(t,u)=\big((T_{X'})_{(\rho_{\tau'\tau}(f))_{*_{e}}t}\circ f_{*}\circ (T_{X})_{-t}\big)(u)$.
	We claim that 
	\begin{eqnarray}\label{nfndwkdnknkk}
		\tau'(f_{*}(u))=\tau'(H(t,u))	
	\end{eqnarray}
	for all $(t,u)\in \mathbb{R}^{n}\times TM$. 
	Indeed, it follows from Proposition \ref{cnekdwnkndknk} and the formulas $\tau\circ (T_{X})_{t}=\Phi_{[t]}\circ \tau$ and $\tau'\circ (T_{X'})_{t'}=\Phi'_{[t']}\circ \tau'$ that 
	\begin{eqnarray*}
		\tau'(H(t,u)) &=& \tau'\bigg(\big((T_{X'})_{(\rho_{\tau'\tau}(f))_{*_{e}}t}\circ f_{*}\circ (T_{X})_{-t}\big)(u)\bigg)\\
		&=&\Big(\Phi'_{\rho_{\tau'\tau}(f)([t])}\circ \tau'\circ f_{*}\circ (T_{X})_{-t}\Big)(u)\\
		&=& \Big(\Phi'_{\rho_{\tau'\tau}(f)([t])}\circ m_{\tau'\tau}(f)\circ \tau\circ (T_{X})_{-t}\Big)(u)\\
		&=& \Big(\Phi'_{\rho_{\tau'\tau}(f)([t])}\circ m_{\tau'\tau}(f)\circ \Phi_{[-t]}\circ \tau\Big)(u)\\ 
		&=& \Big(\Phi'_{\rho_{\tau'\tau}(f)([t])}\circ \Phi_{\rho_{\tau'\tau}(f)([-t])}'\circ m_{\tau'\tau}(f)\circ \tau\Big)(u)\\ 
		&=& (m_{\tau'\tau}(f)\circ \tau)(u)\\
		&=& (\tau'\circ f_{*})(u).
	\end{eqnarray*}
	This concludes the proof of the claim. Taking the derivative with respect to $t$ in \eqref{nfndwkdnknkk} yields 
	\begin{eqnarray*}
		0=\dfrac{\partial}{\partial t}(\tau'(H(t,u)))=\tau'_{*_{H(t,u)}} \dfrac{\partial}{\partial t} H(t,u),
	\end{eqnarray*}
	which implies $\tfrac{\partial}{\partial t}H(t,u)=0$, since $\tau'_{*}$ is a linear bijection at every point of $TM'$. Thus 
	$H(t,u)$ is independent of $t$. It follows that $H(t,u)=H(0,u)$, which proves the proposition.  
\end{proof}

\section{Fundamental lattices and K\"{a}hler functions}\label{nfknnkndknknknk}

	In this section, we explore the close relationship between parallel lattices on a toric dually flat space $M$ and the space of K\"{a}hler 
	functions on $TM$ (see Definition \ref{fdkjfekgjrkgj} below). We begin with the following simple observation.

\begin{lemma}\label{nfeknwkenknk}
	Let $\Phi:\mathbb{T}^{n}\times N\to N$ and $\Phi':\mathbb{T}^{n}\times N'\to N'$ be regular torifications of the dually flat space 
	$(M,h,\nabla)$. If $(L,X,F)$ and $(L',X',F')$ are toric parametrizations of $N$ and $N'$, respectively, then $L=L'$. 
\end{lemma}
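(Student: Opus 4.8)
The plan is to reduce the equality of lattices to an equality of Deck transformation groups, and then to obtain that equality by lifting the identity map of $M$. First I would record that each toric parametrization induces a compatible covering map: writing $\tau = F\circ q_{L} : TM \to N^{\circ}$ and $\tau' = F'\circ q_{L'} : TM \to (N')^{\circ}$, both are K\"ahler covering maps, and their Deck transformation groups are $\Gamma(L)$ and $\Gamma(L')$, respectively (this is the property of compatible covering maps recorded after the definition of toric factorization). Since $\Gamma(L)$ characterizes $L$ through the formula $L = \{\gamma(0_{p})\,\vert\, p\in M,\ \gamma\in\Gamma(L)\}$, and likewise $\Gamma(L')$ characterizes $L'$, it suffices to prove that $\Gamma(L) = \Gamma(L')$ as subgroups of $\textup{Diff}(TM)$.

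Next I would apply the existence-of-lifts result (Proposition \ref{ncnkwknknk}) to the isometric affine immersion $\textup{Id}_{M} : M \to M$, using the compatible covering maps $\tau$ and $\tau'$. This yields a lift $m = m_{\tau'\tau}(\textup{Id}_{M}): N \to N'$ satisfying $m\circ\tau = \tau'\circ(\textup{Id}_{M})_{*}$. Because $(\textup{Id}_{M})_{*} = \textup{Id}_{TM}$, this reads simply
\[
	m\circ\tau = \tau'.
\]
Moreover $m$ is a K\"ahler isomorphism: by Proposition \ref{cnekdwnkndknk} the lift $m_{\tau\tau'}(\textup{Id}_{M})$ is a two-sided inverse of $m$, since $m_{\tau\tau'}(\textup{Id}_{M})\circ m = m_{\tau\tau}(\textup{Id}_{M}) = \textup{Id}_{N}$ and symmetrically $m\circ m_{\tau\tau'}(\textup{Id}_{M}) = \textup{Id}_{N'}$.

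Finally I would extract the equality of Deck groups from $\tau' = m\circ\tau$ together with the fact that $m$ is a diffeomorphism. If $\gamma\in\Gamma(L) = \textup{Deck}(\tau)$, then $\tau'\circ\gamma = m\circ\tau\circ\gamma = m\circ\tau = \tau'$, so $\gamma\in\textup{Deck}(\tau') = \Gamma(L')$; conversely, if $\gamma'\in\Gamma(L')$ then $m\circ\tau\circ\gamma' = \tau'\circ\gamma' = \tau' = m\circ\tau$, and cancelling the injective map $m$ gives $\tau\circ\gamma' = \tau$, i.e. $\gamma'\in\Gamma(L)$. Hence $\Gamma(L) = \Gamma(L')$, and therefore $L = L'$.

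I expect the only genuine subtlety to be packaged inside Proposition \ref{ncnkwknknk}: the lift of $\textup{Id}_{M}$ exists, and the two lifts are mutually inverse, precisely because both torifications are regular. This is where completeness, simple connectedness, and real analyticity of the K\"ahler metrics enter, through the analytic-continuation extension theorem (Theorem \ref{nedwknkdnkn}); indeed the uniqueness of the lattice is exactly the feature that fails for non-regular torifications. Everything else is a formal manipulation of covering maps, so the heart of the argument lies in invoking the regularity hypotheses correctly rather than in any new computation.
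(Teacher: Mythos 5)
Your proof is correct and follows essentially the same route as the paper: reduce $L=L'$ to $\Gamma(L)=\Gamma(L')$ via the characterization $L=\{\gamma(0_{p})\}$, lift $\textup{Id}_{M}$ to a K\"ahler isomorphism $m$ with $m\circ\tau=\tau'$, and transfer Deck transformations through $m$. Your explicit verification that $m$ is invertible via the functoriality of lifts (Proposition \ref{cnekdwnkndknk}) merely spells out what the paper cites in a single line, so there is nothing to add.
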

\begin{proof}
	It suffices to show that $\Gamma(L)=\Gamma(L')$. Let $\pi=\kappa\circ \tau$ (resp. $\pi=\kappa'\circ \tau'$) 
	be the toric factorization induced by $(L,X,F)$ (resp. $(L',X',F')$). Since $\Gamma(L)=\textup{Deck}(\tau)$ and $\Gamma(L')=\textup{Deck}(\tau')$, 
	we must show that $\textup{Deck}(\tau)=\textup{Deck}(\tau')$. Let $m=m_{\tau'\tau}(Id_{M}):N\to N'$ be the lift of $Id_{M}$ with respect to $\tau$ and $\tau'$. 
	By Proposition \ref{cnekdwnkndknk}, $m$ is a K\"{a}hler isomorphism satisfying $m\circ \tau=\tau'\circ (Id_{M})_{*}=\tau'$, that is, $m\circ \tau=\tau'.$
	Now, let $\varphi:TM\to TM$ be a diffeomorphism. We have the following equivalences: 
	\begin{eqnarray*}
		\varphi\in \textup{Deck}(\tau) \,\,\,\,\Leftrightarrow \,\,\,\,\tau\circ \varphi=\tau 
		\,\,\,\,\Leftrightarrow \,\,\,\,m\circ \tau\circ \varphi=m\circ\tau 
		\,\,\,\,\Leftrightarrow \,\,\,\,\tau'\circ\varphi =\tau' \,\,\,\,\Leftrightarrow \,\,\,\,\varphi\in \textup{Deck}(\tau'). 
	\end{eqnarray*}
	Thus $\textup{Deck}(\tau)=\textup{Deck}(\tau')$. 
\end{proof}
	
	In view of this lemma, we are led to introduce the following definition.

\begin{definition}
	Let $(M,h,\nabla)$ be a toric dually flat space. The \textit{fundamental lattice} of $M$ is the parallel lattice described in Lemma \ref{nfeknwkenknk}.
\end{definition}

	Next we turn our attention to K\"{a}hler functions, that we now define. 
	
\begin{definition}\label{fdkjfekgjrkgj}
	Let $N$ be a K\"{a}hler manifold with K\"{a}hler structure $(g,J,\omega).$
	A smooth function $f:N\rightarrow \mathbb{R}$ is called a \textit{K\"{a}hler function} if it satisfies $\mathscr{L}_{X_{f}}g=0$,
	where $X_{f}$ is the Hamiltonian vector field associated to $f$ 
	(i.e.\ $\omega(X_{f},\,.\,)=df(.)$) and where $\mathscr{L}_{X_{f}}$ is the Lie derivative in the direction of $X_{f}.$
\end{definition}

	Following \cite{Cirelli-Quantum}, we denote by $\mathscr{K}(N)$ the space of K\"{a}hler functions on $N.$ 
	When $N$ has a finite number of connected components, then 
	$\mathscr{K}(N)$ is a finite dimensional\footnote{The fact that $\mathscr{K}(N)$ is finite dimensional 
	comes from the following result: if $(M,h)$ is a connected Riemannian manifold, 
	then its space of Killing vector fields $\mathfrak{i}(M):=
	\{X\in \mathfrak{X}(M)\,\big\vert\,\mathscr{L}_{X}h=0\}$ 
	is finite dimensional (see for example \cite{Jost}). } Lie algebra for the Poisson 
	bracket $\{f,g\}:=\omega(X_{f},X_{g})\,.$

	We say that $\mathscr{K}(N)$ \textit{separates the points of $N$} if $f(p)=f(q)$ for all $f\in \mathscr{K}(N)$ implies $p=q$. \\

	The proposition below is the main result of this section. 

\begin{proposition}\label{nfeknkwneknk}
	Let $\Phi:\mathbb{T}^{n}\times N\to N$ be a regular torification of the dually flat space $(M,h,\nabla)$, with fundamental lattice 
	$\mathscr{L}\subset TM$. Then 
	\begin{eqnarray}\label{njenfnenkfkenk}
		\Gamma(\mathscr{L})\subseteq \big\{\varphi\in \textup{Diff}(TM)\,\,\big\vert\,\, f\circ \varphi=f\,\,\forall\,f\in \mathscr{K}(TM)\big\}. 
	\end{eqnarray}
	When $\mathscr{K}(N)$ separates the points of $N$, \eqref{njenfnenkfkenk} is an equality.
\end{proposition}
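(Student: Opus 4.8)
The plan is to reduce both assertions to a single algebraic fact: that every Kähler function on $TM$ is the pullback through a compatible covering map of a Kähler function on $N$. Fix a compatible covering map $\tau:TM\to N^{\circ}$, so that $\Gamma(\mathscr{L})=\textup{Deck}(\tau)$ (Proposition \ref{nkwdnkkfenknk}, together with the definition of the fundamental lattice). Two soft facts about $\tau$ come first: it is a holomorphic local isometry (Corollary \ref{nfkndkefnknk}), and consequently, if $\bar g\in\mathscr{K}(N)$ then $\bar g\circ\tau\in\mathscr{K}(TM)$, because a holomorphic local isometry carries the Killing Hamiltonian field of $\bar g$ to that of $\bar g\circ\tau$. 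Moreover $\bar g\circ\tau$ is automatically $\Gamma(\mathscr{L})$-invariant, since $\tau\circ\gamma=\tau$ for every $\gamma\in\textup{Deck}(\tau)$. Thus the inclusion \eqref{njenfnenkfkenk} will follow immediately once I know that \emph{every} Kähler function on $TM$ has the form $\bar g\circ\tau$.

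The heart of the argument is therefore the surjectivity of $\bar g\mapsto\bar g\circ\tau$, which I would first establish at the level of Killing fields. Write $\mathfrak{i}(P)$ for the Lie algebra of Killing fields of a Riemannian manifold $P$; it is finite dimensional when $P$ is connected. The pullback $\tau^{*}:\mathfrak{i}(N)\to\mathfrak{i}(TM)$, $W\mapsto\tau^{*}W$, is well defined (since $\tau$ is a local isometry) and injective, because a Killing field on the connected real-analytic manifold $N$ that vanishes on the open set $N^{\circ}$ is zero. To see it is onto I would compare dimensions through germs of Killing fields: since $\tau$ is a local isometry, the local Killing algebras at $q\in TM$ and at $\tau(q)\in N$ are isomorphic, so $\dim\mathfrak{i}(TM)\le\dim\mathfrak{i}_{\mathrm{loc}}(\tau(q))$; and because $N$ is regular (connected, simply connected, complete, real-analytic) every local Killing field extends to a global one by analytic continuation (\cite{Kobayashi-Nomizu}), so $\dim\mathfrak{i}_{\mathrm{loc}}(\tau(q))=\dim\mathfrak{i}(N)$. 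Hence $\dim\mathfrak{i}(TM)\le\dim\mathfrak{i}(N)$, and with injectivity of $\tau^{*}$ this forces $\mathfrak{i}(TM)=\tau^{*}\mathfrak{i}(N)$.

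Passing from fields to functions, let $g\in\mathscr{K}(TM)$. Its Hamiltonian field $X_{g}$ is Killing, and on a Kähler manifold a symplectic field is Killing if and only if its flow is holomorphic (cf. Lemma \ref{fneknkenkfnk}), so $X_{g}$ is holomorphic. By the previous step $X_{g}=\tau^{*}W$ for a unique $W\in\mathfrak{i}(N)$, and $W$ is again holomorphic, holomorphicity being a local condition and $\tau$ a local biholomorphism. Then $\iota_{W}\omega^{N}$ is closed, and since $N$ is simply connected it is exact, say $\iota_{W}\omega^{N}=d\bar g$, so that $\bar g\in\mathscr{K}(N)$ has Hamiltonian field $W$. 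Finally $dg=\iota_{X_{g}}\omega^{TM}=\tau^{*}(\iota_{W}\omega^{N})=d(\bar g\circ\tau)$, whence $g=\bar g\circ\tau$ up to an additive constant on the connected manifold $TM$; absorbing the constant into $\bar g$ yields $g=\bar g\circ\tau$. This establishes $\mathscr{K}(TM)=\{\bar g\circ\tau:\bar g\in\mathscr{K}(N)\}$ and, as observed above, the inclusion in \eqref{njenfnenkfkenk}.

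For the reverse inclusion under the separation hypothesis, take $\varphi\in\textup{Diff}(TM)$ with $f\circ\varphi=f$ for all $f\in\mathscr{K}(TM)$. By the identification just proved this reads $\bar g(\tau(\varphi(x)))=\bar g(\tau(x))$ for every $\bar g\in\mathscr{K}(N)$ and every $x\in TM$. Since $\mathscr{K}(N)$ separates the points of $N$, this gives $\tau\circ\varphi=\tau$, i.e.\ $\varphi$ is a lift of the identity through the covering $\tau$, hence $\varphi\in\textup{Deck}(\tau)=\Gamma(\mathscr{L})$; combined with the first part this gives equality. The main obstacle is the middle step, namely showing that Killing fields, and then Kähler functions, on $TM$ descend through $\tau$: this is where all of the regularity of $N$ is genuinely used — completeness and real-analyticity for the extension of local Killing fields in the dimension count, and simple connectedness to integrate $\iota_{W}\omega^{N}$ to a global Hamiltonian. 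One should also verify carefully that $X_{g}$ is honestly holomorphic and that $\tau^{*}W$ is smooth and globally defined, both resting on $\tau$ being a local biholomorphic isometry.
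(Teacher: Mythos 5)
Your proof is correct, and its outer skeleton matches the paper's: both arguments reduce everything to the statement that a smooth function on $TM$ is K\"{a}hler if and only if it is of the form $\overline{f}\circ\tau$ for a compatible covering map $\tau$ and some $\overline{f}\in\mathscr{K}(N)$ (this is the paper's Lemma \ref{nfkdnwkefnknk}), after which the inclusion and, under the separation hypothesis, the equality follow from $\Gamma(\mathscr{L})=\textup{Deck}(\tau)$ exactly as you argue. Where you genuinely diverge is in how the descent lemma is proved. The paper takes a chart-by-chart route: it pushes the Hamiltonian field of $f$ forward through local inverses of $\tau$ to get Killing fields $X_{f_{\alpha}}$ on the plaques $V_{\alpha}\subseteq N^{\circ}$, extends each one globally by Nomizu's theorem (Theorem \ref{nfeknkfenknk}), glues the extensions by a well-chaining argument on the connected cover, and then matches the additive constants $C_{\alpha}$ by continuity. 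You instead prove the clean structural statement $\mathfrak{i}(TM)=\tau^{*}\mathfrak{i}(N)$ once and for all, by combining injectivity of $\tau^{*}$ with a dimension count through local Killing algebras, using Nomizu's theorem to identify $\mathfrak{i}_{\mathrm{loc}}(\tau(q))$ with $\mathfrak{i}(N)$; the function-level statement then drops out by a single global integration on the simply connected $N$. Both routes consume the regularity of $N$ in the same places (real-analyticity and simple connectedness for Nomizu, simple connectedness again to integrate $\iota_{W}\omega^{N}$), but your version buys a stronger intermediate result (the pullback is an isomorphism of Killing algebras, not merely that Hamiltonian Killing fields descend) and avoids the gluing and constant-matching bookkeeping, at the price of invoking the finite-dimensionality and germ-rigidity of Killing algebras. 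One point you should make explicit: your arguments directly give that $W$ preserves $J$, equivalently $\mathscr{L}_{W}\omega^{N}=0$, only on $N^{\circ}$, since that is where $\tau$ provides local biholomorphic isometries; the extension to all of $N$ needs the (one-line) density argument that a continuous tensor vanishing on the dense set $N^{\circ}$ vanishes on $N$ --- the same density step the paper performs for $\mathscr{L}_{X}\omega$.
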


	Before proving Proposition \ref{nfeknkwneknk}, we give an application of it. 

\begin{corollary}\label{nekdnwkwndknk}
	Let $(M,h,\nabla)$ be a connected dually flat space of dimension $n$. If $\mathscr{K}(TM)$ separates the points of $TM$, 
	then $M$ is not toric. 
\end{corollary}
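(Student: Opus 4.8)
The plan is to prove the contrapositive: assuming $M$ is toric, I will show that $\mathscr{K}(TM)$ fails to separate the points of $TM$. So suppose $(M,h,\nabla)$ admits a regular torification $\Phi:\mathbb{T}^{n}\times N\to N$, with fundamental lattice $\mathscr{L}\subset TM$. My goal is to exhibit two distinct points of $TM$ that cannot be separated by any K\"{a}hler function.

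First I would invoke Proposition \ref{nfeknkwneknk}, which gives the inclusion
\begin{eqnarray*}
	\Gamma(\mathscr{L})\subseteq \big\{\varphi\in \textup{Diff}(TM)\,\,\big\vert\,\, f\circ \varphi=f\,\,\forall\,f\in \mathscr{K}(TM)\big\}.
\end{eqnarray*}
The key observation is that $\Gamma(\mathscr{L})$ is nontrivial: by definition, a parallel lattice $\mathscr{L}=L_{X}$ comes with a generator $X=(X_{1},\dots,X_{n})$ of $\nabla$-parallel vector fields forming a frame at each point, so the translation $\varphi:u\mapsto u+X_{1}(\pi(u))$ is a nonidentity element of $\Gamma(\mathscr{L})$. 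Fix any point $p\in M$ and set $u=0_{p}\in T_{p}M$ (the zero vector) and $v=\varphi(0_{p})=X_{1}(p)\in T_{p}M$. Since $\{X_{1}(p),\dots,X_{n}(p)\}$ is a basis of $T_{p}M$, the vector $X_{1}(p)$ is nonzero, so $u\neq v$ in $TM$.

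Next I would combine these facts. For every K\"{a}hler function $f\in \mathscr{K}(TM)$, the inclusion above gives $f\circ \varphi=f$, and in particular
\begin{eqnarray*}
	f(v)=f(\varphi(u))=(f\circ\varphi)(u)=f(u).
\end{eqnarray*}
Thus every K\"{a}hler function takes the same value at the two distinct points $u$ and $v$, which means $\mathscr{K}(TM)$ does not separate the points of $TM$. This is exactly the contrapositive of the desired statement: if $\mathscr{K}(TM)$ separates the points of $TM$, then $M$ cannot be toric. The argument is essentially immediate once Proposition \ref{nfeknkwneknk} is available, so the only subtle point (and the one I expect to need care) is confirming that the separating hypothesis is genuinely incompatible with the existence of a nontrivial lattice automorphism fixing all K\"{a}hler functions — but this is forced by the inclusion itself, since a separating family of functions admits no nonidentity diffeomorphism fixing every member. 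Hence no real obstacle remains beyond correctly citing the main proposition.
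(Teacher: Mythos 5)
Your proof is correct and is exactly the argument the paper intends: the corollary is stated as an immediate application of Proposition \ref{nfeknkwneknk}, whose inclusion $\Gamma(\mathscr{L})\subseteq\{\varphi\in\textup{Diff}(TM)\mid f\circ\varphi=f\ \forall f\in\mathscr{K}(TM)\}$, combined with the nontriviality of $\Gamma(\mathscr{L})\cong\mathbb{Z}^{n}$ (e.g.\ the translation $u\mapsto u+X_{1}(\pi(u))$, which moves $0_{p}$ to $X_{1}(p)\neq 0_{p}$), shows the Kähler functions cannot separate points when $M$ is toric. Your contrapositive formulation and the closing observation that a separating family admits no nonidentity diffeomorphism fixing every member are both sound, so nothing is missing.
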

\begin{example}\label{nceknwdkenknk}
	Consider the set $\mathcal{N}(\mu)$ of normal distributions with known variance $\sigma=1$:
	\begin{eqnarray*}
		p(x;\mu)=\frac{1}{\sqrt{2\pi}}\exp\biggl\{-\frac{\bigl(x-\mu\bigr)^{2}}{2}\biggr\}
		\quad\bigl(x\in\mathbb{R}\bigr).
	\end{eqnarray*}
	The set $\mathcal{N}(\mu)$ is a $1$-dimensional statistical manifold parameterized by 
	the mean $\mu\in \mathbb{R}$. It is an exponential family, because 
	$p(x;\mu)=\exp\big\{C(x)+\theta F(x)-\psi(\theta)\big\}$, where
	\begin{align*}
		 \theta=\mu,\quad\quad C(x)=\ln\bigg(\dfrac{1}{\sqrt{2\pi}}\bigg)-\dfrac{x^{2}}{2}, \quad\quad F(x)= x, \quad\quad  \psi(\theta)=\dfrac{\theta^{2}}{2}.  
	\end{align*}
	The Hessian of $\psi$ is $[1]$. It follows from this and Proposition \ref{prop:4.1} 
	that $T\mathcal{N}(\mu)$ is K\"{a}hler isomorphic to $\mathbb{C}$ endowed with the flat canonical K\"{a}hler structure. Let $f_{1}:\mathbb{C}\to \mathbb{R}$ and 
	$f_{2}:\mathbb{C}\to \mathbb{R}$ be defined by $f_{1}(z)=\textup{Real}(z)$ and $f_{2}(z)=\textup{Im}(z)$. A simple verification shows that 
	$f_{1}$ and $f_{2}$ are K\"{a}hler functions. Clearly, if $f_{k}(z)=f_{k}(w)$, $k=1,2$, then $z=w$. This implies that 
	$\mathscr{K}(\mathbb{\mathbb{C}})$ separates the points of $\mathbb{C}$. By Corollary \ref{nekdnwkwndknk}, $\mathcal{N}(\mu)$ is not toric. 
\end{example}

\begin{example}
	Let $\mathscr{P}$ be the set of Poisson distributions defined over $\Omega=\mathbb{N}=\{0,1,...\}$ (see Example \ref{nfeknwdkenfknk}). 
	Then $\mathscr{P}$ is toric with regular torification $\Phi:\mathbb{T}\times \mathbb{C}\to \mathbb{C}$, $([t],z)\to e^{2i\pi t}z$. 
	Let $\mathscr{L}\subset T\mathscr{P}$ be the fundamental lattice of $\mathscr{P}$. 
	As we noted in Example \ref{nceknwdkenknk}, $\mathscr{K}(\mathbb{C})$ separates the points of $\mathbb{C}$. Therefore $\Gamma(\mathscr{L})$ 
	coincides with the set of diffeomorphisms $\varphi:T\mathscr{P}\to T\mathscr{P}$ satisfying $f\circ \varphi=f$ for all $f\in \mathscr{K}(T\mathscr{P})$.  
	The coordinate expression for the Fisher metric $h_{F}$ is the Hessian of the cumulant generating function: 
	$h_{F}(\theta)=\textup{Hess}(\psi)=\big[e^{\theta}\big].$ 	
	It follows from this and Proposition \ref{prop:4.1} that $T\mathscr{P}$ is K\"{a}hler isomorphic to $\mathbb{C}$ endowed with the K\"{a}hler metric 
	\begin{eqnarray*}
		g_{z}(u,v)=e^{x}\,\textup{Real}(\overline{u}v),
	\end{eqnarray*}
	where $z,u,v\in \mathbb{C}$, $z=x+iy$, $x,y\in \mathbb{R}$. The space of K\"{a}hler functions on $T\mathscr{P}=\mathbb{C}$ 
	is spanned by 
	\begin{eqnarray*}
		 1, \,\,\,\,\,\,\,e^{x}, \,\,\,\,\,\,\,e^{\tfrac{x}{2}}\cos\big(\tfrac{y}{2}\big),
		\,\,\,\,\,\,\, e^{\tfrac{x}{2}}\sin\big(\tfrac{y}{2}\big)
	\end{eqnarray*}
	(to see this, use Proposition 2.25 in \cite{Molitor2014}). Let $\varphi:\mathbb{C}\to \mathbb{C}$ be a diffeomorphism satisfying 
	$f\circ \varphi=f$ for all $f\in \mathscr{K}(\mathbb{C})=\mathscr{K}(T\mathscr{P})$. Let $f_{0}:\mathbb{C}\to \mathbb{C}$, $z\mapsto e^{\tfrac{z}{2}}$. 
	Because the real and imaginary parts of $f_{0}$ are K\"{a}hler functions, we have $f_{0}\circ \varphi=f_{0}$. 
	It is well known that the map $\mathbb{C}\to \mathbb{C}^{*},$ $z\mapsto e^{z}$ is a covering map whose 
	Deck transformation group is $\textup{Deck}(e^{z})=2i\pi\mathbb{Z}$ (here we identify translations in $\mathbb{C}$ with 
	complex numbers). Therefore $f_{0}:\mathbb{C}\to \mathbb{C}^{*}$ is a covering map with Deck transformation group $4i\pi \mathbb{Z}$. 
	Thus $\varphi\in \textup{Deck}(f_{0})=4i\pi \mathbb{Z}$ and hence there exists $k\in \mathbb{Z}$ such that $\varphi(z)=z+4\pi ik$ for all $z\in \mathbb{C}$. 
	Conversely, if $\varphi\in \textup{Diff}(\mathbb{C})$ is of the form $\varphi(z)=z+4\pi i k$ for some $k\in \mathbb{Z}$, 
	then clearly $f\circ \varphi=f$ for all $f\in \mathscr{K}(\mathbb{C})$. It follows that $\Gamma(\mathscr{L})$
	is the set of diffeomorphisms $\varphi\in \textup{Diff}(\mathbb{C})$ of the form 
	$\varphi(z)= z+4\pi ik$, where $k\in \mathbb{Z}$. If $\theta$ is the natural parameter of $\mathscr{P}$ as described in Example \ref{nfeknwdkenfknk}, then 
	we see that the fundamental lattice $\mathscr{L}$ is generated by $X=4\pi\tfrac{\partial}{\partial\theta}\in \mathfrak{X}(\mathscr{P})$.

	\end{example}

	Now we proceed with the proof of Proposition \ref{nfeknkwneknk}. It is based on the following result, which is due to Nomizu \cite{Nomizu}:

\begin{theorem}\label{nfeknkfenknk}
	Let $(M,g)$ be a real analytic Riemannian manifold and let $X$ be a Killing vector field defined on the open 
	set $U\subseteq M$. If $M$ and $U$ are connected and $M$ is simply connected, then $X$ extends uniquely to a 
	global Killing vector field on $M$. 
\end{theorem}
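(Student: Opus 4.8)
The plan is to prove this classical theorem of Nomizu by the method of analytic continuation of Killing fields, exploiting the fact that a Killing field satisfies a first-order linear ordinary differential equation along curves and is therefore completely rigid.

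First I would record the two structural facts about a Killing field $X$ on a real analytic Riemannian manifold. Writing $A:=\nabla X$ for the associated $(1,1)$-tensor, the Killing equation $g(\nabla_Y X,Z)+g(Y,\nabla_Z X)=0$ says that $A$ is skew-symmetric with respect to $g$, and the standard second-order identity $\nabla_Y\nabla_Z X-\nabla_{\nabla_Y Z}X=R(X,Y)Z$ expresses the Hessian of $X$ algebraically through the pair $(X,A)$ and the curvature $R$. Consequently, along any smooth curve $\gamma$, the restriction of $(X,A)$ to $\gamma$ satisfies the first-order linear system
\[
\frac{D}{dt}X=A\dot\gamma,\qquad \frac{D}{dt}A=R(X,\dot\gamma),
\]
where $\tfrac{D}{dt}$ denotes covariant differentiation along $\gamma$ and $R(X,\dot\gamma)$ is read as the endomorphism $Z\mapsto R(X,\dot\gamma)Z$. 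Being linear, this system has a unique solution along $\gamma$ determined by the initial datum $(X,A)\vert_{\gamma(0)}$. I would also note that on $U$ the field $X$ is automatically real analytic, since Killing fields solve an analytic system whose solutions inherit the analyticity of the metric.

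Next I would set up the analytic continuation. Fix $p_{0}\in U$ and $q\in M$, and choose a path $\gamma:[0,1]\to M$ with $\gamma(0)=p_{0}$, $\gamma(1)=q$. Solving the linear system above with initial data $(X(p_{0}),A(p_{0}))$ produces a pair $(X_{\gamma},A_{\gamma})$ along $\gamma$. The crucial point is to upgrade this abstract solution into a genuine local Killing field near each $\gamma(t)$, and here analyticity enters: near $p_{0}$ the given $X$ is an honest Killing germ realizing the $1$-jet $(X(p_{0}),A(p_{0}))$, and covering $\gamma$ by finitely many analytic normal-coordinate balls one transports the Killing germ from ball to ball, using on each overlap that an analytic Killing field is determined by its $1$-jet at a single point (again by the rigidity of the system and the identity theorem for analytic functions). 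This yields a Killing germ around $q$ whose $1$-jet at $q$ is $(X_{\gamma}(1),A_{\gamma}(1))$, i.e.\ the continuation of $X$ along $\gamma$.

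Finally I would show that the continuation is path-independent, which is exactly where simple connectivity is used. Given two paths from $p_{0}$ to $q$, simple connectivity provides a homotopy between them; subdividing the homotopy square into small cells, each contained in a single normal-coordinate ball, the local uniqueness of the analytic Killing germ forces the continuations along the two edges of each cell to agree, and the standard monodromy argument propagates this agreement across the whole homotopy. Hence the continuations along the two paths share the same $1$-jet at $q$ and define the same germ, producing a globally well-defined Killing field $\widetilde{X}$ on $M$ that agrees with $X$ near $p_{0}$, and by connectedness of $U$ together with the rigidity it agrees with $X$ on all of $U$. Uniqueness is then immediate: two global Killing fields agreeing on the nonempty open set $U$ have the same $1$-jet at a point of $U$, so by the rigidity of the first-order system and connectedness of $M$ they coincide everywhere. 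The main obstacle is the passage from the formal ODE-solution $(X_{\gamma},A_{\gamma})$ to an actual Killing germ and the careful monodromy bookkeeping; both rest on the local fact that an analytic Killing field is pinned down by its value and covariant derivative at one point, which is precisely what the displayed system guarantees.
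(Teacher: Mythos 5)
The paper gives no proof of this statement at all --- it is quoted verbatim as a theorem of Nomizu with a citation --- so the only question is whether your reconstruction is sound. Your skeleton is indeed the architecture of Nomizu's argument: the pair $(X,\nabla X)$ satisfies a first-order linear system along curves, hence a Killing field is rigid (determined by its $1$-jet at one point); one continues the germ along paths; simple connectedness gives path-independence by a monodromy argument; and uniqueness of the global extension follows from jet rigidity and connectedness of $M$. The rigidity, monodromy and uniqueness parts of your sketch are fine.

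The genuine gap is the existence step, which you correctly flag as ``the crucial point'' but then dispatch with a uniqueness tool. Transporting the germ ``from ball to ball'' requires knowing that a Killing field defined on the connected open overlap $B_{1}\cap B_{2}$ extends to a Killing field on all of $B_{2}$ --- but that local extension statement is precisely the content of Nomizu's theorem, so as written the step is circular. The facts you invoke (the identity theorem, ``an analytic Killing field is determined by its $1$-jet at a single point'') show only that \emph{if} the continued germ exists it is unique; they produce nothing. Existence genuinely needs an argument, because for merely smooth metrics the statement is false: local Killing fields need not continue, and the dimension of the local isometry algebra can jump. Analyticity must therefore enter as an existence mechanism, not merely through the identity theorem. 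In Nomizu's proof this is done roughly as follows: the full Taylor expansion of a would-be Killing field at a point is determined recursively from the $1$-jet via $R$ and its covariant derivatives; analyticity of $g$ gives convergence of this series; the resulting field is Killing exactly when the jet satisfies a countable family of compatibility conditions involving the $\nabla^{k}R$; and these conditions propagate along the curve under your ODE transport (the set of parameters $t$ where they hold is closed trivially, and open because each condition is an analytic function of $t$ vanishing on an interval). That open-and-closed argument is what legitimizes the ball-to-ball transport. Without some version of it, your proposal establishes uniqueness of the extension but not its existence.
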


	We also need the following lemma.

\begin{lemma}\label{nfkdnwkefnknk}
	Let $\Phi:\mathbb{T}^{n}\times N\to N$ be a regular torification of a dually flat space $(M,h,\nabla)$. 
	Let $\tau:TM\to N^{\circ}$ be a compatible covering map and $f:TM\to \mathbb{R}$ 
	a smooth function. Then $f$ is K\"{a}hler if and only if there exists a K\"{a}hler function $\overline{f}$ on $N$ such that 
	$f=\overline{f}\circ \tau$.
\end{lemma}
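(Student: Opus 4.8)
The plan is to prove the two implications separately, the forward one being routine and the converse requiring the regularity hypotheses on $N$.

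First, suppose $\overline{f}\colon N\to\mathbb{R}$ is a K\"ahler function. A compatible covering map $\tau\colon TM\to N^{\circ}$ is a holomorphic and locally isometric covering (as noted right after the definition of a toric factorization), hence a local K\"ahler isomorphism onto the open K\"ahler submanifold $N^{\circ}$. Since being a K\"ahler function is a local condition preserved by K\"ahler isomorphisms, and since the restriction $\overline{f}|_{N^{\circ}}$ is again K\"ahler, the composite $f=\overline{f}\circ\tau$ is K\"ahler on $TM$: its Hamiltonian vector field is, through the local inverses of $\tau$, $\tau$-related to that of $\overline{f}|_{N^{\circ}}$, so it is locally Killing and therefore Killing.

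For the converse, let $f\colon TM\to\mathbb{R}$ be K\"ahler. Fix $p_{0}\in TM$, choose an evenly covered connected neighborhood $V\subseteq N^{\circ}$ of $\tau(p_{0})$ together with a sheet $U\ni p_{0}$ for which $\tau|_{U}\colon U\to V$ is a K\"ahler isomorphism, and set $\overline{f}_{V}:=f\circ(\tau|_{U})^{-1}$, a K\"ahler function on $V$. Its Hamiltonian vector field $X_{\overline{f}_{V}}$ is a Killing field on $V$; since it preserves both $g$ and $\omega$ (the latter because every Hamiltonian field preserves the symplectic form) it preserves $J$, which is determined through $\omega(\,\cdot\,,\,\cdot\,)=g(J\,\cdot\,,\,\cdot\,)$, so $X_{\overline{f}_{V}}$ is holomorphic as well. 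Because $N$ is connected, simply connected and its K\"ahler metric is real analytic (regularity), Nomizu's extension theorem (Theorem \ref{nfeknkfenknk}) yields a unique global Killing field $Y$ on $N$ with $Y|_{V}=X_{\overline{f}_{V}}$. This $Y$ is real analytic, and as $J$ is real analytic too, the tensor $\mathscr{L}_{Y}J$ is real analytic; it vanishes on $V$, so by connectedness of $N$ it vanishes identically, and $Y$ is holomorphic on all of $N$. A field that is simultaneously Killing and holomorphic preserves $g$ and $J$, hence $\omega$, so $\omega(Y,\,\cdot\,)$ is closed; since $N$ is simply connected this one-form is exact, $\omega(Y,\,\cdot\,)=d\overline{f}$ for some $\overline{f}\colon N\to\mathbb{R}$, and $\overline{f}$ is K\"ahler because $Y=X_{\overline{f}}$ is Killing.

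It remains to arrange $f=\overline{f}\circ\tau$. On $V$ we have $X_{\overline{f}}|_{V}=Y|_{V}=X_{\overline{f}_{V}}$, so $\overline{f}|_{V}-\overline{f}_{V}$ is constant; absorbing this constant into $\overline{f}$ (a K\"ahler function being determined only up to an additive constant) gives $\overline{f}|_{V}=\overline{f}_{V}$ and hence $f=\overline{f}\circ\tau$ on $U$. By the forward direction $h:=\overline{f}\circ\tau$ is K\"ahler on $TM$, and $f-h$ vanishes on the open set $U$, so the Killing field $X_{f-h}=X_{f}-X_{h}$ vanishes on $U$. Here I would invoke the rigidity of Killing fields: on a connected Riemannian manifold a Killing field is determined by its $1$-jet at a single point, so one vanishing on a nonempty open set vanishes identically; this needs only connectedness of $TM$, which is fortunate since $TM$, being homotopy equivalent to $M$, may fail to be simply connected. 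Thus $X_{f-h}\equiv 0$, whence $d(f-h)=0$, so $f-h$ is constant, equal to its value $0$ on $U$, giving $f=\overline{f}\circ\tau$ throughout $TM$. The main obstacle is the converse, specifically the passage from a locally defined K\"ahler function to a global one: the delicate points are using real analyticity both to extend the Killing field and to recognize the extension as holomorphic, and then comparing $f$ with $\overline{f}\circ\tau$ globally on $TM$ without the benefit of simple connectivity.
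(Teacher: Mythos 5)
Your proof is correct, and it shares the paper's backbone: push $f$ down through a sheet of $\tau$, extend the resulting Hamiltonian (hence Killing) vector field to all of $N$ by Nomizu's theorem (Theorem \ref{nfeknkfenknk}), and use simple connectedness of $N$ to integrate the extended field to a global K\"{a}hler function, adjusting constants at the end. The two globalization steps, however, are executed genuinely differently. Where you work with a \emph{single} sheet and propagate $\mathscr{L}_{Y}J=0$ (equivalently $\mathscr{L}_{Y}\omega=0$) from $V$ to all of $N$ by real analytic continuation, the paper covers all of $N^{\circ}$ by sheets $V_{\alpha}$, checks that the Nomizu extensions $X_{\alpha}$ agree pairwise via the well-chained argument of Proposition \ref{ncnkwknknk}, obtains $\mathscr{L}_{X}\omega=0$ on $N^{\circ}$ directly, and then uses density of $N^{\circ}$ in $N$. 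The paper's route thereby avoids the one nontrivial classical fact your argument leans on: that a Killing field of a real analytic metric is itself real analytic, so that $\mathscr{L}_{Y}J$ is an analytic tensor subject to the identity theorem. That fact is true (it follows from analytic elliptic regularity applied to $\Delta Y+\mathrm{Ric}(Y)=0$, or from the analytic-continuation nature of Nomizu's construction), but it is an extra ingredient you should state and cite rather than assert in passing. For the endgame, your rigidity argument ($X_{f-h}$ is Killing and vanishes on the open set $U$, hence vanishes identically on connected $TM$, hence $f-h$ is constant) is a clean alternative to the paper's more elementary observation that $\overline{f}\circ\tau-f$ equals a constant $C_{\alpha}$ on each $U_{\alpha}$ and is continuous on connected $TM$, forcing all $C_{\alpha}$ to coincide; both correctly use only connectedness of $TM$, and your explicit caution that $TM$ need not be simply connected is well placed.
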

\begin{proof}
	One direction is immediate: if $\overline{f}:N\to \mathbb{R}$ is K\"{a}hler, then so is 
	$f=\overline{f}\circ \tau$ (since $\tau$ is a K\"{a}hler covering map). Conversely, suppose $f:TM\to \mathbb{R}$ is K\"{a}hler. 
	Let $\{U_{\alpha}\}_{\alpha\in A}$ be a cover of $TM$ by connected open sets $U_{\alpha}$ such that 
	$\tau\vert_{U_{\alpha}}:U_{\alpha}\to V_{\alpha}:=\tau(U_{\alpha})$ is a K\"{a}hler isomorphism for 
	all $\alpha\in A$. Given $\alpha\in A$, define $f_{\alpha}:V_{\alpha}\to \mathbb{R}$ by 
	\begin{eqnarray*}
		 f_{\alpha}:=f\circ (\tau\vert_{U_{\alpha}})^{-1}.
	\end{eqnarray*}
	Since $\tau\vert_{U_{\alpha}}$ is a K\"{a}hler isomorphism, $f_{\alpha}$ 
	is a K\"{a}hler function on $V_{\alpha}\subseteq N$. This implies that the Hamiltonian vector field 
	$X_{f_{\alpha}}$ of $f_{\alpha}$ is a Killing vector field on 
	$V_{\alpha}$. Since $V_{\alpha}$ is connected and $N$ is a connected and 
	simply connected real analytic Riemannian manifold, $X_{f_{\alpha}}$ extends uniquely to a globally 
	defined Killing vector field, say $X_{\alpha}$, on $N$ (see Theorem \ref{nfeknkfenknk}). 
	Note that $\mathscr{L}_{X_{\alpha}}\omega=0$ on $V_{\alpha}$, by construction. By an argument entirely analogous 
	to that in the proof of Proposition \ref{ncnkwknknk}, one sees that $X_{\alpha}=X_{\beta}$ on $N$ whenever $\alpha,\beta\in A$.
%
%
	Set $X=X_{\alpha}\in \mathfrak{X}(N)$, where $\alpha$ is any element in $A$ (this is independent of the choice of 
	$\alpha$). Since $\{V_{\alpha}\}_{\alpha\in A}$ is a cover of 
	$N^{\circ}$ and since $\mathscr{L}_{X}\omega=\mathscr{L}_{X_{\alpha}}\omega=0$ on each $V_{\alpha}$, we see that 
	$\mathscr{L}_{X}\omega=0$ on $N^{\circ}$. Since $N^{\circ}$ is dense in $N$, $\mathscr{L}_{X}\omega=0$ 
	on $N$. Because of this and because $N$ is simply connected, there exists a smooth function 
	$\overline{f}:N\to \mathbb{R}$ such that $X=X_{\overline{f}}$. The function $\overline{f}$ is K\"{a}hler, since 
	$X$ is a Killing vector field. By construction, $X_{\overline{f}}=X_{f_{\alpha}}$ on $V_{\alpha}$ for every $\alpha\in A$. 
	Since $V_{\alpha}$ is connected, there is $C_{\alpha}\in \mathbb{R}$ such that $\overline{f}=f_{\alpha}+C_{\alpha}$ on 
	$V_{\alpha}$. This implies $\overline{f}\circ \tau=f_{\alpha}\circ \tau+C_{\alpha}$ on $U_{\alpha}$ for all $\alpha\in A$. 
	Since $f_{\alpha}=f\circ (\tau\vert_{U_{\alpha}})^{-1}$, this implies $\overline{f}\circ \tau=f+C_{\alpha}$ on $U_{\alpha}$ 
	for all $\alpha\in A$. Since $\overline{f}\circ \tau -f$ is continuous on $TM$, $C_{\alpha}=C_{\beta}\equiv C$ whenever $\alpha,\beta\in A$. 
	Thus $\overline{f}\circ \tau=f+C$ on $TM$. Redefining $\overline{f}$ by adding a constant if necessary, we can assume $C=0.$ 
	The lemma follows. 
\end{proof}

\begin{proof}[Proof of Proposition \ref{nfeknkwneknk}]
	Let $\varphi\in \Gamma(\mathscr{L})$ be arbitrary. Let $\tau:TM\to N^{\circ}$ be a compatible covering map. 
	By Lemma \ref{nfeknwkenknk}, $\Gamma(\mathscr{L})=\textup{Deck}(\tau)$ and hence $\tau\circ \varphi=\tau$. It follows that
	$\overline{f}\circ \tau\circ \varphi=\overline{f}\circ \tau$ for all $\overline{f}\in 
	\mathscr{K}(N)$. By Lemma \ref{nfkdnwkefnknk}, this implies that $f\circ \varphi=f$ for all 
	$f\in \mathscr{K}(TM)$. This shows the inclusion \eqref{njenfnenkfkenk}. 

	Suppose now that $\mathscr{K}(N)$ separates the points of $N$. Let $\varphi\in \textup{Diff}(TM)$ 
	be such that $f\circ \varphi=f$ for all $f\in \mathscr{K}(TM).$
	By Lemma \ref{nfkdnwkefnknk}, $\overline{f}\circ \tau\circ \varphi=\overline{f}\circ \tau$ 
	for all $\overline{f}\in \mathscr{K}(N)$. Because $\mathscr{K}(N)$ separates the points of $N$, this implies that 
	$\tau\circ \varphi=\tau$, and so $\varphi\in \textup{Deck}(\tau)=\Gamma(\mathscr{L})$. This shows the converse inclusion.
\end{proof}


\section{Torifications and projective varieties}\label{nwkwnkenfkwnknknk}

	Throughout this section, $\mathcal{E}$ is an exponential family of dimension $n$ defined over a finite set $\Omega=\{x_{0},...,x_{r}\}$, with elements of the form $p(x;\theta)=
	\textup{exp}(C(x)+\langle F(x),\theta\rangle-\psi(\theta))$, where $\langle\,,\,\rangle$ is the Euclidean pairing on $\mathbb{R}^{n}$ and 
	\begin{eqnarray*}
		x\in \Omega,\quad\quad\theta\in \mathbb{R}^{n},\quad\quad C:\Omega\to \mathbb{R}, \quad\quad F=(F_{1},...,F_{n}):
		\Omega\to \mathbb{R}^{n}, \quad\quad \psi:\mathbb{R}^{n}\to \mathbb{R}.
	\end{eqnarray*}
	It is assumed that the functions $1,F_{1},...,F_{n}:\Omega\to \mathbb{R}$ are independent 
	so that the map $\mathcal{E}\to \mathbb{R}^{n},$ $p(.,\theta)\mapsto\theta$ becomes a bijection. 
	Note that:
	\begin{itemize}
	\item The condition $\sum_{k\in \Omega}p(k;\theta)=1$ implies that $\psi(\theta)=\ln\big(\sum_{k\in \Omega}\textup{exp}(C(k)-\langle F(k),\theta\rangle)\big)$ for all 
	$\theta\in \mathbb{R}^{n}$. 
	\item $\mathcal{E}$ is a subset of $\mathcal{P}_{r+1}^{\times}$ (see Example \ref{exa:5.5}). 
	\end{itemize}
	We endow $\mathcal{E}$ and $\mathcal{P}_{r+1}^{\times}$ with their canonical dually flat structures (given by the Fisher metric and 
	exponential connection).
\begin{lemma}\label{nefknknknk}
	The inclusion map $j:\mathcal{E}\hookrightarrow \mathcal{P}_{r+1}^{\times}$ is an affine isometric immersion.
\end{lemma}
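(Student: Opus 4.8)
The plan is to work entirely in natural parameters, which reduces the statement to an elementary comparison of two cumulant generating functions. By Proposition \ref{prop:5.7} the natural parameters $\theta=(\theta_{1},\dots,\theta_{n})$ form a global $\nabla^{(e)}$-affine coordinate system on $\mathcal{E}$, and (as recalled after Proposition \ref{prop:5.7}) the Fisher metric $h_{F}$ is there represented by $\textup{Hess}(\psi)$. Fixing a reference point, say $x_{0}\in\Omega$, I would coordinatize $\mathcal{P}_{r+1}^{\times}$ by the natural parameters $\eta=(\eta_{1},\dots,\eta_{r})$ of Example \ref{exa:5.5}, where $\eta_{i}(q)=\ln\!\big(q(x_{i})/q(x_{0})\big)$ and the cumulant generating function is $\tilde{\psi}(\eta)=\ln\!\big(1+\sum_{i=1}^{r}e^{\eta_{i}}\big)$; again these are $\nabla^{(e)}$-affine and $h_{F}$ is $\textup{Hess}(\tilde{\psi})$.

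First I would compute the coordinate expression of $j$. Since $j$ sends a distribution to itself viewed as an element of $\mathcal{P}_{r+1}^{\times}$, we have $p(\,\cdot\,;\theta)=q(\,\cdot\,;\eta(\theta))$ as functions on $\Omega$, whence
\[
	\eta_{i}(\theta)=\ln\frac{p(x_{i};\theta)}{p(x_{0};\theta)}=\big(C(x_{i})-C(x_{0})\big)+\big\langle F(x_{i})-F(x_{0}),\theta\big\rangle,\qquad i=1,\dots,r.
\]
This is an affine function $\eta(\theta)=A\theta+B$ of $\theta$, with $A$ the $r\times n$ matrix whose $i$-th row is $F(x_{i})-F(x_{0})$. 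Because $\theta$ and $\eta$ are global affine charts for the two exponential connections, the affineness of $\eta\circ j\circ\theta^{-1}$ shows at once that $j$ is an affine map in the sense preceding Proposition \ref{nfeknwknekn}.

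For the isometry I would exploit the same identity at the reference point. Evaluating $\ln p(x_{0};\theta)=\ln q(x_{0};\eta(\theta))$ gives
\[
	\psi(\theta)=\tilde{\psi}(\eta(\theta))+C(x_{0})+\big\langle F(x_{0}),\theta\big\rangle,
\]
so $\psi$ and $\tilde{\psi}\circ\eta$ differ by an affine function of $\theta$. Taking Hessians in $\theta$ annihilates the affine term, and since $\eta(\theta)=A\theta+B$ is affine the chain rule yields
\[
	\textup{Hess}_{\theta}(\psi)=\textup{Hess}_{\theta}(\tilde{\psi}\circ\eta)=A^{T}\,\textup{Hess}(\tilde{\psi})\big(\eta(\theta)\big)\,A.
\]
The right-hand side is exactly the matrix of $j^{*}h_{F}^{\mathcal{P}_{r+1}^{\times}}$ in the $\theta$-chart, since $j_{*}\partial_{\theta_{a}}=\sum_{i}A_{ia}\partial_{\eta_{i}}$, while the left-hand side is the matrix of $h_{F}^{\mathcal{E}}$; hence $j^{*}h_{F}^{\mathcal{P}_{r+1}^{\times}}=h_{F}^{\mathcal{E}}$, i.e. $j$ is isometric. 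An isometric map into a Riemannian manifold is automatically an immersion, so $j$ is an isometric affine immersion.

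I expect no serious obstacle: the whole content is the observation that, at the shared reference point, the two log-likelihoods force $\tilde{\psi}\circ\eta$ to coincide with $\psi$ up to an affine term, which simultaneously delivers affineness and—after differentiating twice—the metric identity. The only points requiring care are the bookkeeping with $x_{0}$ and, if one wants the immersion property directly rather than via the isometry, checking that the independence of $1,F_{1},\dots,F_{n}$ forces $A$ to have rank $n$ (a nontrivial kernel relation $\sum_{a}c_{a}(F(x_{i})-F(x_{0}))=0$ would make $\sum_{a}c_{a}F_{a}$ constant on $\Omega$, contradicting independence). As a conceptual cross-check I would note that the isometry also follows coordinate-free from the intrinsic definition of the Fisher metric via the scores $s_{v}(x)=v(\ln p_{(\,\cdot\,)}(x))$: since $j$ leaves each distribution unchanged, $j_{*}v$ has the same score as $v$, so $\mathbb{E}_{p}[s_{j_{*}v}s_{j_{*}w}]=\mathbb{E}_{p}[s_{v}s_{w}]$.
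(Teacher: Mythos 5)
Your proposal is correct, and while the affineness half coincides with the paper's argument, the isometry half takes a genuinely different route. For affineness, both you and the paper equate the shared log-likelihoods $p(\,\cdot\,;\theta)=q(\,\cdot\,;\eta)$ on $\Omega$ to obtain an explicit affine expression $\eta(\theta)=A\theta+B$ in natural parameters (you normalize at $x_{0}$, the paper at $x_{r}$; this is immaterial, and your attribution of the $x_{0}$-based chart to Example \ref{exa:5.5} is slightly off since that example uses the last element as reference, but any such chart is a valid global $\nabla^{(e)}$-affine system). For the isometry, the paper introduces the square-root map $f(p)=(\sqrt{p(x_{0})},\dots,\sqrt{p(x_{r})})$ into Euclidean $\mathbb{R}^{r+1}$, invokes the Amari--Nagaoka expression \eqref{nekwndknkdn} for $h_{ij}$ in natural parameters to get $f^{*}g=\tfrac{1}{4}h$ and $\tilde{f}^{*}g=\tfrac{1}{4}h'$, and concludes $j^{*}h'=h$ from $f=\tilde{f}\circ j$. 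You instead extract from the same log-likelihood identity, evaluated at the reference point, the scalar relation $\psi=\tilde{\psi}\circ\eta+(\textup{affine in }\theta)$, and take Hessians: since $\eta$ is affine, $\textup{Hess}_{\theta}(\psi)=A^{T}\,\textup{Hess}(\tilde{\psi})(\eta(\theta))\,A$, which is exactly the matrix of $j^{*}h'$ because $j_{*}\partial_{\theta_{a}}=\sum_{i}A_{ia}\partial_{\eta_{i}}$. This is shorter and more self-contained: it uses only the Hessian representation of the Fisher metric in natural parameters, which the paper already assumes throughout (and which holds without further conditions for finite $\Omega$), and it derives both affineness and isometry from a single identity. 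What the paper's route buys in exchange is independence from the affine coordinate relation and the classical geometric picture of the Fisher metric as four times the pullback of the flat metric under the square-root embedding, with one computation reused verbatim for $\mathcal{E}$ and $\mathcal{P}_{r+1}^{\times}$. Your remaining checks are also sound: positive definiteness of $h$ makes any isometric map an immersion, so the rank-$n$ verification of $A$ (a kernel relation would force $\sum_{a}c_{a}F_{a}$ to be constant on $\Omega$, contradicting the independence of $1,F_{1},\dots,F_{n}$) is correct but redundant, and the coordinate-free cross-check via scores is the intrinsic reason the statement is true.
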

\begin{proof}
	Let $\theta=(\theta_{1},...,\theta_{n})$ and $\theta'=(\theta_{1}',...,\theta_{r}')$ be the natural parameters of $M$ and 
	$\mathcal{P}_{r+1}^{\times}$, respectively. Recall that $p(x_{i};\theta')=e^{\theta_{i+1}'-\psi'(\theta')}$ 
	if $i=0,...,r-1$ and $p(x_{r};\theta')=e^{-\psi'(\theta')}$, where $\psi'(\theta')=\ln(1+\sum_{k=1}^{r}e^{\theta_{k}'})$ 
	is the cumulant generating function of $\mathcal{P}_{r+1}^{\times}$ (see Example \ref{exa:5.5}). 
	Let $p=p(.,\theta)=p(.,\theta')\in \mathcal{E}$ and $i=0,...,r$ be arbitrary. Since $p(x_{i},\theta)=p(x_{i},\theta'),$ we have 
	\begin{eqnarray*}
		\textup{exp}\bigg(C(x_{i})+\sum_{j=1}^{n}F_{j}(x_{i})\theta_{j}-\psi(\theta)\bigg)=
		\left\lbrace
		\begin{array}{llll}
			& \textup{exp}\big(\theta'_{i+1}-\psi'(\theta')\big), &\textup{if}   & i=0,...,r-1,\\[0.5em]
			& \textup{exp}\big(-\psi'(\theta')\big),              &  \textup{if} & i=r,
		\end{array}
		\right.
	\end{eqnarray*}
	and thus
	\begin{eqnarray*}
		\left \lbrace
		\begin{array}{llll}
			C(x_{i})+\sum_{j=1}^{n}F_{j}(x_{i})\theta_{j}-\psi(\theta) &=& \theta'_{i+1}-\psi'(\theta'), & i=0,...,r-1,\\[0.5em]
			C(x_{r})+\sum_{j=1}^{n}F_{j}(x_{r})\theta_{j}-\psi(\theta) &=& -\psi'(\theta').            & 
		\end{array}
		\right.
	\end{eqnarray*}
	Therefore 
	\begin{eqnarray*}
		\theta_{i}'=C(x_{i-1})-C(x_{r})+\big\langle \theta,F(x_{i-1})-F(x_{r})\big\rangle \,\,\,\,\,\,\,\,\,\,(i=1,...,r)
	\end{eqnarray*}
	that is, 
	\begin{eqnarray}\label{nfeknwkdnknk}
		\begin{bmatrix}
			\theta_{1}'  \\
			\vdots       \\
			\theta_{r}'
		\end{bmatrix}
		=\begin{bmatrix}
			F_{1}(x_{0})-F_{1}(x_{r})  &  \cdots    &   F_{n}(x_{0})-F_{n}(x_{r})  \\
			\vdots                     &            &   \vdots                     \\
			F_{1}(x_{r-1})-F_{1}(x_{r}) &  \cdots    &   F_{n}(x_{r-1})-F_{n}(x_{r}) 
		\end{bmatrix}
		\begin{bmatrix}
			\theta_{1}  \\
			\vdots       \\
			\theta_{n}
		\end{bmatrix}
		+
		\begin{bmatrix}
			C(x_{0})-C(x_{r})  \\
			\vdots       \\
			C(x_{r-1})-C(x_{r})
		\end{bmatrix}
		.
	\end{eqnarray}
	Formula \eqref{nfeknwkdnknk} is the coordinate expression for $j$ in the natural parameters. 
	This shows that $j$ is affine. 

	Next we prove that $j$ is isometric. Let $h$ and $h'$ be the Fisher metrics on $\mathcal{E}$ and $\mathcal{P}_{r+1}^{\times}$, respectively, and let 
	$g$ be the Euclidean metric on $\mathbb{R}^{r+1}$. Let $f:M\to \mathbb{R}^{r+1}$, $p\mapsto (\sqrt{p(x_{0})},...,\sqrt{p(x_{r})})$. Given $1\leq i,j\leq n$, 
	a simple calculation shows that 
	\begin{eqnarray}\label{enfwknkkn}
		g_{f(p)}\bigg( f_{*_{p}}\dfrac{\partial}{\partial \theta_{i}},f_{*_{p}}\dfrac{\partial}{\partial \theta_{j}}\bigg)=\dfrac{1}{4}\sum_{k=0}^{r}\bigg(F_{i}(x_{k})-
		\dfrac{\partial \psi}{\partial \theta_{i}}\bigg)
		\bigg(F_{j}(x_{k})-\dfrac{\partial \psi}{\partial \theta_{j}}\bigg)p(x_{k}). 
	\end{eqnarray}
	In \cite{Amari-Nagaoka}, Formula 3.59, it is observed that the coordinate expression for $h$ in the natural parameters is given by 
	\begin{eqnarray}\label{nekwndknkdn}
		h_{ij}(\theta)=
		\sum_{k=0}^{r}\bigg(F_{i}-\dfrac{\partial \psi}{\partial \theta_{i}}\bigg)\bigg(F_{j}-\dfrac{\partial \psi}{\partial \theta_{j}}\bigg)p(x_{k};\theta). 
	\end{eqnarray}
	Comparing \eqref{enfwknkkn} and \eqref{nekwndknkdn} we obtain $f^{*}g=\tfrac{1}{4}h$. Now let 
	$\tilde{f}:\mathcal{P}_{r+1}^{\times}\to \mathbb{R}^{r+1}$, $p\mapsto (\sqrt{p(x_{0})},...,\sqrt{p(x_{r})})$. Note that $f=\tilde{f}\circ j$. 
	By the same argument as above with $\mathcal{E}$ replaced by $\mathcal{P}_{r+1}^{\times}$, 
	we get $\tilde{f}^{*}g=\tfrac{1}{4}h'$ and hence 
	\begin{eqnarray*}
		h=4f^{*}g=4(\tilde{f}\circ j)^{*}g=4 j^{*}\tilde{f}^{*}g=4j^{*}\tfrac{1}{4}h'=j^{*}h'.
	\end{eqnarray*}
	Therefore $h=j^{*}h'$.  
\end{proof}

	Given $n\geq 1$, recall the notation $\Phi_{n}:\mathbb{T}^{n}\times \mathbb{P}_{n}(c)\to \mathbb{P}_{n}(c)$ defined before Proposition \ref{nfeknwknwknk}. 

\begin{theorem}
	Let $\mathcal{E}$ be an exponential family of dimension $n$ defined over a finite set $\Omega=\{x_{0},x_{1},...,x_{r}\}$ 
	(as described in the begining of this section). Suppose $\mathcal{E}$ toric with regular torification $\Phi:\mathbb{T}^{n}\times N\to N$. Then there is a K\"{a}hler immersion 
	$m:N\to \mathbb{P}_{r}(1)$ and a Lie group homomorphism $\rho:\mathbb{T}^{n}\to \mathbb{T}^{r}$ with finite kernel such that $m\circ \Phi_{a}=
	(\Phi_{r})_{\rho(a)}\circ m$ for all $a\in \mathbb{T}^{n}$. 
\end{theorem}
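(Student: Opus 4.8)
The plan is to exhibit the desired map $m$ as a \emph{lift} of the inclusion $j:\mathcal{E}\hookrightarrow \mathcal{P}_{r+1}^{\times}$, and then to read off all of its properties from the lifting machinery of Section~\ref{nfwkwnkenkwndk}. The two torifications in play are, on one side, the given regular torification $\Phi:\mathbb{T}^{n}\times N\to N$ of $\mathcal{E}$, and, on the other side, the action $\Phi_{r}:\mathbb{T}^{r}\times \mathbb{P}_{r}(1)\to \mathbb{P}_{r}(1)$. The first thing I would check is that the latter really is a \emph{regular} torification of $\mathcal{P}_{r+1}^{\times}$: that it is a torification is exactly Proposition~\ref{nfeknwknwknk}(2) (note that $\mathcal{P}_{r+1}^{\times}$ has dimension $r$, matching the acting torus $\mathbb{T}^{r}$), while regularity is immediate, since $\mathbb{P}_{r}(1)$ is compact, connected and simply connected, hence complete, and the Fubini--Study metric is real analytic.

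With both torifications regular, I would next invoke Lemma~\ref{nefknknknk}, which tells us that $j:\mathcal{E}\hookrightarrow \mathcal{P}_{r+1}^{\times}$ is an isometric affine immersion between the two dually flat spaces (each equipped with its Fisher metric and exponential connection). Fixing compatible covering maps $\tau:T\mathcal{E}\to N^{\circ}$ and $\tau':T\mathcal{P}_{r+1}^{\times}\to \mathbb{P}_{r}(1)^{\circ}$, the Existence of Lifts result (Proposition~\ref{ncnkwknknk}) then produces a unique isometric immersion $m:N\to \mathbb{P}_{r}(1)$ satisfying $m\circ \tau=\tau'\circ j_{*}$; this $m$ is by definition a lift of $j$.

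Finally, I would apply Proposition~\ref{ncekndkvndknk} to this lift. Part~(1) upgrades $m$ to a K\"{a}hler immersion precisely because $j$ is an isometric affine immersion, and part~(2) supplies the unique Lie group homomorphism $\rho:\mathbb{T}^{n}\to \mathbb{T}^{r}$ with finite kernel for which $m\circ \Phi_{a}=(\Phi_{r})_{\rho(a)}\circ m$ holds for all $a\in \mathbb{T}^{n}$, which is exactly the asserted equivariance. In truth the substantive analytic and geometric content has already been absorbed into the cited propositions, so there is no serious obstacle remaining; the only points demanding genuine care are bookkeeping ones --- confirming the dimension count $\dim \mathcal{P}_{r+1}^{\times}=r$ so that the target torus is indeed $\mathbb{T}^{r}$, and verifying the regularity of $\mathbb{P}_{r}(1)$ so that Proposition~\ref{ncnkwknknk} is applicable.
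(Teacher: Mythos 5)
Your proposal is correct and follows essentially the same route as the paper: the paper's proof is exactly the observation that $j:\mathcal{E}\hookrightarrow \mathcal{P}_{r+1}^{\times}$ is an isometric affine immersion between toric dually flat spaces (Lemma \ref{nefknknknk}, with $\Phi_{r}$ on $\mathbb{P}_{r}(1)$ as the regular torification of $\mathcal{P}_{r+1}^{\times}$), so it admits a lift with the stated properties via Propositions \ref{ncnkwknknk} and \ref{ncekndkvndknk}. You have merely spelled out the bookkeeping (the dimension count $\dim\mathcal{P}_{r+1}^{\times}=r$ and the regularity of $\mathbb{P}_{r}(1)$) that the paper leaves implicit.
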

\begin{proof}
	Since the inclusion map $j:\mathcal{E}\to \mathcal{P}_{r+1}^{\times}$ is an isometric affine immersion between toric dually flat spaces, it has a lift 
	$m$ with the desired properties. 
\end{proof}

	We now look at some examples. Let $\mathcal{B}(n)$ be the set of Binomial distributions defined over $\Omega=\{0,1,...,n\}$ (see Example \ref{exa:5.6}). 
	Recall that $\Phi_{1}:\mathbb{T}^{1}\times \mathbb{P}_{1}(\tfrac{1}{n})\to \mathbb{P}_{1}(\tfrac{1}{n})$ is the regular torification of 
	$\mathcal{B}(n)$ (see Proposition \ref{nfeknwknwknk}).
\begin{proposition}\label{ncednwkneksnk}
	The lift of the inclusion map $\mathcal{B}(n)\hookrightarrow \mathcal{P}_{n+1}^{\times}$ is the Veronese embedding:
		\begin{center}
		\begin{tabular}{llllll}
			$m:$ & $\mathbb{P}_{1}(\tfrac{1}{n})$  & $\to$     & $\mathbb{P}_{n}(1)$\\
			    & $[z_{1},z_{2}]$   & $\mapsto$ & $\bigg[z_{1}^{n},...,\displaystyle\binom{n}{k}^{1/2}z_{1}^{n-k}z_{2}^{k},...,z_{2}^{n}\bigg]$.
		\end{tabular}
		\end{center}
		The corresponding Lie group homomorphism $\rho:\mathbb{T}^{1}\to \mathbb{T}^{n}$ is given by 
		\begin{eqnarray*}
			\rho([t])=\big[nt,..., (n-k)t,..., t].
		\end{eqnarray*}
\end{proposition}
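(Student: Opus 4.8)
The plan is to compute the lift directly from its defining relation. By Proposition~\ref{ncnkwknknk} the inclusion $j:\mathcal{B}(n)\hookrightarrow\mathcal{P}_{n+1}^{\times}$, which is an isometric affine immersion by Lemma~\ref{nefknknknk}, admits a unique lift $m$ with respect to any chosen pair of compatible covering maps $\tau,\tau'$, and $m$ is characterized by $m\circ\tau=\tau'\circ j_{*}$. I will take $\tau$ and $\tau'$ to be the explicit covering maps of Proposition~\ref{njewndknknknk}, namely $\tau(z)=[e^{z/2},1]$ on $\mathbb{C}=T\mathcal{B}(n)$ and $\tau'(z)=[e^{z_{1}/2},\dots,e^{z_{n}/2},1]$ on $\mathbb{C}^{n}=T\mathcal{P}_{n+1}^{\times}$ (the torifications being $\Phi_{1}$ and $\Phi_{n}$ of Proposition~\ref{nfeknwknwknk}). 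Thus it suffices to evaluate $\tau'\circ j_{*}$, check that it factors through $\tau$, and identify the resulting descended map with the Veronese embedding; uniqueness of the lift then finishes the identification.

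First I would write $j_{*}$ in the complex coordinates $z=\theta+i\dot\theta$ coming from the identification $T\mathcal{E}\cong\mathbb{C}^{\dim\mathcal{E}}$. Since $j$ is affine, Proposition~\ref{nfeknwknekn} together with the coordinate expression~\eqref{nfeknwkdnknk} shows that $j_{*}$ is the complex-affine map $z\mapsto(z_{1}',\dots,z_{n}')$ with $z_{i}'=\bigl(C(x_{i-1})-C(x_{n})\bigr)+\bigl(F(x_{i-1})-F(x_{n})\bigr)z$. For the Binomial family $F(k)=k$ and $C(k)=\ln\binom{n}{k}$ (Example~\ref{exa:5.6}), so, with the outcomes of $\Omega=\{0,\dots,n\}$ labelled suitably, $z_{i}'=\ln\binom{n}{i-1}+(n-i+1)z$. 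The decisive point is that the constant (translation) part of $j_{*}$ is built from the $\ln\binom{n}{k}$ terms; exponentiating them through $\tau'$ is exactly what produces the binomial prefactors $\binom{n}{k}^{1/2}$ of the Veronese map.

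Next I would compose. Writing $u=e^{z/2}$, so that $\tau(z)=[u,1]$, I obtain $\tau'(j_{*}(z))$ with $(k{+}1)$-th homogeneous coordinate $\binom{n}{k}^{1/2}u^{\,n-k}$ for $k=0,\dots,n$. Identifying $[u,1]=[z_{1},z_{2}]$ via $u=z_{1}/z_{2}$ and multiplying all homogeneous coordinates by $z_{2}^{n}$ turns this into $\bigl[z_{1}^{n},\dots,\binom{n}{k}^{1/2}z_{1}^{n-k}z_{2}^{k},\dots,z_{2}^{n}\bigr]$. As this expression depends only on $[u,1]=\tau(z)$, it defines a map $m:\mathbb{P}_{1}(\tfrac1n)\to\mathbb{P}_{n}(1)$ with $m\circ\tau=\tau'\circ j_{*}$, so by uniqueness $m$ is the stated Veronese embedding. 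The homomorphism $\rho$ then follows from Proposition~\ref{ncekndkvndknk} by a direct comparison of $m\circ(\Phi_{1})_{[t]}$ with $(\Phi_{n})_{\rho([t])}\circ m$: replacing $z_{1}$ by $e^{2i\pi t}z_{1}$ multiplies the $k$-th coordinate $\binom{n}{k}^{1/2}z_{1}^{n-k}z_{2}^{k}$ by $e^{2i\pi(n-k)t}$ and fixes the last coordinate $z_{2}^{n}$, which forces $\rho([t])=[nt,(n-1)t,\dots,(n-k)t,\dots,t]$, manifestly a homomorphism with finite kernel.

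The main obstacle is not analytic but combinatorial: matching the ordering of the homogeneous coordinates of $\mathbb{P}_{n}(1)$ and the labelling of $\Omega$ to the precise form in the statement. With the opposite labelling the same computation delivers the coordinates in reverse order, and since $\binom{n}{k}=\binom{n}{n-k}$ this differs from the stated map only by a permutation of homogeneous coordinates, which is a holomorphic isometry of $\mathbb{P}_{n}(1)$ compatible with a relabelling of $\mathbb{T}^{n}$. Because lifts are determined only up to the conjugacy of Proposition~\ref{neknmfkrnknk} (reflecting the freedom in the choice of $\tau,\tau'$), choosing the labelling that matches orientations yields exactly the Veronese embedding and the homomorphism $\rho$ as written.
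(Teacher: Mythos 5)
Your proposal is correct and takes essentially the same route as the paper: both write $j_{*}$ in complex coordinates using the affine formula from Lemma \ref{nefknknknk}, push it through the compatible covering maps $\tau(z)=[e^{z/2},1]$ and $\tau'$ of Proposition \ref{njewndknknknk}, identify the descended map by uniqueness of the lift, and read off $\rho$ coordinatewise. The only divergence is bookkeeping of the coordinate reversal: the paper computes with the standard enumeration of $\Omega$, obtains $\widetilde{m}([z_{1},z_{2}])=\big[z_{2}^{n},\dots,\binom{n}{k}^{1/2}z_{2}^{n-k}z_{1}^{k},\dots,z_{1}^{n}\big]$ and then corrects by precomposing with the swap $G([z_{1},z_{2}])=[z_{2},z_{1}]$ on $\mathbb{P}_{1}(\tfrac{1}{n})$ via Proposition \ref{neknmfkrnknk}, whereas you absorb the reversal into the labelling of $\Omega$ at the outset (equivalently a permutation of the target coordinates), justified by exactly the conjugacy you invoke in your final paragraph.
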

\begin{proof}
	Let $j:\mathcal{B}(n)\hookrightarrow \mathcal{P}_{n+1}^{\times}$ be the inclusion map. In the proof of Lemma \ref{nefknknknk}, we computed 
	the coordinate expression for $j$ in the natural parameters. With $F(k)=k$ and $C(k)=\ln\binom{n}{k}$, $k=0,...,n$ (see Example \ref{exa:5.6}), this yields 
	\begin{eqnarray*}
		j(\theta)=-\theta(n,n-1,...,1)+\big(\ln\textstyle\binom{n}{0},...,\ln\textstyle\binom{n}{n-1}\big)
	\end{eqnarray*}
	for all $\theta\in \mathbb{R}$.
	Under the usual identifications $T\mathcal{B}(n)=\mathbb{C}$ and $T\mathcal{P}_{n+1}^{\times}=\mathbb{C}^{n}$, we get 
	\begin{eqnarray*}
		j_{*}(z)=-z(n,n-1,...,1)+\big(\ln\textstyle\binom{n}{0},...,\ln\textstyle\binom{n}{n-1}\big)
	\end{eqnarray*}
	for all $z\in \mathbb{C}$. Let $\tau:\mathbb{C}=T\mathcal{B}(n)\to \mathbb{P}_{1}(\tfrac{1}{n})^{\circ}$, $z\mapsto [e^{z/2},1]$ and 
	$\tau':\mathbb{C}^{n}=T\mathcal{P}_{n+1}^{\times}\to \mathbb{P}_{n}(1)^{\circ}$, $(z_{1},...,z_{n})\mapsto [e^{z_{1}/2},...,e^{z_{n}/2},1]$. 
	By Proposition \ref{njewndknknknk}, $\tau$ and $\tau'$ are compatible covering maps. We compute:
	\begin{eqnarray}
		(\tau'\circ j_{*})(z)&=& \tau'\big(-z(n,n-1,...,1)+\big(\ln\textstyle\binom{n}{0},...,\ln\textstyle\binom{n}{n-1}\big)\big)\nonumber\\
		&=&\big[\textstyle\binom{n}{0}^{1/2}(e^{-z/2})^{n},...,\textstyle\binom{n}{k}^{1/2}(e^{-z/2})^{n-k},...,1\big]\nonumber\\
		&=&\big[\textstyle\binom{n}{0}^{1/2},...,\binom{n}{k}^{1/2}(e^{z/2})^{k},...,(e^{z/2})^{n}\big],\label{ndkwqndkwndknk}
	\end{eqnarray}
	where, in the last line, we have multiplied every entry by $(e^{z/2})^{n}$. Let $\widetilde{m}:\mathbb{P}_{1}(\tfrac{1}{n})\to \mathbb{P}_{n}(1)$ 
	be defined by $\widetilde{m}([z_{1},z_{2}])=\big[z_{2}^{n},...,\binom{n}{k}^{1/2}z_{2}^{n-k}z_{1}^{k},...,z_{1}^{n}\big]$. 
	We compute:
	\begin{eqnarray}\label{nkwnkdndknknk}
		(\widetilde{m}\circ \tau)(z)=\widetilde{m}([e^{z/2},1])=\big[1,...,\textstyle\binom{n}{k}^{1/2}(e^{z/2})^{k},...,(e^{z/2})^{n}\big]
	\end{eqnarray}
	for all $z\in \mathbb{C}$. Comparing \eqref{ndkwqndkwndknk} and \eqref{nkwnkdndknknk}, we see that $\widetilde{m}\circ \tau=\tau'\circ j_{*}$. Therefore $\widetilde{m}$ is the
	lift of $j$ with respect to $\tau$ and $\tau'$. Let $G$ be the holomorphic isometry of $\mathbb{P}_{1}(\tfrac{1}{n})$ defined by $G([z_{1},z_{2}])=[z_{2},z_{1}]$. 
	Since $G\circ (\Phi_{1})_{[t]}=(\Phi_{1})_{[-t]}\circ G$ for all $t\in \mathbb{R}$, Proposition \ref{neknmfkrnknk} implies that 
	$m=\widetilde{m}\circ G$ is a lift of $j$. The second formula is obtained by a direct computation. 
\end{proof}

%
%
%

	Let $\mathcal{M}(m+1,n)$ be the set of multinomial distributions defined over 
	$\Omega_{m+1,n}=\{(k_{1},...,k_{m+1})\in \mathbb{N}^{m+1}\,\,\big\vert\,\,k_{1}+...+k_{m+1}=n\}$
	(see Example \ref{jeknkkdefjdk}). Recall that $\mathcal{M}(m+1,n)$ is toric with regular torification 
	$\Phi_{m}:\mathbb{T}^{m}\times \mathbb{P}_{m}(\tfrac{1}{n})\to \mathbb{P}_{m}(\tfrac{1}{n})$ (see Proposition \ref{nfeknwknwknk}). 
	Since $\textup{Card}(\Omega_{m+1,n})=\binom{m+n}{m}=\tfrac{(m+n)!}{m!n!}$, $\mathcal{M}(m+1,n)$ is a subset of $\mathcal{P}_{\binom{m+n}{m}}^{\times}$.

\begin{proposition}\label{nfknknkefnsknkfnk}
	The lift of the inclusion map $\mathcal{M}(m+1,n)\hookrightarrow \mathcal{P}_{\binom{m+n}{m}}^{\times}$ is the $n${-}$th$ Veronese embedding:
		\begin{center}
		\begin{tabular}{llllll}
			 $\mathbb{P}_{m}(\tfrac{1}{n})$  & $\to$     & $\mathbb{P}_{\binom{m+n}{m}-1}(1)$\\[0.9em]
			 $[z_{1},z_{2},...,z_{m+1}]$   & $\mapsto$ & $\bigg[\,\sqrt{\dfrac{n!}{k_{1}!...k_{m+1}!}}\,\,z_{1}^{k_{1}}...z_{m+1}^{k_{m+1}}\,\bigg]_{k\in 
			    \Omega_{m+1,n}}$.
		\end{tabular}
		\end{center}
\end{proposition}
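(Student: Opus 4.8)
The plan is to follow closely the proof of Proposition \ref{ncednwkneksnk} (the Binomial case), since the $n$-th Veronese embedding arises in exactly the same manner, only with multi-indices replacing integers. First I would record that by Lemma \ref{nefknknknk} the inclusion $j:\mathcal{M}(m+1,n)\hookrightarrow \mathcal{P}_{\binom{m+n}{m}}^{\times}$ is an isometric affine immersion between toric dually flat spaces; hence, by the preceding theorem, it admits a lift $m$, and it only remains to identify it. To pin down the coordinate expression of $j$ I would enumerate the finite set $\Omega_{m+1,n}=\{x_0,\dots,x_r\}$ with $r=\binom{m+n}{m}-1$, choosing the reference point (the distinguished $x_r$ appearing in formula \eqref{nfeknwkdnknk}) to be the multi-index $x_r=(0,\dots,0,n)$, for which $C(x_r)=\ln\tfrac{n!}{0!\cdots 0!\,n!}=0$ and $F(x_r)=0$. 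With $F_i(k)=k_i$ and $C(k)=\ln\tfrac{n!}{k_1!\cdots k_{m+1}!}$ (Example \ref{jeknkkdefjdk}), formula \eqref{nfeknwkdnknk} then gives the $i$-th natural coordinate of $j$ as $\langle\theta,(k_1,\dots,k_m)\rangle + C(k)$ with $k=x_{i-1}$.

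Next I would pass to the tangent bundles, using the identifications $T\mathcal{M}(m+1,n)=\mathbb{C}^m$ and $T\mathcal{P}_{\binom{m+n}{m}}^{\times}=\mathbb{C}^r$ through natural parameters. Since $j$ is affine, its derivative $j_*$ is the complexification of the same affine map, so the $i$-th component of $j_*(z)$ is $\langle z,(k_1,\dots,k_m)\rangle + C(k)$ with $k=x_{i-1}$ and $z\in\mathbb{C}^m$. I would then bring in the compatible covering maps furnished by Proposition \ref{njewndknknknk}(2), namely $\tau:\mathbb{C}^m\to \mathbb{P}_m(\tfrac1n)^{\circ}$, $z\mapsto[e^{z_1/2},\dots,e^{z_m/2},1]$, and $\tau':\mathbb{C}^r\to \mathbb{P}_{r}(1)^{\circ}$, $w\mapsto[e^{w_1/2},\dots,e^{w_r/2},1]$.

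The core computation is to factor the exponentials in $(\tau'\circ j_*)(z)$: the $i$-th homogeneous coordinate becomes $e^{C(k)/2}\prod_{j=1}^m (e^{z_j/2})^{k_j} = \sqrt{\tfrac{n!}{k_1!\cdots k_{m+1}!}}\,\prod_{j=1}^{m}(e^{z_j/2})^{k_j}$, using $e^{C(k)/2}=\sqrt{n!/(k_1!\cdots k_{m+1}!)}$. Writing $[u_1,\dots,u_{m+1}]=\tau(z)$, so that $u_j=e^{z_j/2}$ for $j\le m$ and $u_{m+1}=1$, the trivial factor $u_{m+1}^{k_{m+1}}=1$ may be inserted freely, turning this coordinate into the manifestly homogeneous degree-$n$ monomial $\sqrt{\tfrac{n!}{k_1!\cdots k_{m+1}!}}\,u_1^{k_1}\cdots u_{m+1}^{k_{m+1}}$ (the reference index $x_r=(0,\dots,0,n)$ producing $u_{m+1}^n$, consistent with the trailing $1$ in $\tau'$). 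Defining $\widetilde m:\mathbb{P}_m(\tfrac1n)\to\mathbb{P}_{r}(1)$ by precisely the stated Veronese formula, this yields $\widetilde m\circ\tau=\tau'\circ j_*$, so $\widetilde m$ is a lift of $j$ with respect to $\tau$ and $\tau'$, as claimed.

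I expect the main obstacle to be purely organizational: controlling the multi-index bookkeeping and, in particular, verifying that degree-$n$ homogeneity makes $\widetilde m$ well defined on $\mathbb{P}_m(\tfrac1n)$. A secondary point worth addressing explicitly is that, in contrast to the Binomial case where a coordinate swap $G$ was required, the symmetric choice $x_r=(0,\dots,0,n)$ makes the formula match directly; any other enumeration of $\Omega_{m+1,n}$ merely permutes the homogeneous coordinates of $\mathbb{P}_r(1)$, i.e.\ composes $\widetilde m$ with a holomorphic isometry, which is consistent with Proposition \ref{neknmfkrnknk} (lifts are conjugate) and explains why no ordering of the target coordinates needs to be fixed in the statement.
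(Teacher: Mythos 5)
Your proposal is correct and takes essentially the same route as the paper's proof: express the inclusion $j$ in natural parameters via Lemma \ref{nefknknknk}, pass to $j_{*}$ under the identifications $T\mathcal{M}(m+1,n)=\mathbb{C}^{m}$ and $T\mathcal{P}_{\binom{m+n}{m}}^{\times}=\mathbb{C}^{\binom{m+n}{m}-1}$, and verify $\widetilde{m}\circ \tau=\tau'\circ j_{*}$ for the compatible covering maps of Proposition \ref{njewndknknknk}. The only cosmetic difference is that you normalize by choosing the reference element $K=(0,\dots,0,n)$ so that $C(K)=0$ and $F(K)=0$, whereas the paper keeps $K$ arbitrary and instead rescales all homogeneous coordinates by the common factor $e^{\frac{1}{2}\left(\sum_{j=1}^{m}F_{j}(K)z_{j}+C(K)\right)}$ at the end; the computations are otherwise identical.
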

\begin{proof}
	Let $\phi:\{1,2,...,\binom{m+n}{m}\}\to \Omega_{m+1,n}$ be a bijection. Let $K:=\phi(\binom{m+n}{m})$ and $A=\Omega_{m+1,n}-\{K\}$ ($K$ is just the ``last" element of 
	$\Omega_{m+1,n}$). The coordinate expression for the inclusion map $j:\mathcal{M}(m+1,n)\hookrightarrow \mathcal{P}_{\binom{m+n}{m}}^{\times}$ in the natural parameters 
	is given by $j(\theta_{1},...,\theta_{m})=[\theta'_{k}]_{k\in A}$, where 
	\begin{eqnarray*}
		\left\lbrace
		\begin{array}{llll}
			\theta_{k}'=\sum_{j=1}^{m}(F_{j}(k)-F_{j}(K))\theta_{j} +C(k)-C(K),\\[0.5em]
			F_{j}(k)=k_{j} \,\,\,\,\,\,\,\textup{and} \,\,\,\,\,\,\,C(k)=\ln\big(\tfrac{n!}{k_{1}!....k_{m+1}!}\big)
		\end{array}
		\right.
	\end{eqnarray*}
	(see Example \ref{jeknkkdefjdk} and the proof of Lemma \ref{nefknknknk}).  
	Under the usual identifications $T\mathcal{M}(m+1,n)=\mathbb{C}^{m}$ and $T\mathcal{P}_{\binom{m+n}{m}}^{\times}=\mathbb{C}^{\binom{m+n}{m}-1}$, we have 
	$j_{*}(z_{1},...,z_{m})=[z_{k}']_{k\in A}$, where 
	\begin{eqnarray*}
		z_{k}'=\sum_{j=1}^{m}(F_{j}(k)-F_{j}(K))z_{j}+C(k)-C(K). 
	\end{eqnarray*}
	Let $\tau:\mathbb{C}^{m}\to \mathbb{P}_{m}(\tfrac{1}{n})$, $(z_{1},...,z_{m})\mapsto [e^{z_{1}/2},...,e^{z_{m}/2},1]$ and 
	$\tau':\mathbb{C}^{\binom{m+n}{m}-1}\to \mathbb{P}_{\binom{m+n}{m}-1}(1)$, $(z_{k})_{k\in A}\mapsto [(e^{z_{k}/2})_{k\in A},1]$. 
	By Proposition \ref{njewndknknknk}, $\tau$ and $\tau'$ are compatible covering maps. We compute:
	\begin{eqnarray*}
		(\tau'\circ j_{*})(z_{1},...,z_{m})&=& \tau'([z_{k}']_{k\in A})=[(e^{z_{k}'/2})_{k\in A},1]\\
		&=&\Big[\Big(e^{\tfrac{1}{2}\big(\sum_{j=1}^{m}(F_{j}(k)-F_{j}(K))z_{j}+C(k)-C(K)\big)}\Big)_{k\in A},1\Big]\\
		&=&\Big[\Big(e^{\tfrac{1}{2}\big(\sum_{j=1}^{m}F_{j}(k)z_{j}+C(k)\big)}\Big)_{k\in A},e^{\tfrac{1}{2}\big(\sum_{j=1}^{m}F_{j}(K)z_{j}+C(K)\big)}\Big]\\
		&=&\Big[e^{\big(\tfrac{1}{2}\sum_{j=1}^{m}F_{j}(k)z_{j}+C(k)\big)}\Big]_{k\in \Omega_{m+1,n}}\\
		&=&\bigg[\,\sqrt{\dfrac{n!}{k_{1}!...k_{m+1}!}}\,\,(e^{z_{1}/2})^{k_{1}}...(e^{z_{m}/2})^{k_{m}}\,\bigg]_{k\in \Omega_{m+1,n}}\\
		&=&(m\circ \tau)(z_{1},...,z_{m}),
	\end{eqnarray*}
	where $m$ is the $n${-}$th$ Veronese embedding. This shows that $m$ is the lift of $j$. 
\end{proof}

	Now we show how to construct new examples from old ones.

\begin{proposition}[\textbf{Torification of products}]\label{nfeknknefknfknk}
	Given $i=1,2$, let $\psi_{i}:\mathbb{R}^{n_{i}}\to \mathbb{R}$ be a smooth function whose Hessian is positive definite at each point of $\mathbb{R}^{n}$. 
	Suppose that $\Phi_{i}:\mathbb{T}^{n_{i}}\times N_{i}\to N_{i}$ is a torification 
	of $(\mathbb{R}^{n_{i}}, \textup{Hess}(\psi_{i}),\nabla^{\textup{flat}})$, $i=1,2$. 
	Let $\Phi=\Phi_{1}\times \Phi_{2}:\mathbb{T}^{n_{1}+n_{2}}\times (N_{1}\times N_{2})\to N_{1}\times N_{2}$ be the torus action defined by 
	\begin{eqnarray*}
		\Phi\big((a,b),(x,y)\big)=\big(\Phi_{1}(a,x), \Phi_{2}(b,y)\big).
	\end{eqnarray*}
	\begin{enumerate}[(1)]
		\item $N_{1}\times N_{2}$, together with the torus action $\Phi$, is a torification of $(\mathbb{R}^{n_{1}+n_{2}},\textup{Hess}(\psi),\nabla^{\textup{flat}})$, where 
			$\psi:\mathbb{R}^{n_{1}+n_{2}}=\mathbb{R}^{n_{1}}\times \mathbb{R}^{n_{2}} \to \mathbb{R}$, $(x,y)\mapsto \psi_{1}(x)+\psi_{2}(y)$.
		\item If $\tau_{i}:T\mathbb{R}^{n_{i}}\to N_{i}^{\circ}$ is a compatible covering map, $i=1,2$, then the map $\tau:T\mathbb{R}^{n_{1}+n_{2}}=
			T\mathbb{R}^{n_{1}}\times T\mathbb{R}^{n_{2}}\to N_{1}^{\circ}\times 
			N_{2}^{\circ}$, defined by $\tau(u,v)=\big(\tau_{1}(u), \tau_{2}(v)\big)$, 
			is a compatible covering map.  
		\item If $N_{1}$ and $N_{2}$ are regular, then 
		$N_{1}\times N_{2}$ is regular. 
	\end{enumerate}

	\end{proposition}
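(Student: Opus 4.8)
The plan is to reduce everything to the fact that all the structures involved are compatible with Cartesian products, and then to read off the torification from the covering-map criterion of Proposition \ref{nkwdnkkfenknk}. The only point carrying genuine geometric content is that the Dombrowski K\"ahler structure on $T\mathbb{R}^{n_1+n_2}$ coming from $(\textup{Hess}(\psi),\nabla^{\textup{flat}})$ is the product of those on the two factors. First I would observe that, because $\psi(x,y)=\psi_1(x)+\psi_2(y)$ splits, its Hessian is block diagonal, $\textup{Hess}(\psi)=\textup{Hess}(\psi_1)\oplus\textup{Hess}(\psi_2)$ (in particular $\textup{Hess}(\psi)$ is positive definite, so $(\mathbb{R}^{n_1+n_2},\textup{Hess}(\psi),\nabla^{\textup{flat}})$ is indeed dually flat). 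Under the identifications $T\mathbb{R}^{n_1+n_2}=\mathbb{C}^{n_1+n_2}=\mathbb{C}^{n_1}\times\mathbb{C}^{n_2}$, Corollary \ref{nfednwkneknkn} then gives that the complex structure is the canonical one (hence the product of the canonical complex structures on the factors) and that the metric $g_{x+iy}(a+ib,a'+ib')=h_x(a,a')+h_x(b,b')$ splits as $g=g_1\oplus g_2$. Thus $(T\mathbb{R}^{n_1+n_2},g,J)$ equals the K\"ahler product $(T\mathbb{R}^{n_1},g_1,J_1)\times(T\mathbb{R}^{n_2},g_2,J_2)$.

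With this in hand, for (1) and (2) I would apply Proposition \ref{nkwdnkkfenknk} to each factor to obtain compatible covering maps $\tau_i:T\mathbb{R}^{n_i}\to N_i^\circ$, parallel lattices $L_i$ with generators $X_i$, satisfying $\Gamma(L_i)=\textup{Deck}(\tau_i)$ and $\tau_i\circ (T_i)_{t_i}=(\Phi_i)_{[t_i]}\circ\tau_i$. Setting $\tau=\tau_1\times\tau_2$ and $L=L_1\times L_2$ (the parallel lattice on $\mathbb{R}^{n_1+n_2}$ generated by the concatenated frame $X=(X_1,X_2)$ of $\nabla^{\textup{flat}}$-parallel vector fields), I would check four things. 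The product action $\Phi$ is effective, holomorphic and isometric, so the standing hypotheses of the section hold; it is free at $(p,q)$ exactly when $\Phi_1$ is free at $p$ and $\Phi_2$ at $q$, whence $(N_1\times N_2)^\circ=N_1^\circ\times N_2^\circ$. By the first paragraph $\tau$ is a product of holomorphic isometric covering maps between product K\"ahler manifolds, hence a holomorphic isometric covering map onto $(N_1\times N_2)^\circ$. Moreover $\Gamma(L)=\Gamma(L_1)\times\Gamma(L_2)=\textup{Deck}(\tau_1)\times\textup{Deck}(\tau_2)=\textup{Deck}(\tau)$, and writing $t=(t_1,t_2)$ the translation $T_t$ splits as $(T_1)_{t_1}\times(T_2)_{t_2}$, so that $\tau\circ T_t=\Phi_{[t]}\circ\tau$. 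Proposition \ref{nkwdnkkfenknk} then gives simultaneously that $N_1\times N_2$ is a torification of $(\mathbb{R}^{n_1+n_2},\textup{Hess}(\psi),\nabla^{\textup{flat}})$ and that $\tau$ is a compatible covering map, which are exactly (1) and (2).

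For (3) I would simply note that regularity is inherited by products: $N_1\times N_2$ is connected and simply connected because its factors are (and $\pi_1$ of a product is the product of the fundamental groups); it is complete because the Riemannian product of complete manifolds is complete; and its K\"ahler metric $g_1\oplus g_2$, a Riemannian product on a product of real analytic manifolds, is real analytic. Hence $N_1\times N_2$ is regular whenever $N_1$ and $N_2$ are.

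I expect no serious obstacle here: once the product splitting of the Dombrowski structure is established, the argument is bookkeeping. The step demanding the most care is aligning the two factors' lattices, generators and deck groups so that all the hypotheses of Proposition \ref{nkwdnkkfenknk} hold for the single product map $\tau$ at once, together with the easy but necessary identification $(N_1\times N_2)^\circ=N_1^\circ\times N_2^\circ$ of the free loci.
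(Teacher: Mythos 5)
Your proof is correct and follows essentially the same route as the paper's: both arguments rest on the block splitting $\textup{Hess}(\psi)=\textup{Hess}(\psi_{1})\oplus \textup{Hess}(\psi_{2})$, the resulting identification of the Dombrowski K\"{a}hler structure on $T\mathbb{R}^{n_{1}+n_{2}}\cong T\mathbb{R}^{n_{1}}\times T\mathbb{R}^{n_{2}}$ with the product K\"{a}hler structure, the product lattice with $\Gamma(L)\cong \Gamma(L_{1})\times \Gamma(L_{2})$, the identification $(N_{1}\times N_{2})^{\circ}=N_{1}^{\circ}\times N_{2}^{\circ}$, and the standard product properties for regularity in part (3). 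The only difference is one of packaging: you verify the hypotheses of Proposition \ref{nkwdnkkfenknk} directly for the single covering map $\tau=\tau_{1}\times \tau_{2}$ (your assertion $\textup{Deck}(\tau)=\textup{Deck}(\tau_{1})\times \textup{Deck}(\tau_{2})$ is justified here because both coverings are normal with deck groups acting transitively on fibers), whereas the paper descends explicitly to an equivariant K\"{a}hler isomorphism $\widetilde{G}:\mathbb{R}^{n_{1}+n_{2}}_{L}\to \mathbb{R}^{n_{1}}_{L_{1}}\times \mathbb{R}^{n_{2}}_{L_{2}}$ and composes with $F_{1}\times F_{2}$ --- two formulations that Proposition \ref{nkwdnkkfenknk} itself shows to be equivalent.
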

\begin{proof}
	(1) Let $(L_{i},X_{i},F_{i})$ be a toric factorization of $\Phi_{i}:\mathbb{T}^{n_{i}}\times N_{i}\to N_{i}$, $i=1,2.$ Write $X_{i}=((X_{i})_{1},...,(X_{i})_{n_{i}})$, where 
	$(X_{i})_{j}$ is a vector field on $\mathbb{R}^{n_{i}}$. Given $1\leq i\leq n_{1}$ and $1\leq j\leq n_{2}$ define the vector fields $(\widetilde{X}_{1}){i}$ and 
	$(\widetilde{X}_{2})_{j}$ on $\mathbb{R}^{n_{1}+n_{2}}=\mathbb{R}^{n_{1}}\times \mathbb{R}^{n_{2}}$ by letting 
	\begin{eqnarray*}
		(\widetilde{X}_{1})_{i}(p_{1},p_{2})=((X_{1})_{i}(p_{1}),0) \,\,\,\,\,\,\,\,\,\,\textup{and}  \,\,\,\,\,\,\,\,\,\,
		(\widetilde{X}_{2})_{j}(p_{1},p_{2})=(0, (X_{2})_{j}(p_{2})).
	\end{eqnarray*}
	Clearly the vector fields $(\widetilde{X}_{i})_{j}$ are pointwise linearly independent and parallel with respect to the flat connection (since they are constant). 
	It follows that $\widetilde{X}=((\widetilde{X}_{1})_{1},...,(\widetilde{X}_{1})_{n_{1}}, (\widetilde{X}_{2})_{1},..., (\widetilde{X}_{2})_{n_{2}})$ 
	is the generator of a parallel lattice $L\subset T\mathbb{R}^{n_{1}+n_{2}}$. 

	Under the natural identifications $\Gamma(L)=\mathbb{Z}^{n_{1}+n_{2}}$ and $\Gamma(L_{i})=\mathbb{Z}^{n_{i}}$, the map $\mathbb{Z}^{n_{1}+n_{2}}\to 
	\mathbb{Z}^{n_{1}}\times \mathbb{Z}^{n_{2}}$, 
	$(k_{1},...,k_{n_{1}+n_{2}})\mapsto ((k_{1},...,k_{n_{1}}), (k_{n_{1}+1},...,k_{n_{1}+n_{2}}))$ induces a group isomorphism 
	$\phi:\Gamma(L)\to \Gamma(L_{1})\times \Gamma(L_{2})$. 
	Let $\pi_{i}:\mathbb{R}^{n_{1}+n_{2}}=\mathbb{R}^{n_{1}}\times \mathbb{R}^{n_{2}}\to \mathbb{R}^{n_{i}}$ be the projection onto 
	$\mathbb{R}^{n_{i}}$ and let $G:T\mathbb{R}^{n_{1}+n_{2}}\to T\mathbb{R}^{n_{1}}\times T\mathbb{R}^{n_{2}}$ be the diffeomorphism defined by $G(u)= ((\pi_{1})_{*}u, (\pi_{2})_{*}u)$.
	A direct verification using $(\pi_{k})_{*_{u}}(\widetilde{X_{i}})_{j}= \delta_{ki}(X_{i})_{j}(\pi_{k}(u))$ ($\delta_{ki}=$ Kronecker delta) shows that 
	$G$ is equivariant in the sense that
	\begin{eqnarray*}
		G\circ \gamma =\phi(\gamma)\cdot G
	\end{eqnarray*}
	for all $\gamma\in \Gamma(L)$, where the action of $\Gamma(L_{1})\times \Gamma(L_{2})$ on $T\mathbb{R}^{n_{1}}\times T\mathbb{R}^{n_{2}}$ is given by 
	$(\gamma_{1},\gamma_{2})\cdot (u,v)=(\gamma_{1}(u),\gamma_{2}(v))$. Moreover, a direct verification using Proposition \ref{prop:4.1} and the formula 
	\begin{eqnarray*}
		\textup{Hess}(\psi)=
		\begin{bmatrix}
			\textup{Hess}(\psi_{1})   &   0 \\
			0                         &   \textup{Hess}(\psi_{2})
		\end{bmatrix}
	\end{eqnarray*}
	shows that $G$ is a K\"{a}hler isomorphism. It follows that $G$ descends to a 
	K\"{a}hler isomorphism,
	\begin{eqnarray*}
		\widetilde{G}\,\,:\,\,\mathbb{R}^{n_{1}+n_{2}}_{L}\to \mathbb{R}^{n_{1}}_{L_{1}}\times \mathbb{R}^{n_{2}}_{L_{2}}. 
	\end{eqnarray*}
	Let $\Phi_{\widetilde{X}}:\mathbb{T}^{n_{1}+n_{2}}\times \mathbb{R}^{n_{1}+n_{2}}_{L}\to \mathbb{R}^{n_{1}+n_{2}}_{L}$ and 
	$\Phi_{X_{i}}:\mathbb{T}^{n_{i}}\times \mathbb{R}^{n_{i}}_{L_{i}}\to \mathbb{R}^{n_{i}}_{L_{i}}$ be the torus actions associated to the generators $\widetilde{X}$ and $X_{i}$, 
	respectively (see \eqref{nndkfneknkw}). Let $\Phi_{X_{1}}\times \Phi_{X_{2}}$ be the action of $\mathbb{T}^{n_{1}+n_{2}}=\mathbb{T}^{n_{1}}\times \mathbb{T}^{n_{2}}$ on 
	$\mathbb{R}^{n_{1}}_{L_{1}}\times \mathbb{R}^{n_{2}}_{L_{2}}$ defined by $(\Phi_{X_{1}}\times \Phi_{X_{2}})((a,b),(x,y))=
	\big(\Phi_{X_{1}}(a,x),\Phi_{X_{2}}(b,y)\big)$. A direct calculation shows that 
	\begin{eqnarray*}
		\widetilde{G}\circ \big(\Phi_{\widetilde{X}}\big)_{a}=(\Phi_{X_{1}}\times \Phi_{X_{2}})_{a}\circ \widetilde{G}
	\end{eqnarray*}
	for all $a\in \mathbb{T}^{n_{1}+n_{2}}$. Thus $\widetilde{G}$ is equivariant. 

	Now let $F:\mathbb{R}^{n_{1}}_{L_{1}}\times \mathbb{R}^{n_{2}}_{L_{2}}\to N_{1}^{\circ}\times N_{2}^{\circ}=(N_{1}\times N_{2})^{\circ}$ 
	be the equivariant K\"{a}hler isomorphism defined by $F(x,y)= (F_{1}(x),F_{2}(y))$. Since the composition $F\circ \widetilde{G}$ is an equivariant K\"{a}hler isomorphism 
	from $\mathbb{R}^{n_{1}+n_{2}}_{L}$ to $(N_{1}\times N_{2})^{\circ}$, $N_{1}\times N_{2}$ is a torification of $(\mathbb{R}^{n_{1}+n_{2}},\textup{Hess}(\psi),\nabla^{\textup{flat}})$ 
	with corresponding torus action $\Phi_{1}\times \Phi_{2}$. 

	(2) Suppose that $\tau_{i}$ is induced by the toric factorization $(L_{i},X_{i},F_{i})$, $i=1,2.$ This means that 
	$\tau_{i}=F_{i}\circ q_{L_{i}}$, where $q_{L_{i}}:T\mathbb{R}^{n_{i}}\to \mathbb{R}^{n_{i}}_{L_{i}}$ is the quotient map associated to the action of 
	$\Gamma(L_{i})$ on $T\mathbb{R}^{n_{i}}$. Let $\widetilde{X},L,G$ and $\widetilde{G}$ be defined as above. 
	It follows from the discussion above that $(L,\widetilde{X},F\circ \widetilde{G})$ is a toric factorization and that the diagram
	\begin{eqnarray*}
	\begin{tikzcd}
		T\mathbb{R}^{n_{1}+n_{2}}  \arrow[swap]{d}{q_{L}}\arrow{r}{\displaystyle G}  
		& T\mathbb{R}^{n_{1}}\times T\mathbb{R}^{n_{2}}  \arrow[swap]{d}{q_{L_{1}}\times q_{L_{2}}}\arrow{rd}{\tau_{1}\times \tau_{2}} 
		&
		\\
		\mathbb{R}_{L}^{n_{1}+n_{2}}\arrow[swap]{r}{\displaystyle \widetilde{G}}   
		& \mathbb{R}_{L_{1}}^{n_{1}}\times \mathbb{R}_{L_{2}}^{n_{2}} \arrow[swap]{r}{\displaystyle F} 
		& N_{1}^{\circ}\times N_{2}^{\circ}. 
	\end{tikzcd}
	\end{eqnarray*}
	is commutative. From this we see that $F\circ \widetilde{G}\circ q_{L}= (\tau_{1}\times \tau_{2})\circ G$ is a compatible covering map. 
	If $T\mathbb{R}^{n_{1}+n_{2}}$ and $T\mathbb{R}^{n_{1}}\times T\mathbb{R}^{n_{2}} $ are identified 
	via the map $G$, then $\tau_{1}\times \tau_{2}$ itself is a compatible covering map. 

	(3) This is immediate.
\end{proof}

\begin{definition}\label{nfknknefknknekn}
	Let $\mathcal{E}_{1}$ and $\mathcal{E}_{2}$ be exponential families defined over the finite sets $\Omega_{1}=\{x_{1},...,x_{r}\}$ and 
	$\Omega_{2}=\{y_{1},...,y_{s}\}$, respectively. The \textit{product} of $\mathcal{E}_{1}$ and $\mathcal{E}_{2}$, denoted by $\mathcal{E}_{1}\times \mathcal{E}_{2}$, 
	is the set of all maps $p:\Omega=\Omega_{1}\times \Omega_{2}\to \mathbb{R}$ of the form $p(x_{i},y_{j})=p_{1}(x_{i})p_{2}(y_{j})$, where $p_{1}\in \mathcal{E}_{1}$ and 
	$p_{2}\in \mathcal{E}_{2}$. 
\end{definition}

	One can readily check that $\mathcal{E}_{1}\times \mathcal{E}_{2}$ is an exponential family of dimension $\textup{dim}(\mathcal{E}_{1})+\textup{dim}(\mathcal{E}_{2})$ 
	defined over $\Omega=\Omega_{1}\times \Omega_{2}$. If $\psi_{1}:\mathbb{R}^{n_{1}}\to \mathbb{R}$ and $\psi_{2}:\mathbb{R}^{n_{2}}\to \mathbb{R}$ are the 
	cumulant generating functions of $\mathcal{E}_{1}$ and $\mathcal{E}_{2}$, respectively, then $\psi:\mathbb{R}^{n_{1}+n_{2}}=\mathbb{R}^{n_{1}}\times \mathbb{R}^{n_{2}}
	\to \mathbb{R},$ $(\theta,\theta')\mapsto \psi_{1}(\theta)+\psi_{2}(\theta')$ is the cumulant generating function of $\mathcal{E}_{1}\times \mathcal{E}_{2}$.
	It follows from this and Proposition \ref{nfeknknefknfknk} that if $\Phi_{i}:\mathbb{T}^{n_{i}}\times N_{i}\to N_{i}$ is a torification of $\mathcal{E}_{i}$, then 
	$N_{1}\times N_{2}$ is a torification of $\mathcal{E}_{1}\times \mathcal{E}_{2}$ with torus action 
	$\Phi_{1}\times \Phi_{2}.$ Moreover, if $\tau_{i}:T\mathcal{E}_{i}\to N_{i}^{\circ}$ is a compatible covering map, then $\tau_{1}\times \tau_{2}:T\mathcal{E}_{1}\times T\mathcal{E}_{2}\to 
	N_{1}^{\circ}\times N_{2}^{\circ}$, $(u,v)\mapsto (\tau_{1}(u),\tau_{2}(v))$ is a compatible covering map. 

\begin{example}
	$\mathbb{P}_{n}(1)\times \mathbb{P}_{m}(1)$ is the regular torification of $\mathcal{P}_{n+1}^{\times}\times \mathcal{P}_{m+1}^{\times}.$ 
\end{example}


\begin{proposition}\label{nekwnknkwdkejjkknk}
	The lift of the inclusion map $\mathcal{P}_{n+1}^{\times}\times \mathcal{P}_{m+1}^{\times}\to \mathcal{P}_{(n+1)(m+1)}^{\times}$ is the Segre embedding:
		\begin{center}
		\begin{tabular}{llllll}
			$\mathbb{P}_{n}(1)\times \mathbb{P}_{m}(1)$  & $\to$     & $\mathbb{P}_{(n+1)(m+1)-1}(1)$\\[0.9em]
			 $([z_{i}],[w_{j}])$   & $\mapsto$ & $[z_{i}w_{j}],$
		\end{tabular}
		\end{center}
	where $i=0,...,n,j=0,...,m$ and lexicographic ordering is adopted.
\end{proposition}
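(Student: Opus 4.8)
The plan is to follow verbatim the strategy used for the Veronese embedding in Propositions~\ref{ncednwkneksnk} and~\ref{nfknknkefnsknkfnk}, replacing the single family by a product. First I would record that $\mathcal{E}:=\mathcal{P}_{n+1}^{\times}\times \mathcal{P}_{m+1}^{\times}$ is itself an exponential family defined over the finite set $\Omega_{1}\times \Omega_{2}$ (see Definition~\ref{nfknknefknknekn} and the paragraph following it), so that the inclusion $j:\mathcal{E}\hookrightarrow \mathcal{P}_{(n+1)(m+1)}^{\times}$ is an affine isometric immersion by Lemma~\ref{nefknknknk}. Since $\mathbb{P}_{n}(1)\times \mathbb{P}_{m}(1)$ is the regular torification of $\mathcal{E}$ (Proposition~\ref{nfeknwknwknk} together with Proposition~\ref{nfeknknefknfknk}) and $\mathbb{P}_{(n+1)(m+1)-1}(1)$ is the regular torification of the target, Proposition~\ref{ncnkwknknk} guarantees a unique lift of $j$ once compatible covering maps are fixed. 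I would take $\tau=\tau_{1}\times \tau_{2}$, the product of the two categorical covering maps of Proposition~\ref{njewndknknknk}(2), which is compatible by Proposition~\ref{nfeknknefknfknk}(2), and $\tau'$ the categorical covering map of $\mathbb{P}_{(n+1)(m+1)-1}(1)$, again from Proposition~\ref{njewndknknknk}(2).

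The computational heart is identical in spirit to the proof of Proposition~\ref{ncednwkneksnk}. Using the explicit matrix formula~\eqref{nfeknwkdnknk} for the coordinate expression of an inclusion of exponential families, together with the facts that for categorical families $C\equiv 0$ while the statistic $F$ of the product family is the concatenation $(F^{(1)}(x_{a}),F^{(2)}(y_{b}))$, I would write $j$ and hence $j_{*}$ (under the identifications $T\mathcal{E}=\mathbb{C}^{n+m}$ and $T\mathcal{P}_{(n+1)(m+1)}^{\times}=\mathbb{C}^{(n+1)(m+1)-1}$) as an explicit affine map whose $(a,b)$-component is $z_{a}+w_{b}$ shifted by the value at the reference pair $K$. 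Applying $\tau'$ and clearing the common projective factor then yields homogeneous coordinates of the form $\big[e^{(z_{a}+w_{b})/2}\big]_{(a,b)}$, with the convention that the reference entries equal $1$.

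Finally I would compare this with the Segre map evaluated on $\tau(z,w)=\big([e^{z_{1}/2},\dots,e^{z_{n}/2},1],[e^{w_{1}/2},\dots,e^{w_{m}/2},1]\big)$: the entrywise product $[z_{i}w_{j}]$ of the two homogeneous coordinate vectors is exactly $\big[e^{(z_{a}+w_{b})/2}\big]_{(a,b)}$, so $\widetilde{m}\circ \tau=\tau'\circ j_{*}$ and $\widetilde{m}$ is the lift of $j$. The main obstacle, and the only genuine work, is bookkeeping: matching the linear index of~\eqref{nfeknwkdnknk} with the pair index $(a,b)$, checking that the chosen reference pair $K$ and the lexicographic ordering line up, and, should the reference entry force the last rather than the first coordinate to be the homogenizing $1$, composing with the coordinate-reversing holomorphic isometry and invoking Proposition~\ref{neknmfkrnknk} (lifts are conjugate) to land on the standard Segre embedding $([z_{i}],[w_{j}])\mapsto [z_{i}w_{j}]$ in lexicographic order. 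Conceptually nothing new beyond the Veronese case occurs; the content is purely the index and ordering verification.
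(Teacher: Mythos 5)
Your proposal is correct and follows essentially the same route as the paper's proof: parametrize the product as an exponential family, compute the coordinate expression of the inclusion $j$ in natural parameters (via the matrix formula from the proof of Lemma \ref{nefknknknk}), pass to $j_{*}$ under the identifications $T\mathcal{E}\cong\mathbb{C}^{n+m}$, and verify $\tau'\circ j_{*}=f\circ(\tau_{1}\times\tau_{2})$ with the categorical covering maps of Proposition \ref{njewndknknknk} and the product compatibility of Proposition \ref{nfeknknefknfknk}. Your contingency of composing with a coordinate-reversing isometry and invoking Proposition \ref{neknmfkrnknk} turns out to be unnecessary here (unlike in the Veronese case), since lexicographic ordering places the reference pair last so the homogenizing entries line up directly, as in the paper's $n=m=1$ computation.
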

\begin{proof}[Sketch of proof]
	The proof is entirely analogous to the proof for Proposition \ref{nfknknkefnsknkfnk}. For simplicity, we assume $n=m=1$. In this case, the Segre embedding 
	$f:\mathbb{P}_{1}(1)\times \mathbb{P}_{1}(1)\to \mathbb{P}_{3}(1)$ reads $f([z_{1},z_{2}],[w_{1},w_{2}])=
	[z_{1}w_{1},z_{1}w_{2},z_{2}w_{1},z_{2}w_{2}]$. Let $\theta$ be the natural parameter on $\mathcal{P}_{2}^{\times}$. Elements of $\mathcal{P}_{2}^{\times}$ 
	are parametrized as follows: $p(x_{i};\theta)=e^{\delta_{1i}\theta-\psi(\theta)}$, where $x_{i}\in \Omega=\{x_{1},x_{2}\}$ and 
	$\psi(\theta)=\ln(1+e^{\theta})$ (see Example \ref{exa:5.5}). By definition, if $p\in \mathcal{P}_{2}^{\times}\times \mathcal{P}_{2}^{\times}$, then there are real numbers 
	$\theta_{1}$ and $\theta_{2}$ such that $p(x_{i},x_{j})=p(x_{i};\theta_{1})p(x_{j};\theta_{2})$ for all $x_{i},x_{j}\in \Omega$, and hence 
	\begin{eqnarray*}
		p(x_{i},x_{j})=e^{\delta_{1i}\theta_{1}+\delta_{1j}\theta_{2}-\psi(\theta_{1})-\psi(\theta_{2})}=e^{H_{1}(x_{i},x_{j})\theta_{1}+H_{2}(x_{i},x_{j})\theta_{2}-
		\phi(\theta_{1},\theta_{2})}, 
	\end{eqnarray*}
	where $H_{1}(x_{i},x_{j})=\delta_{1i}$, $H_{2}(x_{i},x_{j})=\delta_{1j}$ 
	and $\phi(\theta_{1},\theta_{2})=\psi(\theta_{1})+\psi(\theta_{2})$. This shows in particular that $\mathcal{P}_{2}^{\times}\times 
	\mathcal{P}_{2}^{\times}$ is an exponential family with natural parameters $(\theta_{1},\theta_{2})$ and cumulant generating function $\phi$.

	In order to find the local expression for the inclusion $j:\mathcal{P}_{2}^{\times}\times \mathcal{P}_{2}^{\times}\to \mathcal{P}_{4}^{\times}$, we need 
	to give $\Omega\times \Omega$ an ordering. Let $y_{1}=(x_{1},x_{1}),$ $y_{2}=(x_{1},x_{2})$, $y_{3}=(x_{2},x_{1})$ and $y_{4}=(x_{2},x_{2})$. Then $\Omega\times \Omega=
	\{y_{1},y_{2},y_{3},y_{4}\}$. Let $(\theta_{1}',\theta_{2}',\theta_{3}')$ be the natural parameters on $\mathcal{P}_{4}^{\times}$. 
	Taking into account the proof of Lemma \ref{nefknknknk}, we see that the coordinate expression for the inclusion map $j$ in the natural parameters is given by 
	\begin{eqnarray*}
	\begin{bmatrix}
		\theta_{1}'\\
		\theta_{2}'\\
		\theta_{3}'
	\end{bmatrix}
	=
	\begin{bmatrix}
		H_{1}(y_{1})-H_{1}(y_{4})   &   H_{2}(y_{1})-H_{2}(y_{4})  \\
		H_{1}(y_{2})-H_{1}(y_{4})   &   H_{2}(y_{2})-H_{2}(y_{4})  \\
		H_{1}(y_{3})-H_{1}(y_{4})   &   H_{2}(y_{3})-H_{2}(y_{4}) 
	\end{bmatrix}
	\begin{bmatrix}
	\theta_{1}\\
	\theta_{2} 
	\end{bmatrix}.
	\end{eqnarray*}
	Since $H_{1}(y_{1})=H_{1}(y_{2})=H_{2}(y_{1})=H_{2}(y_{3})=1$ and all the other values of $H_{1}$ and $H_{2}$ are zero, we find
	\begin{eqnarray*}
	\begin{bmatrix}
		\theta_{1}'\\
		\theta_{2}'\\
		\theta_{3}'
	\end{bmatrix}
	=
	\begin{bmatrix}
		1  & 1\\
		1  & 0\\
		0  & 1 
	\end{bmatrix}
	\begin{bmatrix}
	\theta_{1}\\
	\theta_{2} 
	\end{bmatrix}.
	\end{eqnarray*}
	Thus $j(\theta_{1},\theta_{2})=(\theta_{1}+\theta_{2},\theta_{1},\theta_{2}).$ 
	Under the usual identification $T\mathcal{P}_{n+1}^{\times} =\mathbb{C}^{n}$, the derivative $j_{*}:\mathbb{C}\times
	\mathbb{C}\to \mathbb{C}^{3}$ is given by $j_{*}(z_{1},z_{2})=(z_{1}+z_{2}, z_{1},z_{2})$. 
	Let $\tau':\mathbb{C}^{3}\to \mathbb{P}_{3}(1)$, $(z_{1},z_{2},z_{3})\mapsto [e^{z_{1}/2},e^{z_{2}/2},e^{z_{3}/3},1]$. 
	Recall that $\tau'$ is a compatible covering map (see Proposition \ref{njewndknknknk}). We compute:
	\begin{eqnarray}\label{nwndnknkdnkn}
		(\tau'\circ j_{*})(z_{1},z_{2})=\tau(z_{1}+z_{2},z_{1},z_{2})=[e^{(z_{1}+z_{2})/2},e^{z_{1}/2},e^{z_{2}/2},1].
	\end{eqnarray}
	Let $\tau:\mathbb{C}\to \mathbb{P}_{1}(1)$, $z\mapsto [e^{z/2},1]$. 
	By Proposition \ref{nfeknknefknfknk}, $\tau\times \tau$ is a compatible covering map. We compute:
	\begin{eqnarray}\label{nwknknkfnknkn}
		(f\circ (\tau\times \tau))(z_{1},z_{2})=f([e^{z_{1}/2},1],[e^{z_{2}/2},1])=[e^{(z_{1}+z_{2})/2},e^{z_{1}/2},e^{z_{2}/2},1]. 
	\end{eqnarray}
	Comparing \eqref{nwndnknkdnkn} and \eqref{nwknknkfnknkn}, we see that $\tau'\circ j_{*}=f\circ (\tau\times \tau)$. This shows that the Segre embedding 
	$f$ is a lift of $j$.  
\end{proof}

\section{Duality}\label{nnknkenkwnkdnk}

	In this section, we summarize some of the results of this paper in the form of a duality (bijection) similar to Delzant correspondence in symplectic geometry \cite{Delzant}.

	As the literature is not uniform, we give the following definition. 

\begin{definition}\label{nfwknkenkwnknk}
	A \textit{K\"{a}hler toric manifold} is a connected K\"{a}hler manifold $N$ 
	of complex dimension $n$ equipped with an effective isometric and holomorphic action $\Phi:\mathbb{T}^{n}\times N\to N$ of the $n${-}dimension real torus 
	$\mathbb{T}^{n}=\mathbb{R}^{n}/\mathbb{Z}^{n}$ such that for every $\xi\in \textup{Lie}(\mathbb{T}^{n})$, the vector vector field $J\xi_{N}$ is complete, 
	where $J$ is the complex structure on $N$ and $\xi_{N}$ is the fundamental vector field on $N$ associated to $\xi.$
\end{definition}

	Note that the definition of a torification $N$ does not require the vector fields $J\xi_{N}$ to be complete (see Section \ref{nfeknkwdnkk}). Therefore torifications are not 
	necessarily K\"{a}hler toric manifolds in the sense of Definition \ref{nfwknkenkwnknk}. Note also that if a K\"{a}hler toric manifold is regular, 
	then it is simply connected and hence the torus action is Hamiltonian (this follows, for example, from \cite{Ortega}, Propositions 4.5.17 and 4.5.19).

%

	Recall that two K\"{a}hler toric manifolds $\Phi:\mathbb{T}^{n}\times N\to N$ and $\Phi':\mathbb{T}^{n}\times N'\to N'$ are \textit{equivalent} 
	if there exist a K\"{a}hler isomorphism $G:N\to N'$ and a Lie group isomorphism $\rho:\mathbb{T}^{n}\to \mathbb{T}^{n}$ such that 
	$G\circ \Phi_{a}=\Phi_{\rho(a)}\circ G$ for all $a\in \mathbb{T}^{n}$. In this case, we write $N\sim N'.$

	Recall that two dually flat manifolds $(M,h,\nabla)$ and $(M',h',\nabla')$ are \textit{equivalent} if there is an isomorphism of dually flat spaces between 
	them. In this case, we also write $M\sim M'$.

	Equivalence classes are denoted by $[M,h,\nabla]$ and $[\Phi:\mathbb{T}^{n}\times N\to N]$, or simply $[M]$ and $[N]$. 

\begin{theorem}\label{neknkneknkn} 
	Let $A$ be the set of toric dually flat manifolds $(M,h,\nabla)$ of dimension $n$ admitting a global pair of dual coordinate 
	systems and whose regular torifications are K\"{a}hler toric manifolds. Let $B$ be the set of regular K\"{a}hler toric manifolds $N$ of complex dimension $n$. 
	The map 
	\begin{center}
		\begin{tabular}{lllll}
			$A/{\sim}$  & $\to$      & $B/{\sim},$\\[0.5em]
			$[M,h,\nabla]$     & $\mapsto$  & $[\textup{regular torification of}\,\,M]$	
		\end{tabular}
	\end{center}
	is a bijection. 
\end{theorem}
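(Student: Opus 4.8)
The plan is to establish that the map is well-defined and a bijection by constructing an explicit inverse using the tools developed in the earlier sections. The map in question sends $[M,h,\nabla]$ to $[\text{regular torification of } M]$; I would first verify this is well-defined on equivalence classes, then exhibit its inverse and check both composites are the identity. The key structural fact I would invoke is Theorem \ref{nckdnwknknednenkenk}, which associates to each regular K\"ahler toric manifold $N$ (with a fixed $p\in N^{\circ}$ and momentum map $\moment$) a dually flat structure $(\mathbb{R}^{n},h,\nabla^{\textup{flat}})$ together with a global pair of dual coordinate systems, and shows that $\Phi$ is a torification of it.

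\textbf{Well-definedness of the forward map.} First I would check that if $(M,h,\nabla)\sim (M',h',\nabla')$, then their regular torifications are equivalent K\"ahler toric manifolds. This follows from Theorem \ref{aaanfeknkenfknk} together with Proposition \ref{ndknqkefnkwnkndk}: an isomorphism $f:M\to M'$ of dually flat spaces transports a regular torification of $M'$ to a regular torification of $M$ (via $\tau'=\tau\circ(f_{*})^{-1}$), and since all regular torifications of $M$ are equivalent, the classes match. I must also confirm the forward map actually lands in $B/{\sim}$, i.e.\ that the regular torification of $M\in A$ is genuinely a K\"ahler toric manifold in the sense of Definition \ref{nfwknkenkwnknk} --- but this is exactly the defining hypothesis built into the set $A$, so it is automatic.

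\textbf{Construction of the inverse.} Given a regular K\"ahler toric manifold $N$ of complex dimension $n$, since $N$ is regular it is simply connected, hence $\Phi$ is Hamiltonian with momentum map $\moment$. Applying Theorem \ref{nckdnwknknednenkenk} to $N$ yields a dually flat space $(\mathbb{R}^{n},h,\nabla^{\textup{flat}})$ admitting a global pair of dual coordinate systems $(x,y)$, and Theorem \ref{nckdnwknknednenkenk}(2) shows $\Phi$ is a torification of it. Thus $(\mathbb{R}^{n},h,\nabla^{\textup{flat}})$ is toric with $N$ as a regular torification, so it lies in $A$. I would define the candidate inverse by $[N]\mapsto [\mathbb{R}^{n},h,\nabla^{\textup{flat}}]$. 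To see this is well-defined, I would use Proposition \ref{nenwknkfwnknk}: if $N\sim N'$ as K\"ahler toric manifolds, then the associated canonical dually flat spaces are isomorphic, and by Proposition \ref{nfekwnkefnkwnk} these agree (up to isomorphism) with the dually flat spaces produced by Theorem \ref{nckdnwknknednenkenk}.

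\textbf{The two composites.} For the inverse-then-forward direction, starting from $[N]$ I obtain $[\mathbb{R}^{n},h,\nabla^{\textup{flat}}]$, whose regular torification is equivalent to $N$ precisely because Theorem \ref{nckdnwknknednenkenk}(2) already exhibits $\Phi:\mathbb{T}^{n}\times N\to N$ as a torification, and regular torifications are unique up to equivalence (Theorem \ref{aaanfeknkenfknk}). For the forward-then-inverse direction, starting from $[M,h,\nabla]\in A$ with regular torification $N$, I must check that the dually flat space $(\mathbb{R}^{n},h',\nabla^{\textup{flat}})$ reconstructed from $N$ via Theorem \ref{nckdnwknknednenkenk} is isomorphic to $(M,h,\nabla)$; this is supplied by Proposition \ref{nfekwnkefnkwnk}(3), which gives an isomorphism of dually flat spaces from $M$ to the canonical dually flat space $\moment(N^{\circ})$, combined with Theorem \ref{nckdnwknknednenkenk}(3)(b) identifying $\moment(N^{\circ})$ (via $-y$) with $(\mathbb{R}^{n},h',\nabla^{\textup{flat}})$. \textbf{The main obstacle} I anticipate is bookkeeping the precise identifications: the momentum map is only determined up to an additive constant and the torus parametrization up to $\textup{GL}(n,\mathbb{Z})$, so I would need to argue that the resulting dually flat space is independent of these choices --- this is where the uniqueness clause in Theorem \ref{nckdnwknknednenkenk}(3) and the flexibility of the equivalence relation ``$\sim$'' (allowing reparametrization by $\rho$) must be used carefully to absorb the ambiguities rather than letting them obstruct well-definedness.
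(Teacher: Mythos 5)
Your proposal is correct and follows essentially the same route as the paper, whose (very brief) proof rests on exactly the three ingredients you use: existence of lifts (equivalently Theorem \ref{aaanfeknkenfknk} with Proposition \ref{ndknqkefnkwnkndk}) for well-definedness, Proposition \ref{nenwknkfwnknk} for injectivity (your well-definedness of the inverse), and Theorem \ref{nckdnwknknednenkenk} for surjectivity (your construction of the inverse). Packaging the argument as an explicit two-sided inverse rather than injectivity-plus-surjectivity is only a presentational difference, and your handling of the choice ambiguities (base point $p$, momentum map constant, $\textup{GL}(n,\mathbb{Z})$ reparametrization) via Proposition \ref{nenwknkfwnknk} is sound.
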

\begin{proof}
	Let $F$ be the map in the theorem. The existence of lifts guarantees that $F$ is well defined. Injectivity of $F$ is a direct consequence of Proposition \ref{nenwknkfwnknk}. 
	Surjectivity of $F$ follows from Theorem \ref{nckdnwknknednenkenk}.  
\end{proof}

\appendix\label{nfeknknkefnknknk}
\section{Legendre transform}
	Throughout this section, $(x_{1},...,x_{n})$ are standard coordinates on $\mathbb{R}^{n}$ and $\langle\,,\,\rangle$ is the ordinary 
	inner product in $\mathbb{R}^{n}$. Given a differentiable function $h:U\to \mathbb{R}$ defined on an open set $U\subseteq \mathbb{R}^{n}$, 
	we will denote by $\textup{grad}(h)$ the corresponding gradient map. Thus $\textup{grad}(h)(x)=
	\big(\tfrac{\partial h}{\partial x_{1}}(x),...,\tfrac{\partial h}{\partial x_{n}}(x)\big)$, $x\in U$. 

	The material presented in this section is taken from \cite{Rockafellar2} (see also \cite{Rockafellar}). 
\begin{definition}\label{jefkwjkjkejk}
	Let $h$ be a differentiable real-valued function defined on a non-empty open set $U$ in $\mathbb{R}^{n}$. Let $U^{*}$ 
	be the image of $U$ under the gradient map $\textup{grad}(h)$. If $\textup{grad}(h)$ is injective, 
	then the function 
	\begin{eqnarray*}
		h^{*}(x^{*})=\big\langle x^{*}, (\textup{grad}(h))^{-1}(x^{*})\big\rangle -h\big((\textup{grad}(h))^{-1}(x^{*})\big)
	\end{eqnarray*}
	is well-defined on $U^{*}$. The pair $(U^{*},h^{*})$ is called the \textit{Legendre transform} of $(U,h)$. 
\end{definition}

\begin{definition}
	We shall say that a pair $(U,h)$ is a \textit{convex function of Legendre type} on $\mathbb{R}^{n}$ if the following conditions hold:
	\begin{enumerate}[(1)]
		\item $U$ is a non-empty open convex set in $\mathbb{R}^{n}$.
		\item $h:U\to \mathbb{R}$ is a strictly convex differentiable function on $U$. 
		\item $\lim_{\lambda\to 0^{+}}\tfrac{d}{d\lambda}h(\lambda a +(1-\lambda)x)=-\infty$ whenever $a\in U$ and $x$ is a boundary point of $U$.
	\end{enumerate}
\end{definition}

	Note that the third condition is automatically satisfied when $U=\mathbb{R}^{n}$ (since there is no boundary point in this case). 

\begin{theorem}[\cite{Rockafellar2}]\label{neknknfkfneknk}
	Let $(U,h)$ be a convex function of Legendre type on $\mathbb{R}^{n}$. The Legendre transform $(U^{*},h^{*})$ is then well-defined. It is 
	another convex function of Legendre type on $\mathbb{R}^{n}$, and $\textup{grad}(h^{*})=\big(\textup{grad}(h)\big)^{-1}$ on $U^{*}$. 
	The Legendre transform of $(U^{*},h^{*})$ is $(U,h)$ again.
\end{theorem}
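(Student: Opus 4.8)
The plan is to realize the Legendre transform as the restriction of the Fenchel conjugate and then invoke the standard duality between essential smoothness and essential strict convexity of closed convex functions. First I would dispose of well-definedness: the strict convexity of $h$ forces $\textup{grad}(h)$ to be injective, by exactly the monotonicity-of-gradient argument already used in Lemma \ref{nefkwnknkenknk}. Indeed, for $x_{1}\neq x_{2}$ in $U$ one adds the two strict subgradient inequalities to obtain $\langle \textup{grad}(h)(x_{1})-\textup{grad}(h)(x_{2}),\,x_{1}-x_{2}\rangle>0$, so that $\textup{grad}(h)(x_{1})\neq \textup{grad}(h)(x_{2})$. Hence $(\textup{grad}(h))^{-1}:U^{*}\to U$ exists and $h^{*}$ is well-defined on $U^{*}=\textup{grad}(h)(U)$.

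Next I would extend $h$ to a closed proper convex function $\tilde{h}$ on all of $\mathbb{R}^{n}$ (setting $\tilde{h}=+\infty$ off $U$ and passing to the lower semicontinuous hull) and consider its Fenchel conjugate $\tilde{h}^{*}(p)=\sup_{x\in U}\big(\langle p,x\rangle-h(x)\big)$. The point of this reduction is the identification $h^{*}=\tilde{h}^{*}|_{U^{*}}$: for $p=\textup{grad}(h)(x_{0})\in U^{*}$ the concave function $x\mapsto\langle p,x\rangle-h(x)$ has vanishing gradient at $x_{0}$ and therefore attains its supremum there, giving $\tilde{h}^{*}(p)=\langle p,x_{0}\rangle-h(x_{0})=h^{*}(p)$. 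Thus $h^{*}$ inherits convexity from $\tilde{h}^{*}$, and the subgradient inversion $p\in\partial\tilde{h}(x)\Leftrightarrow x\in\partial\tilde{h}^{*}(p)$ specializes to $(\textup{grad}(h))^{-1}(p)\in\partial\tilde{h}^{*}(p)$.

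The heart of the matter is to show that $(U^{*},h^{*})$ is again of Legendre type, i.e. that $U^{*}$ is open and convex, that $h^{*}$ is strictly convex and differentiable with $\textup{grad}(h^{*})=(\textup{grad}(h))^{-1}$, and that $h^{*}$ satisfies the boundary steepness condition. This is where the two halves of the Legendre hypothesis are used dually: the steepness condition makes $\tilde{h}$ essentially smooth, which forces $\tilde{h}^{*}$ to be essentially strictly convex, while the strict convexity of $h$ makes $\tilde{h}$ essentially strictly convex, forcing $\tilde{h}^{*}$ to be essentially smooth. Concretely, essential smoothness of $\tilde{h}$ guarantees that $\textup{grad}(h)$ maps $U$ onto $\textup{int}(\textup{dom}\,\tilde{h}^{*})$, so $U^{*}=\textup{int}(\textup{dom}\,\tilde{h}^{*})$ is open and convex; essential smoothness of $\tilde{h}^{*}$ makes $\partial\tilde{h}^{*}(p)$ the single point $(\textup{grad}(h))^{-1}(p)$, whence $h^{*}$ is differentiable with the asserted gradient; and essential strict convexity of $\tilde{h}^{*}$ yields the strict convexity and the steepness condition for $h^{*}$. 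I expect this step, the preservation of Legendre type, to be the main obstacle, since it genuinely requires the essential-smoothness / essential-strict-convexity dictionary of \cite{Rockafellar2} rather than an elementary computation.

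Finally, the involution $(U^{**},h^{**})=(U,h)$ follows from the biconjugation theorem $\tilde{h}^{**}=\tilde{h}$ for closed proper convex functions. Applying the three steps above to the now-established Legendre-type pair $(U^{*},h^{*})$ gives $\textup{grad}(h^{**})=(\textup{grad}(h^{*}))^{-1}=\textup{grad}(h)$ on $U$, and unwinding the definition of the Legendre transform together with $\tilde{h}^{**}=\tilde{h}$ yields $h^{**}=h$. Since each ingredient reduces to a standard fact in Rockafellar's theory of conjugate convex functions, the proof amounts to assembling them in this order, which is why the statement is quoted from \cite{Rockafellar2} rather than reproved in full here.
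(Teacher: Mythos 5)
Your proposal is correct and takes essentially the same route as the paper's source: the paper states this theorem without proof, deferring to \cite{Rockafellar2}, and your outline --- injectivity of $\textup{grad}(h)$ via strict monotonicity, identification of $h^{*}$ with the Fenchel conjugate of the closed extension $\tilde{h}$, the duality between essential smoothness and essential strict convexity to show $(U^{*},h^{*})$ is again of Legendre type with $\textup{grad}(h^{*})=(\textup{grad}(h))^{-1}$, and biconjugation for the involution --- is precisely Rockafellar's own argument (Theorem 26.5 of the cited work). No gap to report.
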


\begin{footnotesize}\bibliography{bibtex}\end{footnotesize}
\end{document}